\theoremstyle{plain}
  \newtheorem{prop}{Proposition}[section]
  \newtheorem{thm}[prop]{Theorem}
  \newtheorem{cor}[prop]{Corollary}
  \newtheorem{lem}[prop]{Lemma}
  \newtheorem*{lem*}{Lemma}
  \newtheorem*{claim*}{Claim}
\newtheorem{defn}[prop]{Definition}
  \newtheorem*{defn*}{Definition}
  \newtheorem*{rem*}{Remarks}
  \newtheorem*{rema*}{Remark}
  \newtheorem{rems*}{Remarks}
\def\iintl{\iint\limits}
\def\p1{P^{-1}}
\def\Th{\Theta}
\def\uTh{\underline{\Theta}}
\def\midl{{\mathop{\mid}\limits}}
\def\td{\triangledown}
\def\be{\begin{equation}}
\def\ee{\end{equation}}
\def\ul{\underline}
\def\usig{\underline{\sigma}}
\def\uP{\underline{P}}
\def\uz{\underline{z}}
\def\ol{\overline}
\def\la{\lambda}
\def\intl{\int\limits}
\def\om{\omega}
\def\Om{\Omega}
\def\de{{\delta}}
\def\tri{\triangle }
\def\a{\alpha}
\def\thalf{{\textstyle{\frac 12}}}
\def\un{\underline}
\def\vare{\varepsilon}
\def\k{\kappa}
\def\diag{\text{diag}}
\def\dist{\text{dist}}
\def\supp{\text{supp}}
\def\rank{\text{rank}}
\def\lub{\text{lub}}
\def\glb{\text{glb}}
\def\cala{{\mathcal{A}}}
\def\calb{\mathcal{B}}
\def\cald{{\cal{D}}}
\def\calf{{\cal{F}}}
\def\calp{{\cal{P}}}
\def\calt{{\cal{T}}}
\def\cala{{\cal{A}}}
\def\bbr{\mathbb{R}}
\def\bbs{\mathbb{S}}
\def\cals{\mathcal{S}}
\def\hcals{\hat\cals}
\def\calt{\mathcal{T}}
\def\Ga{\Gamma}
\def\Om{\Omega}
\def\eqadef{{\stackrel{{def}}{=}}}
\def\intl{\int\limits}
\def\suml{\sum\limits}
\def\range{\mathrm{range}}
\def\sgn{\mathrm{sgn}}
\def\p{\partial}
\def\tan{\hbox{tan}}
\def\pOm{\partial\Omega}
\numberwithin{equation}{section}
\def\og{\Omega\setminus\Gamma}
\date{} %
\numberwithin{figure}{section}
\newcommand{\vertiii}[1]{{\left\vert\kern-0.25ex\left\vert\kern-0.25ex\left\vert #1
  \right\vert\kern-0.25ex\right\vert\kern-0.25ex\right\vert}}
\begin{document}
\title{Systems of conservation laws in higher space dimensions}
%\author{Michael Sever\thanks{Department of Mathematics, The Hebrew University,
%Jerusalem, Israel.}}
\maketitle
\centerline{\large Michael Sever}
\centerline{Department of Mathematics}
\centerline{ The Hebrew University}
\centerline{Jerusalem, ISRAEL}

\bigskip

\begin{abstract}
The existing paradox between theory and computational experiment for weak solutions of systems of conservation laws in higher space dimensions is arguably resolved.  Apparently  successful computations are identified with underlying boundary-value problems which are well-posed only in a weakened sense.   In the absence of  hyperbolicity in particular, prescribed boundary data sufficient to determine an a priori bound for an entropy weak solution need not suffice to imply local uniqueness thereof.

In this context, fluid flow models based on stationary or self-similar reductions of Euler systems are distinguished as particularly attractive for computational investigation.

\end{abstract}

\section{Introduction}%1

Substantial disparity persists between analytical \cite{B,D,G,LYZ} and experimental \cite{Fl,GM,LeV,LYZ,PT} results on existence of discontinuous weak solutions of nonlinear, first-order systems of conservation laws.  For systems in more than one space dimension in particular, computations establish such, beyond reasonable doubt, in far greater generality than supported by available theory. Here we specifically exclude the special cases of a scalar conservation law, solutions everywhere continuous and of hyperbolic systems with one space dimension, for which more extensive analytical results are available \cite{Gu,K,Lu1,Lu2,LYZ,W,ZZ}.  Scalar conservation laws are nonetheless used as examples throughout.

Point of departure for this treatise is an observation that resolution of this paradox does not require an existence proof of the ostensibly computed solutions.   It will suffice to characterize a solution subset $\hcals$, containing whatever computed solutions, the existence of which implies that successful computational investigations may be undertaken determining solutions therein.  A weakened expression of uniqueness of elements of $\hcals $ for specified data then identifies the existence of nonempty $\hcals$ with a weakened form of well-posedness of the underlying problem.

The systems of interest in this context generally derive from a primitive system (such as an Euler system) hyperbolic by virtue of a strictly convex entropy extension \cite{L2}.  The solution subset $\hcals $ satisfies a reduced system determining  invariant solutions under an element of the symmetry group with which the primitive system is equipped.  Stationary and self-similar solutions are typical examples.  A problem class  is then specified by the reduced system with domain and boundary conditions of interest.

The reduced systems are generally not everywhere hyperbolic, but inherit the entropy extension and thus the familiar inequality as an entropy condition on discontinuous weak solutions.  Time-like directions for the primitive system are reflected as distinguished directions for the reduced system, determined from the entropy flux.  However, boundary conditions for initial boundary-value problems for the primitive system generally do not determine suitable boundary conditions for the reduced system.  Weak well-posedness requires specification of $P_\cals$ determining the common form of boundary conditions satisfied by elements of $\hcals$.  Local uniqueness of $z \in \hcals$ satisfying these boundary conditions is not required.

Exploiting previous results \cite{S1,S2}, here we characterize elements of $\hcals$ as within the range of a Frechet differentiable mapping of boundary data into weak solutions.  The form and regularity of elements of $\hcals$ are modestly restricted, permitting convenient linearization of the weak forms of the given primitive system and of the corresponding
reduced system.

Candidates for local uniqueness are denoted by ``unambiguous" pairs $(z, P(z))$ below, $z \in \hcals$, $P(z)$ determining the form of the requisite boundary conditions, presumably unknown a priori.  Indeed, as detailed in the following section, unambiguous pairs $(z, P)$ satisfy competing stability and admissibility conditions.  Excessive boundary data precludes stability; insufficient boundary data precludes admissibility.  A distinct paucity of unambiguous pairs $(z, P)$ is illuminated in section 2.

Three subsequent sections detail how admissibility failure of a given pair $(z, P)$ may be quantified, and of how admissible $(z,P)$ may be obtained in a limit with respect to $z$ or $P$.

The discussion is related to computational investigations in section 6.  For definiteness and as eventually required, solutions $z$ are sought as familiar vanishing dissipation limits.  With seemingly mild qualifications, it is shown that the present framework  applies, and that successful computations, with boundary conditions determined from $P_\cals$ chosen a priori, result in computed solutions $z_0$ with $(z_0, P_\cals)$ stable but generally not admissible.  A precise definition of ``successful computational investigation" is thus obtained.

As may be anticipated by comparison of the results of \cite{Ma1,Ma2} with theorem 6.1 of \cite{S1}, stability depends on sufficient regularity of the prescribed boundary data.

The main theorem 7.2 then gives sufficient conditions that given a stable $(z, P_\cals)$, by prescription of additional (likely unknown a priori) boundary data, one may achieve $(z, P(z))$ admissible and thus unambiguous.  Often if not always, such $P(z)$ is unique.

Such is employed in section 8 to provide a precise definition 8.2 of weak well-posedness, and in theorem 8.5 to express sufficient conditions that a successful computational investigation implies weak well-posedness of the underlying boundary-value problem.

With the same qualifications, of course, sufficient conditions for weak well-posedness are thus necessary for successful computational investigations.

In this context, sections 9-12 are devoted to determining sufficient conditions for weak well-posedness.  The results fall short of corroboration of the existence of the computed solutions.  In addition to the expected restrictions with regard to regularity of the prescribed data and requisite properties of distinguished directions, severe restrictions are required on the primitive system and on the regularity of computed solutions.  The attractiveness of fluid flow models in this regard is nonetheless illuminated.

However the discrepancy between these results and experimental evidence  can be largely reconciled empirically.

In section 13, a converse of theorem 8.5 is obtained.  A family of approximation schemes is introduced, depending on familiar ``numerical parameters" including a specific adopted form of dissipation.  Failure of such schemes to determine solutions is identified with either whatever solution set $\hat \cals$ incompatible with the adopted form of dissipation or else the specified boundary data incompatible with that required for weak well-posedness, assuming that weak well-posedness is possible.

\section{Review, notation and preliminaries}%2

We consider systems of dimension $n \ge 2$ with $m$ independent variables, not necessarily hyperbolic but equipped with an entropy extension \cite{FL,L2} and thus representation in ``symmetric" or ``gradient"  form \cite{Go,M}
\be\label{aa} \suml^{m} _{i = 1} \left(\psi_{i, z_j} (z)\right)_{x_i} = 0, \; \; \, j = 1, \dots, n.\ee

In \eqref{aa} and throughout, $x_i -, z_j- $ subscripts denote partial derivatives.

The independent variable
\be\label{aab} x\in\Om \subset \bbr^{m}, \ee
with domain $\Om$ open and connected, not necessarily simply connected or bounded, with exterior unit normal $\nu$ defined almost everywhere on the necessarily nonempty  boundary $\partial \Om$.
In \eqref{aab} and throughout, $\subset$ denotes proper subset.

The dependent variable $z$ assumes values in an open convex region $D$,
\be\label{aac} z(x) \in D \subseteq \bbr^n.\ee

A specific system \eqref{aa} is characterized by given  potential functions
\be\label{aad} \psi_i \in C^3 (D\to\bbr), \; \; \, i = 1,\dots, m,\ee
Lagrange duals of the entropy flux components $q_i$,
\be\label{aaea} q_i (\psi^\dag_{i,z}) = z\cdot \psi^\dag_{i,z} (z) - \psi_i(z), \; \; \, i = 1,\dots, m.\ee

In \eqref{aaea} and throughout, dots denote either the $l_2 (\bbr^{m})$ or $l_2(\bbr^n)$ inner product.

Use of the ``symmetric" dependent variable $z$ also facilitates exposition of the related symmetry group \cite{S4}, to be exploited in section 11 below.

For all $x \in \partial\Om$ where $\nu(x)$  is defined, a ``normal potential function" is
\be\label{aae} \left(\psi_\nu (z)\right)  (x)\eqadef \suml^{m}_{i = 1} \nu_i(x) \psi_i \left(z(x)\right).\ee

The space of $n$-vector functions on $\partial\Om$ is denoted throughout by $N$, and $\calp$ the set of projection maps on $N$, symmetric with respect to the $L_2(\partial\Om)$ inner product.

A boundary-value problem for such a system may be regarded as determined by a specific element
\be\label{aaf} P \in \calp,\ee
and a set of solutions $\cals$, ostensibly such that elements $z \in \cals$  are ostensibly determined  by partial specification of the corresponding normal boundary flux
\be\label{ab} b(z,P) = (I_n-P) \psi^\dag_{\nu,z} (z\mathop{|}\limits_{\partial\Om}),\ee
with $I_n$ the $n$ dimensional identity operator throughout.  The solution $z$ is recovered from specified $b(z,P) \in N$, using the weak form of \eqref{aa},
\be\label{ac} \intl_{\pOm} b \cdot \theta = \iintl_\Om \suml^{m}_{i = 1}\suml^n_{j = 1} \psi_{i, z_j} (z)\theta_{j, x_i}\ee
for all $\theta \in X_P$,
\be\label{aca} X_P \eqadef \{ \theta \in (C(\Om)\cap W^{1,1} (\Om))^n
 %_0(\Om)\cap C \left(\bar\Om)\right)^n\,
  | \,  P\theta \mathop{|}\limits_{\partial\Om} = 0\}.\ee

In \eqref{ac} and throughout,  single integrals are over $(m-1)$-manifolds and double integrals over $m$-manifolds, differentials omitted, where no ambiguity arises.

As weak solutions of \eqref{ac} are not unique, traditionally $P$ is fixed and admissibility conditions imposed on elements of $\cals$, attempting to recover uniqueness.

The procedure is reversed in [S1, S2].  Examples of solutions $z$, $P(z) \in \calp$ are sought satisfying an alternative admissibility condition, that $z$ lies in the range of a Frechet differentiable map on the boundary data.

As the adopted admissibility condition requires linearization of \eqref{ac}, detailed assumptions on the form and regularity of solutions are necessary.

Throughout we  assume solutions $z$ uniformly bounded and piecewise continuous,
\be\label{ad} z \in C(\bar\Om \setminus\bar\Ga (z)\to D),\ee
overbars throughout denoting closure. Measure-valued or distribution solutions are excluded.

The associated locus of jump discontinuities $\Ga(z)$ is assumed of the form
\be\label{ada} \Ga(z) = \mathop{\cup}\limits^K_{k=1} \Ga_k(z),\ee
with disjoint $\Ga_k(z)$, each an open ($m -1$)-manifold on which a unit normal $\hat \mu =  \hat \mu_k$ is continuously defined.  The $\Ga_k$ do not intersect  the boundary $\pOm$, by convention, but may have limit  points in  $\pOm$  and may have mutual intersection limit points.

Within each $\Ga_k$, we assume coordinates $\a_1,\dots, \a_{m -1}$, with orthogonal unit vectors $\hat\a_1 (x), \dots, \hat\a_{m-1} (x), \; x (\a)$ determining an isomorphism of $Y_k\subset \bbr^{m-1}$ onto $\Ga_k$, with a bounded Jacobian determinant
\be\label{adc} \Big | \frac{\partial x (\a)}{\partial(\a_1, \dots, \a_{m-1})} \Big | \le c\ee
uniformly for $\a \in Y_k, x(\a) \in \Ga_k$.  Within each $\Ga_k$, we define $\psi_\mu (z), \psi_{\a_\ell} (z), \ell = 1, \dots, m-~1$
analogously with $\psi_\nu $ in \eqref{aae}.

In \eqref{adc} and throughout, $c$ is a (sufficiently large positive) generic constant, distinguished by subscripts  only as appropriate.

With $z$ of the form \eqref{ad}, \eqref{ada}, linearization of \eqref{ac}, \eqref{ab} relates a given Frechet derivative of $b$, at $z$ with $P$ fixed, denoted
\be\label{addz} \dot b = d b (z),\ee of the form
\begin{align}\label{add} \dot b &= (I_n-P)\dot\psi^\dag_{\nu, z} (z)\nonumber\\
&= (I_n-P) \psi_{\nu, zz} (z) \dot z \mathop{\midl}_{\partial\Om}\end{align}
to perturbations of $z, \Ga$, determined by
\begin{align}\label{ade} &\dot z : \Om\setminus \Ga \to \bbr^n\\
\label{adf} &\dot x : \Ga_k \to \bbr^{m}, \; \sigma \, \eqadef - \hat\mu_k \cdot \dot x, \; \; \, k = 1,\dots, K.\end{align}

The formal linearization of \eqref{ac} is
\be\label{afl} \intl_{\partial\Om} \dot b \cdot \theta = \iintl_\Omega \suml^{m}_{i = 1} (\psi_{i, z} (z))^{\cdot} \theta_{x_i}.\ee

In the space of measures on $\Omega$, it is assumed in \eqref{afl}  that each $\psi_{i, z} (z))^{\cdot},
 \, i = 1, \dots, m$, is of the form
\be\label{afm} (\psi_{i, z} (z))^{\cdot} = (\psi_{i,z}(z))^{\cdot}_{\Omega\setminus\Ga} +
(\psi_{i, z}(z))^{\cdot}_\Ga,\ee
with
\be\label{afn} (\psi^\dag_{i, z} (z))^{\cdot}  _{\Omega\setminus\Ga} = \psi_{i, zz} (z) \dot z\ee
 within $\Om\setminus \Ga$, and
\be\label{afo} (\psi_{i,z} (z))^{\cdot}_\Ga = - (\hat \mu \cdot \dot x) [\psi_{i, z} (z)] = \sigma [\psi_{i, z}(z) ]\ee
within $\Ga$, using \eqref{adf}.

Then for all $\theta \in X_P$, linearization of \eqref{ac}, \eqref{aca} determines a relation
\be\label{ae} \intl_{\pOm} \dot b \cdot \theta = \iintl_{\Om\setminus\Ga} \dot z \cdot R(z) \theta + \intl_\Ga \sigma S(z)\theta,\ee
with
\be\label{af} (R(z)\theta)_k = \suml^{m}_{i = 1}\suml^n_{j = 1} \psi_{i, z_jz_k} (z) \theta_{j, x_i}, \quad k = 1, \dots, n,\ee
within $\Om\setminus\Ga$, and
\be\label{afa} S(z)\theta = \suml^{m}_{i=1} \suml^n_{j = 1} [\psi_{i, z_j} (z)] \theta_{j, x_i}\ee
within $\Ga$.

In \eqref{afo}, \eqref{afa}  and throughout, brackets denote jump discontinuities of whatever functions on each $\Ga_k$, in the direction determined by $\hat \mu_k$.

On each $\Ga_k$, solutions $z$ necessarily satisfy the Rankine-Hugoniot conditions
\be\label{afb} [\psi_{ \mu, z_j} (z)] = 0, \quad j = 1,\dots, n.\ee

We use \eqref{afb} and rotational symmetry to rewrite \eqref{afa} requiring only the tangential derivatives of $\theta$,
\be\label{ag} S(z)\theta = \suml^{m -1}_{\ell = 1}\suml^n_{j = 1} [\psi_{\a_\ell, z_j} (z)] \theta_{j, \a_\ell}.\ee

By convention, we use $S(z)\theta$ given by \eqref{ag} throughout.  Once this is done, $z$ need not satisfy \eqref{aa}, and the discussion applies immediately to approximate solutions of the form \eqref{ad}.

\subsection{Solvability}%2.1

Solvability of \eqref{ae} for $(\dot z, \sigma)$ given $\dot b$ is conveniently expressed using conditions on the test space.  A suitable ``boundary norm" $\| \cdot \|_{\partial\Om, P}$, typically an $L_2$- or weighted $L_2$-norm, is placed on $X_P\mathop{|}\limits_{\partial\Om}$, which coincides with ker $P$ using \eqref{aca}.  Such determines a Banach space $B_P$
\be\label{ahd} B_P \eqadef \{ \dot b \in \ker P | \| \dot b \|_{B, P} < \infty\}\ee
with an induced norm
\be\label{ahb} \| \dot b \|_{B, P} \eqadef \underset{\theta \in X_P}{\lub}
\frac{ \intl_{\pOm}{\dot b \cdot \theta}}{\| \theta\|_{\partial\Om, P}}.\ee

For the present, indeed until theorem 6.3 and thereafter,  the regularity of $\dot b$ is limited only by \eqref{ahb}.

Introducing scalar weight functions
\be\label{aga} w:\Om\setminus \Ga \to [1, \infty),\ee
\be\label{agb} w_\Ga:\Ga \to [1,\infty),\ee
we equip $X_P$ with a seminorm, tacitly restricting $X_P$ as required,
\be\label{ah} \| \theta\|_{z, w, w_\Ga} \eqadef \left(\iintl_{\Om\setminus\Ga} w | R(z)\theta|^2  + \intl_\Ga w_\Ga (S(z)\theta)^2\right)^{1/2}.\ee

The space $H(z,P,w,w_\Ga)$, the completion of $X_P$ in the norm \eqref{ah}, is a Hilbert space with inner product
\be\label{aha} (\theta, \theta')_H\eqadef\iintl_{\Om\setminus\Ga} w(R(z)\theta) \cdot (R(z)\theta') + \intl_\Ga w_\Ga (S(z)\theta)(S(z)\theta').\ee

We seek solutions of \eqref{ae} in the class
\be\label{ava} w^{-1/2} \dot z \in L_2 (\Om\setminus \Ga)^n, \; \; \, w^{-1/2}_\Ga \sigma \in L_2 (\Ga),\ee
implicitly restricting $w, w_\Ga$ in \eqref{aga}, \eqref{agb}, satisfying \eqref{ae} for all $\theta \in H(z, P, w, w_\Ga)$.  All such solutions are of the form
\be\label{avb} \dot z = w R(z) \theta' + \dot z_\perp, \; \; \, \sigma = w_\Ga S(z) \theta'' + \sigma_\perp\ee
with $\theta', \theta'' \in H(z,  P, w, w_\Ga)$ and $\dot z_\perp, \sigma_\perp$ satisfying
\be\label{avc} \iintl_{\Om\setminus \Ga} \dot z_\perp \cdot R(z) \theta = \intl_\Ga \sigma_\perp S(z) \theta = 0\ee
for all $\theta \in H(z, P, w, w_\Ga)$.  The functions $\theta', \theta''$ determine a solution of \eqref{ae} from \eqref{avb}, \eqref{avc}  and
\be\label{avd} \intl_{\partial \Om} \dot b \cdot \theta = \iintl_{\Om\setminus\Ga} w R(z) \theta' \cdot R(z) \theta + \intl_\Ga w_\Ga (S(z)\theta'') (S(z)\theta)\ee
for all $\theta \in H(z, P, w,w_\Ga)$.

A seemingly mild, simplifying assumption is made, that \eqref{ah} determines a proper norm on $H(z, I_n, w, w_\Ga)$, that there are no nontrivial functions $\theta$  satisfying
 \be\label{agc} R(z) \theta = 0 \; \hbox{\ throughout \ } \Om\setminus\Ga,\; \,
S(z) = 0 \hbox{\ throughout \ } \Ga, \,  \theta \mathop{|}\limits_{\partial\Om} = 0.\ee

\begin{defn} A pair $(z, P)$ is stable if there exist $\| \cdot \|_{\partial\Om, P}, w, w_\Ga$ such that
\be\label{agd} \| \theta\|_{\partial\Om, P} \le c_  1 (z, P, w, w_\Ga) \| \theta\|_{z, w, w_\Ga}\ee
for all $\theta \in H(z, P, w, w_\Ga)$.
\end{defn}

In \eqref{afl}, \eqref{ae}, \eqref{avd}, using \eqref{aae}, \eqref{ab}, \eqref{aca}, \eqref{afm}, \eqref{afn}, the symmetry of $P$ and the assumed $\Ga \cap \partial\Om$, of measure zero,
\begin{align}\label{agzz}
\int\limits_{\partial\Om} \dot b \cdot \theta &= \intl_{\pOm} \big( ( I_n-P)(\psi_{\nu, z} (z))^\cdot\big) \cdot \theta\nonumber \\
&=\intl_{\pOm} \big( (I_n-P)\psi_{\nu,zz}(z) \dot z\big) \cdot\theta  \nonumber \\
&= \intl_{\pOm} \dot z \cdot \psi_{\nu,zz} (z) \theta,\end{align}
which can hold for all $\theta \in X_P$ with some $\dot z$ depending on $\dot b$ only if
\be\label{age} \dot b \in   \range \, \psi_{\nu, zz} (z)\ee
almost everywhere in $\partial\Om$.

However, \eqref{age} may be extended  at points $x \in \partial\Om$ where there exist $e_0(x) \in \bbs^n$ such that boundary values of $z$ are restricted to satisfy homogeneous conditions,
\be\label{agea} e_0 (x) \cdot \psi^\dag_{\nu,z} (z(x)) = e_0 (x) \cdot \big(\psi^\dag_{\nu,z} (z(x))\big)^\cdot = 0.\ee

The condition \eqref{agea} is reflected in \eqref{ac}, \eqref{afl} by setting
\be\label{agec} P(x) e_0 (x) = 0,\ee
and using \eqref{agec}, the condition \eqref{age} implies
\be\label{aged} (\ker P)(x) \subseteq \, \range\; \psi_{\nu, zz} (z(x)) \oplus  \hbox{span\ } \{ e_0 (x)\}\ee
for almost all $x \in \pOm$.

However in \eqref{ahb}, it suffices to consider $\theta $ satisfying, in addition to \eqref{aca},
\be\label{ageb} e_0 (x) \cdot \theta(x) = 0.\ee

Stability of $(z, P)$ implies solvability of \eqref{ae} for $\dot b \in B_P$ in the class
\be\label{ahe}\dot z \in w R(z) H(z, P, w,w_\Ga), \;  \sigma\in w_\Ga S(z) H(z,P ,w,w_\Ga).\ee

Indeed, for $(z, P)$ stable, $\theta' = \theta''$ in \eqref{avd} uniquely minimizes
$$ \tfrac 12 \iintl_{\Om\setminus \Ga} w |R(z)\theta|^2 + \tfrac 12 \intl_\Ga w_\Ga (S(z)\theta)^2 - \intl_{\pOm} \dot b \cdot \theta $$
within $H(z, P, w, w_\Ga)$.

More generally, we abbreviate
\begin{align}\label{ahf} \Phi (z, P, w_\Ga ) &\eqadef H(z,P,w,w_\Ga)\mathop{|}\limits_{\Ga}\nonumber \\
&= \{ \phi_\Ga:\Ga \to \bbr^n | \intl_\Ga w_\Ga (S(z)\phi_\Ga)^2 < \infty, P\phi_\Ga\mathop{|}\limits_{\partial\Om\cap \partial \Ga} = 0 \}.\end{align}

\begin{thm}  Assume $(z, P)$ stable.  Then for any $\| \cdot \|_{\partial\Om, P}, w, w_\Ga$ such that \eqref{agd} holds and any $\dot b \in B_P,\,  \phi_\Ga \in \Phi (z, P, w_\Ga)$
there exists a unique
\be\label{ahg} \zeta_{\dot b \phi_\Ga} \in H(z, P, w, w_\Ga)\ee
such that
\be\label{aiz} \mathop{\int}\limits_{\partial\Omega} \dot b \cdot \theta +\intl_\Ga w_\Ga (S(z) \phi_\Ga)(S(z)\theta) = (\zeta_{\dot b \phi_\Gamma}, \theta)_H\ee
 for all $\theta \in H(z, P, w, w_\Gamma)$; and
\be\label{ai} \dot z_{ \dot b \phi_\Ga}  \, \eqadef\,  wR(z)
\zeta_{\dot b \phi_\Ga},\;  \sigma_{\dot b  \phi_\Ga} \, \eqadef\,  w_\Ga S(z) (\zeta_{ \dot b  \phi_\Ga} - \phi_\Ga)\ee
satisfies \eqref{ae} with
\be\label{ahh} \iintl_{\Om\setminus\Ga} \frac{|\dot z_{\dot b \phi_\Ga} |^2}{w} + \intl_\Ga \frac{\sigma_{\dot b \phi_\Ga}^2}{w_\Ga} < \infty.\ee

Furthermore,
\be\label{aia} \| \zeta_{\dot b \phi_\Ga} \|_{z, w, w_\Ga} \le c_1 (z, P, w,w_\Ga) \| \dot b \|_{B, P} + \Bigg(\intl_\Ga w_\Ga (S(z)\phi_\Ga)^2\Bigg)^{1/2},\ee
and the choice $\phi_\Ga = 0$ uniquely minimizes $\iintl_{\Om\setminus\Ga} \tfrac 1 w |R(z)\zeta_{\dot b \phi_\Ga} |^2 + \intl_\Ga \tfrac{1}{w_\Ga} (S(z)\zeta_{\dot b \phi_\Ga} )^2$.
\end{thm}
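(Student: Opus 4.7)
The plan is to realize $\zeta_{\dot b \phi_\Gamma}$ as the Riesz representative of a bounded linear functional on the Hilbert space $H := H(z,P,w,w_\Gamma)$, and then read off the remaining claims from the defining identity. I would introduce the linear functional $L : H \to \mathbb{R}$ defined by
$$L(\theta) = \int_{\partial\Omega} \dot b \cdot \theta + \int_\Gamma w_\Gamma (S(z)\phi_\Gamma)(S(z)\theta).$$
The boundary term is estimated by $|\int_{\partial\Omega}\dot b \cdot \theta| \le \|\dot b\|_{B,P}\,\|\theta\|_{\partial\Omega,P} \le c_1 \|\dot b\|_{B,P}\,\|\theta\|_H$, invoking the definition \eqref{ahb} of the dual norm on $B_P$ and then the stability hypothesis \eqref{agd}. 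The $\Gamma$-term is controlled by Cauchy--Schwarz in the weighted inner product on $\Gamma$, giving $|\int_\Gamma w_\Gamma(S\phi_\Gamma)(S\theta)| \le (\int_\Gamma w_\Gamma(S\phi_\Gamma)^2)^{1/2}\|\theta\|_H$, and the coefficient is finite because $\phi_\Gamma \in \Phi(z,P,w_\Gamma)$. Hence $L$ is bounded with operator norm at most $c_1\|\dot b\|_{B,P} + (\int_\Gamma w_\Gamma(S\phi_\Gamma)^2)^{1/2}$.

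Next, the non-degeneracy assumption \eqref{agc} guarantees that $(\cdot,\cdot)_H$ is a genuine inner product on the complete space $H$, so the Riesz representation theorem produces a unique $\zeta_{\dot b \phi_\Gamma} \in H$ satisfying \eqref{aiz}. The operator-norm bound on $L$ transfers directly into \eqref{aia}. To verify \eqref{ae}, I would substitute the definitions \eqref{ai} into \eqref{aiz} and expand $(\zeta,\theta)_H$ via \eqref{aha}: the term $\int_\Gamma w_\Gamma (S\phi_\Gamma)(S\theta)$ on the left combines with $\int_\Gamma w_\Gamma(S\zeta)(S\theta)$ from the right to yield $\int_\Gamma w_\Gamma S(\zeta-\phi_\Gamma)(S\theta) = \int_\Gamma \sigma_{\dot b\phi_\Gamma}(S\theta)$, while the interior $\iint w(R\zeta)(R\theta)$ becomes $\iint \dot z_{\dot b\phi_\Gamma} \cdot R\theta$, exactly matching \eqref{ae}. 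The integrability \eqref{ahh} is then immediate: $\iint |\dot z|^2/w = \iint w|R\zeta|^2 \le \|\zeta\|_H^2$ and $\int \sigma^2/w_\Gamma \le 2\|\zeta\|_H^2 + 2\int w_\Gamma(S\phi_\Gamma)^2$, both finite.

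For the concluding minimization claim I would exploit linearity: decompose $\zeta_{\dot b \phi_\Gamma} = \zeta_{\dot b, 0} + \eta_{\phi_\Gamma}$, where $\eta_\psi \in H$ is the Riesz representative of $\theta \mapsto \int_\Gamma w_\Gamma (S\psi)(S\theta)$, and perturb $\phi_\Gamma \mapsto \varepsilon\psi$. The first-order variation of the functional reduces to $(\eta_\psi,\zeta_{\dot b,0})_H - \int_\Gamma w_\Gamma (S\psi)(S\zeta_{\dot b,0})$, which vanishes by the very defining property of $\eta_\psi$, identifying $\phi_\Gamma = 0$ as a critical point. The second variation is a non-negative quadratic in $\psi$, and uniqueness of the minimizer rests on ruling out $\psi \ne 0$ along which both $R\eta_\psi \equiv 0$ and $S(\eta_\psi - \psi) \equiv 0$, which follows from \eqref{agc}.

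The main obstacle is the first step: correctly bounding $L$ so that Riesz applies is precisely where the stability hypothesis \eqref{agd} is indispensable, since without it the boundary contribution in $L$ cannot be dominated by the $H$-norm and the construction fails outright. The remaining verifications are essentially algebraic matching of terms between \eqref{aiz} and \eqref{ae}, and the minimization claim is a routine first-variation calculation once the linear decomposition $\zeta = \zeta_{\dot b,0} + \eta_{\phi_\Gamma}$ is in place.
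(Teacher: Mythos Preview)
Your approach is essentially the same as the paper's: bound the linear functional via the stability estimate \eqref{agd} and the definition \eqref{ahb}, invoke Riesz to produce $\zeta_{\dot b\phi_\Gamma}$, read off \eqref{ae} and \eqref{ahh} by direct substitution, and handle the minimization by the superposition $\zeta_{\dot b\phi_\Gamma}=\zeta_{\dot b,0}+\zeta_{0,\phi_\Gamma}$ together with vanishing of the cross term. The paper phrases the last step as a direct quadratic expansion rather than first/second variation, but the computation is identical.

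One small caveat: your appeal to \eqref{agc} for \emph{uniqueness} of the minimizer is not quite the right invocation. The degenerate direction you must rule out is $\psi$ with $R(z)\eta_\psi\equiv 0$ and $S(z)(\eta_\psi-\psi)\equiv 0$, i.e.\ $S(z)\eta_\psi=S(z)\psi$, which is not the condition in \eqref{agc} (there one needs $S(z)\theta=0$ and $\theta|_{\partial\Omega}=0$). The paper is equally terse here, simply noting that the cross term vanishes and the residual term $\iint_{\Om\setminus\Ga}\tfrac1w|\dot z_{0\phi_\Gamma}|^2+\int_\Gamma\tfrac1{w_\Gamma}\sigma_{0\phi_\Gamma}^2$ is nonnegative; uniqueness is really uniqueness modulo $\phi_\Gamma$ with $S(z)\phi_\Gamma=0$, since only $S(z)\phi_\Gamma$ enters \eqref{aiz}.
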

\begin{proof}
Any solution of \eqref{ae}  for all $\theta \in H (z, P, w, w_\Ga)$ obtained from \eqref{ah} necessarily satisfies  \eqref{ahh}, and  may be expressed
\be\label{ahi} \frac{\dot z_{\dot b \phi_\Ga }}{w^{1/2}} = w^{1/2} R(z)\zeta_{\dot b \phi_\Ga} + e_\perp\ee
\be\label{ahj} \frac{\sigma_{\dot b \phi_\Ga}}{w^{1/2}_\Ga} = w^{1/2}_\Ga S(z) (\zeta_{\dot b \phi_\Ga} - \phi_\Ga) + e'_\perp\ee
with some $\zeta_{\dot b \phi_\Ga}$ in \eqref{ahg}, some $\phi_\Ga \in \Phi (z, P, w_\Ga)$, and $e_\perp, e'_\perp $ satisfying
\be\label{ahk} \iintl_{\Om\setminus\Ga} w^{1/2} e_\perp \cdot R(z) \theta = \intl_\Ga w^{1/2}_\Ga e'_\perp S(z) \theta = 0\ee
for all $\theta \in H(z, P, w, w_\Ga)$.  Thus all solutions of \eqref{ae}, \eqref{ahh} are of the form \eqref{ai}.

Setting  $\theta' = \zeta_{\dot b \phi_\Ga}, \; \theta'' = \zeta_{\dot b \phi_\Ga} - \phi_\Ga$ in \eqref{avd},  using \eqref{aha} we obtain
 \eqref{aiz} for all $\theta \in H(z, P,w,w_\Ga)$.  From \eqref{agd}, \eqref{ahf}, the left side of \eqref{aiz} determines a bounded linear functional on $H(z, P, w, w_\Ga)$, so the existence and uniqueness of $\zeta_{\dot b \phi_\Ga}$ are immediate from the Riesz theorem.

The estimate \eqref{aia} follows by setting $\theta = \zeta_{\dot b \phi_\Ga}$ in \eqref{aiz} and using \eqref{ahb}, \eqref{agd}.

A superposition principle follows from the linearity in \eqref{aiz},
\be\label{aic} \zeta_{\dot b \phi_\Ga} = \zeta_{\dot b 0} + \zeta_{0\phi_\Ga}.\ee

Using \eqref{aic} in \eqref{ai},
\be\label{aid} \dot z_{\dot b\phi_\Ga} = \dot z_{\dot b 0} + \dot z_{0\phi_\Ga}, \; \; \sigma_{\dot b \phi_\Ga } = \sigma_{
\dot b 0} +\sigma_{0\phi_\Ga}.\ee

From \eqref{ai}, \eqref{aid}, using \eqref{aic}
\begin{align}\label{aie}
\iintl_{\Om\setminus\Ga} \tfrac 1w &|\dot z_{\dot b \phi_\Ga} |^2 + \intl_\Ga \tfrac{1}{w_\Ga} \sigma^2_{\dot b \phi_\Ga}
= \iintl_{\Om\setminus\Ga} w | R(z) \zeta_{\dot b\phi_\Ga}|^2 + \intl_\Ga w_\Ga \left(S(z) (\zeta_{\dot b \phi_\Ga} - \phi_\Ga)\right)^2\nonumber \\
=& \iintl_{\Om\setminus\Ga} w|R(z) (\zeta_{\dot b 0} +\zeta_{0\phi_\Ga})|^2 + \intl_\Ga w_\Ga (S(z)\left(\zeta_{\dot b 0} + (\zeta_{0\phi_\Ga} - \phi_\Ga))\right)^2\nonumber \\
=& \iintl_{\Om\setminus\Ga} \tfrac 1w | \dot z_{\dot b 0} |^2 + \intl_\Ga \tfrac{1}{w_\Ga} \sigma^2_{\dot b 0} + \iintl_{\Om\setminus\Ga} \tfrac 1 w | \dot z_{0\phi_\Ga} |^2 + \intl_\Ga \tfrac{1}{w_\Ga} \sigma^2_{0\phi_\Ga}\nonumber \\
+ &2\Big(\iintl_{\Om\setminus\Ga} w(R(z)\zeta_{0\phi_\Ga}) \cdot (R(z)\zeta_{\dot b 0} ) + \intl_\Ga w_\Ga
\Big(S(z)(\zeta_{0\phi_\Ga} - \phi_\Ga)) (S(z)\zeta_{\dot b 0})\Big).
\end{align}

%%%%%%%%%%%%%%%%%%%%%%%

The final term in \eqref{aie} vanishes, setting $\dot b = 0$, $\theta = \zeta_{\dot b 0}$ in \eqref{aiz}, proving the final claim.
\end{proof}

In particular, from \eqref{aie}, \eqref{aia}
\be\label{aiea} \iintl_{\Om\setminus \Ga} \tfrac 1w |\dot z_{\dot b 0} |^2 + \intl_\Ga \tfrac{1}{w_\Ga} \sigma^2_{\dot b 0} \le c^2_1 (z, P, w, w_\Ga) \| \dot b \|^2_{\calb, P}.\ee

Solutions of \eqref{ae} satisfy a boundary condition on $\dot z_{\dot b\phi_\Ga}$ of the form
\be\label{aieb} \intl_{\partial\Om} \theta\cdot ( b - \psi^\dag_{\nu, z})^\cdot ) = 0\ee
for all $\theta \in H(z, P, w, w_\Ga)$. In this sense $\ker P$ determines the prescribable boundary data on $\partial\Om$.

The condition \eqref{agd} does not require upper  bounds on $w, w_\Ga$.  For
\be\label{agf} \tilde w \ge w, \; \tilde w_\Ga \ge w_\Ga\ee
pointwise, from \eqref{ah}
\be\label{agg} \| \theta\|_{z, \tilde w, \tilde w_\Ga} \ge \| \theta\|_{z, w, w_\Ga}\ee
and \eqref{agd} will hold with some
\be\label{agh} c_1(z, P, \tilde w, \tilde w_\Ga) \le c_1 (z, P, w, w_\Ga).\ee

Furthermore, from \eqref{agg}
\be\label{agi} H(z, P, \tilde w, \tilde w_\Ga) \subseteq H(z, P, w, w_\Ga),\ee
with equality only of $\tilde w /w, \tilde w_\Ga / w_\Ga$ are pointwise bounded above.
Thus solutions $(\dot z, \sigma)$ of \eqref{ae} for given $\dot b$ and all $\theta \in H(z, P, w, w_\Ga)$ also satisfy \eqref{ae} for all $\theta \in H(z, P, \tilde w, \tilde w_\Ga$).  The solution set of \eqref{ae} is thus (generally) modified but not enlarged by replacing $w, w_\Ga$ by $\tilde w, \tilde w_\Ga$.

In particular, if \eqref{agd} holds with some $w, w_\Ga$ bounded above, equivalently with
\be\label{agj} w=w_\Ga = 1,\ee
the obtained solution set by this means is maximal.

However, we anticipate $w, w_\Ga$ generally unbounded from above.  Establishment of \eqref{agd} appears to require \cite{S1} $w, w_\Ga$ sufficiently large that $\| \theta\|_{z,w,w_\Ga}$ majorizes $\theta\mathop{\mid}\limits_\Ga$, and thus $w_\Ga$ becoming unbounded in the neighborhood of limit  points in $\partial \Ga$ where $|[z]|$ and thus $|S(z)\theta|$ vanish.

\subsection{Uniqueness}%2.2

For fixed $z, P, \dot b$, solutions $(\dot z_{\dot b \phi_\Ga}, \sigma_{\dot b \phi_\Ga})$ of the form \eqref{ai} satisfying \eqref{ae} generally depend on ``artificial" functions $\phi_\Ga, w, w_\Ga$.  With $w, w_\Ga$ also fixed, uniqueness of
 $(\dot z_{\dot b \phi_\Ga}, \sigma_{\dot b \phi_\Ga})$ in the class determined by  \eqref{ahe}, independent of $\phi_\Ga \in \Phi(z, P, w_\Ga)$, is equivalent \cite{S1} to any of three statements.  The proofs are elementary and not repeated here.

(i) The homogeneous form of \eqref{ae},
\be\label{ak} \iintl_{\Om\setminus\Ga} \dot \uz \cdot R(z)\theta +\intl_\Gamma \usig S(z) \theta = 0\ee
for all $\theta \in H(z, P, w, w_\Ga)$, admits only the trivial solution within the class determined by \eqref{ahe}.

(ii) The space $H(z, P, w, w_\Ga)$ is of the form
\be\label{al} H(z, P, w, w_\Ga) = H_{\Om\setminus\Ga} (z, P, w) \oplus H_\Ga (z, P, w_\Ga)\ee
with subspaces satisfying
\be\label{ala} S(z) \theta = 0, \; \, \theta \in H_{\Om\setminus\Ga} (z, P, w)\ee
almost everywhere on $\Ga$;
\be\label{alb} R(z)\theta = 0,\;  \,  \theta \in H_\Ga (z, P, w_\Ga)\ee
within $\Om \setminus \Ga$.

(iii)  With $H_\Ga (z, P, w_\Ga)$ determined from \eqref{alb},
\be\label{alc} S(z) H(z, P, w, w_\Ga) = S(z) H_\Ga (z, P, w_\Ga)\ee
in the sense that the two spaces coincide.

\begin{defn} A pair $(z, P)$ is admissible if there exists a norm $\| \cdot \|_{\partial \Om, P}$ on $H(z,P,w,w_\Ga) \mathop{\mid}\limits_{\pOm}$ such that for any $\dot b \in B_P$ determined from \eqref{ahd}, \eqref{ahb}, any solution of the form \eqref{ai} satisfying \eqref{ae} is independent of $\phi_\Ga \in \Phi(z, P, w_\Ga)$, $w$ satisfying \eqref{aga}, $w_\Ga$ satisfying \eqref{agb}.
\end{defn}

We emphasize that admissibility of $(z, P)$ so defined does not imply the existence of such solutions or that \eqref{agd} holds for some $w, w_\Ga$.

\begin{thm}  Assume any one of the three statements (i, ii, iii) holding for some $w'$ satisfying \eqref{aga}, $w'_\Ga$ satisfying \eqref{agb}. Then $(z, P)$ is admissible.
\end{thm}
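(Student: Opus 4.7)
The plan is to upgrade the uniqueness encoded by any of (i)--(iii) at the single pair $(w',w'_\Ga)$ to the fuller uniqueness required by Definition~2.3: namely that $(\dot z_{\dot b\phi_\Ga},\sigma_{\dot b\phi_\Ga})$ produced by Theorem~2.2 be independent of $\phi_\Ga$, $w$ and $w_\Ga$. Since the three conditions are cited as equivalent, I take (ii) in force at $(w',w'_\Ga)$: the direct-sum decomposition $H(z,P,w',w'_\Ga)=H_{\Om\setminus\Ga}(z,P,w')\oplus H_\Ga(z,P,w'_\Ga)$, which is automatically $H$-orthogonal by \eqref{aha}, \eqref{ala}, \eqref{alb}. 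For the boundary norm $\|\cdot\|_{\partial\Om,P}$ demanded by the definition I would simply pick any norm on $\ker P$ consistent with the range condition \eqref{aged}; only its existence is needed for the definition to apply.

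The superposition identities \eqref{aic}, \eqref{aid} reduce the task to two subclaims: (a) $\dot z_{0\phi_\Ga}=\sigma_{0\phi_\Ga}=0$ for every $\phi_\Ga\in\Phi(z,P,w_\Ga)$ and every admissible $(w,w_\Ga)$; (b) $(\dot z_{\dot b 0},\sigma_{\dot b 0})$ is independent of $(w,w_\Ga)$. For (a), setting $\dot b=0$ in \eqref{aiz} identifies $\zeta_{0\phi_\Ga}$ as the Riesz representer in $H(z,P,w,w_\Ga)$ of the functional $\theta\mapsto\intl_\Ga w_\Ga S(z)\phi_\Ga\cdot S(z)\theta$. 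Splitting $\zeta_{0\phi_\Ga}=\zeta'+\zeta''$ along $H_{\Om\setminus\Ga}\oplus H_\Ga$: testing against $\theta'\in H_{\Om\setminus\Ga}$ annihilates the right-hand side by \eqref{ala}, forcing $\zeta'=0$ and hence $\dot z_{0\phi_\Ga}=wR(z)\zeta''=0$ via \eqref{alb}; testing against $\theta''\in H_\Ga$ and appealing to (iii) to realize $S(z)\zeta''-S(z)\phi_\Ga$ as an $S$-trace of some element of $H_\Ga$ then forces $S(z)\zeta''=S(z)\phi_\Ga$ a.e.\ on $\Ga$, so $\sigma_{0\phi_\Ga}=w_\Ga S(z)(\zeta''-\phi_\Ga)=0$. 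Claim (b) is handled by the same orthogonal-projection device applied to the minimization assertion at the end of Theorem~2.2.

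The main obstacle is that the decomposition (ii), equivalently (iii), is supplied only at $(w',w'_\Ga)$ whereas the argument above applies it at the ambient $(w,w_\Ga)$. The key remark enabling the transfer is that the subspaces $H_{\Om\setminus\Ga}, H_\Ga$ are cut out by the pointwise and therefore weight-independent conditions \eqref{ala}, \eqref{alb}: any element of $H_\Ga(z,P,w_\Ga)$ carries vanishing $w$-weighted $R$-norm for every $w\ge 1$ and so lies in $H(z,P,w,w_\Ga)$ automatically, and symmetrically for $H_{\Om\setminus\Ga}$. I would then propagate (ii) to an arbitrary $(w,w_\Ga)$ by working through $H(z,P,\max(w,w'),\max(w_\Ga,w'_\Ga))$, where the nested inclusions \eqref{agg}, \eqref{agi} allow both decompositions to coexist, and by invoking orthogonality together with a density/truncation argument to pass to the weighted completion. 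Once (ii) is secured at the ambient weights, the projection argument of the preceding paragraph delivers admissibility.
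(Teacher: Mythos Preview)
Your reduction to subclaims (a) and (b) via superposition, and your use of the direct-sum decomposition to settle them \emph{once (ii) is known at the ambient weights}, is correct and matches the paper. The genuine gap is in the transfer step. Going from (ii) at $(w',w'_\Ga)$ to (ii) at $(\tilde w,\tilde w_\Ga)=(\max(w,w'),\max(w_\Ga,w'_\Ga))$ is the easy direction, since $H(z,P,\tilde w,\tilde w_\Ga)\subseteq H(z,P,w',w'_\Ga)$ and, as you note, the components inherit the correct weighted bounds. But you then need to pass from $(\tilde w,\tilde w_\Ga)$ down to $(w,w_\Ga)$, i.e.\ to a strictly \emph{larger} test space, and here your ``density/truncation argument'' does not work: the inclusion $H(z,P,\tilde w,\tilde w_\Ga)\subseteq H(z,P,w,w_\Ga)$ need not be dense when $\tilde w/w$ or $\tilde w_\Ga/w_\Ga$ is unbounded, so there is no a priori reason the decomposition (ii) extends.

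The paper handles exactly this hard direction by reducing everything to the extremal weights $(1,1)$ and arguing by contradiction. Suppose (i) failed at $(1,1)$, giving a nontrivial $(\underline{\dot z},\underline\sigma)$ solving \eqref{ak} for all $\theta\in H(z,P,1,1)$. Since (ii) holds at $(w',w'_\Ga)$, the component spaces $H_{\Om\setminus\Ga}(z,P,w')$ and $H_\Ga(z,P,w'_\Ga)$ are themselves Hilbert spaces, and the linear functionals $\theta\mapsto\iintl_{\Om\setminus\Ga}\underline{\dot z}\cdot R(z)\theta$ and $\theta\mapsto\intl_\Ga\underline\sigma\,S(z)\theta$ are bounded on them; Riesz representation produces $\theta_{\dot z},\theta_\sigma$ whose weighted images $w'R(z)\theta_{\dot z}$, $w'_\Ga S(z)\theta_\sigma$ give a nontrivial solution of \eqref{ak} in the class \eqref{ahe} at $(w',w'_\Ga)$, contradicting (i) there. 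This Riesz-on-components step is the substantive idea your outline is missing; once it is in place, the remainder (your (a), (b)) follows as you describe, and the paper carries out (b) by showing directly that $\dot z_{\dot b}(w,w_\Ga)$ and $\sigma_{\dot b}(w,w_\Ga)$ satisfy the determining equations at $(1,1)$.
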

\begin{proof}
The statements (i,ii,iii)  are equivalent, so by hypothesis they all hold for $w', w'_\Ga$.  We claim this is equivalent to their holding with $w, w_\Ga$ satisfying \eqref{agj}.

From \eqref{agf}, \eqref{agi}, \eqref{aga}, \eqref{agb}, \eqref{agj}
\be\label{ama} H(z, P, w', w'_\Ga) \subseteq H(z, P, 1, 1),\ee
so if \eqref{al}, statement (ii) above holds for $H(z, P, 1,1)$, it holds for $H(z, P, w', w'_\Ga)$.

Next assume that statement (i) above holds for $H(z, P, w', w'_\Ga)$ but not for $H(z, P, 1,1)$.  Such could occur only if there exists nontrivial
\be\label{amb} \dot{\underline{z}}
 \in R(z)H(z, P, 1, 1), \; \dot {\underline{z}} \notin w' R(z) H(z, P, w', w'_\Ga)\ee
or there exists
\be\label{amc} {\underline{\sigma}} \in S(z)H(z, P, 1, 1), \;  {\underline{\sigma}} \notin w'_\Ga  S(z) H(z, P, w', w'_\Ga)\ee
satisfying \eqref{ak} for all $\theta \in H(z, P, 1,1)$.

By assumption, \eqref{al} also holds for $H(z, P, w', w'_\Ga); \; H_{\Om\setminus \Ga} (z, P, w'), H_\Ga (z, P, w'_\Ga)$ are Hilbert spaces with norms, respectively
\be\label{amd} \| \theta\|_{\Om\setminus\Ga, z, w', w'_\Ga} \eqadef (\iintl_{\Om\setminus \Ga} w' |R(z)\theta|^2)^{1/2},\ee
\be\label{ame} \| \theta\|_{\Ga, z, w', w'_\Ga} \eqadef (\intl_\Ga w'_\Ga (S(z)\theta)^2)^{1/2}.\ee

Using \eqref{ah}, \eqref{amd}, \eqref{ame} and \eqref{amb}, \eqref{amc}, we note that
\begin{equation*} \iintl_{\Om\setminus\Ga} {\underline{\dot z}} \cdot R(z)\theta\end{equation*}
determines a bounded linear functional on $H_{\Om\setminus\Ga} (z, P, w')$, and
\begin{equation*} \intl_{\Ga} \underline{\sigma} S(z)\theta\end{equation*}
a bounded linear functional on $H_\Ga(z, P, w'_\Ga)$.  Thus by application of the Riesz theorem, we have $\theta_{\dot z}$ unique in $H_{\Om\setminus\Ga} (z, P, w')$, $\theta_\sigma$ unique in $H_\Ga (z, P, w'_\Ga)$  satisfying
\be\label{amf} \iintl_{\Om\setminus\Ga} \underline{\dot z} \cdot R(z)\theta = \iintl_{\Om\setminus\Ga} w' (R(z)\theta_{\dot z})\cdot (R(z)\theta)\ee
for all $\theta\in H_{\Om\setminus\Ga} (z,P,w')$,
\be\label{amg} \intl_\Ga \underline{\sigma} S(z)\theta= \intl_\Ga w'_\Ga (S(z)\theta_\sigma) (S(z)\theta)\ee
for all $\theta \in H_\Ga (z,P,w'_\Ga)$.

Using \eqref{al}, \eqref{ala}, \eqref{alb}, each of
\eqref{amf}, \eqref{amg} holds for all $\theta \in H(z, P, w', w'_\Ga)$.  Combining \eqref{amf}, \eqref{amg},
\be\label{amh} \iintl_{\Om\setminus\Ga}  w' (R(z)\theta_{\dot z}) \cdot (R(z)\theta) + \intl_\Ga w'_\Ga(S(z) \theta_\sigma) (S(z)\theta) = 0 \ee
for all $\theta \in H(z, P, w', w'_\Ga)$.  Thus
\be\label{ami} \dot z = w' R(z)\theta_{\dot z}, \; \sigma = w'_\Ga S(z)\theta_\sigma\ee
is a nontrivial solution of \eqref{ak}, \eqref{ahe}, contradicting statement (i) for $H(z, P, w, w_\Ga)$ and completing the proof of the claim.

Thus the three statements (i, ii, iii) hold for any $w, w_\Ga$
satisfying \eqref{aga}, \eqref{agb}.
For any $\dot b \in B(z, P)$, a solution
$\left(\dot z_{\dot b} (w, w_\Ga), \sigma_{\dot b } (w, w_\Ga)\right)$ satisfying \eqref{ae}, \eqref{ahe} for all $\theta \in H(z, P, w, w_\Ga)$ is unique, and it will suffice to prove that
\be\label{amj}
\dot z_{\dot b} (w, w_\Ga) = \dot z_{\dot b} (1, 1), \; \sigma_{\dot b} (w, w_\Ga) = \sigma_{\dot b } (1, 1).\ee

Using \eqref{al}, from \eqref{ae} it follows that $\dot z_{\dot b} (w, w_\Ga)$ satisfies
\be\label{amk} \intl_{\partial\Om} \dot b \cdot \theta = \iintl_{\og               } \dot z_{\dot b} (w, w_\Ga)\cdot R(z) \theta \ee
for all $\theta \in H_{\Om\setminus\Ga}  (z,P,w)$, and similarly $\sigma_{\dot b} (w, w_\Ga)$ satisfies
\be\label{aml} \intl_{\partial\Om} \dot b\cdot  \theta = \intl_\Ga \sigma_{\dot b} (w, w_\Ga) S(z)\theta\ee
for all $\theta \in H_\Ga (z,P,w_\Ga)$.  From \eqref{ah}, \eqref{ala}, \eqref{alb}, \eqref{aga}, \eqref{agb}
\be\label{amm} H_{\og} (z, P, w) = \{ \theta \in H_{\og} (z, P, 1) | \iintl_{\og} w | R(z)\theta|^2 < \infty\},\ee
and
\be\label{amn} H_\Ga (z, P, w_\Ga) = \{ \theta\in H_\Ga (z, P, 1) | \intl_\Ga w_\Ga (S(z)\theta)^2 < \infty\}.\ee

From \eqref{amm}, it follows that \eqref{amk}  must hold for all $\theta \in H_{\og} (z, P, 1)$, which is the condition satisfied uniquely by $\dot z_{\dot b} (1, 1)$.  Thus
$ z_{\dot b} (w, w_\Ga)$ and $\dot z_{\dot b} (1, 1)$ coincide.

Similarly, \eqref{amn} implies \eqref{aml} for all $\theta \in H_\Ga (z, P, 1)$, as uniquely satisfied by $\sigma_{\dot b} (1, 1)$ , so $\sigma_{\dot b} (w, w_\Ga)$ coincides with $\sigma_{\dot b} (1, 1)$.

\end{proof}

\subsection{Simultaneous stability and admissibility}%2.3

\begin{defn} A pair $(z, P)$ is unambiguous if it is both stable and admissible.
\end{defn}

Unambiguous $(z,P)$ implies existence and uniqueness (for given $\dot b$) of $\dot z, \sigma $ of the form \eqref{ava} satisfying \eqref{ae}.

As has been discussed \cite{S2}, stability and admissibility are competing if not incompatible requirements on $\ker P$, and $(z,P)$ unambiguous  is obtained with difficulty.

In particular, for any $P, \hat P \in \calp$ such that
\be\label{ana} \ker P \subset \ker \hat P\ee
and any $w, w_\Ga$, it  follows from \eqref{aca}, \eqref{ah} that
\be\label{anb} H(z, P, w, w_\Ga) \subset H(z, \hat P, w, w_\Ga).\ee

From \eqref{anb}, if $(z, \hat P)$ is stable, so is $(z, P)$ with
\be\label{anc} c_1 (z,P,w,w_\Ga) = c_1(z, \hat P, w, w_\Ga)\ee
in \eqref{agd}.

Admissibility of $(z,P)$ requires
\be\label{ara} H_\Ga (z, 0_N, 1)\mathop{\midl}_{\Ga} = H(z,P,1,1)   \mathop{\midl}_\Ga = L_2(\Ga).\ee

From \eqref{ara}, necessarily there exists
\be\label{arb} G(\cdot, \cdot; z): L_2 (\Ga) \to H_\Ga (z,0_N, 1),\ee
such that for any $\theta_\Ga \in H_\Ga (z, 0_N, 1)$,
\be\label{arc} \theta_\Ga (x) = \intl_\Ga G(x,y; z) (S(z)\theta_\Ga)(y),\; \; \, x \in \bar \Om, \; y \in \Ga \ee
understanding integration with respect to $y$ in \eqref{arc}.

So determined, $ G (\cdot, \cdot; z)$ is generally not unique.

Using \eqref{alb}, \eqref{arc},  $\theta_\Ga \in H_\Ga (z, P, 1)$ requires
\be\label{ard} \left( R (z) G (\cdot, y; z)\right) (x) = 0, \; x \in \Om\setminus \Ga, \; y \in \Ga\ee
while \eqref{arc} obviously requires
\be\label{are} \left(S(z)G(\cdot, \cdot; z)\right) (x, y) = \delta_\Ga (x-y),\; \; x, y \in \Ga, \ee
with $\delta_\Ga$ the Dirac measure on $\Ga$.

Using \eqref{al}, \eqref{aca}, admissibility  of $(z, P)$ requires $\ker P$ sufficiently large that there exists $G(\cdot, \cdot; z)$ satisfying
\be\label{arf} \left(PG(x, y; z)\right) (x) = 0, \; \; \, x \in \partial\Om, \; y \in \Ga.\ee

\begin{thm}  Assume $P, \hat P$ satisfying \eqref{ana} and $(z,P)$ admissible.  Then $(z, \hat P)$ is admissible.
\end{thm}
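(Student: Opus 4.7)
The strategy is to verify statement (iii) of Subsection 2.2 for $(z, \hat P)$ and then invoke Theorem 2.3 to conclude admissibility. By the reduction contained in the proof of Theorem 2.3, it suffices to work with $w = w_\Ga = 1$, so the target identity is
$$S(z)\, H(z, \hat P, 1, 1) \;=\; S(z)\, H_\Ga(z, \hat P, 1).$$
The containment ``$\supseteq$'' is automatic from $H_\Ga(z, \hat P, 1) \subseteq H(z, \hat P, 1, 1)$ together with \eqref{anb}.

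For the reverse containment, I would fix $\theta \in H(z, \hat P, 1, 1)$ and set $u := S(z)\theta \in L_2(\Ga)$ (finiteness coming from \eqref{ah}). The discussion surrounding \eqref{arb}--\eqref{arf}, invoked with the given admissibility of $(z, P)$, supplies a kernel $G(\cdot, \cdot; z) : L_2(\Ga) \to H_\Ga(z, 0_N, 1)$ satisfying $R(z) G(\cdot, y; z) = 0$ in $\Om \setminus \Ga$, $S(z) G(\cdot, \cdot; z) = \delta_\Ga$ on $\Ga$, and $P G(x, y; z) = 0$ for $x \in \partial\Om$. Define
$$\theta_\Ga(x) \;:=\; \int_\Ga G(x, y; z)\, u(y)\, dy.$$
Then $\theta_\Ga \in H_\Ga(z, 0_N, 1)$ with $R(z)\theta_\Ga = 0$ in $\Om \setminus \Ga$ and $S(z)\theta_\Ga = u$ on $\Ga$, and commuting $P$ through the integral gives $(P\theta_\Ga)(x) = \int_\Ga (PG(\cdot, y; z))(x)\, u(y)\, dy = 0$ for $x \in \partial\Om$. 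The hypothesis $\ker P \subseteq \ker \hat P$ forces $\hat P \theta_\Ga = 0$ pointwise on $\partial\Om$, so $\theta_\Ga \in H_\Ga(z, \hat P, 1)$; and $S(z)\theta = u = S(z)\theta_\Ga \in S(z) H_\Ga(z, \hat P, 1)$, as required.

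The main obstacle is confirming that the admissibility of $(z, P)$ genuinely supplies a single $G$ satisfying all four properties simultaneously — this is precisely the content of \eqref{arb}--\eqref{arf} — and that the resulting $\theta_\Ga$ inherits the integrability $\int_\Ga (S(z)\theta_\Ga)^2 < \infty$ from $u \in L_2(\Ga)$ via the boundedness of $G$ as a map $L_2(\Ga) \to H_\Ga(z, 0_N, 1)$. Once these are granted, the decisive step $PG = 0 \Rightarrow \hat P G = 0$ follows pointwise from the projection inclusion, and Theorem 2.3 closes the argument.
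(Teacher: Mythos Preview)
Your proof is correct and follows essentially the same route as the paper's: both arguments hinge on the observation that the kernel $G(\cdot,\cdot;z)$ furnished by admissibility of $(z,P)$ satisfies $PG=0$ on $\partial\Om$, and hence $\hat P G=0$ by the inclusion $\ker P\subset\ker\hat P$, so the same $G$ witnesses admissibility of $(z,\hat P)$. The paper compresses this into a single line invoking \eqref{anb} and \eqref{arf}, whereas you unpack it through statement~(iii) and the explicit construction $\theta_\Ga=\int_\Ga G(\cdot,y;z)u(y)\,dy$; your version is more detailed but not substantively different.
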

\begin{proof} Using \eqref{anb}, if \eqref{arf} holds for $(z, P)$, it holds for $(z, \hat P)$.
\end{proof}

In particular, any admissible $(z, P)$ implies $(z, 0_N)$ admissible, and the existence of $\theta' \in H_{\Om \setminus\Ga} (z, 0_N, 1)$ for each $\theta \in H(z, P, 1,1)$ such that almost everywhere in $\Om\setminus\Ga$
\be\label{axa} R(z)\theta' = R(z)\theta.\ee

If $(z,0_N)$ is inadmissible, there  exists a nontrivial solution of \eqref{ak} with $P=0_N$.  Partial integration in \eqref{ak} using \eqref{af} determines such $\underline{\dot {z}}$ satisfying
\be \label{axh} \psi_{\nu,zz} (z) {\underline{\dot z}} = 0\ee
almost everywhere on $\partial\Om.$ Thus such $z$ is linearly unstable even with $\psi^\dag_{\nu,z} (z)$ fixed almost everywhere on $\partial \Om$.
%%\end{proof}

An example of inadmissible $(z, 0_\aleph)$, for a scalar conservation law in the strip $0  < x_1 <1, \; x_2 \in \bbr$, is shown in figure 2.1.  Necessarily one of the discontinuities is ``entropy-violating"; the dashed curves illustrate characteristic trajectories.

%%\begin{figure}[h]
%%\begin{center}
%%\includegraphics[width=0.8\textwidth]{  ./NS_figures/figure1.pdf }
%%\end{center}
%%\caption{%Figure 2.1:
%%Inadmissible $(z, 0_N)$}
%%\label{ }
%%\end{figure}

\begin{figure}[h]
\begin{center}
\includegraphics[width=0.8\textwidth]{ ./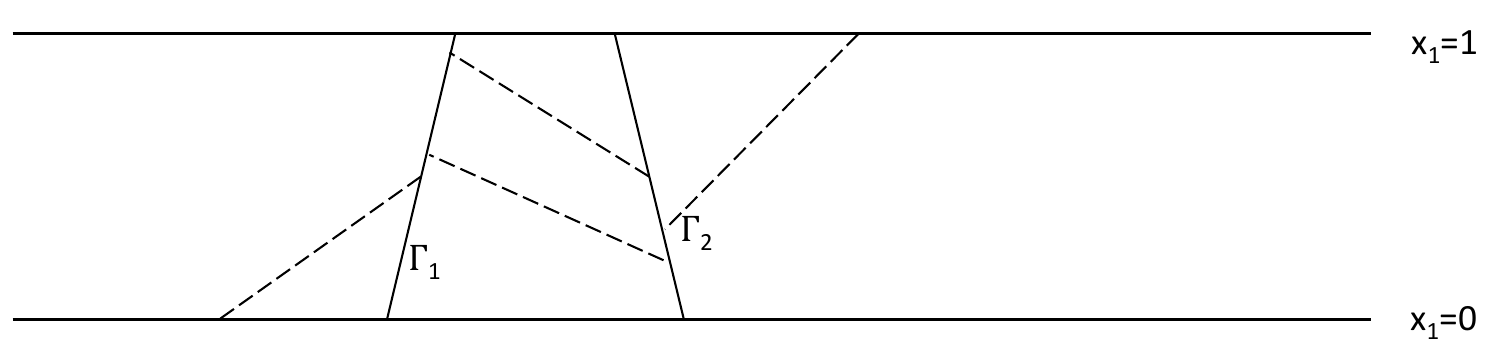}
 %%./NS_figures/figure1d.pdf }
\end{center}
\caption{%Figure 2.1:
Inadmissible $(z, 0_N)$}
\label{ }
\end{figure}

Using definition 2.1, $(z,P)$ stable requires $\ker P$ sufficiently small that $G$ satisfying
\eqref{arb}, \eqref{ard}, \eqref{are}, \eqref{arf} is unique.

From \eqref{ara}, existence of the integral in \eqref{arc} requires
\be\label{arg} \| G (x, \cdot; z) \|_{L_\infty (\Ga)} \le c_{x,z}, \;  x \in \bar\Om,\ee
but not uniform boundedness with respect to $x$.  But for any $G$ satisfying a seemingly very mild additional assumption, that the condition \eqref{arb} holds with
\be\label{arh} G: L_2 (\Ga) \to L_\infty (\Om)^n,\ee
it follows that
\be\label{ari} \| \; \|G(\cdot, \cdot; z)\|_{L_\infty (\Ga)} \|_{L_\infty (\Om)} \le c_z.\ee

The conditions \eqref{arg}, \eqref{ari} are independent of stability of $(z, P)$ or uniqueness of $G(\cdot, \cdot; z)$.

For a given $z$, the condition $(z,P)$ unambiguous does not uniquely determine $P\in\calp$, but places severe restrictions thereon.

\begin{thm} Assume $(z,P)$ and $(z,P')$ both unambiguous, and that the same $\{ e_0\}$ applies for $P, P'$.

Then there exist $\hat P, \hat P', \in \calp$ such that $\ker P \subseteq \ker \hat P, \ker P' \subseteq \ker \hat P'$; $(z, \hat P)$ and $(z, \hat P')$ are both unambiguous; and $B_{\hat P}$ is isomorphic to $B_{\hat P'}$.  Furthermore the spaces $H_{\Om\setminus\Ga} (z, \hat P,\cdot), \; H_{\Om\setminus\Ga} (z, \hat P', \cdot)$, and the spaces $H_\Ga (z, \hat P, \cdot), H_\Ga (z, \hat P', \cdot)$, coincide up to functions $\theta$ satisfying $R(z) \theta = S(z)\theta = 0$. The same $\{ e_0\}$ applies to $\hat P, \hat P'$.
\end{thm}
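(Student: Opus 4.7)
The plan is to construct $\hat P$ and $\hat P'$ by enlarging $\ker P$ and $\ker P'$ maximally subject to retaining stability, and then to read the remaining assertions off this maximality combined with the admissibility decomposition \eqref{al}. Admissibility of $(z, P)$ and $(z, P')$ propagates to $(z, 0_N)$ by Theorem 2.6, so an intrinsic pair of spaces $H_{\Om\setminus\Ga}(z, 0_N, 1), H_\Ga(z, 0_N, 1)$ is available; these, and their images under $R(z), S(z)$, will play the role of a common target for both $\hat P$ and $\hat P'$.

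For the construction itself, I would define $\ker \hat P \supseteq \ker P$ to be maximal (pointwise a.e.\ on $\partial\Om$) among subspaces for which stability \eqref{agd} continues to hold with some weights $w, w_\Ga$ and finite constant $c_1$. The pointwise constraint \eqref{aged}, together with $P(x) e_0(x) = 0$, confines the candidates to the finite-dimensional subspace $\range \psi_{\nu, zz}(z(x)) \oplus \operatorname{span}\{e_0(x)\}$, so a Zorn-type exhaustion inside $L_2(\partial\Om)$ yields the maximum. Admissibility of $(z, \hat P)$ is then immediate from $\ker P \subseteq \ker \hat P$ and Theorem 2.6, and the common $\{e_0\}$ is inherited automatically. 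Symmetrically I define $\hat P'$ from $P'$.

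For the coincidence and isomorphism statements I would invoke the Riesz representation supplied by Theorem 2.2. Each $\dot b \in B_{\hat P}$ represents a bounded linear functional on $H(z, \hat P, w, w_\Ga)$ that, through the decomposition \eqref{al}, factors into its actions on $R(z)\theta$ over $\Om\setminus\Ga$ and $S(z)\theta$ over $\Ga$; the same is true for $\hat P'$. Maximality of both kernels forces the respective images under $R(z)$ and under $S(z)$ to coincide: if some direction lay in one image but not in the other, it could be used to strictly enlarge the weaker kernel (via the unambiguity of whichever of $(z, P), (z, P')$ furnished it) while retaining stability, contradicting maximality. The subspaces $H_{\Om\setminus\Ga}(z, \hat P, \cdot), H_{\Om\setminus\Ga}(z, \hat P', \cdot)$ and $H_\Ga(z, \hat P, \cdot), H_\Ga(z, \hat P', \cdot)$ therefore agree modulo the null functions satisfying $R(z)\theta = S(z)\theta = 0$, and the common dual yields the Banach-space isomorphism $B_{\hat P} \simeq B_{\hat P'}$.

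The main obstacle is verifying that the two maximal constructions produce matching images under $(R(z), S(z))$; a priori, maximal stable extensions of $P$ and $P'$ could target disjoint admissible directions on the two sides. Settling this seems to require the common-$\{e_0\}$ hypothesis in an essential way, since it aligns precisely those degenerate boundary directions where prescribed data is restricted, together with careful tracking of how the weights $w, w_\Ga$ interact with the admissibility decomposition \eqref{al} and with the uniformity of the stability constant across $\partial\Om$.
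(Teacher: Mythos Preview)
Your approach is genuinely different from the paper's, and the gap you flag at the end is real and not closed by your proposal. You construct $\hat P$ and $\hat P'$ \emph{independently}, as maximal stable enlargements of $P$ and of $P'$ respectively, and then hope to argue a posteriori that the resulting images under $R(z)$ and $S(z)$ coincide. But maximal stable enlargements need not be unique (stability in Definition~2.1 involves a choice of $\|\cdot\|_{\partial\Om,P}$, $w$, $w_\Gamma$, and two incomparable enlargements could be stable with different such choices), and even if they were, your contradiction argument is circular: if some direction in $R(z)H_{\Om\setminus\Ga}(z,\hat P,\cdot)$ is missing from the $\hat P'$ side, enlarging $\ker\hat P'$ to capture it might well destroy stability of $(z,\hat P')$, so no contradiction with maximality arises. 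The common-$\{e_0\}$ hypothesis only controls the degenerate directions in \eqref{aged}; it does not force alignment of the full images.

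The paper avoids this entirely by a constructive transfer. Given $\dot b\in B_P$, one uses unambiguity of $(z,P)$ to solve \eqref{ae} uniquely for $(\dot z,\sigma)$, then reads off the \emph{full} normal boundary flux $\dot b_0=\psi_{\nu,zz}(z)\dot z|_{\partial\Om}$ via \eqref{apf}, \eqref{apg}. The same $(\dot z,\sigma)$ now satisfies \eqref{aph} for all $\theta\in H(z,0_N,\tilde w,\tilde w_\Ga)$, in particular for the $P'$-test space, so $A\dot b\eqadef (I_n-\hat P')\dot b_0$ is the boundary data seen from the $P'$ side (with $\hat P'$ a mild adjustment of $P'$ absorbing the null directions \eqref{jaa}). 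Running the construction backward from $P'$ gives $A^*$, and $A^*A=I$ forces the isomorphism. Because the \emph{same} $(\dot z,\sigma)$ appears on both sides, the coincidence of $H_{\Om\setminus\Ga}$ and $H_\Ga$ subspaces (modulo the null functions) follows directly from \eqref{jaj}--\eqref{jap}, with no appeal to maximality. The key idea you are missing is this explicit solve-then-reproject map linking the two boundary-data spaces through a common interior solution.
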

\begin{proof}  By assumption, there exist $w, w_\Ga, \| \cdot \|_{\partial\Om, P}$ and $w', w'_\Ga, \| \cdot \|_{\partial\Om, P'}$ each satisfying \eqref{agd}.

Pointwise in $\Om\setminus\Ga$, we denote
\be\label{apa} \tilde w (x) \eqadef \hbox{ maximum\ } \left(w(x), w'(x)\right),\ee
and  pointwise within $\Ga$,
\be\label{apb} \tilde w_\Ga (x) = \hbox{ maximum\ } \left(w_\Ga(x), w'_\Ga (x)\right).\ee

Using \eqref{agf}, \eqref{agh}, the condition \eqref{agd} holds for $P, \tilde w, \tilde w_\Ga$ and for $P', \tilde w, \tilde w_\Ga$, with
\be\label{apc} c_1 (z, P,  \tilde w,  \tilde w_\Ga) \le c_1 (z, P,  w, w_\Ga), \; c_1 (z, P', \tilde w,  \tilde w_\Ga) \le c_1 (z, P',  w,  w_\Ga).\ee

By theorem 2.4, admissibility of $(z, P)$ and $(z, P')$ are not lost by use of $\tilde w, \tilde w_\Ga$.

We construct  a  linear map
\be\label{apd} A:B_P \to B_{ \hat P'}\ee
as follows.  For arbitrary given $\dot b \in B_P$, from $(z,P)$ unambiguous we find unique
\be\label{ape} \dot z \in \tilde w R(z)H_{\Om\setminus\Ga} (z, P, \tilde w), \; \, \sigma \in \tilde w _\Ga S(z) H_\Ga (z, P, \tilde w_\Ga)\ee
satisfying \eqref{ae} for all $\theta \in H(z, P, \tilde w, \tilde w_\Ga).$

Using $\dot z $ so obtained, we extend $\dot b$ to $\dot b_0 \in B_{0_N}$ satisfying
\be\label{apf}  (I_n-P) \dot b_0 = \dot b, \ee
\be\label{apg} P\dot b_0 = P\psi_{\nu, zz} (z) \dot z\midl_{\partial\Om}. \ee

Here
\be\label{apga} B_{0_N} \eqadef \, \{ \dot b_0 \in N \, | \, \dot b_0 \in \range\;  \psi_{\nu,zz} (z) \}\ee
almost everywhere in $\p\Om$.

With $\dot b_0$ so obtained, $(\dot z, \sigma)$ satisfies an extended form of \eqref{ae},
\be\label{aph} \intl_{\partial\Om} \dot b_0 \cdot \theta = \iintl_{\Om\setminus\Ga} \dot z \cdot R(z) \theta + \intl_\Ga \sigma S(z) \theta \ee
for all $\theta \in H(z, 0_N, \tilde w, \tilde w_\Ga)$.  From theorem 2.6, $(z, 0_N)$ necessarily is admissible, so $(\dot z, \sigma)$ uniquely satisfies \eqref{aph} with
\be\label{api} \dot z \in \tilde w R(z) H_{\Om\setminus\Ga} (z, 0_N, \tilde w), \; \sigma \in \tilde w_\Ga S(z) H_\Ga (z, 0_N, \tilde w_\Ga).\ee

Denote by $\ker\uP,\;  \uP \in \calp$, the subspace (possibly trivial) of $\ker P$ such that
\be\label{jaa} (I_n-P') \dot b_0 = 0 \; \; \hbox{for\ all\ } \dot b \in \ker (I_n-\uP).\ee

Then $A, \ker \hat P' $ in \eqref{apd} are determined, using \eqref{jaa}, from
\be\label{jab} \ker(I_n-\hat P') = \ker (I_n-P') \oplus \ker (I_n-\uP);\ee
\be\label{apj} (I_n-\hat P') (A\dot b) = (I_n-\hat P') \dot b_0;\ee
\be\label{jac} (I_n-P') (A\dot b) = (I_n-P') \dot b_0;\ee
\begin{align}\label{jad} (P' - \hat P') (A\dot b) &=(P'- \hat P') \dot b_0\nonumber\\
&=(I_n-\uP) \dot b_0\nonumber \\
&=(I_n-\uP)\dot b,\end{align}
using $\ker\uP$ a subspace of $\ker P$ in the last step.

From \eqref{aph}, \eqref{apj}
\be\label{apl} \intl_{\partial\Om} (A\dot b)\cdot \theta = \iintl_{\Om\setminus\Ga} \dot z \cdot R (z) \theta + \intl_\Ga \sigma S(z)\theta\ee
for all $\theta \in H (z,\hat P', \tilde w, \tilde w_\Ga)$.

So determined, the mapping $A$ is $1-1$ but not necessarily onto $B_{\hat P'}$.  However reversing the argument, we determine a linear map
\be\label{jaf} A^*: B_{ \hat P'} \to B_{\hat P}\ee
such that $\ker P \subseteq \ker \hat P$ and
\be\label{jag} A^*A = I\ee
on $B_P$.

The mapping $A^*$ is $1-1$ onto $B_{\hat P}$, so we may extend $A$ as the right inverse of $A^*$ so that \eqref{jag} holds on $B_{\hat P}$.  Such makes $A$ onto $B_{\hat P'}$ so
\be\label{jah} AA^* = I\ee
on $B_{\hat P'}$.

Next we show that stability has not been lost for $(z, \hat P), (z, \hat P')$.  Using \eqref{jad}, it will suffice to show that the map $A$ as applied to $B_P$ is bounded.

For admissible $(z,  P)$, we may set $\phi_\Gamma = 0$ in \eqref{ai}, \eqref{aia} to obtain
\begin{align} \label{apn} \Big(\iintl_{\Om\setminus\Ga} \tfrac{1}{\tilde w} |\dot z|^2 +& \intl_\Ga \tfrac{1}{\tilde w_\Ga} \sigma^2\Big)^{1/2}
= \| \zeta_{\dot b 0}\|_{z, \tilde w, \tilde w_\Ga}\nonumber \\
&\le c_1 (z, P, \tilde w, \tilde w_\Ga) \| \dot b\|_{B, P}.\end{align}

Using \eqref{ahb} for $\hat P'$ and \eqref{apl},
\begin{align}\label{apo} \|A \dot b\|_{B, \hat P'} &= \underset{\theta \in H(z, \hat P', \tilde w, \tilde w_\Ga)}{\lub}
\frac{\intl_{\pOm} A\dot b \cdot \theta}{\| \theta\|_{z, \tilde w, \tilde w_\Ga }}\nonumber \\
&=\underset{\theta \in H(z, \hat P', \tilde w, \tilde w_\Ga)}{\lub} \frac{\iintl_{\Om\setminus\Ga} \dot z \cdot  R(z)\theta + \intl_\Ga  \sigma S(z)\theta}{\| \theta\|_{z, \tilde w, \tilde w_\Ga}}\nonumber \\
&\le \Big( \iintl_{\Om\setminus\Ga} \tfrac{1}{\tilde w} |\dot z|^2 + \intl_\Ga \tfrac{1}{\tilde w_\Ga} \sigma^2\Big)^{1/2}\nonumber \\
&\le c_1 (z, P, \tilde w, \tilde w_\Ga) \; \| \dot b\|_{B, P}.\end{align}

Thus both $(z, \hat P)$ and $(z, \hat P')$  are unambiguous.

As \eqref{al} holds for both $\hat P, \hat P'$, using \eqref{apj}, \eqref{jag}, \eqref{jah},
we obtain from \eqref{apl}
\be\label{jaj} \intl_{\partial\Om} \dot b_0\cdot \theta = \iintl_{\Om\setminus\Ga} \dot z \cdot R(z)\theta\ee
for all $\theta \in H_{\Om\setminus\Ga} (z, \hat P, \tilde w) \cup H_{\Om\setminus\Ga} (z, \hat P', \tilde w)$, with
\be\label{jam} \dot z \in \{ \tilde w R(z) H_{\Om\setminus\Ga} (z, \hat P, \tilde w) \} \cap \{ \tilde w R(z) H_{\Om\setminus\Ga} (z, \hat P', \tilde w)\}\ee
unique.  Thus $H_{\Om\setminus\Ga} (z, \hat P, \tilde w)$ and $H_{\Om\setminus\Ga} (z, \hat P', \tilde w)$ coincide possibly excepting elements $\theta $ with $R(z)\theta$ vanishing identically.

Analogously, we have
\be\label{jao} \intl_{\partial\Om} \dot b_0 \cdot \theta = \intl_\Ga \sigma (S(z)\theta)\ee
for all $\theta \in H_\Ga (z, \hat P, \tilde w_\Ga ) \cup H_\Ga (z, \hat P', \tilde w_\Ga)$, with
\be\label{jap} \sigma \in \{ \tilde w_\Ga S(z) H_\Ga (z, \hat P, \tilde w_\Ga)\} \cap \{ \tilde w_\Ga S(z) H_\Ga (z, \hat P', \tilde w_\Ga)\}\ee
unique.  Thus $H_\Ga (z, \hat P, \tilde w_\Ga)$ and $H_\Ga (z, \hat P', \tilde w_\Ga)$ coincide, up to elements $\theta $ with $S(z)\theta$ vanishing identically.

This completes the proof.
\end{proof}

%%%%%%%%%%%%%%%%%%

A sufficient condition for uniqueness of $P$ making $(z, P)$ unambiguous follows from theorem 2.7.

\begin{thm} Assume $(z, P_0)$ stable and such that
\begin{align}\label{jb} &\{ w^{1/2}  R(z)\theta | \theta \in H(z, P_0, w, w_\Ga)\} = L_2 (\Om\setminus \Ga)^n,\nonumber \\
&\{ w_\Ga^{1/2}  S(z)\theta | \theta \in H(z, P_0, w, w_\Ga)\} = L_2 (\Ga).\end{align}

Then $P$ making $(z, P)$ unambiguous and satisfying
\be\label{jba} \ker P_0 \subset \ker P\ee
is unique. In this case $z$ is locally unique, satisfying boundary data $b(z,P)$, within the set of solutions determined from \eqref{ac}, \eqref{aca}, such that \eqref{ae}, \eqref{ade}, \eqref{adf} are satisfied uniquely for all $\dot b \in B_P$.
\end{thm}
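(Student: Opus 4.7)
The plan is to prove uniqueness of $P$ by supposing $P, P' \in \calp$ both satisfy the stated conclusion and deriving $P = P'$, combining theorem 2.7 with the surjectivity hypothesis \eqref{jb} transferred along the containment $\ker P_0 \subset \ker P$. Local uniqueness of $z$ then follows from the bijection furnished by theorem 2.2.

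First I would apply theorem 2.7 to $(z, P), (z, P')$, obtaining $\hat P, \hat P' \in \calp$ both unambiguous, with $\ker P \subseteq \ker \hat P$, $\ker P' \subseteq \ker \hat P'$, and such that $H_{\Om\setminus\Ga}(z, \hat P, \cdot)$ coincides with $H_{\Om\setminus\Ga}(z, \hat P', \cdot)$, and likewise $H_\Ga(z, \hat P, \cdot)$ with $H_\Ga(z, \hat P', \cdot)$, modulo test functions annihilated by both $R(z)$ and $S(z)$. The chain $\ker P_0 \subset \ker P \subseteq \ker \hat P$ combined with \eqref{anb} gives $H(z, P_0, w, w_\Ga) \subseteq H(z, \hat P, w, w_\Ga)$, so the surjectivity \eqref{jb} is inherited by $\hat P$, and analogously by $\hat P'$. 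Coupled with the admissibility decomposition \eqref{al}, this promotes surjectivity to each factor: $w^{1/2} R(z)$ maps $H_{\Om\setminus\Ga}(z, \hat P, w)$ onto $L_2(\Om\setminus\Ga)^n$ and $w_\Ga^{1/2} S(z)$ maps $H_\Ga(z, \hat P, w_\Ga)$ onto $L_2(\Ga)$, and similarly for $\hat P'$.

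Next I would argue that the isomorphism $A: B_{\hat P} \to B_{\hat P'}$ constructed in theorem 2.7 must in fact be the identity. For any $\dot b \in B_{\hat P}$, the inherited surjectivity together with unambiguity produces a unique $(\dot z, \sigma)$, which in turn determines the extension $\dot b_0 \in B_{0_N}$ via \eqref{apf}--\eqref{apg}. Since the factor spaces coincide for $\hat P$ and $\hat P'$ modulo zero-kernel elements, and such elements contribute nothing to the right side of \eqref{agzz} by virtue of the range condition \eqref{age}, the same $\dot b_0$ arises under either projection. Hence $\ker \hat P = \ker \hat P'$ and $\hat P = \hat P'$. A parallel comparison of $(z, P)$ with $(z, \hat P)$ then forces $\ker P = \ker \hat P$: any strict containment would admit a perturbation $(\dot z, \sigma)$ in the admissible class of $\hat P$ but not of $P$, contradicting the surjectivity at $P_0$. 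By symmetry, $P = P' = \hat P$.

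Local uniqueness of $z$ is then immediate: with $P$ uniquely determined by the unambiguous requirement, theorem 2.2 yields a bounded injective linearization $\dot b \mapsto (\dot z_{\dot b 0}, \sigma_{\dot b 0})$ on $B_P$. Any perturbed solution sharing $b(z, P)$ would force $\dot b \equiv 0$ and hence $\dot z \equiv 0, \sigma \equiv 0$, precluding any distinct nearby solution in the stated class. The principal obstacle lies in the middle step: showing that coincidence of the interior Hilbert spaces modulo zero-kernel functions really forces equality of the boundary projections. The essential leverage is \eqref{agzz} combined with \eqref{age}, which confines admissible boundary perturbations to the range of $\psi_{\nu, zz}(z)$ and thereby rules out hidden boundary-trace information carried by zero-kernel test functions.
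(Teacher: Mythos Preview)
Your middle step has a real gap. You try to show the isomorphism $A$ from theorem 2.7 is the identity by arguing that ``the same $\dot b_0$ arises under either projection.'' But the extension $\dot b_0$ in \eqref{apg} is built from the $\dot z$ solving \eqref{ae} for the \emph{given} projection; for $\hat P'$ one obtains an a priori different $\dot z'$ and hence a different extension. Coincidence of the interior spaces $H_{\Om\setminus\Ga}$, $H_\Ga$ modulo zero-kernel test functions does not pin down the boundary traces: theorem 2.7 only asserts $B_{\hat P}$ and $B_{\hat P'}$ are isomorphic, not equal, and nothing in \eqref{agzz} or \eqref{age} bridges that. Your subsequent collapse of $\hat P$ to $P$ (``any strict containment would admit a perturbation \dots\ contradicting the surjectivity at $P_0$'') is also unsubstantiated: a strictly larger test space can still have the same $R(z)$- and $S(z)$-images.

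The paper avoids this entirely by a different mechanism. Rather than comparing $P$ and $P'$ directly, it forms $P''$ with $\ker P'' = \ker P \cap \ker P'$. Then $(z,P'')$ is stable (kernel shrank), and a dichotomy closes the argument: if $(z,P'')$ is admissible, one is back in the nested case $\ker P'' \subset \ker P$ with both unambiguous, which the chain \eqref{jbb}--\eqref{jbd} already rules out via \eqref{jb}; if $(z,P'')$ is not admissible, there is a nontrivial solution of \eqref{ak} for all $\theta \in H(z,P'',w,w_\Ga)$, hence for all $\theta \in H(z,P_0,w,w_\Ga)$ since $\ker P_0 \subseteq \ker P''$, and this directly contradicts the surjectivity \eqref{jb}. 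The surjectivity hypothesis is thus used not to force boundary projections to agree, but to exclude nontrivial homogeneous solutions at the $P_0$ level. For local uniqueness the paper argues similarly: failure yields a nontrivial $\theta',\theta''$ solving \eqref{avd} with $\dot b=0$, and admissibility of $(z,P)$ forces each term to vanish separately, which is impossible.
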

\begin{proof} Assume otherwise, some other $P'$ satisfying
\eqref{jba} and making $(z, P')$ unambiguous.

If $P, P'$ satisfy \eqref{ana} ($P'$ replacing $\hat P$), then using \eqref{jba}, \eqref{al}
\begin{align}\label{jbb}
&\{w^{1/2} R(z) \theta | \theta \in H(z, P_0, w, w_\Ga)\} \subseteq \{ w^{1/2} R(z) \theta | \theta \in H_{\Om\setminus\Ga} (z, P, w)\},\nonumber \\
&\{w_\Ga^{1/2} S(z) \theta | \theta \in H(z, P_0, w, w_\Ga)\} \subseteq \{ w_\Ga^{1/2} S(z) \theta | \theta \in H_{\Ga} (z, P, w_\Ga)\};\\
&\{w^{1/2} R(z) \theta | \theta \in H_{\Om\setminus\Ga} (z, P, w)\} \subseteq \{ w^{1/2} R(z) \theta | \theta \in H_{\Om\setminus\Ga} (z, P', w)\},\nonumber \\
\label{jbc}&\{w_\Ga^{1/2} S(z) \theta | \theta \in H_\Ga (z, P, w_\Ga)\} \subseteq \{ w_\Ga^{1/2} S(z) H_{\Ga} (z, P', w_\Ga)\};\\
\label{jbd}
&\{w^{1/2} R(z) \theta | \theta \in H_{\Om\setminus\Ga}(z, P', w)\} \subseteq L_2(\Om\setminus\Ga)^n\nonumber, \\
&\{w_\Ga^{1/2} S(z) \theta | \theta \in H_\Ga (z, P', w_\Ga)\} \subseteq L_2(\Ga).
\end{align}

%From \eqref{ana}
If \eqref{ana} holds, one of the conditions \eqref{jbc} is necessarily with strict inequality, precluding \eqref{jb}.

Thus \eqref{jb}, \eqref{jba} could hold, with uniqueness of $P$ failing, only if theorem 2.7 applies with $\hat P = P, \hat P'= P'$, making
\begin{align}\label{jbe}
&\{w^{1/2} R(z) \theta | \theta \in H_{\Om\setminus\Ga}(z, P, w)\}\; \, \hbox{isomorphic\ to \ }  \{ w^{1/2} R(z) \theta | \theta \in H_{\Om\setminus\Ga} (z, P', w)\},\nonumber \\
&\{w_\Ga^{1/2} S(z) \theta | \theta \in H_\Ga (z, P, w_\Ga)\} \; \, \hbox{isomorphic\ to \ }   \{ w_\Ga^{1/2} S(z)  \theta | \theta\in H_\Ga (z, P', w_\Ga)\}.
\end{align}

We obtain $P''$, satisfying $\ker P'' \subset \ker P, \ker P'$, from
\be\label{jbf} \ker P''= \ker P\cap \ker P'.\ee

As $(z, P'')$ necessarily is stable, if $(z, P'')$ is also admissible then again we have a contradiction using \eqref{ana}.

For $(z, P'')$ not admissible, there is a nontrivial solution of \eqref{ak} for all $\theta \in H(z, P'',w,w_\Ga)$.  From \eqref{jbf}, \eqref{jba} (also with $P'$ replacing $P$)
\be\label{jbg} H(z, P_0, w,w_\Ga) \subseteq H(z, P'', w, w_\Ga),\ee
so there is a nontrivial solution of \eqref{ak} with $P = P_0$.  Such precludes \eqref{jb}.

If local uniqueness fails for solutions with the same boundary data and such that  \eqref{ae}, \eqref{ade}, \eqref{adf} are uniquely satisfied, from \eqref{avd} with $\dot b = 0$ there exists a nontrivial solution of
\be\label{jca} \iintl_{\partial\setminus \Ga} w R(z)\theta^{'} \cdot  R(z)\theta + \intl_\Ga w_\Ga (S(z)\theta^{''})  (S(z)\theta ) = 0\ee
for all $\theta \in H(z, P, w, w_\Ga)$, with some fixed $\theta', \theta'' \in H(z, P, w, w_\Ga)$.  But as $(z, P)$ is admissible, each term in \eqref{jca} mush vanish separately, which is impossible by inspection.
\end{proof}

%%%%%%%%%%%%%%%%%%%%%%%%%%%

Under common if not generic conditions, unambiguous $(z,P)$ is further restricted.

\begin{thm} Assume $(z, P)$ unambiguous with $z\midl_{\Om'}$ constant in an open subset $\Om'$ of $\Om$, such that $\nu$ is continuously defined on $\partial\Om' \cap \partial\Om$.

Assume that \eqref{agd} holds with
\be\label{atb} w\midl_{\Om'} = 1,\ee
and such that
\be\label{atc} \| \theta\|_{L_2(\partial\Om' \cap \partial\Om)} \le c \| \theta\|_{\partial\Om, P}\ee
for all $\theta \in H(z, P, w, w_\Ga)$.

Assume  in addition that
\be\label{atd} \| \theta\|_{L_2(\Om')} \le c \| \theta\|_{z, w, w_\Ga}.\ee

%%%%%%%%%%%%%%%%%%%
Denote by $\psi^\pm_{\nu, zz} (x)$ the nonnegative semidefinite commuting symmetric matrices satisfying
\be\label{ata} \left(I_n-P(x)\big) \psi_{\nu, zz} (z\midl_{\Om'}) \big(I_n-P(x)\right) = \psi^+_{\nu,zz} (x) - \psi^-_{\nu,zz} (x)\ee
for all $x \in \partial\Om' \cap \partial \Om$.

%%%%%%%%%%%%%%%%%%%

Then for any $\om$ compactly contained in $\partial\Om'\cap \partial\Om$ such that $P(x)$ is defined for all $x \in \partial\Om' \cap\partial \Om$,
\be\label{atz} \ker P(x) = (\range\;  \psi^+_{\nu, zz} (x) ) \oplus (\range\;  \psi^-_{\nu, zz} (x)) \oplus \mathrm{span\ } \{ e_0(x)\}\ee
for each $x \in \omega$, with $e_0(x)$ from \eqref{agea}.
\end{thm}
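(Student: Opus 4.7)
The plan is to establish the equality \eqref{atz} by two-sided containment. For the easy direction $(\range\,\psi^+_{\nu,zz}(x)) \oplus (\range\,\psi^-_{\nu,zz}(x)) \oplus \mathrm{span}\{e_0(x)\} \subseteq \ker P(x)$: from \eqref{ata} the matrices $\psi^\pm_{\nu,zz}(x)$ are of the form $(I_n-P(x))(\cdots)(I_n-P(x))$, so their ranges lie inside $\range\,(I_n-P(x)) = \ker P(x)$; and $e_0(x) \in \ker P(x)$ by \eqref{agec}. Directness of the sum follows from two facts: (i) $\psi^+_{\nu,zz}$ and $\psi^-_{\nu,zz}$ are commuting nonnegative semidefinite matrices, hence simultaneously diagonalizable with mutually orthogonal positive and negative eigenspaces; (ii) the linearized boundary constraint \eqref{agea} together with $P e_0 = 0$ forces $\psi_{\nu,zz}(z) e_0 \perp \ker P$, so $(I_n-P)\psi_{\nu,zz}(I_n-P) e_0 = 0$, whence $\psi^+_{\nu,zz} e_0 = \psi^-_{\nu,zz} e_0 = 0$.

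For the reverse inclusion, the key tool is a Green identity available on $\Om'$: since $z|_{\Om'}$ is constant, the matrices $A_i := \psi_{i,zz}(z|_{\Om'})$ are constant symmetric, and for every $\theta \in H(z, P, w, w_\Ga)$ integration by parts of $R(z)\theta = \sum_i A_i \theta_{x_i}$ against $\theta$ yields
\[
2 \iintl_{\Om'} R(z)\theta \cdot \theta \;=\; \intl_{\partial\Om' \cap \partial\Om} \psi_{\nu,zz}(z)\theta\cdot\theta \;+\; \intl_{\partial\Om' \cap \Om} \psi_{\nu,zz}(z)\theta\cdot\theta.
\]
By Cauchy--Schwarz together with \eqref{atb} (giving $w|_{\Om'} = 1$) and \eqref{atd}, the left-hand side is bounded by $c \|\theta\|_{z,w,w_\Ga}^2$. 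On $\omega$ the boundary condition $P\theta|_{\partial\Om} = 0$ makes $\theta = (I_n - P)\theta$, so the integrand in the first boundary term collapses to $(\psi^+_{\nu,zz} - \psi^-_{\nu,zz})\theta \cdot \theta$; its integral over $\omega$ is then controlled by $c \|\theta\|_{z,w,w_\Ga}^2$ after invoking \eqref{atc} and stability \eqref{agd}. I would then argue the reverse containment by contradiction: suppose some $x_0 \in \omega$ admits $v \in \ker P(x_0)$ linearly independent of $e_0(x_0)$ and satisfying $\psi^+_{\nu,zz}(x_0) v = \psi^-_{\nu,zz}(x_0) v = 0$; equivalently, $v \in \ker P(x_0)$ with $\psi_{\nu,zz}(z) v \in \range\,P(x_0)$. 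Localizing near $x_0$ with a cutoff $\chi$, I would construct test functions $\theta \in H(z, P, w, w_\Ga)$ whose boundary trace on $\omega$ lies along $v\chi$, noting that on such $\theta$ the quadratic form $(\psi^+_{\nu,zz} - \psi^-_{\nu,zz}) \theta \cdot \theta$ vanishes identically on $\omega$. Pairs of such $\theta$ differing only in boundary trace along $v$ then produce a nontrivial solution of the homogeneous equation \eqref{ak}, contradicting admissibility of $(z, P)$ via the uniqueness criterion (i) of subsection 2.2.

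The principal obstacle is the local construction itself: one must produce $\theta \in H(z, P, w, w_\Ga)$ with controlled graph norm, boundary trace on $\omega$ concentrated along $v$, and compatibility with $P\theta|_{\partial\Om} = 0$ and (where applicable) the additional restriction \eqref{ageb}. The compact containment $\omega \Subset \partial\Om' \cap \partial\Om$ (so $\omega$ remains bounded away from $\Ga$) and the constancy of $z$ on $\Om'$ (so the coefficients $A_i$ of $R(z)$ are constant near $\omega$) reduce this to a standard extension-and-cutoff argument for a constant-coefficient symmetric first-order operator. The delicate point is verifying that the resulting admissibility failure is genuinely new --- that is, not already absorbed into the single $e_0$-null direction allowed in \eqref{aged} --- and that is where the linear independence of $v$ from $e_0(x_0)$, coupled with the spectral identity \eqref{ata}, does the decisive work.
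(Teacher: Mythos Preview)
Your easy direction and Green-identity setup are on track, but the contradiction argument for the hard inclusion has a genuine gap. Producing two test functions $\theta_1,\theta_2\in H(z,P,w,w_\Ga)$ with identical $R(z)\theta_i$, $S(z)\theta_i$ but different boundary traces does \emph{not} yield a nontrivial solution of \eqref{ak}: that relation is a condition on the pair $(\underline{\dot z},\underline\sigma)$, not on $\theta$. What $\theta^*=\theta_1-\theta_2$ would actually give is a nonzero element of $H(z,P,w,w_\Ga)$ with $\|\theta^*\|_{z,w,w_\Ga}=0$, which would contradict \emph{stability} \eqref{agd} rather than admissibility --- but only if $R(z)\theta^*=0$ throughout $\Om\setminus\Ga$ and $S(z)\theta^*=0$ on $\Ga$, a global PDE constraint that your local cutoff near $x_0$ does not deliver. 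You also leave the interior boundary contribution $\int_{\partial\Om'\cap\Om}\theta\cdot\psi_{\nu,zz}\theta$ uncontrolled.

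The paper avoids any such construction. It applies the Green identity to $\chi\theta$ with a cutoff $\chi\in W^{1,\infty}(\bar\Om')$ satisfying $\chi|_\omega=1$ and $\chi|_{\partial\Om'\cap\Om}=0$, which eliminates the interior boundary term and (via \eqref{atb}, \eqref{atd}) yields
\[
\Bigl|\int_{\partial\Om'\cap\partial\Om}\chi\theta\cdot(\psi^+_{\nu,zz}-\psi^-_{\nu,zz})\chi\theta\Bigr|\;\le\;c\,\|\theta\|_{z,w,w_\Ga}^2.
\]
The decisive step you are missing is the companion bound on the \emph{sum}: since $\psi^+_{\nu,zz}+\psi^-_{\nu,zz}$ is pointwise bounded, \eqref{atc} together with \eqref{agd} gives directly
\[
\int_{\partial\Om'\cap\partial\Om}\chi\theta\cdot(\psi^+_{\nu,zz}+\psi^-_{\nu,zz})\chi\theta\;\le\;c\,\|\theta\|_{L_2(\partial\Om'\cap\partial\Om)}^2\;\le\;c\,\|\theta\|_{z,w,w_\Ga}^2.
\]
Adding and subtracting these two inequalities yields $\int_\omega\theta\cdot\psi^{\pm}_{\nu,zz}\theta\le c\,\|\theta\|_{z,w,w_\Ga}^2$ separately for each sign. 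This is precisely the statement that the stability inequality persists with $\ker P$ taken equal to the right-hand side of \eqref{atz} on $\omega$; since $(z,P)$ is assumed unambiguous, $\ker P$ cannot be strictly larger without destroying stability, forcing equality in \eqref{atz}.
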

\begin{rem*} The statement is vacuous for $P = 0_N $ or $I_n$ within $\om$.  For this result, $z$ may be only an approximate solution of \eqref{aa}.
\end{rem*}

\begin{proof}  From \eqref{ata}, \eqref{age}, \eqref{agec}, pointwise within $\om$  it will suffice to show that
\be\label{atf}  \ker P(x) \subseteq \left(\range\; \psi^+_{\nu,z z} (x)\right) \oplus \left( \range\;  \psi^-_{\nu,zz}(x)\right) \oplus \hbox{span\ }  \{ e_0(x)\}.\ee

The conclusion is trivial if \eqref{atf} holds with equality for each $x \in \omega$.

Otherwise, it will suffice to show that $(z, P)$ is stable with $P$ satisfying \eqref{atz}, as stability would be lost for any increase of $\ker P$.

By hypothesis, we may choose a scalar function
\be\label{atg} \chi \in W^{1,\infty} (\bar \Om' \to [0,1]),\ee
 with $\bar \Om'$ the closure of $\Om'$, satisfying
\be\label{ath} \chi\midl_\om = 1\ee
and
\be\label{ati} \chi\mathop{\midl}_{\partial\Om'\cap\Om} = 0.\ee

As $\chi\theta \in H(z, P, w, w_\Ga)$, using \eqref{agd}, \eqref{atb}, \eqref{atd}, \eqref{atg},
\begin{align}\label{atj}
| \iintl_{\Om'} \chi \theta\cdot R(z) (\chi\theta) | &\le c (\| \chi\|^2_{W^{1, \infty}(\Om')} \| \theta\|^2_{L_2(\Om')}+ \|R(z)\theta\|^2_{L_2 (\Om')})\nonumber \\
&\le c \| \theta \|^2_{z, w, w_\Ga}.
\end{align}

Using $z\midl_{\Om'} $ constant, \eqref{af}, \eqref{ati}, partial integration gives
\begin{align}\label{atk}
\iintl_{\Om'} \chi \theta\cdot R(z\midl_{\Om'}) (\chi\theta) &= \tfrac 12 \intl_{\partial\Om' \cap \partial\Om} \chi\theta\cdot \psi_{\nu,zz} (z\midl_{\Om'}) (\chi\theta)\nonumber \\
&= \tfrac 12 \intl_{\partial\Om' \cap \partial\Om} \chi\theta\cdot (\psi^+_{\nu,zz} - \psi^-_{\nu,zz}) (\chi\theta),\end{align}
so from \eqref{atj}, \eqref{atk}, for all $\theta \in H(z, P, w, w_\Ga)$
\be\label{atl} | \intl_{\partial\Om' \cap \partial\Om} \chi\theta\cdot (\psi^+_{\nu,zz} - \psi^-_{\nu,zz}) (\chi\theta)|\le c \| \theta\|^2_{z, w, w_\Ga}.\ee

Simultaneously, using \eqref{atc}, \eqref{agd},
\begin{align}\label{atm} \intl_{\partial\Om' \cap \partial\Om}(\chi\theta) \cdot  (\psi^+_{\nu,zz} + \psi^-_{\nu,zz}) (\chi\theta) &\le c \|\theta\|^2_{L_2(\partial\Om'\cap\partial\Om)}\nonumber \\
&\le c \|\theta\|^2_{\partial\Om, P}\nonumber \\
&\le c \|\theta\|^2_{z, w, \om_\Ga}.\end{align}

Combining \eqref{atl}, \eqref{atm}, then using \eqref{ath}
\be\label{ato} \intl_\om \theta\cdot \psi^\pm_{\nu,zz} \theta \le c \| \theta\|^2_{z, w, w_\Ga}\ee
for all $\theta \in H(z, P, w, w_\Ga)$.

Stability of $(z, P)$ with $P$ satisfying \eqref{atz} is thus established.

\end{proof}

\section{Quantification of admissibility failure}%3

Tacit assumption that a given $(z, P)$ is stable makes admissibility equivalent to unambiguity.  Applying theorem 2.4, discussion of admissibility  is simplified, with no loss of generality, by assumption of $w, w_\Ga $ satisfying \eqref{agj}.  Following theorem 2.6, we assume  $(z, 0_N)$ admissible  throughout.

Given the form \eqref{ada} of $\Ga$, using \eqref{ahf}, \eqref{agj} we denote
\be\label{baa} \Phi_0 (z, P) \eqadef \{ \phi\in \Phi (z, P, 1) \cap W^{1,p} (\Ga) \, | \, \phi\midl_{\partial\Ga_k} = 0, \; \, k = 1, \dots, K\},\ee
understanding various norms on $\Ga$ obtained using the $\a$-coordinates within each $\Ga_k$ throughout, for some (finite) $p > m $ to be left unspecified throughout.

For arbitrary $\phi \in \Phi_0 (z, P), \;  \theta \in H(z, P, 1,1)$, we denote
\be\label{bab} L_0(\theta, \phi; z, P) \eqadef \iintl_{\Om\setminus\Ga} | R (z) \theta|^2 + \intl_\Ga \left(S(z)(\theta-\phi)\right)^2,\ee
\be\label{bac} M_0(\phi; z, P)\eqadef\underset{\theta \in H(z,P,1,1)}{\glb} L_0(\theta, \phi; z, P).\ee

Immediately from \eqref{bab}, \eqref{bac}, we have
\be\label{bad} M_0 (\phi; z, P) = L_0 \left( \Xi (z, P)\phi, \phi; z, P\right),\ee
with a homogeneous linear map
\be\label{bae} \Xi(z, P): \Phi_0 (z,P) \to H(z, P, 1,1)\ee
determined from
\be\label{baf} \Xi(z, P) \phi =\zeta_{0\phi} \in H(z, P, 1,1)\ee
as determined from \eqref{aiz}, \eqref{aha} with $\dot b = 0, w = w_\Ga = 1$;
\be\label{bag} \iintl_{\Om\setminus\Ga} R(z)\zeta_{0\phi} \cdot R(z)\theta +\intl_\Ga\left( S(z)(\zeta_{0\phi} - \phi)\right)\left(S(z)\theta)\right) = 0,\ee
for all $\theta \in H (z, P, 1,1)$.

Observing that $M_0 (\phi; z, P)$ is homogeneous of degree two in $\phi$, our adopted measure of inadmissibility of $(z,P)$ is a nonnegative scalar,
\be\label{bb}  Q_0 (z,P)\eqadef\underset{\phi\in\Phi_0(z,P)}{\lub} \frac{M_0(\phi; z, P)}{\| \phi\|^2_{W^{1,p}(\Ga)}}.\ee
\begin{thm} Admissibility  of $(z,P)$ is equivalent to
\be\label{bba}Q_0(z, P) = 0.\ee
\end{thm}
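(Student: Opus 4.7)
The plan is to pivot on statement (iii) from Section~2.2, which characterizes admissibility as the identity $S(z) H(z,P,1,1) = S(z) H_\Gamma(z,P,1)$; by Theorem~2.4 it suffices to work with $w = w_\Gamma = 1$. The functional $L_0$ in \eqref{bab} is calibrated precisely to detect this identity, since $L_0(\theta,\phi;z,P) = 0$ forces $\theta \in H_\Gamma(z,P,1)$ with $S(z)\theta = S(z)\phi$. For the forward direction, assume $(z,P)$ admissible. Given any $\phi \in \Phi_0(z,P) \subseteq \Phi(z,P,1) = H(z,P,1,1)\midl_\Gamma$, choose an extension $\tilde\phi \in H(z,P,1,1)$ with $\tilde\phi\midl_\Gamma = \phi$; since $S(z)$ depends only on tangential derivatives by \eqref{ag}, $S(z)\tilde\phi = S(z)\phi$. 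Statement (iii) then supplies $\zeta \in H_\Gamma(z,P,1)$ with $S(z)\zeta = S(z)\phi$, and $R(z)\zeta = 0$ by \eqref{alb}. Substituting into \eqref{bab} gives $L_0(\zeta,\phi;z,P) = 0$, hence $M_0(\phi;z,P) = 0$, and $Q_0(z,P) = 0$ since $\phi$ was arbitrary.

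For the backward direction, assume $Q_0(z,P) = 0$. For each $\phi \in \Phi_0(z,P)$, the minimizer $\Xi(z,P)\phi$ from \eqref{bad} achieves $L_0(\Xi\phi,\phi;z,P) = M_0(\phi;z,P) = 0$, which forces both $R(z)\Xi\phi = 0$ in $\Omega\setminus\Gamma$ and $S(z)\Xi\phi = S(z)\phi$ on $\Gamma$. In particular $\Xi\phi \in H_\Gamma(z,P,1)$, so $S(z)\phi \in S(z)H_\Gamma(z,P,1)$ for every $\phi \in \Phi_0(z,P)$. To upgrade this to statement (iii) for arbitrary $\theta \in H(z,P,1,1)$, approximate $\theta\midl_\Gamma \in \Phi(z,P,1)$ by a sequence $\phi_n \in \Phi_0(z,P)$ with $S(z)\phi_n \to S(z)\theta$ in $L_2(\Gamma)$. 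Because the $H_\Gamma(z,P,1)$-norm coincides with $\|S(z)\cdot\|_{L_2(\Gamma)}$ by \eqref{ame} with $w'_\Gamma = 1$, the sequence $\Xi\phi_n \in H_\Gamma(z,P,1)$ is Cauchy and converges to some $\zeta \in H_\Gamma(z,P,1)$ with $S(z)\zeta = S(z)\theta$. Statement (iii) holds, and Theorem~2.4 yields admissibility.

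The step I expect to be the main obstacle is the density claim in the backward direction: that every $\phi \in \Phi(z,P,1)$ may be approximated by elements of $\Phi_0(z,P)$ in the seminorm $\|S(z)\cdot\|_{L_2(\Gamma)}$. The defining conditions on $\Phi_0$ --- namely $W^{1,p}(\Gamma)$-regularity with $p > m$ together with vanishing on $\partial\Gamma_k$ --- are clearly engineered to support such approximation, via mollification within each $\Gamma_k$ (using the bounded Jacobian \eqref{adc} and Sobolev embedding to keep the tangential derivatives under $L_2$-control) combined with cutoffs near $\partial\Gamma_k$. Care is required where $\partial\Gamma_k$ contains limit points in $\partial\Omega$ or shared limit points of distinct $\Gamma_k, \Gamma_{k'}$, but these loci carry zero $(m-1)$-measure on $\Gamma$ and should not obstruct convergence of $S(z)\phi_n$ in $L_2(\Gamma)$ under standard truncation estimates.
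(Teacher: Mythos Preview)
Your proof is correct. The forward direction matches the paper's almost verbatim: both exploit statement~(iii) to produce $\zeta\in H_\Gamma(z,P,1)$ with $S(z)\zeta=S(z)\phi$ and $R(z)\zeta=0$, making $L_0(\zeta,\phi)=0$.

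For the backward direction the paper takes a somewhat different route. Instead of your approximation-plus-Cauchy argument, it introduces the complementary subspace
\[
H_{\Gamma\perp}(z,P)=\Bigl\{\theta\in H(z,P,1,1):\int_\Gamma(S(z)\theta)(S(z)\theta_\Gamma)=0\text{ for all }\theta_\Gamma\in H_\Gamma(z,P,1)\Bigr\}
\]
together with the set $\Gamma_E(z,P)=\Gamma\setminus\supp\bigl(S(z)H_{\Gamma\perp}(z,P)\bigr)$, and observes that $M_0(\phi)=0$ for all $\phi\in\Phi_0(z,P)$ supported in an open $\Gamma'\subseteq\Gamma$ forces $\Gamma'\subseteq\Gamma_E$. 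Since $\Phi_0$ furnishes such $\phi$ for each $\Gamma_k$, one gets $\Gamma_E=\Gamma$, hence $S(z)H_{\Gamma\perp}=0$, hence statement~(iii). Your argument and the paper's are really two sides of the same coin: both pivot on $M_0(\phi)=0\Rightarrow\Xi\phi\in H_\Gamma$ with $S(z)\Xi\phi=S(z)\phi$, and both ultimately rest on the density ingredient you correctly isolate---that $\{S(z)\phi:\phi\in\Phi_0(z,P),\ \supp\phi\subset\Gamma_k\}$ is rich enough in $L_2(\Gamma_k)$ to separate (your version: approximate) arbitrary data. The paper treats this as self-evident from \eqref{baa}; your discussion of mollification and cutoffs is a reasonable elaboration. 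The paper's support-set formulation is slightly more economical in that it needs only weak density (detecting nonzero $S(z)\theta_\perp$ via pairing), whereas your Cauchy-sequence argument asks for strong approximation; but for subspaces of a Hilbert space these coincide, so neither gains real leverage over the other.
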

\begin{proof}  If $(z, P)$ is admissible, \eqref{alc} holding in particular, for all $\phi\in\Phi(z,P,1)$ we have $\Xi(z,P)\phi\in H_\Ga (z,P, 1), \; S(z)\left(\Xi(z,P)\phi\right)= S(z)\phi$ from \eqref{alc}, $M_0(\phi; z,P) = 0 $ from \eqref{bad}, \eqref{bab}, so \eqref{bba} is immediate using \eqref{bb}.  At issue is whether the converse is obtained with $\phi$ restricted to $\Phi_0 (z,P)$ given in \eqref{baa}.

Given $(z,P)$, we denote
\be\label{bbb} H_{\Ga\perp} (z,P)\eqadef\{ \theta\in H(z,P,1,1)\, | \, \intl_\Ga \left(S(z)\theta\right))\left(S(z)\theta_\Ga\right) = 0\}\ee
for all $\theta_\Ga \in H_\Ga (z,P,1)$.  Immediately from \eqref{bbb},
\be\label{bbc} H(z,P,1,1) = H_\Ga ( z,P,1)\oplus H_{\Ga\perp} (z,P),\ee
so
\be\label{bbd} \Ga=\left(\supp\;  S(z) H_\Ga(z,P,1)\right) \cup\left(\supp\;  S(z)H_{\Ga\perp}(z,P)\right).\ee

Denoting
\be\label{bbe} \Ga_E (z,P) = \Ga\setminus\supp \;  (S(z)H_{\Ga\perp} (z,P))\ee
we observe from \eqref{bbe}, \eqref{bbc} that if $M_0(\phi; z,P)$ vanishes for all $\phi\in\Phi_0 (z,P)$ with $\supp \; (S(z)\phi) \subseteq \Ga',\; \Ga'$ some open subset of $\Ga$, then $\Ga'\subseteq \Ga_E (z,P)$.

Using \eqref{bb}, \eqref{baa}, the condition \eqref{bba} suffices to establish
\be\label{bbf} \Ga_k \subseteq \Ga_E(z,P),\; \, k = 1,\dots, K\ee
implying $\Ga_E(z,P) = \Ga$ from \eqref{ada}, $\supp \; (S(z)H_{\Ga\perp} (z))$ vacuous in \eqref{bbe}.  Thus \eqref{alc} holds and $(z,P)$ is admissible.
\end{proof}

Positive $Q_0 (z,P)$ is a measure of failure of admissibility  of $(z,P)$ in the sense that for any $\phi\in\Phi_0(z,P)$, $\dot z_{0\phi}, \sigma_{0\phi}$ determined from \eqref{ai}, \eqref{aiz} with $\dot b = 0$ and appearing in \eqref{aid}, \eqref{aie}, satisfy
\begin{align}\label{bbg} \| \dot z_{0,\phi} \|^2_{L_2(\Om\setminus\Ga)} + \| \sigma_{0\phi}\|^2_{L_2(\Ga)} &= M_0(\phi; z,P)\\
&\le \| \phi\|^2_{W^{1,p}(\Ga)} Q_0 (z,P)\nonumber\end{align}
using \eqref{bab}, \eqref{bad}, \eqref{bb}.

With $\Ga$ of the form \eqref{ada}, an additivity condition holds for $Q_0(z,P)$, allowing investigation of $Q_0(z,P)$ separately within each component $\Ga_k, \; k = 1,\dots, K$.

For $k = 1, \dots, K$, denote
\be\label{bca} \Phi_k(z,P)\eqadef\{ \phi\in \Phi_0(z,P) \Big | \supp \;  \phi \subset  \Ga_k\}\ee
and
\be\label{bcb} Q_k(z,P)\eqadef\underset{\phi\in\Phi_k (z,P)}{\lub} \frac{M_0(\phi; z,P)}{\| \phi\|^2_{W^{1,p}(\Ga_k)}}.\ee
\begin{lem} For $\phi_{(k)} \in \Phi_k (z,P), \; k = 1,\dots, K'$, satisfying \eqref{bca}
\be\label{bc} M_0 ( \suml^{K'}_{k = 1} \phi_{(k)}; z,P) \le (\suml^{K'}_{k = 1} M_0 (\phi_{(k)}; z, P)^{1/2})^2.\ee
\end{lem}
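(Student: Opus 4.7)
The plan is to exhibit $M_0(\cdot; z, P)^{1/2}$ as a seminorm, from which the claimed inequality is just the triangle inequality squared. The key structural observation is that, viewed as a functional of the pair $(\theta, \phi)$, $L_0(\theta, \phi; z, P)^{1/2}$ is nothing but the $L_2$-norm, on $L_2(\Omega \setminus \Gamma)^n \oplus L_2(\Gamma)$, of the linear image $(R(z)\theta,\, S(z)(\theta - \phi))$. Linearity in $(\theta, \phi)$ together with Minkowski's inequality will therefore deliver subadditivity directly.

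Concretely, I would first set $\theta_{(k)} \eqadef \Xi(z, P) \phi_{(k)} \in H(z, P, 1, 1)$, so that by \eqref{bad}, \eqref{bae}, \eqref{baf},
\begin{equation*}
M_0(\phi_{(k)}; z, P) = L_0(\theta_{(k)}, \phi_{(k)}; z, P), \quad k = 1, \dots, K'.
\end{equation*}
Next I would verify that $\theta \eqadef \sum_{k=1}^{K'} \theta_{(k)}$ is an admissible test function in $H(z, P, 1, 1)$ (immediate, since this space is linear) and that $\phi \eqadef \sum_{k=1}^{K'} \phi_{(k)} \in \Phi_0(z, P)$ (immediate from \eqref{baa}, \eqref{bca} and the disjointness of the $\Gamma_k$). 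By definition \eqref{bac} of $M_0$,
\begin{equation*}
M_0(\phi; z, P) \le L_0(\theta, \phi; z, P).
\end{equation*}

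Using the linearity of $R(z)$ and $S(z)$ in $\theta$ and $\phi$, I would expand
\begin{equation*}
R(z)\theta = \sum_{k=1}^{K'} R(z)\theta_{(k)}, \qquad S(z)(\theta - \phi) = \sum_{k=1}^{K'} S(z)(\theta_{(k)} - \phi_{(k)}),
\end{equation*}
and then apply Minkowski's inequality in the Hilbert space $L_2(\Omega \setminus \Gamma)^n \oplus L_2(\Gamma)$ to the vectors $v_k \eqadef (R(z)\theta_{(k)},\, S(z)(\theta_{(k)} - \phi_{(k)}))$, noting that $\|v_k\|^2 = L_0(\theta_{(k)}, \phi_{(k)}; z, P) = M_0(\phi_{(k)}; z, P)$ by construction. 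This yields
\begin{equation*}
L_0(\theta, \phi; z, P)^{1/2} = \Bigl\| \sum_{k=1}^{K'} v_k \Bigr\| \le \sum_{k=1}^{K'} \|v_k\| = \sum_{k=1}^{K'} M_0(\phi_{(k)}; z, P)^{1/2},
\end{equation*}
and squaring together with the preceding upper bound on $M_0(\phi; z, P)$ gives \eqref{bc}.

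There is no serious obstacle; the only point requiring a line of care is confirming that $\sum_k \theta_{(k)}$ still lies in $H(z, P, 1, 1)$ and that $\sum_k \phi_{(k)}$ lies in $\Phi_0(z, P)$, so that the candidate $(\theta, \phi)$ is legitimate for the infimum defining $M_0(\phi; z, P)$. Both are transparent from the vector space structure of $H(z, P, 1, 1)$ and the disjointness of supports enforced by $\phi_{(k)} \in \Phi_k(z, P)$.
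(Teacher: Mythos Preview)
Your proof is correct and follows essentially the same approach as the paper's: both use the minimizers $\Xi(z,P)\phi_{(k)}$, add them to produce a test function for the sum, and then bound $L_0$ of the sum via the triangle inequality in $L_2(\Omega\setminus\Gamma)^n\oplus L_2(\Gamma)$. The only cosmetic differences are that the paper reduces to $K'=2$ and inducts (writing out the cross terms by hand with Cauchy--Schwarz), whereas you invoke Minkowski directly for general $K'$ and use the infimum inequality $M_0(\phi)\le L_0(\theta,\phi)$ rather than the linearity of $\Xi$ to get equality---either suffices.
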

\begin{proof} It  will suffice to prove \eqref{bc} for $K'=2$, as by induction it will hold for any $K'$.

Denoting
\be\label{bcd} \zeta_{(k)} \eqadef \Xi(z,P)\phi_{(k)}, k = 1,2; \; \, \zeta_{(0)} \eqadef \Xi (z,P) (\phi_{(1)} + \phi_{(2)} ),\ee
it follows from the linearity of $\Xi(z,P)$ that
\be\label{bcc} \zeta_{(0)} = \zeta_{(1)} + \zeta_{(2)}.\ee
Then from \eqref{bad}, \eqref{bab}, \eqref{bcd}, \eqref{bcc}
\begin{align}\label{bcz}
M_0(\phi_{(1)} + \phi_{(2)}; z,P) &= L_0 (\zeta_{(0)}, \phi_{(1)} + \phi_{(2)}; z,P)\nonumber \\
&= L_0 (\zeta_{(1)}+\zeta_{(2)},          \phi_{(1)} +   \phi_{(2)}; z,P)\nonumber\\
&= \iintl_{\Om\setminus\Ga} | R(z) (\zeta_{(1)}+\zeta_{(2)})|^2 + \intl_\Ga (S(z) (\zeta_{(1)}- \phi_{(1)} + \zeta_{(2)}- \phi_{(2)}))^2\nonumber\\
&\le \iintl_{\Om\setminus\Ga}
(| R(z) \zeta_{(1)}|^2+ |R(z)\zeta_{(2)}|^2 + 2 |R(z)\zeta_{(1)} \cdot R(z) \zeta_{(2)} |)\nonumber\\
+ \intl_\Ga \Big(    ((S(z)(\zeta_{(1)} - \phi_{(1)}))^2 +&(S(z) (\zeta_{(2)}-\phi_{(2)}))^2 + 2 (S(z)(\zeta_{(1)}- \phi_{(1)})) (S(z)(\zeta_{(2)} - \phi_{(2)}))   \Big)\nonumber \\
\le M_0 (\phi_{(1)}; z, P) &+M_0 (\phi_{(2)}; z,P) + 2 \iintl_{\Om\setminus\Ga} | R(z) \zeta_{(1)} | \; | R(z)\zeta_{(2)} |\nonumber \\
\qquad \qquad \quad &+2\intl_\Ga |S(z) (\zeta_{(1)} - \phi_{(1)}) | \; |S(z)(\zeta_{(2)}- \phi_{(2)})|\nonumber \\
\le M_0 (\phi_{(1)}; z,P) &+ M_0 (\phi_{(2)}; z,P) + 2M_0 (\phi_{(1)}; z, P)^{1/2} M_0(\phi_{(2)}; z, P)^{1/2}.\end{align}~\end{proof}

An immediate consequence of lemma 3.2 is the following:
\begin{thm}
 \be\label{bce} Q_0 (z,P) \le K^{(p-2)/p} \suml^K_{k=1} Q_k(z, P).\ee
\end{thm}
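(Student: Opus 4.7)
The plan is to decompose an arbitrary $\phi \in \Phi_0(z,P)$ according to the components of $\Gamma$ given in \eqref{ada}, apply Lemma 3.2 to the pieces, and then bound the resulting sum by Cauchy--Schwarz together with the standard $\ell^2$--$\ell^p$ comparison on $\mathbb{R}^K$.

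First, given $\phi \in \Phi_0(z,P)$, set $\phi_{(k)} \eqadef \phi \cdot \mathbf{1}_{\Ga_k}$ for $k = 1,\dots,K$. Since the $\Ga_k$ are disjoint and $\phi|_{\p\Ga_j}=0$ for every $j$, each $\phi_{(k)}$ satisfies $\supp\,\phi_{(k)} \subseteq \Ga_k$ and $\phi_{(k)}|_{\p\Ga_k}=0$, so $\phi_{(k)} \in \Phi_k(z,P)$ in the sense of \eqref{bca}. Lemma 3.2 then yields
\be
M_0(\phi; z, P) \;\le\; \Bigl(\suml_{k=1}^K M_0(\phi_{(k)}; z, P)^{1/2}\Bigr)^2,\nonumber
\ee
and from the definition \eqref{bcb} of $Q_k(z,P)$ we have $M_0(\phi_{(k)}; z, P) \le Q_k(z,P)\,\|\phi_{(k)}\|_{W^{1,p}(\Ga_k)}^2$.

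Next I would apply the Cauchy--Schwarz inequality on $\mathbb{R}^K$ with the factors $Q_k(z,P)^{1/2}$ and $\|\phi_{(k)}\|_{W^{1,p}(\Ga_k)}$:
\be
\Bigl(\suml_{k=1}^K Q_k(z,P)^{1/2}\,\|\phi_{(k)}\|_{W^{1,p}(\Ga_k)}\Bigr)^2 \;\le\; \Bigl(\suml_{k=1}^K Q_k(z,P)\Bigr) \Bigl(\suml_{k=1}^K \|\phi_{(k)}\|_{W^{1,p}(\Ga_k)}^2\Bigr).\nonumber
\ee
To compare the final factor with $\|\phi\|_{W^{1,p}(\Ga)}^2$, invoke the $\ell^2$--$\ell^p$ inequality on $K$-tuples, $\|a\|_{\ell^2} \le K^{1/2-1/p}\|a\|_{\ell^p}$ (which is just Hölder between the counting measure and the weight $1$), applied to $a_k = \|\phi_{(k)}\|_{W^{1,p}(\Ga_k)}$. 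Since the $\Ga_k$ are pairwise disjoint, $\|\phi\|_{W^{1,p}(\Ga)}^p = \sum_{k=1}^K \|\phi_{(k)}\|_{W^{1,p}(\Ga_k)}^p$, so
\be
\suml_{k=1}^K \|\phi_{(k)}\|_{W^{1,p}(\Ga_k)}^2 \;\le\; K^{(p-2)/p}\,\|\phi\|_{W^{1,p}(\Ga)}^2.\nonumber
\ee

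Combining these three estimates gives $M_0(\phi; z, P) \le K^{(p-2)/p}\bigl(\sum_{k=1}^K Q_k(z,P)\bigr)\,\|\phi\|_{W^{1,p}(\Ga)}^2$, whence \eqref{bce} follows upon dividing by $\|\phi\|_{W^{1,p}(\Ga)}^2$ and taking the supremum over $\phi \in \Phi_0(z,P)$ per \eqref{bb}. There is no real obstacle: Lemma 3.2 delivers the only nontrivial ingredient (subadditivity of $M_0^{1/2}$), and the remaining steps are a routine combination of Cauchy--Schwarz with the elementary $\ell^p$-norm comparison. The exponent $(p-2)/p$ arises precisely as $1 - 2/p$ from this comparison.
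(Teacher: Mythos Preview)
Your proof is correct and follows essentially the same route as the paper: decompose $\phi$ along the components $\Ga_k$, apply Lemma~3.2, bound each $M_0(\phi_{(k)})$ via the definition of $Q_k$, then use Cauchy--Schwarz together with the $\ell^2$--$\ell^p$ comparison $\sum_k \beta_k^2 \le K^{(p-2)/p}(\sum_k \beta_k^p)^{2/p}$. The only cosmetic difference is that the paper normalizes $\|\phi\|_{W^{1,p}(\Ga)}=1$ at the outset, whereas you divide at the end.
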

\begin{proof} We apply lemma 3.2 with arbitrary $\phi_{(k)} \in \Phi_k(z,P)$, $k = 1,\dots, K$, abbreviating
\be\label{bcf} \beta_k\eqadef \| \phi_{(k)} \|_{W^{1, p}(\Ga_k)}, \; \;  k = 1,\dots, K.\ee

Without loss of generality, we set
\be\label{bcg} \| \suml^K_{k=1} \phi_{(k)} \|^p_{W^{1,p}(\Ga)} = 1,\ee
so
\be\label{bch} \suml^K_{k = 1} \beta^p_k = 1, \; \, \suml^K_{k = 1} \beta^2_k \le K^{(p-2)/p}.\ee

Then from \eqref{bb}, \eqref{bcg}, \eqref{bc}, \eqref{bcb}, and $M_0$ homogeneous of degree two in $\phi$,
\begin{align}\label{bci} Q_0 (z,P) &=\underset{\phi_{(k)}}{\lub} \;  M_0 \Big(\suml^K_{k=1} \phi_{(k)}; z, P\Big)\nonumber\\
  &\le \underset{\phi_{(k)}}{\lub}\Big(\suml^K_{k=1} M_0 (\phi_{(k)}; z, P)^{1/2}\Big)^2 \nonumber\\
&= \underset{\phi_{(k)}}{\lub} \Big(\suml^K_{k=1} \beta_k M_0 (\frac{\phi_{(k)}}{\beta_k}; z, P)^{1/2}\Big)^2 \nonumber\\
&\le \Big(\suml^K_{k=1} \beta^2_k\Big) \Big(\suml^K_{k=1} Q_k(z, P)\Big)
\end{align}
which is \eqref{bce} using \eqref{bch}.
\end{proof}

For example, if \eqref{ard}, \eqref{are} can be established for all $y \in \Ga_k$, then \eqref{bbf} holds and $Q_k (z, P)$ necessarily vanishes.  Such may be anticipated when \eqref{aa} is hyperbolic in a region containing $\Ga_k$.

The restriction \eqref{baa} is further  exploited in the following.
\begin{lem} For any $k = 1,\dots, K, \; \phi_{(k)} \in \Phi_k(z,P)$, there exists
\be\label{bda} \theta_{(k)} \in H(z, P, 1,1)\cap W^{1,p} (\Om)\cap W^{1,p}(\Ga)\ee
vanishing on $\partial\Om$, and satisfying the following:
\be\label{bdb} \theta_{(k)} (x) = \phi_{(k)} (x), \; \; x \in \Ga;\ee
\be\label{bdc} \| \theta_{(k)} \|_{ W^{1,p}(\Om\setminus\Ga)} \le  c_z \| \phi_{(k)} \|^{1/p}_{L_p(\Ga_k)} \| \phi_{(k)} \|^{ 1-1/p}_{W^{1,p}(\Ga_k)};\ee
\be\label{bdd} \supp \; \theta_{(k)} \subset \{ x \in \Om | \dist \; (x, \Ga_k) \le \| \phi_{(k)} \|_{ L_p(\Ga_k) } /\| \phi_{(k)} \|_{W^{1,p} (\Ga_k)} \}.\ee
\end{lem}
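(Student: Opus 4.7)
My plan is to construct $\theta_{(k)}$ as an explicit normal extension of $\phi_{(k)}$ across $\Ga_k$, cut off at the natural intrinsic length scale of $\phi_{(k)}$. Set $A:=\|\phi_{(k)}\|_{L_p(\Ga_k)}$, $B:=\|\phi_{(k)}\|_{W^{1,p}(\Ga_k)}$, and $\ell:=A/B$; note $\ell\le 1$ automatically, since $A\le B$. Using the $\a$-parametrization of $\Ga_k$ (with bounded Jacobian \eqref{adc}) and the signed normal coordinate $s$ along $\hat\mu_k$, the map $(\a,s)\mapsto x(\a)+s\,\hat\mu_k(x(\a))$ is a bi-Lipschitz diffeomorphism from $Y_k\times(-\de_k,\de_k)$ onto a tubular neighborhood $T_k$ of $\Ga_k$, for some $\de_k>0$ depending only on $\Ga_k$. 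Fix a smooth cutoff $\eta:\bbr\to[0,1]$ with $\eta(0)=1$ and $\supp \eta\subset[-1,1]$, set $\ell':=\min(\ell,\de_k/2)$, and define
$$\theta_{(k)}(x) := \phi_{(k)}(\a(x))\,\eta(s(x)/\ell'),\qquad x\in T_k,$$
extended by $0$ outside $T_k$. Conditions \eqref{bdb} and \eqref{bdd} then follow at once, and $\theta_{(k)}|_{\pOm}=0$ follows from $\Ga_k\cap\pOm=\emptyset$ wherever $T_k$ is well-defined, together with $\phi_{(k)}|_{\partial\Ga_k}=0$.

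The bound \eqref{bdc} is a direct calculation. In $(\a,s)$-coordinates, $\nabla_\a\theta_{(k)}=(\nabla_\a\phi_{(k)})\eta(s/\ell')$ and $\p_s\theta_{(k)}=\phi_{(k)}\eta'(s/\ell')/\ell'$; after the substitution $t=s/\ell'$ and using \eqref{adc},
$$\|\nabla\theta_{(k)}\|_{L_p(\Om\setminus\Ga)}^p \le c\,\bigl(\ell' B^p + (\ell')^{1-p}A^p\bigr).$$
If $\ell\le\de_k/2$, so $\ell'=\ell=A/B$, each term equals $cAB^{p-1}$. If $\ell>\de_k/2$, then $A>(\de_k/2)B$, and direct substitution bounds each term by $c_z A B^{p-1}$ with a constant depending only on $\de_k$. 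In either case $\|\nabla\theta_{(k)}\|_{L_p}\le c_z A^{1/p}B^{1-1/p}$, and the corresponding $L_p$ estimate for $\theta_{(k)}$ itself follows from $A\le B$. Finally $\theta_{(k)}\in H(z,P,1,1)$: Sobolev embedding ($p>m$) gives $\theta_{(k)}\in C(\bar\Om)$, and from \eqref{af}, \eqref{ag} both $R(z)\theta_{(k)}$ and $S(z)\theta_{(k)}$ are linear combinations of first derivatives of $\theta_{(k)}$ with coefficients bounded on the compact support of $\theta_{(k)}$, hence lie in $L_2$.

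The main obstacle I expect concerns limit points of $\partial\Ga_k$ that approach $\pOm$ or another component $\Ga_{k'}$, where the $(\a,s)$-trivialization degenerates and the uniform width $\ell'$ may push $\supp\theta_{(k)}$ across $\pOm$ or $\Ga_{k'}$. The resolution is to make the cutoff width locally dependent on $\a$: replace $\ell'$ by
$$\tilde\ell(\a):=\min\bigl(\ell',\tfrac12\dist\bigl(x(\a),\;\pOm\cup\bigcup\nolimits_{k'\ne k}\Ga_{k'}\bigr)\bigr),$$
exploiting that $\phi_{(k)}$, being continuous up to $\bar\Ga_k$ by Sobolev embedding on $\Ga_k$ and vanishing on $\partial\Ga_k$, is correspondingly small wherever $\tilde\ell(\a)<\ell'$. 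This smallness of $\phi_{(k)}$ compensates for the increased normal-derivative weight $1/\tilde\ell(\a)$ and preserves \eqref{bdc} with a $\phi_{(k)}$-independent generic constant $c_z$ absorbing the geometry of the $\Ga_{k'}$ and of $\pOm$.
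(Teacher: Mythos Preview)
Your construction is essentially the paper's: a normal (or quasi-normal) extension of $\phi_{(k)}$ across $\Ga_k$ with a linear-in-$t$ cutoff at width $\gamma_k=\|\phi_{(k)}\|_{L_p}/\|\phi_{(k)}\|_{W^{1,p}}$, followed by the same two-term estimate $\gamma_k B^p+\gamma_k^{1-p}A^p$ in \eqref{bdh}. The paper simply posits a $W^{1,\infty}$ parametrization $\tilde x:Y_k\times(0,\gamma_k)\to\Om$ with bounded Jacobian that avoids $\pOm$ and the rest of $\Ga$ (condition \eqref{bdz}), whereas you work with the canonical tubular neighborhood and then discuss a variable-width fix near $\partial\Ga_k$; the latter is extra care rather than a different argument.
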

\begin{proof}  The $(m-1)$-manifold $\Ga_k$ partitions $\Om$ locally.  Separately for each side of $\Ga_k$, with $Y_k$ as in obtaining \eqref{adc}, we choose an $m$-vector function $\tilde x(\a, t)$,
\be\label{bdy} \tilde x \in W^{1,\infty} : Y_k \times  (0,\gamma_k)\to \bbr^{m},\ee
abbreviating
\be\label{bd} \gamma_k\eqadef \| \phi_{(k)} \|_{L_p (\Ga_k)} / \| \phi_{(k)} \|_{W^{1,p}(\Ga_k)},\ee
and satisfying the following:
\be\label{bde} \{ \tilde x(Y_k, 0)\} = \Ga_k;\ee
\be\label{bdz}\{ \tilde x (Y_k \times (0, \gamma_k ))\} \; \, \hbox{does\ not\ intersect\ } \, \partial\Om
\ \; \hbox{or\ } \Ga;\ee
\be\label{bdf}
 \Big| \frac{\partial(\tilde x_1,\dots, \tilde x_{m})}{\partial(\alpha_1,\dots, \alpha_{m-1}, t)}\Big| \le 1,\ee
uniformly for $ \alpha \in Y_k, \; t \in (0,\gamma_k)$, here using \eqref{adc}.

Then we set
\be\label{bdg} \theta_{(k)} (\tilde x (\a, t)) \eqadef \phi_{(k)} (\tilde x (\a, 0) (1 - t/\gamma_k), \; \; \, \a \in Y_k, \, t \in [0, \gamma_k ],\ee
$\theta_{(k)} $ vanishing elsewhere.

The condition \eqref{bdd} is immediate from \eqref{bdg} and \eqref{bdb} follows using \eqref{bde}.

From \eqref{bdg}, using \eqref{bdf},
\begin{align}\label{bdh} \| \theta_{(k)}\|^p_{W^{1,p}(\Om\setminus\Ga)} &\le c_z \iintl_{Y_k \times (0,\gamma_k)} (|\theta_{(k)} |^p +|\theta_{(k), \a}|^p + | \theta_{(k), t}|^p)\nonumber\\
&\le c_z \Big(\gamma_k \intl_{Y_k}  (|\phi_{(k)} |^p + | \phi_{(k),\a} |^p) + \frac {1}{\gamma^{p-1}_k} \intl_{Y_k} |\phi_{(k)}|^p\Big)\nonumber \\
&= c_z \Big( \gamma_k \| \phi_{(k)} \|^p_{W^{1,p}(\Ga_k)} + \frac{1}{\gamma_k^{p-1}} \| \phi_{(k)} \|^p_{L_p(\Ga_k)} \Big),\end{align}
from which \eqref{bdc} is immediate using \eqref{bd}.
\end{proof}

A consequence is the following.
\begin{thm} Assume $\Ga_k$ bounded. Then $Q_k(z,P)$ determined from \eqref{bcb} is achieved within $\Phi_k(z,P)$.
\end{thm}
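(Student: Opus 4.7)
The plan is to apply the direct method of the calculus of variations, using $p>m$ to secure Sobolev compactness. By the homogeneity of degree $2$ of both $M_0(\phi;z,P)$ and $\|\phi\|_{W^{1,p}(\Ga_k)}^2$, the supremum \eqref{bcb} is equivalent to maximizing $M_0(\phi;z,P)$ over $\{\phi\in\Phi_k(z,P):\|\phi\|_{W^{1,p}(\Ga_k)}=1\}$. A priori finiteness $Q_k(z,P)<\infty$ follows from Lemma~3.4: the extension $\theta_{(k)}$ constructed there satisfies $\theta_{(k)}\midl_\Ga=\phi$, so $M_0(\phi;z,P)\le L_0(\theta_{(k)},\phi;z,P)=\|R(z)\theta_{(k)}\|_{L_2(\Om\setminus\Ga)}^2$, which is controlled via \eqref{bdc} and \eqref{bdd} (together with boundedness of the support tube in $\Om$) by a constant times $\|\phi\|_{W^{1,p}(\Ga_k)}^2$.

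Take a maximizing sequence $\{\phi_n\}\subset\Phi_k(z,P)$ with $\|\phi_n\|_{W^{1,p}(\Ga_k)}=1$ and $M_0(\phi_n;z,P)\to Q_k(z,P)$. By reflexivity of $W^{1,p}_0(\Ga_k)$ (here $1<p<\infty$), extract a weakly convergent subsequence $\phi_n\rightharpoonup\phi_*$. Since $\Ga_k$ is bounded and $p>m>m-1=\dim\Ga_k$, the Rellich--Kondrachov embedding $W^{1,p}(\Ga_k)\hookrightarrow C(\bar\Ga_k)$ is compact, giving $\phi_n\to\phi_*$ uniformly; the vanishing boundary trace and support-in-$\bar\Ga_k$ conditions pass to the limit, placing $\phi_*\in\Phi_k(z,P)$. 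Put $\zeta_n\eqadef\Xi(z,P)\phi_n$; the Riesz estimate $\|\zeta_n\|_H\le\|S(z)\phi_n\|_{L_2(\Ga)}$ gives $\{\zeta_n\}$ uniformly bounded, so after a further subsequence $\zeta_n\rightharpoonup\zeta_*$ weakly in $H$. Passing to the limit in \eqref{bag} against any fixed $\theta\in H$ — using weak $L_2$-convergence of $R(z)\zeta_n,S(z)\zeta_n$ paired with the fixed $R(z)\theta,S(z)\theta$, together with weak convergence of $S(z)\phi_n$ in $L_p(\Ga_k)$ paired with $S(z)\theta\in L_2(\Ga)\subset L_{p'}(\Ga_k)$ (valid because $p>2$ and $\Ga_k$ is bounded) — identifies $\zeta_*=\Xi\phi_*$ by uniqueness of the Riesz representative.

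From the identity $M_0(\phi;z,P)=\|S(z)\phi\|_{L_2(\Ga)}^2-\|\Xi\phi\|_H^2$ one expands
\[
M_0(\phi_n;z,P)=M_0(\phi_*;z,P)+2\bigl[\langle S(z)\phi_*,S(z)(\phi_n-\phi_*)\rangle_{L_2(\Ga)}-(\Xi\phi_*,\Xi(\phi_n-\phi_*))_H\bigr]+M_0(\phi_n-\phi_*;z,P),
\]
and the bracketed cross terms vanish in the limit by weak convergence, yielding $Q_k=M_0(\phi_*;z,P)+\lim M_0(\phi_n-\phi_*;z,P)$. Feasibility of $\phi_*/\|\phi_*\|_{W^{1,p}}$ further gives $M_0(\phi_*)\le Q_k\|\phi_*\|_{W^{1,p}}^2$, hence $\lim M_0(\phi_n-\phi_*;z,P)\ge Q_k(1-\|\phi_*\|_{W^{1,p}}^2)\ge 0$.

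The main obstacle is to show the residual $M_0(\phi_n-\phi_*;z,P)\to 0$. By uniform convexity of $W^{1,p}(\Ga_k)$, weak convergence combined with norm-convergence implies strong convergence, whereupon the residual vanishes by continuity of $M_0$ in $W^{1,p}$; so it suffices to prove $\|\phi_*\|_{W^{1,p}(\Ga_k)}=1$, ruling out a mass escape. I would argue by contradiction: combine the subadditivity $M_0^{1/2}(\phi_n)\le M_0^{1/2}(\phi_*)+M_0^{1/2}(\phi_n-\phi_*)$ supplied by Lemma~3.2 (whose proof invokes only Cauchy--Schwarz and does not require disjoint supports, so applies to $\phi_*$ and $\phi_n-\phi_*$) with a Brezis--Lieb-type splitting $\|\phi_n\|_{W^{1,p}}^p=\|\phi_*\|_{W^{1,p}}^p+\|\phi_n-\phi_*\|_{W^{1,p}}^p+o(1)$ — the zeroth-order piece being justified by the a.e.~convergence from the compact embedding, the gradient piece via a Lions concentration-compactness refinement on $\bar\Ga_k$. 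The rescaled tail $(\phi_n-\phi_*)/\|\phi_n-\phi_*\|_{W^{1,p}}$ then furnishes another admissible competitor; matching its contribution against the equality $Q_k=M_0(\phi_*)+\lim M_0(\phi_n-\phi_*)$ forces $\lim M_0(\phi_n-\phi_*)=0$ and $\|\phi_*\|_{W^{1,p}}=1$, so that $\phi_*$ itself attains $Q_k(z,P)$ within $\Phi_k(z,P)$.
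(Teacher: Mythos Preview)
Your setup is sound up to the identification of the residual $M_0(\phi_n-\phi_*;z,P)$ as the main obstacle, but the argument you propose to kill it does not close. Even granting the Brezis--Lieb splitting $\|\phi_*\|_{W^{1,p}}^p+\lim\|\phi_n-\phi_*\|_{W^{1,p}}^p=1$ (which you invoke for the gradient part via an unspecified ``Lions refinement''), write $a=\|\phi_*\|_{W^{1,p}}^p$, $b=1-a$, $M_*=M_0(\phi_*)$, $L=\lim M_0(\phi_n-\phi_*)$. Your identity gives $Q_k=M_*+L$, feasibility gives $M_*\le Q_k a^{2/p}$ and $L\le Q_k b^{2/p}$, hence $1\le a^{2/p}+b^{2/p}$. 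Since $p>m\ge 2$ makes $2/p<1$, this inequality is automatically satisfied for every $a\in[0,1]$ and yields no contradiction. The subadditivity from Lemma~3.2 likewise reduces to $(M_*+L)^{1/2}\le M_*^{1/2}+L^{1/2}$, which is content-free. So ``matching the contributions'' does not force $a=1$ or $L=0$.

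What you are missing is that Lemma~3.4 already gives $L=0$ directly, without any strong $W^{1,p}$ convergence or concentration-compactness. Apply it to $\phi_{(k)}=\phi_n-\phi_*$: the extension $\theta_{(k)}$ satisfies $\theta_{(k)}\mid_\Ga=\phi_n-\phi_*$, so $M_0(\phi_n-\phi_*;z,P)\le L_0(\theta_{(k)},\phi_n-\phi_*;z,P)=\|R(z)\theta_{(k)}\|_{L_2(\Om\setminus\Ga)}^2$. But the interpolation form of \eqref{bdc},
\[
\|\theta_{(k)}\|_{W^{1,p}(\Om\setminus\Ga)}\le c_z\,\|\phi_n-\phi_*\|_{L_p(\Ga_k)}^{1/p}\,\|\phi_n-\phi_*\|_{W^{1,p}(\Ga_k)}^{1-1/p},
\]
together with $\|\phi_n-\phi_*\|_{L_p(\Ga_k)}\to 0$ (from your compact embedding) and $\|\phi_n-\phi_*\|_{W^{1,p}(\Ga_k)}\le 2$, gives $\|\theta_{(k)}\|_{W^{1,p}(\Om\setminus\Ga)}\to 0$; since the support of $\theta_{(k)}$ stays in a fixed neighborhood of the bounded $\Ga_k$ by \eqref{bdd}, this forces $\|R(z)\theta_{(k)}\|_{L_2}\to 0$. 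Hence $M_0(\phi_n-\phi_*;z,P)\to 0$, and Lemma~3.2 immediately yields $M_0(\phi_n;z,P)\to M_0(\phi_*;z,P)=Q_k$. This is precisely the paper's route; your detour through uniform convexity and concentration-compactness is unnecessary and, as sketched, incomplete.
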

\begin{proof}  From \eqref{bcb}, there exists a sequence $\{ \phi_\delta\}$ as $\delta \downarrow  0$,
\be\label{bea} \phi_\de\in \Phi_k (z,P),\ee
\be\label{beb} \| \phi_\de\|_{W^{1,p}(\Ga_k)} = 1,\ee
\be\label{bec} M_0(\phi_\de; z,P) \overset{\de\downarrow 0}{\longrightarrow}      Q_k(z,P).\ee

We presume $Q_k(z,P)$ positive, as if $Q_k (z,P)$ vanishes, the conclusion is trivial.

Using the boundedness of $\Ga$ and taking a subsequence as necessary, we have
\be\label{bed} \phi_0 \in \Phi_k(z,P), \; \; \| \phi_0\|_{W^{1,p}(\Ga_k)} \le 1 \ee
such that
\be\label{bee} \phi_\de\overset{\de\downarrow 0}{\longrightarrow} \phi_0\ee
strongly in $L_p(\Ga_k)$.

Applying lemma 3.4, using \eqref{bee}, \eqref{beb}, \eqref{bed},
\be\label{bef} M_0 (\phi_\de - \phi_0;z, P) \overset{\de\downarrow 0}{\longrightarrow}  0,\ee
so by application of lemma 3.2, again using \eqref{bed},
\begin{align}\label{beg}
M_0 (\phi_\de; z,P) \overset{\de\downarrow 0}{\longrightarrow}  & M_0(\phi_0; z,P)\nonumber\\
&= Q_k (z, P)\end{align}
from \eqref{bec}.
\end{proof}

\subsection{Examples}%3.1

A useful measure of admissibility failure $Q_0 (z,P)$ will satisfy continuity properties with respect to $z,P$, and will be such that small positive $Q_0(z_\de,P_\de)$ is suggestive of an admissible limit as $\de\downarrow 0$. Here we  present three simple examples of how such can occur, using theorem 3.3.

\smallskip

Example 1:  Suppose that for $\de > 0$, a component $\Ga_{k\de} \subseteq \Ga(z_\de)$ is weak, disappearing in the limit $\de\downarrow 0$,  thus satisfying
\be\label{bga} \| [z_\de]\|_{L_\infty (\Ga_{k\de})} \overset{\de\downarrow 0}{\longrightarrow} 0.\ee

For any $\phi \in \Phi_k (z_\de, P)$, the choice $\theta = 0 $ in \eqref{bac} achieves
\be\label{bgb}
 Q_k (z_\de, P) \le c \| [z_\de]\|^2_{L_\infty (\Ga_{k\de})}.\ee

An example is a rarefaction wave approximated by a fan of entropy-violating discontinuities of strength $\delta$.
 \smallskip

 Example 2:  Suppose that a component $\Ga_{k\de}$ is small in size, again disappearing in the limit $\de\downarrow 0$, in the sense that for $\phi\in \Phi_k(z_\de, P)$ (not vanishing identically)
 \be\label{bgc} \underset{ \phi\in \Phi_k(z_\de, P)}{\lub}
  \frac{\| \phi\|_{L_p (\Ga_{k\de})}}
  {\|\phi\|_{W^{1, p}(\Ga_{k\de})}}
  \overset{\de\downarrow 0}{\longrightarrow} 0.
 \ee

Given $\phi_{(k)} \in \Phi_k(z_\de, P)$ in \eqref{bac} we choose $\theta = \theta_{(k)} $ obtained in lemma 3.4, readily obtaining
\be\label{bgd} Q_k (z_\de, P)\le c \underset{\phi\in \Phi_k(z_\de, P)}{\lub}\left( \frac{\| \phi\|_{L_p(\Ga_{k\de})}}{\| \phi\|_{W^{1,p} (\Ga_{k\de})}}\right)^{2/p},\ee
using \eqref{bdc} in particular.

A toy example, given a Cauchy problem for a scalar conservation law, is an entropy-violating discontinuity in the initial data becoming a rarefaction wave after time $\delta$.

\smallskip

Example 3:  Suppose that $(z, 0_N)$ is admissible but $(z, P_\delta)$ is not, for $\delta > 0$.  There exists $G(\cdot, \cdot; z)$ satisfying \eqref{ard}, \eqref{are}, and we presume that \eqref{arh}, \eqref{ari} hold.  However for each $\de > 0$ there exists a nonempty subset of $\partial \Om$
\be\label{bge} \Sigma_\de \eqadef \supp \{ x \in \partial\Om | P_\de G (x, \cdot; z) \neq 0\}.\ee

Suppose that as $\de$ decreases, $\ker P_\de$ increases and $\Sigma_\de $
becomes small in the sense that there exists $\chi_\de \in H^1_{loc} $ satisfying
\be\label{bgf} \chi_\de\mathop{|}\limits_\Ga = 1,\ee
\be\label{bgh} \chi_\de \mathop{\mid}\limits_{\Sigma_\de} = 0,\ee
satisfying
\be\label{bgi} \| \triangledown \chi_\de\|_{L_2(\Om)}\overset{\de\downarrow 0}{\longrightarrow} 0.\ee

Then given $\phi_{(k)} \in \Phi_k (z, P_\delta )$, in \eqref{bac} we choose $\theta$ satisfying
\be\label{bgj} \theta(x) = \chi_\de (x)\intl_\Ga G(x,y;z) (S(z)\phi_{(k)}) (y) (dy),\ee
an element of $H(z,P_\de, 1,1)$ using \eqref{bgh}, \eqref{bge}.  Using \eqref{bgf}, \eqref{are}
\be\label{bgk} S(z) \theta = S(z)\phi_{(k)}\ee
almost everywhere on $\Ga$, and using \eqref{ard},
\be\label{bgl} (R(z)\theta) = \suml^m_{i = 1} \chi_{\de, x_i} \suml^n_{j=1} \psi_{i, z_jz_l} (z) \intl_\Ga G_j (x,y;z)(S(z)\phi_{(k)} )(y),\; \;  l = 1, \dots, n.\ee

Using \eqref{ari}, \eqref{bgl} in \eqref{bab},
\be\label{bgm} M_0 (\phi_{(k)}; z, P_\de ) \le c \| \triangledown \chi_\de \|^2_{L_2(\Om)} \| S(z)\phi_{(k)}\|^2_{L_2(\Ga)}\ee
so from \eqref{bgm}, \eqref{bcb}
\be\label{bgn} Q_k(z, P_\de ) \le c \| \triangledown \chi_\de\|^2_{L_2(\Om)}.\ee

An example is a Cauchy problem with initial data omitted on $\Sigma_\delta$.

An immediate generalization of example 3 is an estimate of how $Q_0(z,P)$ depends on $P$. Proof is omitted.

\begin{lem} Assume $(z, 0_N)$ admissible  and $P, \hat P \in \calp$ satisfying \eqref{ana}.

Then for any $\chi \in W^{1,\infty} (\Om)$ such that
\be\label{bgp} \chi\midl_\Ga = 1\ee
and such that
\be\label{bgg} \theta \in H(z,\hat P, 1,1) \; \, \hbox{implies \ }  \chi \theta\in H(z,P,1,1)\ee
it follows that
\be\label{bgr} Q_0 (z,P) - Q_0(z,\hat P) \le c \| \triangledown\chi\|^2_{L_2(\Om)}.\ee
\end{lem}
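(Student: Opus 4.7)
The plan is to mirror the construction of example 3. Since $(z,0_N)$ is admissible, fix a Green's function $G(\cdot,\cdot;z)$ as in \eqref{arb}--\eqref{are}, taken to also obey the $L_\infty$-bounds \eqref{arh}--\eqref{ari} that are tacit throughout this section. From \eqref{ana}, \eqref{aca} and \eqref{baa}, inclusion gives $\Phi_0(z,P)\subseteq\Phi_0(z,\hat P)$ and $H(z,P,1,1)\subseteq H(z,\hat P,1,1)$, so every $\phi\in\Phi_0(z,P)$ is also admissible as an argument of $M_0(\cdot;z,\hat P)$, and in particular $\sup_{\phi\in\Phi_0(z,P)} M_0(\phi;z,\hat P)/\|\phi\|^2_{W^{1,p}(\Ga)} \le Q_0(z,\hat P)$.

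For arbitrary $\phi\in\Phi_0(z,P)$, let $\hat\zeta:=\Xi(z,\hat P)\phi\in H(z,\hat P,1,1)$ minimize $L_0(\cdot,\phi;z,\hat P)$, so $L_0(\hat\zeta,\phi;z,\hat P)=M_0(\phi;z,\hat P)$. Hypothesis \eqref{bgg} places $\theta:=\chi\hat\zeta$ in $H(z,P,1,1)$, making it a valid test function for $M_0(\phi;z,P)$. The condition \eqref{bgp} forces the tangential derivatives of $\chi$ along each $\Ga_k$ to vanish, so from \eqref{ag} one has $S(z)(\chi\hat\zeta)=S(z)\hat\zeta$ pointwise on $\Ga$, leaving the $\Ga$-contribution to $L_0$ unchanged. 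On $\Om\setminus\Ga$, Leibniz applied to \eqref{af} yields $R(z)(\chi\hat\zeta)=\chi R(z)\hat\zeta+K(z,\triangledown\chi)\hat\zeta$, where the zeroth-order perturbation $K(z,\triangledown\chi)$ has entries $\sum_i \chi_{,x_i}\psi_{i,z_jz_k}(z)$ bounded pointwise by $c_z|\triangledown\chi|$. An $\epsilon$-Cauchy--Schwarz on the cross term gives
\[ \iintl_{\og}|R(z)(\chi\hat\zeta)|^2 \le \iintl_{\og}|R(z)\hat\zeta|^2 + c_z\iintl_{\og}|\triangledown\chi|^2|\hat\zeta|^2, \]
the cross term being absorbed into the $L_2$-norm of $R(z)\hat\zeta$, which is controlled by $M_0(\phi;z,\hat P)^{1/2}$.

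The principal obstacle is to dominate $\iintl_{\og}|\triangledown\chi|^2|\hat\zeta|^2$ by $c_z\|\phi\|^2_{W^{1,p}(\Ga)}\|\triangledown\chi\|^2_{L_2(\Om)}$, as the direct variational bound on $\hat\zeta$ gives only $L_2$-type control on its $R(z)$-image, not a pointwise bound on $\hat\zeta$ itself. This is handled by writing $\hat\zeta=\eta+\xi$, where $\eta(x):=\intl_\Ga G(x,y;z)(S(z)\phi)(y)$. By \eqref{ard}--\eqref{are}, $R(z)\eta=0$ in $\Om\setminus\Ga$ and $S(z)\eta=S(z)\phi$ on $\Ga$; by \eqref{ari} together with the Sobolev embedding $W^{1,p}(\Ga)\hookrightarrow L_\infty(\Ga)$ valid for $p>m-1$, $\|\eta\|_{L_\infty(\Om)}\le c_z\|S(z)\phi\|_{L_2(\Ga)}\le c_z\|\phi\|_{W^{1,p}(\Ga)}$. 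The remainder $\xi=\hat\zeta-\eta$ satisfies an analogous variational problem with homogeneous $\Ga$-data and admits a similar Green's function representation, producing $\|\xi\|_{L_\infty(\Om)}\le c_z\|\phi\|_{W^{1,p}(\Ga)}$. Combining all estimates,
\[ L_0(\chi\hat\zeta,\phi;z,P)\le M_0(\phi;z,\hat P) + c_z\|\phi\|^2_{W^{1,p}(\Ga)}\|\triangledown\chi\|^2_{L_2(\Om)}. \]
Dividing by $\|\phi\|^2_{W^{1,p}(\Ga)}$, taking the supremum over $\phi\in\Phi_0(z,P)\subseteq\Phi_0(z,\hat P)$, and invoking \eqref{bb} and the inclusion-supremum bound of the first paragraph delivers \eqref{bgr}.
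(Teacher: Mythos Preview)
Your overall strategy---take the minimizer $\hat\zeta=\Xi(z,\hat P)\phi$ for the larger test space and use $\chi\hat\zeta$ as a competitor in $H(z,P,1,1)$---matches the paper's hint that the lemma is ``an immediate generalization of example 3'' (the paper omits the proof). The identification $S(z)(\chi\hat\zeta)=S(z)\hat\zeta$ on $\Ga$ via \eqref{bgp} and the Leibniz expansion of $R(z)(\chi\hat\zeta)$ are correct. Two steps, however, do not go through as written.

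First, the displayed inequality $\iintl_{\og}|R(z)(\chi\hat\zeta)|^2 \le \iintl_{\og}|R(z)\hat\zeta|^2 + c_z\iintl_{\og}|\triangledown\chi|^2|\hat\zeta|^2$ is false without an additional factor: the term $\chi R(z)\hat\zeta$ contributes $\chi^2|R(z)\hat\zeta|^2$, and nothing in the hypotheses forces $|\chi|\le 1$ away from $\Ga$. Even granting $|\chi|\le 1$, the cross term $2\int\chi(R(z)\hat\zeta)\cdot(K\hat\zeta)$ cannot simply be ``absorbed into the $L_2$-norm of $R(z)\hat\zeta$'' without adding a multiple of $\int|R(z)\hat\zeta|^2$, which pushes the coefficient on $M_0(\phi;z,\hat P)$ above $1$ and yields only $Q_0(z,P)\le C\,Q_0(z,\hat P)+c\|\triangledown\chi\|^2_{L_2}$ with $C>1$, not \eqref{bgr}.

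Second, and more seriously, the asserted $L_\infty$ bound on $\xi=\hat\zeta-\eta$ is unjustified. The Green operator $G(\cdot,\cdot;z)$ of \eqref{arb}--\eqref{are} produces functions with $R(z)\theta=0$; but $R(z)\xi=R(z)\hat\zeta$, which is nonzero in general, so $\xi$ is \emph{not} in the range of any ``similar Green's function representation.'' The variational characterization of $\hat\zeta$ controls only $\|R(z)\hat\zeta\|_{L_2}$ and $\|S(z)(\hat\zeta-\phi)\|_{L_2(\Ga)}$, and there is no mechanism in the paper's framework to upgrade this to $\|\xi\|_{L_\infty(\Om)}\le c_z\|\phi\|_{W^{1,p}(\Ga)}$. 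Without that bound, the key step $\iintl_{\og}|\triangledown\chi|^2|\hat\zeta|^2\le c_z\|\phi\|^2_{W^{1,p}(\Ga)}\|\triangledown\chi\|^2_{L_2(\Om)}$ fails, and the argument collapses. In Example~3 this difficulty does not arise precisely because there $\hat P=0_N$ makes $(z,\hat P)$ admissible, so one can take $\hat\zeta=\eta$ itself (with $R(z)\eta=0$ and the clean bound \eqref{ari}); for general $\hat P$ that shortcut is unavailable, and your proof needs a different route to control $\hat\zeta$ pointwise on $\mathrm{supp}\,\triangledown\chi$.
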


\section{Extension and mollification}%4

Investigation of the dependence of $Q_0(z,P)$ on $z$ uses extension and mollification of the expressions for $L_0, M_0, Q_0$.

For arbitrary $P'\in \calp$, denote
\be\label{caa} Z(P')\eqadef \{ \theta: \Om \to \bbr^n \big| \;  \| \theta\|_Z < \infty, \; \; P'\theta\mathop{\mid}\limits_{\partial\Om} = 0\}\ee
with
\be\label{cab} \| \theta\|_Z \eqadef\| \theta\|_{W^{1,p} (\Om)} + \| \theta\|_{W^{1,p} (\Ga)}.\ee

We note that $Z (P')$  depends on $z$ through $\Ga$, and that using $p > m$, the elements of $Z(P')$ are continuous and $Z(P')$ is complete.

Then for $\phi \in \Phi_0 (z, P'), \theta \in Z(P'), \Ga' \subseteq \Ga, \, \vare \ge 0$,
\be\label{cac} z' \in L_\infty (\Om\to D),\ee
of the form \eqref{ad}, \eqref{ada},  we denote
\begin{align}\label{cad}
L(\theta, \phi; z', P', \Ga', \vare) &\eqadef \iintl_\Om |R(z') \theta|^2 + \intl_{\Ga'} ( S (z')(\theta - \phi))^2\nonumber \\
&+\intl_{\Ga\setminus \Ga'} ( S(z')\theta)^2 + \vare \| \theta\|^2_Z;\end{align}
\be\label{cae} M(\phi; z', P', \Ga', \vare) \eqadef \underset{\theta\in H(z',P',1,1)\cap Z(P')}{\glb} L(\theta, \phi; z', P', \Ga', \vare);\ee
\be\label{caf} Q( z', P', \Ga', \vare) \eqadef \underset{\phi\in \Phi_0(z,P')}{\lub} \frac{M(\phi; z', P', \Ga', \vare)}{\| \phi\|^2_{W^{1,p}(\Ga')}}.\ee

By inspection,
\be\label{cdd} M_0(\phi; z, P) = M(\phi; z,P,\Ga,0)\ee
for any $\phi \in\Phi_0(z,P)$, and thus
\be\label{cde} Q_0 (z, P) = Q(z, P,\Ga,0).\ee

Using theorem 3.1 and \eqref{cde}, admissible $(z,P)$ is equivalent to
\be\label{cdf} Q(z,P,\Ga,0) = 0.\ee

Setting $\theta = 0$ in \eqref{cad}, \eqref{cae}, \eqref{caf}, we have
\be\label{cdz} Q(z', P', \Ga', \vare) \le c_{z', \Ga'},\ee
independent of $P',\vare$.

Additional features of $M,Q$ so defined are readily determined by inspection.

For $\vare' < \vare$,
\be\label{cba} Q(z', P', \Ga', \vare') \le Q(z', P', \Ga', \vare).\ee

For $\Ga'' \subset \Ga'$,
\be\label{cbb} Q(z', P', \Ga'', \vare) \le Q(z', P', \Ga', \vare).\ee

For $P, \hat P$ satisfying \eqref{ana}
\be\label{cbc} Z(P) \subset Z(\hat P), \; H(z,P,1,1) \subset H(z, \hat P, 1,1),\ee
and thus
\be\label{cbd} M(\phi; z', \hat P, \Ga', \vare) \le M(\phi; z', P,
\Ga', \vare),\ee
\be\label{cbe} Q(z', \hat P, \Ga', \vare) \le Q(z', P,\Ga', \vare).\ee

As $L(\cdot, \phi; z', P', \Ga', \vare)$ is  nonnegative and convex, and $H(z,P',1,1)\cap Z(P')$ is complete, the minimum in \eqref{cae} is achieved.  There exists $\xi_\phi \in Z(P')$, (also depending on $z', \Ga', \vare)$ such that
\be\label{cbf} M(\phi; z',P',\Ga',\vare) = L (\xi_\phi, \phi; z', P', \Ga', \vare)\ee
and satisfying
\be\label{cbg} \vare \| \xi_\phi \|^2_Z \le
Q(z', P', \Ga', \vare) \|\phi\|^2_{W^{1,p} (\Ga')}.
\ee

As $\Phi_0 (z,P')$ is complete, analogously with theorem 3.5, for $\Ga'$ bounded the maximum in \eqref{caf} is achieved within $\Phi_0 (z,P')$.

For $z$ of the form \eqref{ad}, $z'$ satisfying \eqref{cac}, we denote
\be\label{cca} \dist_\Ga (z,z')\eqadef \| z-z'\|_{L_{2p/(p-2)}(\Om)} +  \| [z-z']\|_{L_{2p/(p-2)}(\Ga(z))}.\ee

We observe that while $z'$ is restricted only by \eqref{cac}, $z'$ viewed as an approximation of $z$ can approach $z$ only if $z'$  has jump discontinuities on $\Ga$ associated with $z$ in \eqref{ad}, $\Ga(z)\subseteq \Ga (z')$.

For $\theta \in Z(P')$, from \eqref{af} \eqref{ag}, \eqref{cab}, \eqref{cca}
\be\label{ccb} \| R(z) \theta - R(z') \theta\|_{L_2(\Om\setminus\Ga(z))} \le c \| \theta\|_Z \dist_\Ga(z,z');\ee
\be\label{ccc} \|S(z)\theta - S(z')\theta\|_{L_2(\Ga(z))} \le c \| \theta\|_{W^{1,p}(\Ga)} \dist_\Ga(z,z'),\ee
with constants depending on $\|z\|_{L_\infty (\Om)}, \| z'\|_{L_\infty (\Om)}.$

The use of positive $\vare $ is reflected in the following.

\begin{thm} For $\vare > 0$, $\Ga'\subseteq \Ga(z)$,
\be\label{cd} | Q(z,P', \Ga', \vare) - Q(z', P', \Ga', \vare)\big| \le c \hbox{\ dist}_\Ga(z,z')
\left(1 + \Big(\frac{Q(z',P',\Ga', \vare)}{\vare}\Big)^{1/2}\right)  \ee
%%\frac{\dist_\Ga(z,z')}{\vare^{1/2}} \big(Q(z',P', \Ga', \vare)\big)^{1/2}\ee
with a constant depending on $\| z\|_{L_\infty(\Om)}, \| z'\|_{L_\infty (\Om)}$.
\end{thm}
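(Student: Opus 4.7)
The plan is a trial-function / minimizer comparison exploiting the $Z$-control \eqref{cbg} provided by the $\varepsilon\|\theta\|_Z^2$ regularization in \eqref{cad}. Fix $\phi\in\Phi_0(z,P')$ and scale so that $\|\phi\|_{W^{1,p}(\Gamma')}=1$. Let $\xi'_\phi$ denote the unique minimizer of $L(\cdot,\phi;z',P',\Gamma',\varepsilon)$ guaranteed by the paragraph containing \eqref{cbf}. Then \eqref{cbg} and inspection of \eqref{cad} give
\[
\varepsilon\,\|\xi'_\phi\|_Z^2 \le M(\phi;z',P',\Gamma',\varepsilon) \le Q(z',P',\Gamma',\varepsilon),
\]
while each of $\|R(z')\xi'_\phi\|_{L_2(\Omega)}$, $\|S(z')(\xi'_\phi-\phi)\|_{L_2(\Gamma')}$, and $\|S(z')\xi'_\phi\|_{L_2(\Gamma\setminus\Gamma')}$ is likewise bounded by $Q(z',P',\Gamma',\varepsilon)^{1/2}$.

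Next I would use $\xi'_\phi$ as a trial function in $L(\cdot,\phi;z,P',\Gamma',\varepsilon)$, which under the natural compatibility $\Gamma(z)\subseteq\Gamma(z')$ of the remark preceding \eqref{cca} is admissible for the $z$-problem. This yields $M(\phi;z,\cdot)\le L(\xi'_\phi,\phi;z,\cdot)$; subtracting $M(\phi;z',\cdot)=L(\xi'_\phi,\phi;z',\cdot)$ kills the common $\varepsilon\|\xi'_\phi\|_Z^2$ contribution and leaves three differences of squared $L_2$-norms. Expanding each via $a^2-b^2=(a-b)(a+b)$, invoking the Lipschitz-type continuity estimates \eqref{ccb}, \eqref{ccc}, and using the triangle inequality to replace $\|R(z)\xi'_\phi\|_{L_2}$ by $\|R(z')\xi'_\phi\|_{L_2}+c\,d\,\|\xi'_\phi\|_Z$ with $d\equiv\dist_\Gamma(z,z')$ (and analogously for $S$), I would obtain
\[
M(\phi;z,\cdot) - M(\phi;z',\cdot) \le c\,d\,\|\xi'_\phi\|_Z \left(M(\phi;z',\cdot)^{1/2} + c\,d\,\|\xi'_\phi\|_Z\right).
\]
Substituting the bounds from the first paragraph and taking the supremum over $\phi$ produces $Q(z,\cdot) - Q(z',\cdot) \le c\,d\,(Q(z',\cdot)/\varepsilon)^{1/2} + c\,d^2\,Q(z',\cdot)/\varepsilon$, the first term being the expected linear Lipschitz contribution.

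For the opposite direction I would swap $z$ and $z'$ and use $\xi_\phi$, the minimizer for the $z$-problem, as trial in the $z'$-problem, yielding $Q(z',\cdot) - Q(z,\cdot) \le c\,d\,(Q(z,\cdot)/\varepsilon)^{1/2} + c\,d^2\,Q(z,\cdot)/\varepsilon$. The asymmetric form of \eqref{cd} then follows by an elementary case split: if $Q(z,\cdot)\ge Q(z',\cdot)$ the first estimate controls $|Q(z,\cdot)-Q(z',\cdot)|$ directly, while if $Q(z,\cdot)<Q(z',\cdot)$ then $(Q(z,\cdot)/\varepsilon)^{1/2}<(Q(z',\cdot)/\varepsilon)^{1/2}$ and the second estimate gives the claimed bound with $Q(z',\cdot)$ appearing on the right.

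The main technical obstacle is absorbing the quadratic-in-$d$ remainder $c\,d^2\,Q(z',\cdot)/\varepsilon$ into the stated linear bound $c\,d\,(1+(Q(z',\cdot)/\varepsilon)^{1/2})$. In the regime $d\,(Q(z',\cdot)/\varepsilon)^{1/2}\le 1$ one has $d^2 Q(z',\cdot)/\varepsilon \le d\,(Q(z',\cdot)/\varepsilon)^{1/2}$ immediately; in the complementary regime the uniform bound \eqref{cdz} gives $|Q(z,\cdot)-Q(z',\cdot)|$ bounded absolutely, while $c\,d\,(1+(Q(z',\cdot)/\varepsilon)^{1/2})\ge c\,d\,(Q(z',\cdot)/\varepsilon)^{1/2}>c$ exceeds this for a suitable choice of the generic constant. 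A secondary subtlety is verifying that $\xi'_\phi$ is an admissible trial for the $z$-problem, which rests on the compatibility $\Gamma(z)\subseteq\Gamma(z')$ noted before \eqref{cca}; without it the spaces $Z(P')$ associated with $z$ and $z'$ would not compare in a direct way.
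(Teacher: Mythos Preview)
Your approach is correct and follows the same trial-function strategy as the paper, but the paper organizes the two directions more economically. Rather than proving $Q(z,\cdot)-Q(z',\cdot)\le\cdots$ and $Q(z',\cdot)-Q(z,\cdot)\le\cdots$ separately and then case-splitting, the paper works at the level of a fixed $\phi$ and chooses \emph{which} minimizer to insert according to the sign of $M(\phi;z,\cdot)-M(\phi;z',\cdot)$: if $M(\phi;z,\cdot)\ge M(\phi;z',\cdot)$ it uses the $z'$-minimizer, otherwise the $z$-minimizer. In either case $\varepsilon\|\xi_\phi\|_Z^2$ is bounded by the \emph{smaller} of the two $M$-values, hence by $M(\phi;z',\cdot)\le Q(z',\cdot)$ regardless; the asymmetric appearance of $Q(z',\cdot)$ on the right then falls out without any comparison of $Q(z,\cdot)$ with $Q(z',\cdot)$. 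Your case split accomplishes the same thing but is a bit more roundabout.

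Two minor points. First, when you expand the $S$-terms you should track the $\phi$ contribution separately: $\|S(z)(\xi'_\phi-\phi)-S(z')(\xi'_\phi-\phi)\|_{L_2}$ is controlled via \eqref{ccc} by $c\,d\,\|\xi'_\phi-\phi\|_{W^{1,p}(\Gamma)}\le c\,d\,(1+\|\xi'_\phi\|_Z)$, so the bound picks up a $c\,d$ term in addition to $c\,d\,\|\xi'_\phi\|_Z$; this is the source of the ``$1$'' inside the bracket in \eqref{cd}. Second, your concern about admissibility of $\xi'_\phi$ as a trial for the $z$-problem is resolved by the setup of \S4: the space $Z(P')$ in \eqref{caa}--\eqref{cab} is defined once, via $\Gamma=\Gamma(z)$, and used for all $z'$; the compatibility $\Gamma(z)\subseteq\Gamma(z')$ you cite is only needed so that $S(z')$ makes sense on $\Gamma(z)$. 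Your handling of the quadratic remainder via \eqref{cdz} is correct and in fact more explicit than the paper's own chain \eqref{cdc}, which absorbs the $(a+b)$ factors directly into the generic constant.
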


\begin{proof} With $\Ga'\subseteq \Ga$ determined for $z$ in \eqref{ad},
\begin{align}\label{cda} |Q(z,P',\Ga',\vare)-& Q(z', P', \Ga',\vare)|\nonumber \\
&\le\underset{\phi}{\lub} | M(\phi; z,P',\Ga',\vare) -M(\phi; z', P', \Ga', \vare)|\end{align}
with $\phi \in \Phi_0 (z, P')$ satisfying
\be\label{cdb} \| \phi\|_{W^{1,p}(\Ga')} = 1.\ee

For any $\phi$ such that $M(\phi; z, P', \Ga', \vare) \ge M(\phi; z', P', \Ga', \vare)$, we choose
$\xi_\phi$ satisfying \eqref{cbf}, \eqref{cbg} as stated.  For any $\phi$ such that $M(\phi; z', P', \Ga', \vare) >  M(\phi; z, P', \Ga', \vare)$, we choose $\xi_\phi$ satisfying \eqref{cbf}, \eqref{cbg} with $z$ replacing $z'$.

Then from \eqref{cae}, \eqref{cad}, using \eqref{cda}, \eqref{cdb}, \eqref{ccb}, \eqref{ccc},  \eqref{cbg}
\begin{align}\label{cdc}
 &|Q(z,P',\Ga', \vare)| - Q(z', P', \Ga', \vare)|\nonumber \\
\le & \underset{\phi}{\lub} | L(\xi_\phi, \phi; z,P', \Ga', \vare) - L(\xi_\phi, \phi; z', P', \Ga', \vare)|\nonumber \\
\le & \underset{\phi}{\lub}  | \; \| R(z) \xi_\phi\|^2_{L_2(\Om)} - \| R(z') \xi_\phi\|^2_{L_2(\Om)} \nonumber \\
&\qquad \qquad \; \; + \| S(z)(\xi_\phi - \phi)\|^2_{L_2(\Ga')} - \| S (z')(\xi_\phi - \phi)\|^2_{L_2 (\Ga')}|\nonumber \\
 \le & c \; \underset{\phi}{\lub} \Big( \| R(z)\xi_\phi - R(z')\xi_\phi\|_{L_2(\Om)} + \| S(z) (\xi_\phi - \phi) - S(z') (\xi_\phi - \phi)\|_{L_2 (\Ga')}\Big)\nonumber \\
 \le & c\; \underset{\phi}{\lub} \Big(\| R(z)\xi_\phi - R(z')\xi_\phi\|_{L_2(\Om)} + \| S(z) \phi-S(z') \phi\|_{L_2(\Ga')} + \| S(z) \xi_\phi - S(z') \xi_\phi \|_{L_2(\Ga')}\Big)\nonumber \\
 \le & c\; \underset{\phi}{\lub} \frac{\dist_\Ga (z,z')}{\vare^{1/2}} (\| \xi_\phi\|_Z + \| \xi_\phi - \phi\|_{W^{1,p}(\Om)})  \nonumber \\
 \le & c\; \dist_\Ga(z,z') \underset{\phi}{\lub} (1+\| \xi_\phi \|_Z)\nonumber \\
 \le & c \; \dist_\Ga (z,z') (1+\Big(\frac{Q(z', P', \Ga', \vare)}{\vare}\Big)^{1/2}).\end{align}
\end{proof}

Thus at issue is the limit $\vare\downarrow 0$ in \eqref{cae}, \eqref{caf}.
\begin{lem} For any $\phi \in \Phi_0(z', P'), M(\phi; z', P', \Ga',\cdot)$ is continuous in $\vare$ at $\vare = 0$.
\end{lem}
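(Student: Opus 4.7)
The plan is to exploit the monotonicity of $M$ in $\vare$ together with the fact that the admissible set for the minimization, namely $H(z',P',1,1)\cap Z(P')$, is independent of $\vare$. Writing out $L$, the only $\vare$-dependent summand is $\vare\|\theta\|_Z^2$, which is monotone nondecreasing in $\vare$ pointwise in $\theta$. Consequently
\[
M(\phi;z',P',\Ga',\vare') \;\le\; M(\phi;z',P',\Ga',\vare) \qquad \text{whenever } 0 \le \vare' \le \vare,
\]
and in particular $\liminf_{\vare\downarrow 0} M(\phi;z',P',\Ga',\vare) \ge M(\phi;z',P',\Ga',0)$.

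For the matching upper bound, fix $\delta>0$ and, using \eqref{cae} at $\vare=0$, choose a competitor $\theta_\delta \in H(z',P',1,1)\cap Z(P')$ with
\[
L(\theta_\delta,\phi;z',P',\Ga',0) \;<\; M(\phi;z',P',\Ga',0) + \delta .
\]
Because $\theta_\delta$ belongs to the same admissible set for every $\vare\ge 0$, it may be used as a trial function in \eqref{cae}, giving
\[
M(\phi;z',P',\Ga',\vare) \;\le\; L(\theta_\delta,\phi;z',P',\Ga',\vare) \;=\; L(\theta_\delta,\phi;z',P',\Ga',0) + \vare\|\theta_\delta\|_Z^2.
\]
Letting $\vare\downarrow 0$ (with $\theta_\delta$ fixed, so $\|\theta_\delta\|_Z$ is just a finite number) yields $\limsup_{\vare\downarrow 0} M(\phi;z',P',\Ga',\vare) \le M(\phi;z',P',\Ga',0) + \delta$. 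Since $\delta>0$ was arbitrary, the limit equals $M(\phi;z',P',\Ga',0)$, proving continuity at $\vare=0$.

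The argument is essentially soft: it relies only on the pointwise-in-$\theta$ monotone affine dependence of $L$ on $\vare$ and on the $\vare$-independence of the feasible set for the infimum. The only point requiring minor care is that the minimizer $\xi_\phi$ of theorem~4.1 at $\vare>0$ satisfies only the Hilbert-space bound $\vare\|\xi_\phi\|_Z^2 \le M(\phi;z',P',\Ga',\vare)$, which blows up as $\vare\downarrow 0$; that is why we do not attempt to pass to the limit in the minimizers themselves, but instead fix an \emph{a priori} near-optimal $\theta_\delta$ for the $\vare=0$ problem and use it as a test function. That side-steps what would otherwise be the main obstacle, namely the possible non-existence of a minimizer at $\vare=0$ in $Z(P')$.
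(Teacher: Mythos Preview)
Your proof is correct and follows essentially the same approach as the paper: both use monotonicity in $\vare$ for the lower bound and plug a (near-)optimal $\theta$ for the $\vare=0$ problem into the $\vare>0$ functional for the upper bound. Your version is in fact slightly more careful, since the paper simply invokes the minimizer $\xi_\phi$ at $\vare=0$ from \eqref{cbf}, whereas you avoid assuming that minimizer exists in $Z(P')$ by working with an approximate minimizer $\theta_\delta$.
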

\begin{proof}  Given $\phi$, we obtain $\xi_\phi$ from \eqref{cbf} with $\vare = 0$.

Then from \eqref{cae}, \eqref{cad}, for any $\vare > 0$,
\be\label{cdfa} M(\phi; z', P', \Ga', \vare) - M(\phi; z', P', \Ga', 0) \le \vare \| \xi_\phi \|^2_Z.\ee

Any condition
\be\label{cdg} \underset{\vare\downarrow 0}{\lim} M(\phi; z', P', \Ga', \vare) - M(\phi; z', P', \Ga', 0) = \eta>0\ee
is contradicted by taking $\vare > 0$ such that
\be\label{cdg} \vare \| \xi_\phi \|^2_Z < \eta.\ee
\end{proof}

\begin{lem} For $\phi, \phi' \in \Phi_0 (z, P')$
\begin{align}\label{ce}
&M(\phi + \phi'; z', P', \Ga', \vare) - M(\phi; z', P', \Ga', \vare)\nonumber \\
\le 2 &M(\phi; z', P', \Ga', \vare)^{1/2} M(\phi'; z', P', \Ga', \vare)^{1/2} +  M(\phi'; z', P', \Ga', \vare).
\end{align}
\end{lem}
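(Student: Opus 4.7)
The plan is to mimic the proof of Lemma 3.2, using the minimizers guaranteed by \eqref{cbf} as test functions in the minimum problem for $\phi + \phi'$, and then controlling the cross terms by Cauchy-Schwarz. The only substantive difference is that the functional $L$ now contains the non-Hilbertian penalty $\vare\|\theta\|_Z^2$, so one cross term must be handled by the triangle inequality for $\|\cdot\|_Z$ rather than by a true inner-product expansion.

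First I would select, via \eqref{cbf}, elements $\xi_\phi, \xi_{\phi'} \in H(z', P', 1, 1)\cap Z(P')$ achieving
$M(\phi; z',P',\Ga',\vare) = L(\xi_\phi,\phi; z',P',\Ga',\vare)$ and similarly for $\phi'$. By linearity of $H(z', P', 1, 1)\cap Z(P')$, the sum $\xi_\phi + \xi_{\phi'}$ is an admissible competitor in \eqref{cae} for the datum $\phi + \phi' \in \Phi_0(z,P')$, so
\[
M(\phi+\phi'; z',P',\Ga',\vare) \le L(\xi_\phi+\xi_{\phi'},\,\phi+\phi';\, z',P',\Ga',\vare).
\]

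Next I would expand the right-hand side using the definition \eqref{cad}. The three $L_2$-type contributions $\|R(z')\cdot\|^2_{L_2(\Om)}$, $\|S(z')(\cdot-\phi)\|^2_{L_2(\Ga')}$, $\|S(z')\cdot\|^2_{L_2(\Ga\setminus\Ga')}$ split as $\|\cdot\|^2_\phi + 2\langle\cdot,\cdot\rangle + \|\cdot\|^2_{\phi'}$, with each cross inner product bounded by the product of $L_2$-norms. For the penalty term, the triangle inequality gives
\[
\vare\|\xi_\phi+\xi_{\phi'}\|_Z^2 \le \vare\|\xi_\phi\|_Z^2 + 2(\sqrt{\vare}\|\xi_\phi\|_Z)(\sqrt{\vare}\|\xi_{\phi'}\|_Z) + \vare\|\xi_{\phi'}\|_Z^2.
\]
Collecting the four resulting ``diagonal'' pieces gives exactly $M(\phi;\cdot) + M(\phi';\cdot)$, and the four cross pieces are of the form $2\sum_{i=1}^{4} a_i b_i$, where $(a_1,\dots,a_4)$ are $\|R(z')\xi_\phi\|_{L_2(\Om)}$, $\|S(z')(\xi_\phi-\phi)\|_{L_2(\Ga')}$, $\|S(z')\xi_\phi\|_{L_2(\Ga\setminus\Ga')}$, $\sqrt{\vare}\|\xi_\phi\|_Z$, and $(b_1,\dots,b_4)$ the analogous quantities for $\phi'$. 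A single application of Cauchy-Schwarz in $\mathbb{R}^4$ then yields $2\sum a_i b_i \le 2\bigl(\sum a_i^2\bigr)^{1/2}\bigl(\sum b_i^2\bigr)^{1/2} = 2\,M(\phi;\cdot)^{1/2} M(\phi';\cdot)^{1/2}$.

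Combining these bounds produces
\[
L(\xi_\phi+\xi_{\phi'},\phi+\phi';\cdot) \le M(\phi;\cdot) + M(\phi';\cdot) + 2\,M(\phi;\cdot)^{1/2} M(\phi';\cdot)^{1/2},
\]
and subtracting $M(\phi;\cdot)$ from both sides gives \eqref{ce}. I do not expect a genuine obstacle here; the only subtlety is the $W^{1,p}$ nature of $\|\cdot\|_Z$, which forces us to use the triangle inequality for that term instead of a parallelogram expansion, but the bookkeeping above absorbs this cleanly by grouping $\sqrt{\vare}\|\xi_\phi\|_Z$ and $\sqrt{\vare}\|\xi_{\phi'}\|_Z$ as one of the four components in the final Cauchy-Schwarz step.
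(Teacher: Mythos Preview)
Your proof is correct and follows essentially the same route as the paper: choose the minimizers $\xi_\phi,\xi_{\phi'}$ from \eqref{cbf}, use their sum as a competitor for $\phi+\phi'$, bound each of the four summands in $L$ by the triangle inequality (which covers the non-Hilbertian $\vare\|\cdot\|_Z^2$ term just as well as the $L_2$ terms), and finish with Cauchy--Schwarz in $\mathbb{R}^4$. The paper packages this as the algebraic identity \eqref{ceb} with $u_i,u'_i$ the four norms, which is exactly your bookkeeping.
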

\begin{proof} Given $\phi, \phi'$, we obtain $\xi_\phi, \xi_{\phi'} $ from \eqref{cbf}. Then from \eqref{cae}, \eqref{cad}
\begin{align}\label{cea}
&M(\phi + \phi'; z', P', \Ga', \vare) - M(\phi; z', P', \Ga', \vare)\nonumber \\
\le &L(\xi_\phi + \xi_{\phi'}, \phi + \phi'; z', P', \Ga', \vare) - L(\xi_\phi, \phi; z', P', \Ga', \vare)\nonumber \\
= & \| R(z') (\xi_\phi + \xi_{\phi'}) \|^2_{L_2(\Om\setminus \Ga)} - \| R (z')\xi_\phi \|^2_{L_2(\Om\setminus \Ga)} + \| S (z') (\xi_\phi - \phi + \xi_{\phi'} - \phi' \|^2_{L_2(\Ga')}\nonumber \\
 &- \|S(z') (\xi_\phi - \phi)\|^2_{L_2(\Ga')} + \| S(z') (\xi_\phi + \xi_{\phi'}) \|^2_{L_2(\Ga \setminus \Ga')} - \| S (z') \xi_\phi\|^2_{L_2(\Ga\setminus\Ga')}\nonumber \\
 \; \; &+\vare \| \xi_\phi + \xi_{\phi'} \|^2_Z - \vare \| \xi_\phi\|^2_Z.\end{align}

With obvious choice of $u_1, u_2,u_3,u_4$, the right side of \eqref{cea} is of the form
\begin{align}\label{ceb}
\suml^4_{i = 1} ( (&u'_i + u_i)^2 - u_i^2) = 2 \suml^4_{i = 1} u_i u'_i + \suml^4_{i = 1} (u'_i)^2\nonumber\\
&\le 2 (\suml^4_{i = 1} u^2_i)^{1/2} (\suml^4_{i=1} (u'_i)^2)^{1/2} + \suml^4_{i = 1} (u'_i)^2.
\end{align}
which is the right side of \eqref{ce}.
\end{proof}

\begin{thm} Assume $\Ga'$ bounded.  Then $Q(z',P',\Ga', \cdot)$  is continuous with respect to $\vare $ at $\vare = 0$.
\end{thm}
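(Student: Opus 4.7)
The plan is to establish $\lim_{\vare \downarrow 0} Q(z', P', \Ga', \vare) = Q(z', P', \Ga', 0)$ as a two-sided bound. The monotonicity \eqref{cba} immediately gives the easy direction $Q(z', P', \Ga', 0) \le \lim_{\vare \downarrow 0} Q(z', P', \Ga', \vare)$, so the content is the reverse inequality. Since $\Ga'$ is bounded, the remark immediately preceding the statement yields, for each $\vare > 0$, a maximizer $\phi_\vare \in \Phi_0(z, P')$ of \eqref{caf}, which I normalize by $\| \phi_\vare \|_{W^{1,p}(\Ga')} = 1$. Weak compactness of the unit ball in $W^{1,p}(\Ga')$ together with the compact embedding $W^{1,p}(\Ga') \hookrightarrow L^p(\Ga')$ (valid because $p > m$ and $\Ga'$ is bounded) gives a subsequence with $\phi_\vare \rightharpoonup \phi_0$ weakly in $W^{1,p}(\Ga')$ and strongly in $L^p(\Ga')$, with $\phi_0 \in \Phi_0(z, P')$ and $\| \phi_0 \|_{W^{1,p}(\Ga')} \le 1$ (the trace vanishing on $\partial \Ga_k$ passes to the limit via compact trace embedding).

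The decisive observation is that \eqref{ce} says exactly that $\phi \mapsto \sqrt{M(\phi; z', P', \Ga', \vare)}$ is subadditive, since its right side equals $(\sqrt{M(\phi;\cdot)} + \sqrt{M(\phi';\cdot)})^2 - M(\phi;\cdot)$. Writing $\phi_\vare = \phi_0 + (\phi_\vare - \phi_0)$ therefore gives
\begin{equation*}
\sqrt{M(\phi_\vare; z', P', \Ga', \vare)} \le \sqrt{M(\phi_0; z', P', \Ga', \vare)} + \sqrt{M(\phi_\vare - \phi_0; z', P', \Ga', \vare)}.
\end{equation*}
Lemma 4.2 applied to the fixed $\phi_0$ handles the first term on the right: it converges to $\sqrt{M(\phi_0; z', P', \Ga', 0)}$, which by \eqref{caf} and $\|\phi_0\|_{W^{1,p}(\Ga')} \le 1$ is at most $\sqrt{Q(z', P', \Ga', 0)}$. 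The entire argument thus reduces to showing that the residual $M(\eta_\vare; z', P', \Ga', \vare) \to 0$, where $\eta_\vare := \phi_\vare - \phi_0$.

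This residual estimate is the main obstacle, since $\eta_\vare$ only converges to $0$ weakly in $W^{1,p}(\Ga')$, and the naive bound $M(\eta_\vare;\cdot,\vare) \le \| S(z')\eta_\vare \|_{L^2(\Ga')}^2$ obtained by setting $\theta = 0$ in \eqref{cae} involves tangential derivatives of $\eta_\vare$ and is not small. The remedy is to decompose $\eta_\vare = \sum_{k} \eta_\vare^{(k)}$ componentwise, with $\eta_\vare^{(k)} \in \Phi_k(z, P')$ (the trace condition $\eta_\vare^{(k)}|_{\partial \Ga_k} = 0$ is inherited from $\phi_\vare, \phi_0 \in \Phi_0$), and then invoke Lemma 3.4 to produce extensions $\theta_\vare^{(k)}$ supported in a $\gamma_\vare^{(k)}$-neighborhood of $\Ga_k$, where $\gamma_\vare^{(k)} = \|\eta_\vare^{(k)}\|_{L^p(\Ga_k)}/\|\eta_\vare^{(k)}\|_{W^{1,p}(\Ga_k)} \to 0$ because $\eta_\vare \to 0$ in $L^p$ while $\|\eta_\vare\|_{W^{1,p}}$ stays bounded. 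Using $\theta_\vare^{(k)}$ as the test function in \eqref{cae}, the $\Ga'$ cross-term drops by \eqref{bdb}; for $\vare$ small the support does not reach other components, so the $\Ga\setminus\Ga'$ contribution vanishes; Hölder interpolation over the thin support combined with \eqref{bdc} gives $\|R(z')\theta_\vare^{(k)}\|_{L^2(\Om)}^2 = O(\gamma_\vare^{(k)})$; and the penalty term $\vare\|\theta_\vare^{(k)}\|_Z^2$ vanishes trivially. Applying the subadditivity of $\sqrt{M(\cdot;\vare)}$ once more across the finite sum in $k$ delivers $M(\eta_\vare; z', P', \Ga', \vare) \to 0$, and combining with the preceding steps yields $\limsup_{\vare \downarrow 0} Q(z', P', \Ga', \vare) \le Q(z', P', \Ga', 0)$, completing the proof.
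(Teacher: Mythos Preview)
Your proof is correct and follows essentially the same approach as the paper's: extract a (sub)sequential limit $\phi_0$ of the near-maximizers $\phi_\vare$ using boundedness of $\Ga'$, control $M(\phi_\vare - \phi_0;\cdot,\vare)$ via the thin-tube extensions of Lemma~3.4, and combine via the subadditivity of $\sqrt{M}$ (Lemma~4.3) together with Lemma~4.2 for the fixed term $\phi_0$. The paper phrases this as a contradiction argument and uses the uniform convergence $\phi_\vare \to \phi_0$ (available since $p>m$ on the $(m{-}1)$-dimensional $\Ga'$), while you argue directly with $\limsup$ and use only $L^p$ convergence, but the substance is identical.
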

\begin{proof} If not, by application of lemma 4.2, there exists an infinite sequence of values of $\vare$ decreasing to zero, for each of which there exists
\be\label{cfa} \phi_\vare \in \Phi_0 (z, P'), \supp\, \phi_\vare \subseteq \Ga', \; \| \phi_\vare \|_{W^{1,p} (\Ga')} = 1,\ee
such that
\be\label{cfb} \lim\limits_{\vare \downarrow 0} M(\phi_\vare; z', P', \Ga', \vare) > Q(z', P', \Ga', 0).\ee

Using \eqref{cfa} and the assumption of bounded $\Ga'$, taking a subsequence as necessary, there exists
\be\label{cfc} \phi_0 \in \Phi_0 (z, P'), \; \supp\,  \phi_0 \subseteq \Ga', \; \| \phi_0\|_{W^{1,p}(\Ga')} \le 1, \ee
such that
\be\label{cfd} \phi_\vare\overset{\vare \downarrow 0}{\longrightarrow } \phi_0\ee
uniformly on $\Ga'$.

For each $\vare$, we apply lemma 3.4 for each $k$ (such that $\Ga'$ intersects $\Ga_k$) with
\be\label{cfe} \phi_{(k)} = (\phi_\vare - \phi_0)\midl_{\Ga_k},\ee
obtaining
\be\label{cff} \theta_\vare = \suml_k \theta_{(k)}\ee
so determined.  We thus have $\theta_\vare \in Z(P')$, coinciding with $\phi_\vare - \phi_0$ on $\Ga'$, vanishing on $\Ga \setminus\Ga'$, satisfying
\be\label{cfg} \| \theta_\vare \|_{W^{1,p}(\Om\setminus\Ga)} \overset{\vare \downarrow 0}{\longrightarrow } 0\ee
from \eqref{bdc}, \eqref{cfd}, \eqref{cfa}, and
\be\label{cfh} \| \theta_\vare\|_{W^{1,p}(\Om')} \le 2\ee
from \eqref{cfa}, \eqref{cfc}.

From \eqref{cad}, \eqref{caa},  using \eqref{cfg}, \eqref{cfh},
\be\label{cfi} L(\theta_\vare, \phi_\vare - \phi_0; z', P', \Ga', \vare) \overset{\vare\downarrow 0}{\longrightarrow} 0,\ee
so from \eqref{cae}
\be\label{cfj} M(\phi_\vare - \phi_0; z',P', \Ga', \vare) \overset{\vare\downarrow 0}{\longrightarrow} 0.\ee

By application of lemma 4.3, using \eqref{cfj}, and then lemma 4.2,
\begin{align}\label{cfk} \lim\limits_{\vare\downarrow 0} M(\phi_\vare; z', P', \Ga', \vare) &= \lim\limits_{\vare\downarrow 0} M(\phi_0; z', P', \Ga',\vare)\nonumber \\
&= M(\phi_0; z', P', \Ga', 0)\nonumber \\
&\le Q(z', P', \Ga', 0)\end{align}
using \eqref{caf}, \eqref{cfc} in the last step.  The conclusion \eqref{cfk} is incompatible with \eqref{cfb}.

\end{proof}

\section{Limit admissible sequences}%5

Section 3.1 may be interpreted as examples of sequences of inadmissible pairs $\{ (z_\de, P_\de)\}$, with admissible and potentially unambiguous  limits as $\de\downarrow 0$.  Such are generalized using the results of section 4.

The above examples and \eqref{cca} tacitly assume each $z_\de$ also of form \eqref{ad}, with some $\Ga_\de$ such that $\Ga$ coincides with $\Ga_\de$ or a subset thereof.  This is extended by assumption of each $z_\de$ also of the form \eqref{ad}, \eqref{ada}, with
\be\label{da} \Ga_\de = \mathop{\cup}\limits^{K_\de}_{k = 1} \Ga_{\de k},\ee
some $K_\de \ge K$.

For $k \le K$, we assume $\Ga_k, \Ga_{\de k}$ related by the existence of
\be\label{daa}  T_\de \in C^1(\Om\to \Om), \; \| T_\de - I_m\|_{C^1(\Om)} \overset{\de\downarrow 0}{\longrightarrow} 0\ee
such that \be\label{dab} \Ga_{\de k} = T_\de \Ga_k,\; \; k = 1,\dots, K.\ee

Thus for any bounded $\Om'\le \Om $ and any $\de > 0$,
\be\label{dac} \Ga \cap \Om'\subseteq T^{-1}_\de (\Ga_\de \cap (T_\de \Om')).\ee

We assume $z_\de$ satisfying
\be\label{dad} \| z_\de\|_{L_\infty (\Om)} \le c,\ee
with a constant independent of $\de$, and
\be\label{dae} \dist_\Ga (z, z_\de \circ T_\de)  \overset{\de\downarrow 0}{\longrightarrow} 0,\ee
with $\dist_\Ga$ defined in \eqref{cca} and
\be\label{daj} (z_\de \circ \Ga_\de) (x) \eqadef z_\de (\Ga_\de x).\ee

Finally, we assume
\be\label{daf} T_\de \in C (\partial\Om\to \partial\Om)\ee
and $P_\de$ such that there exists a limit
\be\label{dag} \lim\limits_{\de\downarrow 0} P_\de \circ T^{-1}_\de \in \calp,\ee
as determined from
\be\label{dah} (P_\de \circ T^{-1}_\de) (\theta \circ T_\de) = P_\de \theta.\ee

Then from \eqref{daa}, \eqref{dac}, \eqref{daj}, \eqref{dag}, \eqref{dah}, for any $\de > 0, \vare \ge 0$,
\begin{align}\label{dai} \lim\limits_{\de\downarrow 0} Q\Big(z_\de, P_\de, &\Ga_\de \cap (T_\de \Om'), \vare\Big)\nonumber \\
&= \lim\limits_{\de\downarrow 0} Q \Big(z_\de\circ T_\de, P_\de \circ T^{-1}_\de, T^{-1}_\de (\Ga_\de \cap (T_\de \Om')), \vare \Big),\end{align}
as the left-hand terms in \eqref{dai} correspond to the right-hand terms modulo expressions disappearing in the limit $\de\downarrow 0$ using \eqref{daa}.

\begin{defn} A sequence $\{ (z_\de, P_\de)\}$ is limit admissible if
\be\label{db} \lim\limits_{\vare\downarrow 0} (\lim\limits_{\de\downarrow 0} Q(z_\de, P_\de, \Ga_\de \cap \Om'', \vare)) = 0\ee
for any bounded $\Om'' \subseteq \Om$.
\end{defn}

Suitable limits of limit admissible sequences are admissible and potentially unambiguous.
\begin{thm}%5.2
Assume $\{ (z_\de, P_\de)\}$ limit admissible; $\{ z_\de\}$ satisfying \eqref{dad} with a limit $z$, of form \eqref{aac}, \eqref{ad}, \eqref{ada}, such that \eqref{dab}, \eqref{dae} hold; and $\{ P_\delta \}$ such that the limit \eqref{dag} exists, for some $\{ T_\delta \}$ satisfying \eqref{daa}, \eqref{daf}.

Then $(z, P)$ is admissible for any $P \in \calp$ such that
\be\label{dba} \ker  (\lim\limits_{\de\downarrow 0}  (P_\de \circ T^{-1}_\de) \subseteq \ker P.\ee
\end{thm}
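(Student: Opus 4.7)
The plan is to show $Q(z, P, \Ga, 0) = 0$, which by Theorem 3.1 together with \eqref{cde} is equivalent to the asserted admissibility of $(z, P)$. Write $P_0 \eqadef \lim\limits_{\de \downarrow 0}(P_\de \circ T_\de^{-1})$, supplied by \eqref{dag}. The hypothesis \eqref{dba} combined with the monotonicity \eqref{cbe} gives $Q(z, P, \Ga, 0) \le Q(z, P_0, \Ga, 0)$, so it suffices to verify $Q(z, P_0, \Ga, 0) = 0$. I would first localize: any $\phi \in \Phi_0(z, P_0)$ can be approximated in $W^{1,p}(\Ga)$ by truncations $\phi_R$ supported in a bounded $\Om'' \subseteq \Om$, and by inspection of \eqref{cad} one has $M(\phi_R; z, P_0, \Ga, \vare) = M(\phi_R; z, P_0, \Ga \cap \Om'', \vare)$ once $\supp \phi_R \subseteq \Ga \cap \Om''$. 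It thus suffices to show $Q(z, P_0, \Ga \cap \Om'', 0) = 0$ for every bounded $\Om''$, and since $\Ga \cap \Om''$ is bounded, Theorem 4.4 reduces this further to $\lim\limits_{\vare \downarrow 0} Q(z, P_0, \Ga \cap \Om'', \vare) = 0$.

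For each fixed $\vare > 0$ I would pass from $(z, P_0)$ to $(z_\de \circ T_\de, \hat P_\de)$, with $\hat P_\de \eqadef P_\de \circ T_\de^{-1}$, via Theorem 4.1. The resulting error, bounded by $c\, \dist_\Ga(z, z_\de \circ T_\de)\bigl(1 + \vare^{-1/2}\, Q(z_\de \circ T_\de, \hat P_\de, \Ga \cap \Om'', \vare)^{1/2}\bigr)$, vanishes as $\de \downarrow 0$ by \eqref{dae}, \eqref{dad} and the uniform a priori bound \eqref{cdz}. The simultaneous passage $P_0 \leftrightarrow \hat P_\de$ I would handle by a cutoff comparison in the spirit of lemma 3.6, exploiting \eqref{dag} to control the mismatch between $P_0$ and $\hat P_\de$ on vanishing subsets of $\partial\Om$. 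Finally the transformation identity \eqref{dai} equates the resulting $\de \downarrow 0$ limit with $\lim\limits_{\de \downarrow 0} Q(z_\de, P_\de, \Ga_\de \cap T_\de \Om'', \vare)$, and limit admissibility of $\{(z_\de, P_\de)\}$ (Definition 5.1) sends the $\vare \downarrow 0$ limit to zero.

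The main obstacle is the order of limits: Theorem 4.1 provides only $O(\vare^{-1/2})$ continuity in $z$, forcing $\de \downarrow 0$ to precede $\vare \downarrow 0$---precisely the order prescribed in \eqref{db}. A secondary difficulty, not covered by a ready-made continuity result in the paper, is making the $P_0 \leftrightarrow \hat P_\de$ replacement rigorous in a manner compatible with this order; a cutoff argument patterned on example 3 of Section 3.1 appears to be the natural route.
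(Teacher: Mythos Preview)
Your overall strategy---localize to bounded $\Omega''$, invoke Theorem 4.4 for continuity at $\varepsilon = 0$, use Theorem 4.1 for continuity in $z$ at fixed $\varepsilon > 0$, pull back through the change of variables \eqref{dai}, and finish with limit admissibility \eqref{db}---is exactly the paper's. The order of limits you identify, $\delta \downarrow 0$ before $\varepsilon \downarrow 0$, is also the one the paper uses, for the same reason.

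The one substantive difference is your handling of the projection. You first reduce from $P$ to $P_0 = \lim_{\delta\downarrow 0}(P_\delta \circ T_\delta^{-1})$ via \eqref{cbe}, and then face the passage $P_0 \leftrightarrow \hat P_\delta$, for which monotonicity is unavailable (neither $\ker \hat P_\delta \subseteq \ker P_0$ nor the reverse need hold for finite $\delta$), forcing you toward a separate cutoff continuity argument. The paper sidesteps this entirely: it never introduces $P_0$ as an intermediate target, but goes directly from $\hat P_\delta = P_\delta \circ T_\delta^{-1}$ to $P$ inside the $\delta$-limit via the single monotonicity step \eqref{cbe}, invoking \eqref{dba} together with \eqref{ana}. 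Concretely, the paper's chain is
\[
Q(z_\delta\circ T_\delta,\, \hat P_\delta,\, \Gamma\cap\Omega',\, \varepsilon) \;\ge\; Q(z_\delta\circ T_\delta,\, P,\, \Gamma\cap\Omega',\, \varepsilon),
\]
and only then is Theorem 4.1 applied to reach $Q(z, P, \Gamma\cap\Omega', \varepsilon)$. This eliminates your ``secondary difficulty'' altogether. Your route would work once the cutoff argument is made precise, but it is a detour; the paper's ordering of the monotonicity step is cleaner. A minor further point: after applying \eqref{dai} you land on $\Gamma_\delta \cap T_\delta\Omega''$, whereas \eqref{db} is phrased for $\Gamma_\delta \cap \Omega''$; the paper handles this by choosing the bounded set in \eqref{db} large enough to contain $T_\delta\Omega'$ and then using \eqref{cbb}, a step you should make explicit.
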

\begin{proof}  For any bounded $\Om'\subseteq \Om$, we choose a bounded $\Om''$ such that
\be\label{dbb} \{T_\de x \,|\, x \in \Om'\} \subseteq \Om''\ee
for all $\de > 0$.

Then
\begin{align*} \lim\limits_{\vare\downarrow 0} ( &\lim\limits_{\de\downarrow 0} Q(z_\de, P_\de, \Ga_\de \cap \Om'', \vare))\\
&\ge \lim\limits_{\vare\downarrow 0}(\lim\limits_{\de\downarrow 0} Q(z_\de, P_\de, \Ga_\de \cap (T_\de\Om'),\vare)\end{align*}
using \eqref{cbb}, \eqref{dbb};
\[ = \lim\limits_{\vare\downarrow 0}( \lim\limits_{\de\downarrow 0} Q(z_\de\circ T_\de, P_\de \circ T^{-1}_\de, T^{-1}_\de (\Ga_\de \cap (T_\de\Om'), \vare)\]
using \eqref{dai};
\[ \ge \lim\limits_{\vare\downarrow 0}( \lim\limits_{\de\downarrow 0} Q(z_\de\circ T_\de, P_\de \circ T^{-1}_\de, \Ga\cap \Om', \vare)\]
using \eqref{cbb}, \eqref{dac};
\[ \ge \lim\limits_{\vare\downarrow 0}( \lim\limits_{\de\downarrow 0} Q(z_\de\circ T_\de, P,  \Ga\cap \Om', \vare)\]
using \eqref{cbe}, \eqref{dba}, \eqref{ana};
\[ = \lim\limits_{\vare\downarrow 0}Q(z,P, \Ga\cap\Om',\vare)\]
using theorem 4.1;
\be\label{dbc} = Q(z,P,\Ga\cap \Om', 0)\ee
using theorem 4.4.

Thus \eqref{db} implies \eqref{cdf} and $(z,P)$ admissible.

\end{proof}

Reversing the argument produces a converse statement, suggesting that given $(z,P)$ admissible, limit admissible sequences with limit $(z, P)$ are realistic expectations.

\begin{thm} Assume $(z, P)$ admissible, and a sequence $\{ (z_\de, P_\de, \Ga_\de)\}$ satisfying
\eqref{da}, \eqref{daa}, \eqref{dab}, \eqref{dad}, \eqref{dae}, \eqref{daf}, \eqref{dag}, and  \eqref{dac} with equality, and
\be\label{dc}\ker P \subseteq \lim\limits_{\bar{\de}\downarrow 0} \mathop{\cap}\limits_{\de \le \bar{\de}} \ker (P_\de \circ T^{-1}_\de).\ee

Then $\{ (z_\de, P_\de)\}$ is limit admissible.
\end{thm}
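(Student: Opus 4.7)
The plan is to essentially reverse the chain of inequalities used in the proof of theorem 5.2, exploiting the strengthened hypotheses: equality in \eqref{dac}, and \eqref{dc}. Fix a bounded $\Om'' \subseteq \Om$ and $\vare > 0$. Using $T_\de \to I_m$ from \eqref{daa}, I would choose a bounded $\Om' \subseteq \Om$ slightly larger than $\Om''$ so that $\Om'' \subseteq T_\de \Om'$ for all sufficiently small $\de > 0$. An application of \eqref{cbb} then gives
\be\label{dcz} Q(z_\de, P_\de, \Ga_\de \cap \Om'', \vare) \le Q(z_\de, P_\de, \Ga_\de \cap (T_\de \Om'), \vare).\ee
Passing to the limit $\de \downarrow 0$ on the right side using \eqref{dai} and the assumed equality in \eqref{dac} converts the right-hand side to $\lim_{\de \downarrow 0} Q(z_\de \circ T_\de, P_\de \circ T_\de^{-1}, \Ga \cap \Om', \vare)$, in which $\Ga \cap \Om'$ no longer depends on $\de$.

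Next I would invoke \eqref{dc}: for each vector $v \in \ker P$, there exists $\bar \de(v) > 0$ such that $v \in \ker (P_\de \circ T_\de^{-1})$ for all $\de \le \bar \de(v)$. This yields $\ker P \subseteq \ker (P_\de \circ T_\de^{-1})$ for small $\de$ in the sense needed to apply \eqref{cbe}, giving
\be\label{dcy} Q(z_\de \circ T_\de, P_\de \circ T_\de^{-1}, \Ga \cap \Om', \vare) \le Q(z_\de \circ T_\de, P, \Ga \cap \Om', \vare)\ee
for all sufficiently small $\de$. Now, with $\vare > 0$ fixed, theorem 4.1 together with \eqref{dae} lets me pass $\de \downarrow 0$ on the right of \eqref{dcy} to obtain
\be\label{dcx} \lim_{\de\downarrow 0} Q(z_\de, P_\de, \Ga_\de \cap \Om'', \vare) \le Q(z, P, \Ga \cap \Om', \vare).\ee

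Finally, sending $\vare \downarrow 0$ in \eqref{dcx} and invoking theorem 4.4 (continuity of $Q$ in $\vare$ at $\vare = 0$ for bounded $\Ga \cap \Om'$) yields
\be\label{dcw} \lim_{\vare\downarrow 0}\lim_{\de\downarrow 0} Q(z_\de, P_\de, \Ga_\de \cap \Om'', \vare) \le Q(z, P, \Ga \cap \Om', 0).\ee
Admissibility of $(z, P)$ gives $Q(z, P, \Ga, 0) = 0$ via \eqref{cdf}, and then \eqref{cbb} applied to $\Ga \cap \Om' \subseteq \Ga$ forces the right side of \eqref{dcw} to vanish. Since $Q$ is nonnegative, definition 5.1 is verified.

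The main obstacle I anticipate is the delicate interaction between the three limits ($\de \downarrow 0$, $\vare \downarrow 0$, sup over $\phi$) together with the fact that the test space $\Phi_0$ and the admissible $\theta$ both depend on the projection, which itself varies with $\de$. The two safeguards are: taking $\vare > 0$ fixed first, so that the Tikhonov term $\vare \|\theta\|_Z^2$ in \eqref{cad} gives the uniform control of the minimizers needed by theorem 4.1; and exploiting the equality version of \eqref{dac} so that, after the change of variables via $T_\de$, the support set $\Ga \cap \Om'$ becomes independent of $\de$ and therefore admits theorem 4.1 directly without any further geometric perturbation argument.
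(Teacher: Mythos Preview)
Your proof is correct and follows essentially the same approach as the paper's own proof: both use the same sequence of ingredients (the monotonicity relations \eqref{cbb}, \eqref{cbe}, the change of variables \eqref{dai}, equality in \eqref{dac}, the kernel condition \eqref{dc}, then theorem~4.1 for the $\de$-limit and theorem~4.4 for the $\vare$-limit), and the choice of $\Om'$ with $\Om'' \subseteq T_\de \Om'$ is exactly the paper's \eqref{dca}. The only difference is cosmetic---the paper starts from $0 = Q(z,P,\Ga\cap\Om',0)$ and runs the chain forward to dominate $\lim_\vare\lim_\de Q(z_\de,P_\de,\Ga_\de\cap\Om'',\vare)$, whereas you start from the latter and bound it upward toward $0$; the steps and their justifications match one-for-one.
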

\begin{proof} Given any bounded $\Om'' \subseteq \Om$ we choose bounded $\Om'$ such that
\be\label{dca} \Om'' \subseteq T_\de \Om'\ee
for all $\de > 0$.

Then from \eqref{cdf}, using theorem 4.4
\begin{align*} 0 &= \lim\limits_{\vare \downarrow 0} Q (z, P, \Ga \cap \Om', \vare)\\
&= \lim\limits_{\vare \downarrow 0} ( \lim\limits_{\de \downarrow 0} Q(z_\de \circ T_\de, P, \Ga \cap \Om', \vare))
\end{align*}
using theorem 4.1;
\[ = \lim\limits_{\vare \downarrow 0} (\lim\limits_{\de \downarrow 0}
Q(z_\de \circ T_\de, P_\de \circ T^{-1}_\de, \Ga \cap \Om', \vare)) \]
using \eqref{cbe}, \eqref{dc};
\[ = \lim\limits_{\vare \downarrow 0} (\lim\limits_{\de \downarrow 0}
Q(z_\de \circ
 T_\de, P_\de \circ T^{-1}_\de,
 T^{-1}_\de(\Ga_\de\cap(T_\de \Om')), \vare))\]
  using \eqref{dac} with equality;
  \[=\lim\limits_{\vare\downarrow 0} ( \lim\limits_{\de\downarrow 0} Q(z_\de, P_\de, \Ga_\de \cap (T_\de \Om')), \vare))\]
  using \eqref{dai};
  \be\label{dd} =\lim\limits_{\vare\downarrow 0} ( \lim\limits_{\de\downarrow 0} Q(z_\de, P_\de, \Ga_\de \cap \Om'', \vare))\ee
  using \eqref{cbb}, \eqref{dca}, thus establishing \eqref{db}.
\end{proof}

In the special case that
\be\label{dda} z_\de = z\ee
independent of $\de$, the condition \eqref{db} can be simplified using theorem 4.4.

\begin{cor} Assume \eqref{dda}, that
\be\label{ddb} \lim\limits_{\de'\downarrow 0} \mathop{\cup}\limits_{\de < \de'} \ker P_\de \subseteq \ker P,\ee
and that for any bounded $\Om''$,
\be\label{ddc} \lim\limits_{\de\downarrow 0} Q (z, P_\de, \Ga (z) \cap \Om'', 0) = 0.\ee

Then $(z, P)$ is admissible.
\end{cor}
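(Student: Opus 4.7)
The plan is to bypass the full apparatus of Theorem 5.2 and establish admissibility directly through Theorem 3.1, taking advantage of the simplification $z_\delta = z$, which renders the extension/mollification machinery of section 4 unnecessary: the monotonicity \eqref{cbe} together with the hypothesis \eqref{ddb} already yields the essential estimate at $\varepsilon = 0$, so the delicate swap of the limits in $\varepsilon$ and $\delta$ required by Definition 5.1 can be avoided.

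First, under \eqref{ddb}, for all $\delta$ sufficiently small, $\ker P_\delta \subseteq \ker P$. Applying \eqref{cbe} at $\varepsilon = 0$ gives
\[
Q(z, P, \Gamma(z) \cap \Omega'', 0) \le Q(z, P_\delta, \Gamma(z) \cap \Omega'', 0)
\]
for every bounded $\Omega'' \subseteq \Omega$. Letting $\delta \downarrow 0$ and invoking \eqref{ddc} immediately yields $Q(z, P, \Gamma(z) \cap \Omega'', 0) = 0$ for every bounded $\Omega''$.

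Next I would promote this local vanishing to $Q_0(z, P) = Q(z, P, \Gamma(z), 0) = 0$, which by \eqref{cde} and Theorem 3.1 is exactly admissibility of $(z, P)$. For any $\phi \in \Phi_0(z, P)$ with $\supp \phi \subseteq \Omega''$, inspection of \eqref{cad} shows that $L(\theta, \phi; z, P, \Gamma(z), 0)$ and $L(\theta, \phi; z, P, \Gamma(z) \cap \Omega'', 0)$ coincide for every admissible $\theta$, so $M(\phi; z, P, \Gamma(z), 0) = M(\phi; z, P, \Gamma(z) \cap \Omega'', 0) = 0$ by the previous step. For general $\phi \in \Phi_0(z, P)$, I would approximate in $W^{1,p}(\Gamma(z))$ by a sequence $\phi_n \in \Phi_0(z, P)$ of compactly supported elements; the trivial upper bound
\[
M(\phi - \phi_n; z, P, \Gamma(z), 0) \le \intl_{\Gamma(z)} (S(z)(\phi - \phi_n))^2 \le c \, \| \phi - \phi_n \|^2_{W^{1,p}(\Gamma(z))},
\]
obtained by taking $\theta = 0$ in \eqref{cae} and using the jump estimate on $S(z)$, combined with Lemma 4.3 gives $M(\phi; z, P, \Gamma(z), 0) = 0$, as required.

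The principal obstacle is this final approximation step, since $\Gamma(z)$ may be unbounded and its components may accumulate near $\partial \Omega$. A standard cutoff construction, multiplying $\phi$ by a smooth bump vanishing outside a large ball and exploiting the $W^{1,p}$ regularity of $\phi$ together with the convention that $\partial \Gamma_k$ does not intersect $\partial \Omega$, should suffice to establish the required density.
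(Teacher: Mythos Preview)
Your argument is correct and more direct than the paper's route. The paper frames Corollary 5.4 as a specialization of Theorem 5.2: with $z_\delta = z$ fixed, it asserts that the limit-admissibility condition \eqref{db} simplifies to \eqref{ddc} via Theorem 4.4, after which the chain in the proof of Theorem 5.2 delivers $Q(z,P,\Gamma\cap\Omega',0)=0$. You instead bypass \eqref{db}, Theorem 4.4, and Theorem 5.2 entirely: since the monotonicity \eqref{cbe} already holds at $\varepsilon=0$, you pass directly from $Q(z,P_\delta,\cdot,0)$ to $Q(z,P,\cdot,0)$ and then invoke Theorem 3.1. This shows that the $\varepsilon$-mollification machinery of section 4 is genuinely superfluous once $z_\delta=z$; the paper's detour buys nothing extra here.

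Two small remarks. Your reading of \eqref{ddb} as ``$\ker P_\delta\subseteq\ker P$ for all small $\delta$'' is slightly stronger than the literal limsup statement (one can imagine $\ker P_\delta$ rotating toward $\ker P$ without ever being contained in it); the paper is equally informal at the analogous step invoking \eqref{cbe}, \eqref{dba} in the proof of Theorem 5.2, so this is not a defect relative to the source. And the obstacle you flag at the end admits a shorter resolution than cutoff approximation: since $\phi\in\Phi_0(z,P)\subset\Phi(z,P,1)$ gives $S(z)\phi\in L_2(\Gamma)$ by \eqref{ahf}, any near-minimizer $\theta$ for $M(\phi;z,P,\Gamma\cap\Omega'',0)$ yields $M(\phi;z,P,\Gamma,0)\le 2\int_{\Gamma\setminus\Omega''}(S(z)\phi)^2\to 0$ as $\Omega''\uparrow\Omega$. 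The paper treats this passage as routine (cf.\ the line from \eqref{hda} to \eqref{hdb}).
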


Use of theorem 4.1 and \eqref{cdz} to simplify \eqref{db} results in the following.

\begin{cor} Assume $P_\de = P$ independent of $\de$ in \eqref{db}, that \eqref{dad}, \eqref{dae} hold, and that for any bounded $\Om''\subseteq \Om$
\be\label{ddf} \lim\limits_{\de\downarrow 0} Q(z_\de, P, \Ga(z_\de) \cap \Om'', 0) = 0.\ee
Then $(z, P)$ is admissible.
\end{cor}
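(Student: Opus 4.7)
The plan is to reduce to Theorem 5.2 by verifying that the sequence $\{(z_\de, P)\}$ is limit admissible in the sense of Definition 5.1; that is,
\[
\lim_{\vare \downarrow 0}\lim_{\de\downarrow 0} Q(z_\de, P, \Ga_\de \cap \Om'', \vare) = 0
\]
for every bounded $\Om''\subseteq\Om$. Once this is in hand, Theorem 5.2 applied with $P_\de\equiv P$ (in which case the kernel condition \eqref{dba} reduces to $\ker P\subseteq\ker P$, automatic) delivers admissibility of $(z,P)$.

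First, for each fixed $\de$ the set $\Ga_\de\cap\Om''$ is bounded, so Theorem 4.4 yields
\[
\lim_{\vare\downarrow 0} Q(z_\de, P, \Ga_\de\cap\Om'', \vare) = Q(z_\de, P, \Ga_\de\cap\Om'', 0).
\]
Passing $\de\downarrow 0$ and invoking the hypothesis \eqref{ddf} then produces
\[
\lim_{\de\downarrow 0}\lim_{\vare\downarrow 0} Q(z_\de, P, \Ga_\de\cap\Om'', \vare) = 0,
\]
but with the two limits taken in the reverse of the order demanded by \eqref{db}.

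The main step is therefore to exchange the iterated limits. The bound \eqref{cdz}, combined with \eqref{dad}, supplies a $\de,\vare$-uniform ceiling $Q(z_\de, P, \Ga_\de\cap\Om'', \vare)\le c_{\Om''}$, while the monotonicity \eqref{cba} in $\vare$ makes the inner limit a nondecreasing function of $\vare$. Theorem 4.1, applied between the limit $z$ and $z_\de\circ T_\de$ using $\dist_\Ga(z,z_\de\circ T_\de)\to 0$ from \eqref{dae}, together with the transformation rule \eqref{dai} and the monotonicity in $\Ga'$ from \eqref{cbb}, controls the $\de$-dependence at each fixed $\vare>0$. Assembling these ingredients one obtains a Moore--Osgood-type exchange of the iterated limits, giving
\[
\lim_{\vare\downarrow 0}\lim_{\de\downarrow 0} Q(z_\de,P,\Ga_\de\cap\Om'',\vare) = \lim_{\de\downarrow 0}\lim_{\vare\downarrow 0} Q(z_\de,P,\Ga_\de\cap\Om'',\vare) = 0,
\]
which is exactly \eqref{db}.

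I anticipate the principal obstacle to be justifying this exchange cleanly. Theorem 4.4 supplies continuity of $Q(z_\de,P,\Ga_\de\cap\Om'',\cdot)$ at $\vare=0$ only pointwise in $\de$, whereas the intended exchange requires this modulus of continuity to be essentially uniform along the sequence $\{z_\de\}$. Extracting such uniformity must rely on Theorem 4.1 at fixed $\vare>0$, whose Lipschitz constant blows up like $\vare^{-1/2}$, balanced against the uniform ceiling provided by \eqref{cdz}; the delicate point is that one cannot send $\vare\downarrow 0$ and $\de\downarrow 0$ independently, and one must instead follow a joint diagonal scheme, selecting $\vare=\vare_\de$ small enough to invoke Theorem 4.4 yet large enough that the Theorem 4.1 error term stays controlled.
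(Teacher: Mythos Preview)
Your overall plan matches the paper's hint exactly: the paper introduces this corollary with the single sentence ``Use of theorem 4.1 and \eqref{cdz} to simplify \eqref{db} results in the following,'' and supplies no further proof. So reducing to Theorem 5.2 by verifying \eqref{db} is the intended route, and you have identified the right tools.

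The gap you flag is genuine, however, and your proposed resolution does not close it. Your diagonal scheme asks for $\vare_\de$ satisfying simultaneously $d_\de/\sqrt{\vare_\de}\to 0$ (controlling the Theorem 4.1 error) and $Q(z_\de,P,\Ga_\de\cap\Om'',\vare_\de)-Q(z_\de,P,\Ga_\de\cap\Om'',0)\to 0$ (so that hypothesis \eqref{ddf} can be invoked). The second requirement demands a modulus of continuity in Theorem 4.4 that is \emph{uniform in $\de$}, and Theorem 4.1 --- which compares different first arguments at the \emph{same} $\vare>0$ --- gives no leverage on this. Abstractly the exchange can fail: the family $f_\de(\vare)=\min(1,\vare/\de)$ is monotone in $\vare$, uniformly bounded, continuous at $\vare=0$ for each $\de$, satisfies $f_\de(0)=0\to 0$ and $f_\de(\vare)\to 1$ for each $\vare>0$, yet $\lim_\vare\lim_\de f_\de(\vare)=1$. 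So the ingredients you assemble (monotonicity \eqref{cba}, ceiling \eqref{cdz}, Theorem 4.1, pointwise Theorem 4.4) are not by themselves sufficient; equicontinuity of $\{Q(z_\de,P,\Ga_\de\cap\Om'',\cdot)\}$ at $\vare=0$ is genuinely needed.

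The mechanism that actually delivers this equicontinuity appears later in the paper, in the proof of Lemma 8.4: one argues that the estimates of Lemma 3.4 (which drive the proof of Theorem 4.4) hold with constants uniform along the sequence $\{z_\de\}$ once \eqref{dad}, \eqref{dae} are in force, whence Theorem 4.4 holds uniformly in $\de$ and the iterated limits agree. That argument rests on the geometric stability of $\Ga_\de$ near $\Ga$ under \eqref{daa}, \eqref{dab}, not on Theorem 4.1; your suggestion to ``extract such uniformity'' from Theorem 4.1 is looking in the wrong place.
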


\section{Interpretation of computations}%6

We now relate the above discussion to the results of typical computational investigations.  Results from this section will be used in section 8 in formulating a weakened definition of well-posed problems, and in section 13 in construction of a class of approximation schemes.

A system \eqref{aa} is determined by $m, n, \psi_1,\dots, \psi_m, D$.  Without assuming hyperbolicity, a boundary-value problem (including Cauchy problems and initial-boundary value problems) is determined by additional specification of $\Om \subset \bbr^{m} $ open and connected (not necessarily  bounded or simply connected), a specific element $P_\cals\in \calp$ determining the form of the boundary conditions, $D_\cals \subset D$ open and bounded, a set of available boundary data $\tilde B_\cals \subset \ker P_\cals$, and whatever $\{ e_0\}$ in \eqref{agea}.

Posited is the existence of a corresponding solution set $\tilde \cals$, with elements
\be\label{faa} z:\Om\to D_\cals,\ee
of the form \eqref{ad}, \eqref{ada}, satisfying a suitable entropy condition, with $\Ga$ depending on $z$,  $K$ depending on $\tilde \cals$, satisfying \eqref{ac}, \eqref{aca} with $P=P_\cals$ and some  $b\in \tilde B_{P_\cals}$, available a priori.  Indeed, $\tilde \cals$ is restricted implicitly, using \eqref{ab}, by
\be\label{fab} \tilde B_{P_\cals} \subseteq \{ b(z, P_\cals), \; z \in \tilde \cals\}.\ee

The set $\tilde \cals$ is further restricted by the  adopted entropy condition,  attempting to make $\tilde \cals$ isomorphic to $\tilde B_{P_\cals}$.

For some specified higher order dissipation operator $\cald$, we seek the elements $ z \in \tilde \cals$ obtained as $h\downarrow 0$ limits of ``viscous approximations"
\be\label{fac} \tilde z_h \in C(\Om \to D_\cals)^n,\ee
satisfying
\be\label{fad} \suml^{m}_{i=1}  ( \psi^\dag_{i, z} (\tilde z_h))_{x_i} = h\cald(\tilde z_h)\ee
within $\Om$, and partially specified  boundary data, reflecting \eqref{ab}, of the form
\be\label{fae} (I_n-P_\cals) \psi^\dag_{\nu,z} (\tilde z_h\underset{\partial\Om}{\Big|} ) = b\ee
 together with \eqref{agea} (with $z = \tilde z_h$) on $\partial\Om$.

 Throughout, we assume that with $q(z)$ obtained from \eqref{aaea}, $z\in \tilde{\cals}$ so obtained satisfies the familiar entropy inequality \cite{L2}
\be\label{faf} \triangledown  \cdot  q(z) \le 0\ee
weakly within $\Om$.  This choice of entropy condition has
attractive features: it does not require hyperbolicity or precise specification of $\cald$; it is easily incorporated in discretization schemes; satisfied with equality where $z$ is continuous, it is easily verified a posteriori.

It is speculated that a well-posed problem results, in the sense that the adopted entropy condition \eqref{faf}, $P_\cals$ and $\{ e_0\}$  are such that $\tilde \cals$ is indeed isomorphic to $\tilde B_{P_\cals}$, in particular that there exists a continuous  mapping $\cala_\cals:\tilde B_{P_\cals} \to \tilde \cals$ satisfying
\be\label{fag} \cala_\cals(b(z, P_\cals)) = z\ee
for all $z \in \tilde \cals$,  using \eqref{ab}.

In the present framework, a (traditionally) well-posed problem is associated with $P_\cals$ for which $\cala_\cals $ is Frechet differentiable with respect to $b$, and such that
\be\label{pb} \tilde \cals \subseteq \{ z \, | \,(z, P_\cals) \; \hbox{unambiguous} \}\ee

A computational investigation tacitly incorporates a further assumption,  that any such  mapping $\cala_\cals$ is found as the $h \downarrow 0$ limit of computable approximations $z_h \in \cals$,
\be\label{fah} \cals \eqadef \{ z:\Om \to D_\cals\},\ee
$z$ of the form \eqref{ad}, \eqref{ada},  with $\Ga, K$ depending on $z$.

Approximation schemes
\be\label{fai} \cala_h:  \tilde B_{P_\cals} \to \cals, \; h > 0\ee
are constructed, depending on the following: \
$h$ as in \eqref{fad}, also denoting the discretization  parameter;
$\cald$  in \eqref{fad}, conveniently selected to obtain \eqref{faf} a posteriori;
\be\label{faj} U_\cald :N\times \calp \to N\ee
determining additional boundary conditions needed to recover uniqueness in \eqref{fad}, \eqref{fae}, with $\cald$ of the higher order in particular; and
\be\label{fak}\calf_{h,b} : C(C(\Om\to D_\cals)) \to \cals\ee
a convenient ``shock fitting" mapping to recover approximations $z_h \in\cals$ each of the form \eqref{ad}, with $\Ga(z_h)$ but not $K$ generally depending on $h$.

Then given $b \in \tilde B_{P_{\cals}}$, we compute $\tilde z_h(b)$ satisfying (a suitable discretization of)
\eqref{fad}, \eqref{fae}, \eqref{agea},
simultaneously with
\be\label{fal} U_\cald(\tilde z_h, P_\cals) = 0 \ee
on $\partial\Om$.  The mapping $U_\cald $ is chosen, typically depending on $\nu \cdot \nabla z$, so that \eqref{fal} is generally lost in the limit $h\downarrow 0$.

Then our approximation of $z$ satisfying \eqref{fag} is
\be\label{fam} z_h(b) = \calf_{h,b} (\tilde z_h(b)).\ee

We denote two mappings on $\tilde B_{P_\cals}$  below
\be\label{fan} \tilde \cala_h (b)\eqadef \tilde z_h(b), \; \; \cala_h(b) \eqadef z_h(b).\ee

The ``shock fitting" procedure \eqref{fam}, \eqref{fak} is not computationally necessary, in the sense that for any $ b \in \tilde B_{P_\cals}$, in a sufficiently weak topology,
\be\label{fana} \lim\limits_{h\downarrow 0} \cala_h(b) = \lim\limits_{h\downarrow 0} \tilde \cala_h (b) \,\eqadef\, \cala_0 (b).\ee

The step \eqref{fam} facilitates comparison of $z_h \in \cals$ with $z\in \tilde \cals$ satisfying \eqref{fag}, and permits discussion of stability of the approximations $(z_h, P_\cals)$ in theorem 6.4 below.

\subsection{Interpretation of obtained results}%6.1

For such schemes $\cala_h$, positive empirical results admit interpretation using the above analysis.  Motivated by \eqref{cca}, we introduce a distance function~on~$\cals$,
\begin{align}\label{ga} &\dist_\cals(z,z'; w, w_\Ga)\eqadef \underset{T_\de}{\glb} \Big( \| w^{1/2} (z-z'\circ T_\de)\|_{L_{2p(p-2)}(\Om)}\\
+&\| w^{1/2}_\Ga [z-z'\circ T_\de]\|_{L_{2p/(p-2)}(\Ga(z))} + \| T_\de - I_m\|_{C^1(\Om)} + \| T_\de - I_m\|_{L_2(\Om)}\Big)\nonumber\end{align}
for any $z, z'\in \cals$, $w, w_\Ga$ satisfying \eqref{aga}, \eqref{agb}, $T_\delta $ satisfying
%%\eqref{daa}, \eqref{daf} and
\be\label{ffa} T_\de\Ga(z) \subseteq \Ga(z').\ee

Several statements are immediate from \eqref{ga}.

For $T_\delta $ satisfying \eqref{daa}, \eqref{daf}, $\theta \in Z(P)$, it follows that  $ \theta \circ T^{-1}_\de \in Z(P\circ T^{-1}_\de)$
and
\be\label{gaa} | \, \| \theta \circ T^{-1}_\de  \|_Z - \| \theta\|_Z | \, \le c( \| T_\de - I_m\|_{C^1(\Om)} + \| T_\de - I_m\|_{L_2(\Om)}) \| \theta\|_Z,\ee
\be\label{gab} | \, \| \theta \circ T^{-1}_\de \|_{\partial\Om, P \circ T^{-1}_\de} - \| \theta \|_{\partial\Om, P} | \le c \| T_\de - I_m\|_{L_\infty (\partial\Om)} \|\theta\|_{\partial\Om, P}.\ee

For $\theta \in H(z,P,w,w_\Ga) \cap Z(P)$,
\begin{align}\label{gac} \| w^{1/2} &|R(z) \theta - R(z'\circ T_\de)\theta | \, \|_{L_2 (\Om\setminus \Ga(z))}+\| w_\Ga^{1/2} (S(z)\theta - S (z'\circ T_\de)\theta)\|_{L_2 (\Ga(z))}\nonumber \\
& \le c \,\dist_\cals (z, z'; w, w_\Ga)\| \theta\|_Z,\end{align}
so
\be\label{gad} | \, \|\theta\|_{z, w,w_\Ga} - \| \theta\circ T^{-1}_\de\|_{z', w\circ T^{-1}_\de, w_\Ga \circ T^{-1}_\de} | \le c \, \dist_\cals (z,z'; w, w_\Ga) \; \| \theta \|_Z.\ee

From \eqref{gad}, for any $z, z', w, w_\Ga$ such that \eqref{ffa} holds with equality and   $\dist_\cals (z, z'; w, w_\Ga)$  is finite,
\be\label{gae} H(z, P, w, w_\Ga) \cap Z(P) = H(z'\circ T^{-1}_\de, P\circ T^{-1}_\de, w\circ T^{-1}_\de, w_\Ga \circ T^{-1}_\de) \cap Z(P \circ T^{-1}_\de).\ee

%%%%%%%%%%%%%

The following is well-known, restated here in present notation.

\begin{lem} For $b,w,w_\Ga$ fixed, assume that the sequence $\{ z_h\}$ obtained in \eqref{fam} satisfies \eqref{fae} and is Cauchy with respect to $\dist_\cals$, so  that there exists $z_0 \in \cals$ such that
\be\label{fb} \dist_\cals  (z_0, z_h; w, w_\Ga) \overset{h\downarrow 0}{\longrightarrow} 0.\ee

Assume further that
\be\label{fbb} h \cald (\tilde z_h) \overset{h\downarrow 0}{\rightharpoondown} 0\ee
weakly in the dual space  $X^{{}^{\ast}}_{P_\cals}$, and that for $i = 1,\dots, m$, any $ r < \infty, $
\be\label{fba} \| \psi_{i, z} (\tilde z_h) - \psi_{i, z} (z_h) \|_{L_r(\Om)} \overset{h\downarrow 0}{\longrightarrow} 0.\ee

Then $z_0 $  satisfies \eqref{ac} for all $\theta \in X_{P_\cals}$.
\end{lem}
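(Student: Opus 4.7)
The plan is to derive the weak form for each approximation $\tilde z_h$ first, and then pass to the limit $h\downarrow 0$ term by term. Fix an arbitrary $\theta\in X_{P_\cals}$. Multiply \eqref{fad} componentwise by $\theta_j$, sum over $j$, and integrate over $\Om$; after integration by parts (justified by the smoothness assumed in \eqref{fac}) one obtains
\begin{equation*}
\int_{\p\Om}\psi^\dag_{\nu,z}(\tilde z_h)\cdot\theta \;-\;\iintl_\Om\suml^{m}_{i=1}\suml^n_{j=1}\psi_{i,z_j}(\tilde z_h)\,\theta_{j,x_i} \;=\;\langle h\cald(\tilde z_h),\theta\rangle,
\end{equation*}
where the right-hand pairing is between $X^{{}^{\ast}}_{P_\cals}$ and $X_{P_\cals}$. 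Using the symmetry of $P_\cals$ in $L_2(\p\Om)$, the condition $P_\cals\theta|_{\p\Om}=0$ from \eqref{aca}, and the boundary identity \eqref{fae}, the boundary integral collapses to $\int_{\p\Om}b\cdot\theta$. Thus for every $h>0$,
\begin{equation*}
\int_{\p\Om}b\cdot\theta \;=\;\iintl_\Om\suml^{m}_{i=1}\suml^n_{j=1}\psi_{i,z_j}(\tilde z_h)\,\theta_{j,x_i} \;+\;\langle h\cald(\tilde z_h),\theta\rangle.
\end{equation*}

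Next I would pass to the limit $h\downarrow 0$. The dissipation pairing vanishes by \eqref{fbb}. To handle the volume term, I split
\begin{equation*}
\iintl_\Om\psi_{i,z_j}(\tilde z_h)\theta_{j,x_i}\;=\;\iintl_\Om\bigl(\psi_{i,z_j}(\tilde z_h)-\psi_{i,z_j}(z_h)\bigr)\theta_{j,x_i}\;+\;\iintl_\Om\psi_{i,z_j}(z_h)\theta_{j,x_i}.
\end{equation*}
The first piece goes to $0$ by \eqref{fba}: for any truncation level $M>0$, write $|\theta_{j,x_i}|=|\theta_{j,x_i}|\mathbf 1_{|\theta_{j,x_i}|\le M}+|\theta_{j,x_i}|\mathbf 1_{|\theta_{j,x_i}|>M}$; on the first set H\"older combined with \eqref{fba} (for any finite $r$) gives vanishing, while on the second the uniform $L_\infty$ bound on $\psi_{i,z_j}$ (from $\tilde z_h,z_h\in D_\cals$ bounded in \eqref{faa}) multiplied by $\|\theta_{j,x_i}\mathbf 1_{|\theta_{j,x_i}|>M}\|_{L_1(\Om)}$ can be made arbitrarily small by choosing $M$ large, using $\theta_{j,x_i}\in L_1(\Om)$ from \eqref{aca}.

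For the remaining piece $\iintl_\Om\psi_{i,z_j}(z_h)\theta_{j,x_i}$, the Cauchy property \eqref{fb} with respect to $\dist_\cals$ (see \eqref{ga}) supplies a sequence $T_h\in C^1(\Om\to\Om)$ with $T_h\to I_m$ in $C^1$ and $\|z_0-z_h\circ T_h\|_{L_{2p/(p-2)}(\Om)}\to 0$; after the change of variables $y=T_h^{-1}(x)$, with Jacobians $|\det DT_h|\to 1$ uniformly, the Lipschitz property of $\psi_{i,z_j}$ on the bounded set $D_\cals$ converts this into $L_{2p/(p-2)}$ convergence of $\psi_{i,z_j}(z_h)\to\psi_{i,z_j}(z_0)$ modulo a null set arising from the misalignment of $\Ga(z_h)$ and $\Ga(z_0)$, which is handled because $z_h,z_0\in\cals$ are bounded and differ only on this vanishing set. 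The same truncation trick as above (now using pointwise a.e.\ convergence along a subsequence combined with uniform $L_\infty$ boundedness and $L_1$ integrability of $\theta_{j,x_i}$, i.e.\ dominated convergence with dominant $2\sup_{D_\cals}|\psi_{i,z_j}|\cdot|\theta_{j,x_i}|$) yields
\begin{equation*}
\iintl_\Om\psi_{i,z_j}(z_h)\theta_{j,x_i}\;\xrightarrow{h\downarrow 0}\;\iintl_\Om\psi_{i,z_j}(z_0)\theta_{j,x_i},
\end{equation*}
and the full sequence converges because subsequential limits are all the same.

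The main obstacle is the low regularity of $\theta$: test functions in $X_{P_\cals}$ only have $L_1$ gradients, so one cannot simply apply H\"older with finite $r$ against the $L_r$ convergence in \eqref{fba} or the $L_{2p/(p-2)}$ convergence from $\dist_\cals$. The truncation argument described above, which exploits the uniform $L_\infty$ boundedness of $\psi_{i,z}(\cdot)$ on $\bar D_\cals$, is the essential device that resolves this mismatch. Collecting the three limits produces \eqref{ac} for $z_0$ against the arbitrary $\theta\in X_{P_\cals}$, completing the proof.
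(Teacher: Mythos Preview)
Your proof is correct and follows the same route as the paper, which simply states that \eqref{ac} is ``immediate from \eqref{fb}, \eqref{fbb}, \eqref{fba}'' after using \eqref{fad}, \eqref{fae}. You have supplied the details the paper omits---in particular the truncation argument reconciling the $L_1$ regularity of $\theta_{x_i}$ with the $L_r$ and $L_{2p/(p-2)}$ convergences, and the change of variables via $T_h$ from $\dist_\cals$---but the underlying three-step limit (kill the dissipation via \eqref{fbb}, pass from $\tilde z_h$ to $z_h$ via \eqref{fba}, then from $z_h$ to $z_0$ via \eqref{fb}) is identical.
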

\begin{rem*} The assumption \eqref{fba} implies  a condition on $\calf_{b, h}$ in \eqref{fak}, maintaining \eqref{fana}.

With $D_\cals$ bounded, convergence \eqref{fba} in $L_1 (\Om)$ implies convergence in $L_{p'}(\Om)$ for any finite $p'$.

For a particular case, we likely  regard  \eqref{fb}, \eqref{fbb}, \eqref{fba} as established beyond reasonable doubt by empirical results.
%\end{rems*}
\end{rem*}
\begin{proof}  Using \eqref{fad}, \eqref{fae}, establishment  of \eqref{ac} is immediate from \eqref{fb}, \eqref{fbb}, \eqref{fba}.
\end{proof}

From \eqref{fb}, \eqref{ga}, \eqref{fba}, with $z_0, z_h$ satisfying \eqref{ad},
\be\label{fbe} \| [z_0] - [z_h\circ T_h]\|_{L_1(\Ga(z_0))} \overset{h\downarrow 0}{\longrightarrow} 0.\ee

For \eqref{fb}, \eqref{fba}, \eqref{fbb} regarded as established for all $b \in \tilde B_{P_\cals}$, perhaps with $\hat B_{P_\cals}$ suitably restricted, we designate
\be\label{fbc} \cala_0:\tilde B_{P_\cals} \to \hat \cals_0\ee
as determined from \eqref{fana},
\be\label{fbd} \cala_0(b) \eqadef z_0 (b), \; \, b \in \tilde B_{P_\cals}\ee
as our candidate for $\cala_\cals$ in \eqref{fag}.
%%%%%%%%%%%%

In \eqref{fbc}, \eqref{fbd}, $\hat\cals_0$ is the solution subset of \eqref{ac}, \eqref{aca} so obtained, for whatever adopted approximation scheme $\cala_h$.  Existence of a well-posed problem, in the sense that \eqref{fag} holds, implies a conjecture
\be\label{fbda} \hat\cals_0 = \tilde \cals,\ee
irrespective of $\cala_h$.

In this context, identification of $\cala_0$ as $\cala_\cals$ remains uncertain; the condition \eqref{fbda} is tentative at best.
If empirical evidence establishes \eqref{fbc} beyond reasonable doubt, it does not establish uniqueness.  In particular, $\cala_h$ given in \eqref{fan} necessarily depends on $\cald$ in \eqref{fad}, $U_\cald $ in \eqref{fal}, and whatever discretization of \eqref{fad}. There is no assurance that such dependence disappears in the limit $h\downarrow 0$.  In general there is also no a priori assurance of uniqueness of the solution of \eqref{fad}, \eqref{fae}, \eqref{fal}, \eqref{agea}.

Additionally, we may recall that \eqref{fag} is not necessary for convergence of $z_h$ as $h\downarrow 0$.  The space
\be\label{fhb} \underline{\cals} \eqadef \{ z \in L_\infty (\Om)^n \, |\,z_{j, x_i}\; \hbox{locally\  bounded\ measures,\ } i = 1, \dots, m, \;   j = 1, \dots, n\}\ee
is precompact \cite{D}, lemma 15.2.1, in $L^n_{p', loc}$ for any finite $p'$.  Thus a bound
\be\label{fha} \| z_h\|_{\underline{\cals}} \le c_b\ee
independent of $h$ will result in a subsequence convergent in $L^n_{p', loc}$, presumably achieving \eqref{fb} in any given case.

Crude bounds on $z_h$ are obtained from the form of the boundary conditions \eqref{fae} and the entropy inequality \eqref{faf}. As is well-known, each entropy flux component $q_i$ is a function only of the corresponding fluxes $\psi_{i, z_1}, \dots, \psi_{i, z_n}$.

We presume $P_\cals$ chosen to achieve
\begin{align}\label{fhc}
  \nu\cdot q ( \psi^\dag_{\nu, z} (z_h))
&= \nu\cdot q ( b + P_\cals \psi^\dag_{\nu,z} (z_h))\nonumber \\
& \ge - c_b
\end{align}
independent of $h$, using \eqref{ab}. Such is expected, for example, choosing $P_\cals$ to satisfy \eqref{fdd} below \cite{S2}, throughout $\p\Om$.

Then using \eqref{aaea}, and a partial integration
\begin{align}\label{fhz} \iintl_\Om &\ul{\Theta} z_h \cdot \big( \suml^m_{i = 1} \psi^\dag_{i, z} (z_h)_{x_i}\big) = \iintl_\Om \ul{\Theta} \triangledown \cdot q (\psi^\dag_{\cdot, z} (z_h))\nonumber \\
&= \intl_{\pOm} \uTh \nu \cdot q (\psi^\dag_{\nu,z} (z_h) - \iintl_\Om\triangledown \uTh \cdot q (\psi^\dag_{\cdot, z} (z_h); \end{align}
  using \eqref{faf}, assumed applying also to $z_h$, and \eqref{fhc}  gives a bound on $z_h$, independent of $h$, of the form
\be\label{fhd} - \iintl_\Om \triangledown { \uTh} \cdot q (\psi^\dag_{\cdot, z} (z_h)) \le c_b \| {\ul{\Theta}}\|_{L_1 (\pOm)}\ee
for any nonnegative sufficiently smooth  scalar function $\underline{\Theta}$.

The condition \eqref{fhc} may be relaxed in the not infrequent if special case that $P_\cals$ is of the form
\begin{align}\label{fhca} P_\cals (x) &= \begin{cases} 0, \; \; &x\in \pOm_0\\ I_n, &x\in \pOm_I\\
I_n - &\suml_{\{ e_0\}} e_0 (x) e^\dag_0 (x), \; \; x \in \pOm_e\end{cases} \nonumber \\
\pOm &= \pOm_0 \cup \pOm_I \cup \pOm_e.\end{align}

Then in \eqref{fhz}, \eqref{fhd}, we restrict $\uTh$
\be\label{fhcb} \uTh (x) = 0, \; \; \, x \in \pOm_I \cup \pOm_e,\ee
whence
\be\label{fhcc} \theta = \uTh \tilde z_h\ee
satisfies $P_\cals \theta = 0$ throughout $\pOm$ using \eqref{fhca}, and \eqref{fhc} holds trivially within $\pOm_0$.  Further details omitted.

The dissipation included in \eqref{fad} is typically achieves
\be\label{fhe} h \| \triangledown \tilde z_h\|^2_{L_{2, loc}} \le c\ee
depending on $\| z_h\|_{L_\infty}$. In any particular case, \eqref{fhd}, \eqref{fhe} or a weakened form thereof may suffice to imply \eqref{fha}. Indeed, anticipation  of a bound \eqref{fha} may be understood as motivation for adoption of the vanishing dissipation limit in \eqref{fad}.

Under seemingly mild assumptions, the mapping $\cala_0$ is Frechet differentiable, making the above discussion applicable to $z_0$ as obtained from \eqref{fbd}.

%%%%%%%%%%%%%%%%%%%%%%%%%

Denoting Frechet derivatives by $d$ throughout, we assume $\cald, \calf_{h,b}, U_\cald $ as appearing in \eqref{fad}, \eqref{fak}, \eqref{fal}, respectively, Frechet differentiable with respect to $z$.

For $b \in \tilde B_{P_\cals}, \dot b \in B_{P_\cals}$ given in \eqref{ahd}, linearization of
\eqref{fad}, \eqref{fae}, \eqref{fal} determines
\be\label{fbg} \dot{\tilde z}_h = d \tilde \cala_h(b) \dot b\ee
satisfying
\be\label{fbh} \intl_{\partial\Om} \dot b \cdot \theta = \iintl_\Om \dot{\tilde z}_h \cdot R(\tilde z_h) \theta + h\iintl_\Om(d\cald(\tilde z_h) \dot{\tilde z}_h)\cdot \theta\ee
for all $\theta \in X_{P_\cals}$, with supplemental boundary conditions on $\partial \Om$,
\be\label{fbi} dU_\cald (\tilde z_h, P_\cals)\dot{\tilde z}_h = 0.\ee

Linearization of \eqref{fam}, using \eqref{fan}  determines
\be\label{fbj} d\cala_h(b) \dot b = d\calf_{b,h} (\tilde z_h) \dot{\tilde z}_h = (\dot z_h, \sigma_h)\ee
satisfying
\be\label{fbk} \iintl_\Om \dot{\tilde z}_h\cdot R(\tilde z_h) \theta - \iintl_{\Om\setminus\Ga(z_h)} \dot z_h \cdot R(z_h) \theta - \intl_{\Ga (z_h)} \sigma_h S(z_h)\theta\overset{h\downarrow 0}{\longrightarrow} 0\ee
for any $\theta \in X_{P_\cals}$.

Assuming that \eqref{fb} holds,  taking weak limits $\dot z_h\rightharpoondown \dot z_0$,  $\sigma_h \rightharpoondown \sigma_0$ as $h\downarrow 0$, we have
\be\label{fbl} d\cala_0 (b) \dot b = (\dot z_0, \sigma _0)\ee
satisfying
\begin{align}\label{fbm} \iintl_{\Om\setminus \Ga (z_0)} ( (\dot z_h \circ T_h)\cdot &R( z_h \circ T_h)\theta - \dot z_0 R(z_0) \theta)\nonumber \\
&+\intl_{\Ga(z_0)} ( (\sigma_h\circ T_h) S(z_h\circ T_h)\theta - \sigma_0 S(z_0)\theta) \overset{h\downarrow 0}{\longrightarrow} 0\end{align}
for all $\theta \in X_{P_\cals}$, with $T_h$
as determined from \eqref{fb}, \eqref{ga} (with $h$ replacing~$\delta, \,  z_0$ replacing $z, \,  z_h$ replacing $z'$).

\begin{lem} For fixed $b,  w, w_\Ga$ assume that \eqref{fb}, \eqref{fbb}, \eqref{fba} hold, and for any $\dot b \in B_{P_\cals}$, assume $\dot{\tilde z}_h \in (C\cap L_1)(\Om)$ obtained from \eqref{fbh}, \eqref{fbi} and satisfying
\be\label{fbo} h d\cald (\tilde z_h) \dot{\tilde z}_h \overset{h\downarrow 0}{\rightharpoondown} 0 \ee
weakly in $X^\ast_{P_\cals}$.

Assume $\dot z_h\in L_1 (\Om\setminus \Ga(z_h))$, $\sigma_h[z_h] \in L_1 (\Ga (z_h))$ satisfying \eqref{fbk}, and $\dot z_0 \in L_1 (\Om \setminus \Ga(z_0))$, $\sigma_0 [z_0] \in L_1(\Ga(z_0))$ satisfying \eqref{fbm}.

Then $\dot b, z_0, \dot z_0, \sigma_0$ satisfy \eqref{ae}.
\end{lem}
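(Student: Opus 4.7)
The plan is to fix an arbitrary test function $\theta\in X_{P_\cals}$, pass to the limit $h\downarrow 0$ in the linearized viscous identity \eqref{fbh}, and identify the limit with the right side of \eqref{ae} evaluated at $(z_0,\dot z_0,\sigma_0)$. The left side $\intl_{\pOm}\dot b\cdot \theta$ of \eqref{fbh} is independent of $h$, and the dissipation term $h\iintl_\Om(d\cald(\tilde z_h)\dot{\tilde z}_h)\cdot\theta$ vanishes by the hypothesis \eqref{fbo}, so \eqref{fbh} reduces in the limit to
\[\intl_{\pOm}\dot b\cdot \theta=\lim_{h\downarrow 0}\iintl_\Om \dot{\tilde z}_h\cdot R(\tilde z_h)\theta.\]

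Next I would use \eqref{fbk} to replace, modulo $o(1)$, the bulk integral on the right by $\iintl_{\Om\setminus\Ga(z_h)}\dot z_h\cdot R(z_h)\theta + \intl_{\Ga(z_h)}\sigma_h S(z_h)\theta$, and then transfer the domains of integration to $\Om\setminus\Ga(z_0)$ and $\Ga(z_0)$ via the map $T_h$ supplied by \eqref{fb}, \eqref{ga}. Because $\|T_h-I_m\|_{C^1(\Om)}\to 0$, the Jacobian $|\det dT_h|$ tends uniformly to $1$, $\theta\circ T_h$ tends uniformly to $\theta$ (using the continuity built into \eqref{aca}), and by \eqref{ffa} the symmetric difference $\Ga(z_h)\triangle T_h(\Ga(z_0))$ shrinks to measure zero; combined with the integrability of $\dot z_h$ and $\sigma_h[z_h]$ assumed in the lemma, which is uniform in $h$ as a consequence of the weak convergences $\dot z_h\rightharpoondown \dot z_0$ and $\sigma_h\rightharpoondown \sigma_0$ already established in the text before the lemma, the change-of-variables estimate yields
\begin{align*}\iintl_{\Om\setminus\Ga(z_h)}\dot z_h\cdot R(z_h)\theta &=\iintl_{\Om\setminus\Ga(z_0)}(\dot z_h\circ T_h)\cdot R(z_h\circ T_h)\theta+o(1),\\
\intl_{\Ga(z_h)}\sigma_h S(z_h)\theta &=\intl_{\Ga(z_0)}(\sigma_h\circ T_h)S(z_h\circ T_h)\theta+o(1).\end{align*}

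Finally, the hypothesis \eqref{fbm} is precisely the statement that the sum of the two pulled-back integrals on the right converges, as $h\downarrow 0$, to $\iintl_{\Om\setminus\Ga(z_0)}\dot z_0\cdot R(z_0)\theta + \intl_{\Ga(z_0)}\sigma_0 S(z_0)\theta$. Chaining the three limits therefore produces \eqref{ae} for the fixed but arbitrary $\theta\in X_{P_\cals}$, which is the required statement. The delicate step is the middle one: \eqref{fbk} is phrased on the moving locus $\Ga(z_h)$, whereas \eqref{fbm} is phrased after pullback by $T_h$, so a change-of-variables estimate must absorb the errors from $|\det dT_h|-1$, from the oscillation $\theta\circ T_h-\theta$, and from the thin tubular region $\Ga(z_h)\triangle T_h(\Ga(z_0))$. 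Each of these is controlled using only the $C^1$-smallness of $T_h-I_m$ afforded by \eqref{ga} together with the uniform $L_1$-bounds on $\dot z_h$ and $\sigma_h[z_h]$; this is the single place in the argument where an integrability bound, rather than mere distributional convergence, is essential.
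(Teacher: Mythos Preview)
Your proof is correct and follows essentially the same route as the paper's: chain \eqref{fbh} with \eqref{fbo} to kill the dissipation term, then invoke \eqref{fbk}, then pass to the limit via \eqref{fbm}. The paper's own proof is a single sentence (``immediate, using \eqref{fbo} in \eqref{fbh}, then \eqref{fbk}, taking the limit $h\downarrow 0$ using \eqref{fbm}''), so you have supplied considerably more detail, in particular by isolating and discussing the change-of-variables step that bridges the integrals over $\Om\setminus\Ga(z_h)$, $\Ga(z_h)$ in \eqref{fbk} and the pulled-back integrals over $\Om\setminus\Ga(z_0)$, $\Ga(z_0)$ in \eqref{fbm}; the paper leaves this entirely implicit.

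One small caution: your assertion that uniform $L_1$ bounds on $\dot z_h$ and $\sigma_h[z_h]$ follow from the weak convergences stated before the lemma is not quite a consequence of the lemma's hypotheses as written (each $\dot z_h$ is assumed integrable, but no uniformity in $h$ is stated, and the phrase ``taking weak limits'' in the preceding text reads more as a standing assumption than a derived fact). In practice this is harmless---the change-of-variables errors are governed by $\|T_h-I_m\|_{C^1(\Om)}\to 0$ from \eqref{ga}, and one may also argue via density of $C^1$ test functions in $X_{P_\cals}$---but it is worth flagging that this is where the paper has compressed the most.
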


\begin{rem*}A familiar example in which \eqref{fbb}, \eqref{fbo} are established is
\be\label{fbp} \cald(z) = \triangle z, \; U_\cald (z, P) = P \nu \cdot \triangledown z.\ee

In this case, energy estimates for \eqref{fad}, \eqref{fae}, \eqref{fal} and for \eqref{fbh}, \eqref{fbi} establish
\[ h\| \triangledown \tilde z_h\|^2_{L_2(\Om)}, \; h\| \triangledown \dot{\tilde z}_h\|^2_{L_2(\Om)}\]
bounded independently of $h$, from which \eqref{fbb}, \eqref{fbo} follow easily.
\end{rem*}
\begin{proof}  The proof is immediate, using \eqref{fbo} in \eqref{fbh}, then \eqref{fbk}, taking the limit $h\downarrow 0$ using \eqref{fbm}.
\end{proof}

Stability of $(z_0, P_\cals)$ and $(z_h, P_\cals)$ are anticipated if not guaranteed.

Lack of ``boundary layers" in $\tilde z_h$, as implied by $z_h$ satisfying \eqref{fae}, is compelling indication that the boundary data is not over-specified and thus that $(z_0, P_\cals)$ is stable.  Sufficient requirements on $\| \cdot \|_{\partial\Om, P_\cals}, w, w_\Ga$ for stability are determined by the following.

\begin{thm}%6.3
Assume $z_0, P_\cals$ such that lemma 6.2 holds, and existence of a norm $\| \dot b \|_{B,P_\cals}$ on $B_{P_\cals}$ in \eqref{ahd}, $w$ satisfying \eqref{aga}, $w_\Ga$ satisfying \eqref{agb} such that for any $\dot b \in B_{P_\cals}$, $\dot z, \sigma$ obtained from \eqref{ae} for all $\theta \in X_{P_\cals}$ satisfy
\be\label{fbq} \iintl_{\Om \setminus \Ga(z_0)} \tfrac 1w |\dot z|^2 + \intl_{\Ga(z_0)} \tfrac{1}{w_\Ga}\sigma^2 \le c_1 (z_0, P_\cals, w, w_\Ga)^2 \| \dot b \|^2_{B, P_{\cals}}.\ee

Then $(z_0, P_\cals)$ is stable; the conditions \eqref{agd}, \eqref{ahb} hold with
\be\label{fbqa} \| \theta\|_{\pOm, P_\cals} \eqadef \, \underset{\dot b \in B_{P_\cals}}{\lub} \; \frac{\intl_{\pOm} \dot b \cdot \theta}{\| \dot b \|_{B, P_\cals}}.\ee
\end{thm}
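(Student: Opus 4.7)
The plan is to derive the stability inequality \eqref{agd} directly from the dual definition \eqref{fbqa}, by combining the weak identity \eqref{ae} with a weighted Cauchy--Schwarz estimate and then invoking the assumed a priori bound \eqref{fbq}. No further structural input beyond the hypotheses is required; the entire argument is a duality reading of the weighted energy pairing.

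Fix $\theta\in X_{P_\cals}$ and an arbitrary $\dot b\in B_{P_\cals}$, and let $(\dot z,\sigma)$ be the pair whose existence the hypothesis postulates, so that both \eqref{ae} and \eqref{fbq} are at our disposal. First I would rewrite the numerator $\intl_{\pOm}\dot b\cdot\theta$ in \eqref{fbqa} using \eqref{ae}, then apply Cauchy--Schwarz twice --- with the weight pairings $w^{-1}\!\cdot\!w$ on $\og$ and $w_\Ga^{-1}\!\cdot\!w_\Ga$ on $\Ga$ --- and combine by $AB+CD\le(A^2+C^2)^{1/2}(B^2+D^2)^{1/2}$, to obtain
$$\Big|\intl_{\pOm}\dot b\cdot\theta\Big|\le\Big(\iintl_{\og}\tfrac{|\dot z|^2}{w}+\intl_\Ga\tfrac{\sigma^2}{w_\Ga}\Big)^{1/2}\Big(\iintl_{\og} w|R(z_0)\theta|^2+\intl_\Ga w_\Ga(S(z_0)\theta)^2\Big)^{1/2}.$$
By \eqref{ah} the second factor is exactly $\|\theta\|_{z_0,w,w_\Ga}$, and by hypothesis \eqref{fbq} the first factor is majorized by $c_1(z_0,P_\cals,w,w_\Ga)\,\|\dot b\|_{B,P_\cals}$.

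Dividing by $\|\dot b\|_{B,P_\cals}$ and taking the supremum over $\dot b\in B_{P_\cals}$ in \eqref{fbqa} yields
$$\|\theta\|_{\pOm,P_\cals}\le c_1(z_0,P_\cals,w,w_\Ga)\,\|\theta\|_{z_0,w,w_\Ga}$$
for every $\theta\in X_{P_\cals}$, with precisely the constant named in Definition 2.1. The standing assumption \eqref{agc} makes $\|\cdot\|_{z_0,w,w_\Ga}$ a proper norm, so extending the inequality by density to the completion $H(z_0,P_\cals,w,w_\Ga)$ is routine and the constant is preserved. The only point requiring any care is ensuring the Cauchy--Schwarz split across $\og$ and $\Ga$ is legitimate, which it is because the hypothesis places $(\dot z,\sigma)$ in precisely the weighted $L^2$-classes of \eqref{ava}/\eqref{ahe} that are dual to the seminorm structure of \eqref{ah}; I do not anticipate any substantive obstacle.
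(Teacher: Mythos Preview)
Your proof is correct and follows essentially the same route as the paper: both use \eqref{ae} to rewrite $\intl_{\pOm}\dot b\cdot\theta$, apply weighted Cauchy--Schwarz to produce the factor $\|\theta\|_{z_0,w,w_\Ga}$, and invoke \eqref{fbq} to absorb the other factor before dividing by $\|\dot b\|_{B,P_\cals}$ and taking the supremum. The paper's write-up is slightly terser (it does not dwell on the density extension to $H(z_0,P_\cals,w,w_\Ga)$), but the argument is the same.
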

\begin{rem*} In \eqref{fbq}, the specific choice of $\dot b$ affects the obtained values of $w, w_\Ga, c_1$.  Here the regularity of $\dot b$ becomes relevant.

Stability of $(z_0, P_\cals)$ may be inferred by solvability of \eqref{aiz}, \eqref{ai} (with $\phi_\Ga = 0$) for $\dot b$ in a finite-dimensional space $B_{P_\cals}$, subsequently obtaining $w, w_\Ga, c_1$ from \eqref{fbq} and $\| \theta\|_{\pOm, P_\cals}$ from \eqref{fbqa}.
\end{rem*}
\begin{proof} For all $\theta \in X_{P_\cals}$, $\dot b \in B_{P_\cals}$, from \eqref{ae}, \eqref{ah}, \eqref{fbq}
\begin{align}\label{fbr}  \frac{
\intl_{\pOm} \dot b \cdot \theta}{\| \dot b \|_{B, P_\cals}} &=
\frac{\iintl_{\Om\setminus\Ga(z_0)} \dot z \cdot R(z_0)\theta + \intl_{\Ga(z_0)}\sigma S(z_0)\theta}{\| \dot b \|_{B, P_\cals}}
\nonumber \\
&\le \frac{
(\iintl_{\Om\setminus\Ga(z_0)} \frac 1 w (\dot z)^2 + \intl_{\Ga(z_0)} \frac {1}{w_\Ga} \sigma^2)^{1/2} \| \theta\|_{z_0,w,w_\Ga}}
{\| \dot b \|_{B, P_\cals}}
\nonumber \\
&\le c_1 (z_0, P_\cals, w, w_\Ga) \|\theta\|_{z_0,w,w_\Ga}.\end{align}

Now \eqref{agd}, \eqref{ahb} follow from \eqref{fbr}, \eqref{fbqa}.
\end{proof}

It is anticipated that theorem 6.3 will be used to verify stability of $(z_0, P_\cals)$ and no nontrivial solutions of \eqref{agc} (with $z = z_0$) a posteriori of determination of $z_0$ from \eqref{fana}, \eqref{fbd}.  Here dependence of stability on the regularity of the boundary data arises, as anticipated from comparison of \cite{Ma1,Ma2,S1}.

We choose disjoint subspaces $H(z_0, P_\cals, w, w_\Ga; \eta),\; \,  \eta = 1,\dots$, with $w, w_\Ga$ satisfying \eqref{aga}, \eqref{agb} independent of $\eta$, such that
\be\label{xaa} H(z_0, P_\cals, w, w_\Ga) = \mathop{\oplus}^\infty_{\eta = 1} H(z_0, P_\cals, w, w_\Ga; \eta),\ee
and for $\dot b\in B_{P_\cals}$,
\be\label{xab}\dot b = \suml^\infty_{\eta = 1} \dot b_\eta, \; \; \dot b_\eta \in (H(z_0, P_\cals, w, w_\Ga; \eta)\mathop{\midl}_{\pOm} )^*,\ee
the dual space.

For judiciously chosen $w, w_\Ga$, for each value of $\eta$ we seek $\dot z_\eta,
\sigma_\eta, \dot b_\eta$ satisfying \eqref{ae}, recovering \eqref{aiea} in the form
\be\label{xac} \iintl_{\Om\setminus\Ga(z_0)} \frac 1 w |\dot z_\eta |^2 + \intl_{\Ga(z_0)} \frac{1}{w_\Ga} \sigma^2_\eta \; \eqadef\;  \| \dot b_\eta\|^2.\ee

Values of $\eta$ are restricted to those for which \eqref{xac} holds.  Failure of any such is identified with nontrivial solutions of \eqref{agc}.

From \eqref{xab}, \eqref{xac} and
\be\label{xad} \dot z = \suml_\eta \dot z_\eta, \; \; \sigma = \suml_\eta \sigma_\eta,\ee
we recover \eqref{fbq},
\begin{align}\label{xae} \iintl_{\Om\setminus \Ga(z_0)} \frac 1w |\dot z|^2 + \intl_{\Ga(z_0)} \frac{1}{w_\Ga} \sigma^2
 &\le \suml_n 2^{1+\eta} \Big(\iintl_{\Om\setminus \Ga(z_0)}\frac 1w |\dot z_\eta|^2 + \intl_{\Ga(z_0)} \frac{1}{w_\Ga} \sigma^2_\eta \Big)\nonumber \\
&= \suml_\eta 2^{1+\eta} \| \dot b_\eta\|^2\nonumber \\
&\eqadef \| \dot b \|^2_{B, P_\cals}.\end{align}

Thus $(z_0, P_\cals)$ is stable; \eqref{agd} holds with
\be\label{xaf} c_1(z_0, P_\cals, \; w, \, w_\Ga) = 1,\ee
and obtaining $\| \theta\|_{\pOm, P_\cals}$ from \eqref{fbqa}, \eqref{xae}.

The $(z_h, P_\cals)$ are then also stable.
\begin{thm} Stability of $(z_0, P_\cals)$  is equivalent to that of $(z_h, P_\cals)$ for $h$ sufficiently small.
\end{thm}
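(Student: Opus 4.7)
The plan is to transfer the stability inequality \eqref{agd} between $(z_0, P_\cals)$ and $(z_h, P_\cals)$ via the diffeomorphism $T_h$ produced by \eqref{fb}, \eqref{ga}. The roles of $z_0$ and $z_h$ are symmetric (exchange them and replace $T_h$ by $T_h^{-1}$, which satisfies the same estimates as $h\downarrow 0$), so I treat only the forward direction: assuming $(z_0, P_\cals)$ stable with weights $w, w_\Ga$, boundary norm $\|\cdot\|_{\pOm, P_\cals}$, and constant $c_1$, I produce weights $w_h := w\circ T_h^{-1}$, $w_{\Ga,h} := w_\Ga \circ T_h^{-1}$ (each satisfying \eqref{aga}, \eqref{agb}) together with a constant $c_1(h)\to c_1$ for which \eqref{agd} holds at $(z_h, P_\cals)$.

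The pullback construction is as follows. For any $\theta \in H(z_h, P_\cals, w_h, w_{\Ga,h}) \cap Z(P_\cals)$, set $\theta' := \theta\circ T_h$; by \eqref{gae} this lies in $H(z_0, P_\cals\circ T_h, w, w_\Ga) \cap Z(P_\cals\circ T_h)$. Combining \eqref{gaa}--\eqref{gad} with $\|T_h - I_m\|_{C^1(\Om)} \to 0$ from \eqref{ga}, \eqref{fb} and $\dist_\cals(z_0, z_h; w, w_\Ga)\to 0$, one obtains
$$\bigl| \|\theta'\|_{z_0, w, w_\Ga} - \|\theta\|_{z_h, w_h, w_{\Ga,h}} \bigr| \le \eta_h \|\theta'\|_Z, \qquad \bigl| \|\theta'\|_{\pOm, P_\cals\circ T_h} - \|\theta\|_{\pOm, P_\cals} \bigr| \le \eta_h' \|\theta\|_{\pOm, P_\cals},$$
with $\eta_h, \eta_h' \to 0$. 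Since \eqref{daa}, \eqref{daf} force $P_\cals\circ T_h \to P_\cals$ in operator norm on the boundary, a perturbation of $\theta'$ of order $o(1)\|\theta'\|_{\pOm, P_\cals}$ places it in $H(z_0, P_\cals, w, w_\Ga)$; one then applies the assumed \eqref{agd} to this corrected function and rearranges. For $h$ sufficiently small the $\eta_h, \eta_h'$ contributions are absorbable, yielding \eqref{agd} for $(z_h, P_\cals)$ with constant $(1+o(1))c_1$. Density of $Z(P_\cals)\cap H$ in $H(z_h, P_\cals, w_h, w_{\Ga,h})$ (by definition of the completion in $\|\cdot\|_{z_h, w_h, w_{\Ga,h}}$) extends the inequality from the test subspace to the full Hilbert space.

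The main obstacle is the residual $\|\theta'\|_Z$ term in \eqref{gad}: the stability inequality controls the boundary norm by the seminorm $\|\theta\|_{z, w, w_\Ga}$, whereas the transfer introduces an additive perturbation proportional to the strictly stronger $Z$-norm, which the seminorm does not in general dominate. The resolution invokes the discussion following \eqref{agj}, where $w, w_\Ga$ are typically taken large enough that $\|\theta\|_{z, w, w_\Ga}$ majorizes $\theta$ in the trace senses needed here; once this is in force, $\theta\mapsto \theta\circ T_h$ is a bounded perturbation of the identity whose defect vanishes with $h$, so the $\eta_h\|\theta'\|_Z$ term can be absorbed into the constant on the right-hand side. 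Quantifying this absorption is the principal technical content; the remaining items (verifying $w_h, w_{\Ga,h}$ satisfy \eqref{aga}, \eqref{agb}, bounding $c_1(h)$ uniformly, and invoking the symmetric argument for the converse direction) are routine.
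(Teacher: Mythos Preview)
Your approach is essentially the paper's: both restrict attention to $\theta \in Z(P_\cals)$ (dense in $H$), rewrite the stability constant as the supremum of ratios \eqref{fbs}, and transfer between $z_0$ and $z_h$ via the pullback $T_h$ using \eqref{gab} and \eqref{gad}. The paper's proof is a single sentence after \eqref{fbs} (``The conclusion now follows from \eqref{fbs}, with $z=z_0$, $P=P_\cals$, using \eqref{fb}, \eqref{ga}, \eqref{gad}, \eqref{gab}'') and in particular does not explicitly address the $\|\theta\|_Z$ residual you correctly isolate as the main obstacle; your more detailed account of this point, and its resolution via weights large enough to majorize the relevant trace quantities, is at least as complete as what the paper provides.
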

\begin{proof}  It is no loss of generality to assume $\theta \in Z(P)$ in \eqref{agd}. Then for any $z, P$, directly from \eqref{agd},
\be\label{fbs} c_1(z,P,w,w_\Ga) =
\underset{\theta}{\lub} (\| \theta \|_{\partial\Om, P} /  \| \theta\|_{z, w,w_\Ga})\ee
for $\theta \in H(z,P,w,w_\Ga) \cap Z(P)$. The conclusion now follows  from \eqref{fbs}, with $z = z_0, P = P_\cals$,  using \eqref{fb}, \eqref{ga}, \eqref{gad}, \eqref{gab}.
\end{proof}

Admissibility of $(z_0, P_\cals)$ is quite another matter.
%%%%%%%%%%%%%%%%%%%%%%%%%%%%%%

\subsection{Cause for concern}%6.2

 If stability of $(z_0, P_\cals)$ so obtained is to be anticipated, admissibility may be elusive.  In particular, the specified boundary data \eqref{fae} may be insufficient for admissibility.

A general paucity of unambiguous $(z,P)$ is expressed above in theorems 2.7, 2.8, 2.9, and in \cite{S1,S2}.  An application of theorem 2.9 is the following.
\begin{thm} Assume $(z_0, P)$ unambiguous for some $z_0$ satisfying \eqref{ac}, \eqref{aca}, $P\in \calp$.  Assume \eqref{agd} holding with $w, w_\Ga$ such that \eqref{atb}, \eqref{atc}, \eqref{atd} hold, with $\Om'$ an open subset of $\Om, \nu$  continuously defined on an open subset $\partial\Om' \cap \partial\Om$.

Assume an  $(m -2)$-manifold, open for $m > 2$,
\be\label{fdc} \gamma_\Om  \subset \partial\Ga_k(z_0) \cap \partial\Om \cap \partial\Om',\ee
some $k$ in \eqref{ada},  such that  $\rank \;  \psi^-_{\nu,zz} (z_0)$ changes abruptly on $\gamma_\Omega$, where $\psi^{\pm}_{\nu, zz}$ are the positive and negative semidefinite parts of
$\psi_{\nu, zz}$, pointwise on $\partial\Om'$, as in \eqref{atz}, \eqref{atf} above.

Assume $P$ determined such that for some $\eta > 0$,
\be\label{fdd} (I_n-P) \psi_{\nu,zz} (z) (I_n-P) \le 0\ee
within $\partial\Om'\cap \partial\Om$, for all $z \in \cals$ satisfying
\be\label{fdf} \dist_\cals (z_0, z; w, w_\Ga) \le \eta.\ee
Then for all $\de > 0$, there exists $z_\de \in \cals$ satisfying
\be\label{fde} \dist_\cals (z_0, z_\de, w, w_\Ga) \le \delta\ee
such that $(z_\de, P )$ is not unambiguous.
\end{thm}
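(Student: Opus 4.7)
The strategy is a contradiction argument invoking theorem 2.9 twice---once for $(z_0, P)$ and once for a small geometric perturbation $(z_\delta, P)$---exploiting the rigidity imposed by the sign condition \eqref{fdd}. Since $(z_0, P)$ is unambiguous and the hypotheses of theorem 2.9 are in force, the conclusion \eqref{atz} holds on any $\omega$ compactly contained in the smooth portion of $\partial\Omega' \cap \partial\Omega$. Any $v \in \ker P(x)$ satisfies $v \cdot \psi_{\nu,zz}(z_0) v \le 0$ by \eqref{fdd} (with $z = z_0$), while this quadratic form is positive definite on $\range\,\psi^+_{\nu,zz}(z_0(x))$; combining with the inclusion $\range\,\psi^+_{\nu,zz}(z_0) \subset \ker P$ from \eqref{atz} forces $\psi^+_{\nu,zz}(z_0) = 0$ pointwise on $\omega$, whence
\[
\dim\ker P(x) - \dim\mathrm{span}\{e_0(x)\} = \rank\,\psi^-_{\nu,zz}(z_0(x)), \quad x \in \omega.
\]

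For the perturbation, set $z_\delta := z_0 \circ T_\delta^{-1}$ where $T_\delta \in C^1(\bar\Omega \to \bar\Omega) \cap C(\partial\Omega \to \partial\Omega)$ equals the identity off a small neighborhood of $\gamma_\Omega$ and whose restriction to $\partial\Omega$ tangentially displaces $\gamma_\Omega$ by an arc length of order $\delta$. Controlling $\|T_\delta - I_m\|_{C^1(\Omega)}$, the terms in \eqref{ga} are all small and \eqref{fde} is achieved. By design, in the narrow tangential strip $\Sigma_\delta \subset \partial\Omega' \cap \partial\Omega$ bounded by $\gamma_\Omega$ and $T_\delta(\gamma_\Omega)$, the boundary trace $z_\delta|_{\Sigma_\delta}$ coincides with the limit of $z_0$ on $\partial\Omega$ taken from the side of $\gamma_\Omega$ opposite to $\Sigma_\delta$, whereas $z_\delta = z_0$ outside a small neighborhood of $\Sigma_\delta$.

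Now suppose $(z_\delta, P)$ is unambiguous. Since $\dist_\cals(z_0, z_\delta; w, w_\Gamma) \le \delta < \eta$, condition \eqref{fdf} is satisfied and hence \eqref{fdd} applies to $z_\delta$; stability of $(z_\delta, P)$ follows from theorem 6.4 and the remaining \eqref{atb}--\eqref{atd} are preserved under $C^1$-close diffeomorphisms. The argument of the first paragraph applied to $(z_\delta, P)$ yields $\rank\,\psi^-_{\nu,zz}(z_\delta(x)) = \dim\ker P(x) - \dim\mathrm{span}\{e_0(x)\}$ on a corresponding $\omega_\delta$. Because $P$ and $e_0$ depend only on $x$, the right-hand side is the same function of $x$ in both invocations, so at any $x^* \in \omega \cap \omega_\delta \cap \Sigma_\delta$ we obtain $\rank\,\psi^-_{\nu,zz}(z_\delta(x^*)) = \rank\,\psi^-_{\nu,zz}(z_0(x^*))$; but $z_\delta(x^*)$ and $z_0(x^*)$ are precisely the boundary traces of $z_0$ from the two opposite sides of $\gamma_\Omega$, whose $\psi^-_{\nu,zz}$-ranks differ by the abrupt rank-change hypothesis---contradiction, so $(z_\delta, P)$ is not unambiguous. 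The main obstacle is the geometric construction of $T_\delta$: it must tangentially shift $\gamma_\Omega$ by an order-$\delta$ amount while preserving the ambient hypothesis that $z_\delta$ is locally constant where $z_0$ was (so that theorem 2.9 applies afresh to $(z_\delta, P)$), the full applicability of \eqref{agd}, \eqref{atb}--\eqref{atd}, and the distance bound \eqref{fde}; once this is in place, the rank-counting contradiction is routine.
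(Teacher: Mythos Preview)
Your overall strategy---deriving a rank identity from theorem 2.9 combined with the sign condition \eqref{fdd}, then producing a contradiction by comparing two applications at the same boundary point---matches the paper's. However, there is a genuine gap in the execution: theorem 2.9 requires $z\big|_{\Om'}$ to be \emph{constant}, and nothing in the hypotheses of the present theorem says that $z_0$ is constant on $\Om'$. Your first paragraph applies theorem 2.9 directly to $(z_0,P)$, and later you refer to ``preserving the ambient hypothesis that $z_\delta$ is locally constant where $z_0$ was''; but no such hypothesis is present. The partial-integration step \eqref{atk} in the proof of theorem 2.9 fails without constancy, so the conclusion \eqref{atz} is not available for $z_0$ as stated. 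The diffeomorphism $T_\delta$ you propose inherits whatever non-constancy $z_0$ has, so the same obstruction recurs for $(z_\delta,P)$.

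The paper circumvents this by never applying theorem 2.9 to $z_0$. Instead it fixes a point $\hat x\in\gamma_\Omega$, takes the two one-sided limiting values $a_\pm$ of $z_0$ there, shrinks $\Om'$ to a small neighbourhood of $\hat x$, and constructs two perturbations $z_+,z_-\in\cals$ with $z_\pm\big|_{\Om'}=a_\pm$ (constant) and $\dist_\cals(z_0,z_\pm)\le\delta$. If either $(z_\pm,P)$ fails to be unambiguous, we are done; if both are unambiguous, theorem 2.9 applies to each (now legitimately, by construction), and together with \eqref{fdd} forces $\dim\ker P=\rank\,\psi^-_{\nu,zz}(a_+)=\rank\,\psi^-_{\nu,zz}(a_-)$ near $\hat x$, contradicting the abrupt rank change. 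Your rank-counting is correct; what is missing is precisely this manufacture of the constancy that theorem 2.9 needs, which the paper achieves by replacing the diffeomorphism construction with locally constant approximants.
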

\begin{rem*}
The result is vacuous for $P\midl_{\partial\Om'\cap \partial \Om} = 0_N $ or $I_n$, as  occurs in Cauchy problems, as \eqref{fdc} cannot hold.  Otherwise the condition \eqref{fdc} may be expected where a ``transonic shock" $\Ga_k$, across which $R(z_0)$ changes characteristic type, reaches the boundary $\partial\Om$.

The condition \eqref{fdd}  is frequent if not universal, used in establishing \eqref{agd} or \eqref{fhc}. A nontrivial example is on the lateral boundary of initial-boundary value problems for hyperbolic systems \eqref{aa}.

The conclusion implies that unambiguous $(z_0, P)$ is exceptional, lost immediately, for example,  by inclusion of a small balance term in \eqref{aa}, \eqref{ac}.  Using \eqref{fde}, \eqref{ga}, \eqref{aa}, a partial integration establishes that $z_\de$ satisfies an approximation of \eqref{ac},
\be\label{fdg} | \iintl_\Om \suml^m_{i=1} \psi_{i,z} (z_\de) \theta_{\cdot, x_i} | \le c \de \|\theta\|_{W^{1,p}(\Om)}\ee
for any $\theta \in W^{1,p} (\Om)$ vanishing on $\partial\Om$.
\end{rem*}
\begin{proof}
Fixing a point $\hat x \in \gamma_\Om$, we have one-sided limiting values $a_\pm \in D$ at $\hat x $ from opposite sides of $\gamma_\Om$.  By hypothesis \eqref{fdc},
\be\label{fdj} \rank\, \psi^-_{\nu,zz} (a_+) \neq \rank\, \psi^-_{\nu,zz} (a_-).\ee

Given $\de> 0$, restricting $\Om'$ as necessary while maintaining $\hat x \in \partial\Om'\cap \partial\Om$, there exist $z_\pm \in \cals $ satisfying \eqref{fde}
(with $z_\pm$ replacing $z_\de$) simultaneously with
\be\label{fdm}
 z_\pm \midl_{\Om'} = a_\pm.\ee

 If either $(z_+, P)$ or $(z_-, P)$ is not unambiguous, the conclusion is immediate.

 Otherwise, either of $z_+$ or $z_-$ qualifies as $z$ in theorem 2.9, and from \eqref{ata}, \eqref{fdm}, $(z_\pm, P)$ are both unambiguous, with
 \be\label{fdn} \ker P = \range  \; \psi^-_{\nu,zz} (a_\pm) \ee
 in a neighborhood of $\hat x$.

Using \eqref{fdd} in \eqref{fdn}, in a neighborhood of $\hat x$,
\be\label{fdo} \dim \ker P = \rank\,  \psi_{\nu,zz} (a_\pm),\ee
which is incompatible with \eqref{fdj}.
\end{proof}

On this basis, we define a successful computational investigation.  The form of the approximation scheme $\cala_h$ is not specified, and no requirement of admissible solutions is included.

\begin{defn} For a given problem class determined from specification of a system \eqref{aa} in a domain $\Om$ \eqref{aab}, $P_\cals, \tilde B_{P_\cals}, \; \{ e_0\} $ \eqref{agea}, a successful computational investigation is identified a posteriori of verification of \eqref{fbc}, \eqref{fbd} with each $z_0 \in \hcals_0$ of the form \eqref{ad}, satisfying \eqref{ac}, \eqref{aca}, \eqref{agea}, \eqref{faf}, $(z_0, P_\cals)$ stable, and such that there are no nontrivial solutions of \eqref{agc} with $z = z_0$.
\end{defn}

\section{Main theorem}%7

If \eqref{fag}, \eqref{fbda} fail but $\cala_h$ satisfies \eqref{fb}, \eqref{fbb}, \eqref{fba}, then $\cala_0$ in \eqref{fbc}, \eqref{fbd} satisfies a weaker condition,
\begin{align}\label{han}
A_0 (b) &\in V(b, P_\cals)\nonumber \\
&\eqadef \{ z \in \hat \cals_0 \, | \, (I_n-P_\cals) \psi^\dag_{\nu, z} (z\underset{\partial\Omega}{|}) = b\}.
\end{align}

In this case, the images of $\cala_0$ generally  depend on features specific to $\cala_h$, including  the choices of $U_\cald $ and  $\cald$.

Using \eqref{ab}, elements of $V(b, P_\cals)$ may be assumed to be distinguished by  their corresponding values (in $N$) of  unspecified normal boundary flux, some of which presumably unavailable a priori, denoted
\be\label{hal} \tilde b (z, P) \eqadef P\psi^\dag_{\nu,z}(z\underset{\partial\Omega}{|})\ee
for some $P$ depending on $z$.

 We then do not expect $(z_0, P_\cals)$ to be unambiguous, but suppose that for each $z \in \tilde \cals_0$, there  exists $P(z) \in \calp$, satisfying
 \be\label{hae} \ker P_\cals \subseteq \ker P(z)\ee
 and such that $(z,P(z))$ is unambiguous.  We may then project  $\tilde b (z, P_\cals -  P(z))$ independent of features of $\cala_h$, thus relating the uncertainly in $z_0 \in V(b, P_\cals)$ to $\tilde b (z_0, P_\cals - P(z_0))$,  the boundary data which has been supplied by $\cala_h$.

 With seemingly mild qualifications,  \eqref{hae} indeed holds.
 \begin{defn} For $z\in \cals, \, P \in \calp, \, (z, P)$ is potentially admissible if $(z, 0_N)$ is admissible and if for all $\eta > 0$ there exists a bounded $\Om_\eta \subseteq \Om$ such that
 \be\label{haa} Q(z, P, \Ga(z) \cap ( \Om\setminus \Om_\eta), 0)\le \eta.\ee
\end{defn}

 For bounded $\Om$, potential admissibility reduces to $(z, 0_N)$ admissible.  Otherwise, one anticipates \eqref{haa} satisfied by $\Ga(z)\cap (\Om\setminus\Om_\eta)$ composed of weak shocks, as in \eqref{bgb}, and entropy shocks, as in \eqref{bbe}, with $G(\cdot, \cdot;  z)$ satisfying \eqref{ard}, \eqref{are}, \eqref{arf} and
 \be\label{hab} G(x,y; z) \overset{|x| \to \infty}{\longrightarrow} 0\ee
 for any $y \in \Ga (z) \cap (\Om\setminus\Om_\eta)$.
 \begin{thm}
Assume $(z, P_\cals)$ stable, \eqref{agd} holding with $w, w_\Ga$ satisfying
\be\label{hac} w\in L_{p/(p-2), loc}\ee
within $\Om$,
\be\label{had} |[z]|^2 w_\Ga \in L_{p/(p-2), loc} \ee
within $\Ga(z)$.

Assume $(z, P_\cals)$ potentially admissible but not admissible.  Then there exists $P (z) \in \calp$ satisfying \eqref{hae} such that $(z, P(z))$ is unambiguous.

 If in addition, \eqref{jb} holds with $P_0 = P_\cals$,
\begin{align}\label{kaa} &\{ w^{1/2} R(z)\theta | \theta \in H(z, P_\cals, w, w_\Ga)\} = L_2 (\Om\setminus \Ga(z))^n, \nonumber \\
 &\{ w^{1/2}_\Ga  S(z)\theta | \theta \in H(z, P_\cals, w, w_\Ga)\} = L_2 ( \Ga(z)),
  \end{align}
 then $P(z)$ is unique.

  \end{thm}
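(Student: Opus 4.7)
The uniqueness assertion is an immediate application of Theorem 2.8 with $P_0 = P_\cals$: the hypothesis \eqref{kaa} is precisely \eqref{jb}, so any $P$ satisfying \eqref{hae} and making $(z,P)$ unambiguous is unique. All substantive work is in the existence part.

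For existence I would exploit the Green's-function characterization of admissibility developed around \eqref{arb}--\eqref{arf}. Since potential admissibility of $(z, P_\cals)$ includes $(z, 0_N)$ admissible, there exists $G(\cdot, \cdot; z)$ satisfying \eqref{arb}, \eqref{ard}, \eqref{are}, and under the mild hypothesis \eqref{arh} also \eqref{ari}. Admissibility of $(z, P)$, for any $P$ with $\ker P_\cals \subseteq \ker P$, is equivalent to \eqref{arf}. The construction of $P(z)$ is then transparent: let $V \subseteq N$ be the $L_2(\partial\Om)$-closure of $\ker P_\cals$ together with $\{G(\cdot, y; z)|_{\partial\Om} : y \in \Ga(z)\}$, and let $P(z) \in \calp$ be the orthogonal projection with $\ker P(z) = V$. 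By construction $\ker P_\cals \subseteq \ker P(z)$ and $P(z) G(\cdot, y; z)|_{\partial\Om} = 0$ for all $y \in \Ga(z)$, so \eqref{arf} holds; Theorem 2.4 confirms that admissibility is preserved under this enlargement of the kernel.

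The technical crux is stability of $(z, P(z))$, which I would verify with the same weights $w, w_\Ga$ that realize stability of $(z, P_\cals)$. Any $\theta \in H(z, P(z), w, w_\Ga)$ decomposes as $\theta = \theta_0 + \theta_\Ga$, with $\theta_0 \in H(z, P_\cals, w, w_\Ga)$ (so that $\|\theta_0|_{\partial\Om}\|_{\partial\Om, P_\cals}$ is controlled by stability of $(z, P_\cals)$) and $\theta_\Ga$ given via the representation \eqref{arc}. For the second piece, the uniform bound \eqref{ari} on $G$ reduces the boundary-trace estimate to bounding $\|S(z)\theta_\Ga\|_{L_1(\Ga)}$ by $\|\theta_\Ga\|_H = (\int_\Ga w_\Ga |S(z)\theta_\Ga|^2)^{1/2}$. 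A H\"older estimate using the integrability hypothesis \eqref{had} on $|[z]|^2 w_\Ga$ (combined with the Sobolev embedding $W^{1,p}(\Ga) \hookrightarrow L_\infty(\Ga)$ implicit in $p > m$) delivers this bound locally; \eqref{hac} plays the analogous role for interior $R(z)$ contributions. Potential admissibility \eqref{haa} supplies the needed localization: outside a bounded $\Om_\eta$ the admissibility failure is arbitrarily small and, combined with a decay statement on $G$ at infinity as in \eqref{hab}, the local bounds patch together into \eqref{agd} on all of $\partial\Om$.

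The main obstacle is precisely this last stability verification --- gluing the local H\"older estimates furnished by \eqref{hac}, \eqref{had} with the localization of $Q$ and decay of $G$ at infinity from potential admissibility into a single global bound valid on all of $H(z, P(z), w, w_\Ga)$. Once stability is in hand, admissibility by construction yields unambiguity, completing the existence claim; uniqueness then follows, under the additional hypothesis \eqref{kaa}, directly from Theorem 2.8.
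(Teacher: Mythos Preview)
Your treatment of uniqueness via Theorem 2.8 matches the paper exactly.

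For existence you take a genuinely different route from the paper. The paper does \emph{not} work with the Green's kernel $G$ at all; instead it runs a recursive construction, adding one direction at a time to $\ker P_\cals$. At each step it selects $\phi_{\Ga,j,l}\in\Phi_0$ (normalized in $W^{1,p}(\Ga)$ and orthogonal in $L_2(\Ga)$ to the previous choices), solves a variational problem \eqref{hcg} for $\zeta'_{j,l}\in H(z,P_{j,l},w,w_\Ga)$, and then projects onto the complementary space $H(z,I_n-P_{j,l},w,w_\Ga)$ to obtain $\xi_{j,l}$; the kernel is enlarged by $\mathrm{span}\{\xi_{j,l}\}$. The hypotheses \eqref{hac}, \eqref{had} enter through Lemma 3.4 (see \eqref{hcs}) to force the admissibility defect $Q$ to zero on bounded pieces, with potential admissibility handling the tail. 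The point of this elaborate machinery is that the $\xi_{j,l}$ are, by construction, $H$-orthogonal to $H(z,P_\cals,w,w_\Ga)$ and to one another (equation \eqref{hdd}). Stability then falls out of a clean Pythagorean identity \eqref{hdf}, and the boundary norm \eqref{hdg} is written down explicitly to match.

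Your one-shot construction via boundary traces of $G$ delivers admissibility immediately, which is elegant, but your stability argument has a real gap. The decomposition $\theta=\theta_0+\theta_\Ga$ you propose is determined only by \emph{boundary values}: you choose $\theta_\Ga$ (a $G$-combination) so that $\theta-\theta_\Ga$ has trace in $\ker P_\cals$. Nothing in this construction controls $\|\theta_0\|_{z,w,w_\Ga}$ or $\|\theta_\Ga\|_{z,w,w_\Ga}$ in terms of $\|\theta\|_{z,w,w_\Ga}$; without $H$-orthogonality the cross term $2(\theta_0,\theta_\Ga)_H$ can be large and negative, so $\|\theta_\Ga\|_H$ (and hence your H\"older bound on the boundary trace) is not dominated by $\|\theta\|_H$. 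The paper's recursive scheme is precisely what manufactures this missing orthogonality. A secondary issue: you invoke the uniform bound \eqref{ari}, which follows from the extra hypothesis \eqref{arh} that is not among the assumptions of the theorem.
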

\begin{rem*}  For a given application
 utility of $z_0$ obtained from \eqref{fbd} is recovered  by restriction of
$\tilde B_{P_\cals}$ as necessary  such that theorem 7.2 holds for all $z \in \hat \cals_0$ and such that for all $b \in \tilde B_{P_\cals}$,
\be\label{hee} \underset{z \in V (b, P_\cals)}{\lub} \| \tilde b (z, P_\cals - P(z)) \|_{B, P_\cals}\ee
is acceptably  small in the context of the application.

The need for assumptions of the form \eqref{hac}, \eqref{had}, (with $p$ given in \eqref{baa} and used throughout sufficiently large) is anticipated from \eqref{fbq} with $\Om$ unbounded.

Considerable flexibility exists in choosing $\| \cdot \|_{\partial\Om, P(z)} $ to achieve \eqref{agd} with $P = P(z)$. The choice below achieves
\be\label{hap} c_1 (z, P(z), w, w_\Ga ) = c_1 (z, P_\cals, w, w_\Ga)\ee
and permits expression of moments of $\tilde b (z, P_\cals - P(z))$, at the expense of
\be\label{haq} \| \theta\|_{\partial\Om, P(z)} \neq \| ( I_n-P(z))\theta\|_{\partial\Om, 0_N}.\ee

With $z$ fixed, the notion of limit admissibility is considerably simplified; it suffices to set $\vare =0$, $z_\delta = z$ in definition 5.1, \eqref{db}.

If \eqref{hae}, \eqref{kaa} hold and theorem 7.2 applies, then
\begin{align}\label{kax} H(z, P(z), w, w_\Ga) & = H(z, P_\cals, w, w_\Ga) \oplus \{ \theta|R(z)\theta = S(z) \theta = 0,\nonumber \\
P(z) \theta &= (P_\cals - P(z)) \theta = 0\}.\end{align}

Thus for a given $\dot b$ satisfying
\be\label{kaw} (P_\cals - P(z))\dot b = 0\ee
almost everywhere on $\partial\Om$, the solution of \eqref{ae} for all $\theta \in H(z, P_\cals, w, w_\Ga)$ will coincide with that for all $\theta \in H(z, P(z), w, w_\Ga)$.

%\begin{proof}
Because of \eqref{ageb}, we are tacitly assuming
\be\label{hez} \hbox{span}\, \{ e_0\} \subseteq \hbox{span} \, \{ e'_0\},\ee
where \eqref{agea}, \eqref{agec} hold with
\be\label{heza} \hbox{span}\, \{ e_0\} \subset \ker P_\cals,\;  \hbox{span} \{ e'_0\} \subset \ker P(z).\ee
\end{rem*}
\begin{proof} The proof is constructive and self-contained, but may be understood as an application of theorem 5.2.

We obtain $P(z)$ as the limit of a sequence
\be\label{hba} \{ P_{j,l}, j = 0, 1, \dots, \kappa_l, l = 1, \dots  \}\ee
satisfying
\be\label{hbb} P_{0, 1} = P_\cals, \; P_{0, l+1} = P_{\k_l , l}, l = 1, \dots.\ee
\be\label{hbc} \ker P_{j, l} \subseteq \ker P_{j +1, l}, \; j = 0, 1, \dots, \kappa_l -1, l = 1, \dots .\ee

Choosing a sequence
\be\label{hbd} \de_l \downarrow 0 \; \hbox{as\ } l \to \infty,\ee
from \eqref{haa} we have a bounded $\Om_{\de_l}$ such that
\be\label{hbe} Q(z, P_\cals, \Ga (z) \cap (\Om\setminus \Om_{\de_l}), 0 ) \le \de_l.\ee

Below we use abbreviations
\be\label{hbf}
H_{j,l} \eqadef H(z, P_{j, l},  w, w_\Ga)\ee
\be\label{hbg}
\Ga'_l \, \eqadef\,  \Ga(z)\cap \Om_{\de_l}
\ee
\be\label{hbh}
\tilde M_{j, l} (\phi_\Ga) \,\eqadef  \underset{\theta \in H_{j, l}}{\glb}\;
\frac{ \iintl_{\Om_{\de_{l}\setminus \Ga'_l}} w|R(z)\theta|^2 +\intl_{\Ga'_l} w_\Ga (S(z) (\theta-\phi_\Ga))^2 }
{ \| \phi_\Ga\|^2_{W^{1,p}(\Ga'_l)} }
\ee
with $w, w_\Ga$ independent of $j, l$ throughout.

The $\{ P_{j, l} \}$ satisfying \eqref{hbb}, \eqref{hbc} are determined recursively.  For each $j, l$, we denote
\be\label{hc} \Sigma_{j, l} \subset \{ \phi_\Ga \in \Phi_0 (z, P_{j, l})\cap \Phi (z, P_{j,l}, w_\Ga)\},\ee
$S(z)\phi_\Ga $ not vanishing identically, using \eqref{ahf}, \eqref{baa},  satisfying
\be\label{hbm} \tilde M_{j, l} (\phi_\Ga) > 0,\ee
\be\label{hcb} \supp (\phi_\Ga ) \subset \Ga'_l, \ee
\be\label{hcc} \| \phi_\Ga \|_{W^{1,p}(\Ga'_l)} = 1,\ee
and for $j \ge 1$,
\be\label{hcd} \intl_{\Ga'_l} \phi_\Ga\cdot \phi_{\Ga, i, l} = 0, \; i = 0, \dots, j-1,\ee
for previously determined $\phi_{\Ga, j, l}$.

From \eqref{had}, \eqref{hcc}, for $\phi_\Ga  \in \Sigma_{k, l}$
\be\label{hcf} \| w^{1/2}_\Ga S(z) \phi_\Ga \|_{L_2(\Ga'_l)} \le c_l\ee
independent of $j$, depending on $l$ because of $\Om_{\delta_l}$.

Denoting
\be\label{hck} \Lambda_{j, l} \eqadef
\underset{\phi_\Ga \in \Sigma_{j, l}}{\glb}
\frac{\| w^{1/2}_\Ga S(z)\phi_\Ga\|_{L_2 (\Ga'_l)}}
{\| \phi_\Ga\|_{L_2(\Ga'_l)}}
\ee
we choose $\phi_{\Ga, j,l} \in \Sigma_{j,l} $ such that
\be\label{hcka}  \frac{\| w^{1/2}_\Ga S(z)\phi_{\Ga, j,l}\|_{L_2(\Ga'_l)}}{\|\phi_{\Ga, j,l}\|_{L_2 (\Ga'_l)}} \le 2 \Lambda_{j,l} + 1,\ee
otherwise arbitrary.

For given $\phi_{\Ga, j, l}$ satisfying \eqref{hcf}, analogously with \eqref{bag} there exists a unique $\zeta'_{j,l} \in H_{j,l}$ satisfying
\be\label{hcg}  \iintl_{\Om\setminus\Ga(z)} w R(z)\zeta'_{j,l} \cdot R(z)\theta + \intl_{\Ga(z)} w_\Ga (S(z) (\zeta'_{j,l} - \phi_{\Ga, j,l})) (S(z)\theta) = 0 \ee
for all $\theta \in H_{j,l}$.

From \eqref{hcg}, \eqref{hcf}, recalling \eqref{ah},
\begin{align}\label{hch}
\| \zeta'_{j,l} \|_{z, w, w_\Ga} &\le \| w_\Ga^{1/2}  S(z) \phi_{\Ga, j,l} \|_{L_2 (\Ga'_l})\\
&\le c_l \nonumber \end{align}
independent of $j$, depending on $l$.

Using \eqref{hch}, $\zeta'_{j,l} $ determines a bounded linear functional on $H(z, I_n-P_{j, l}, w, w_\Ga)$, so by another application of the Riesz theorem there exists a unique $\xi_{j,l} \in H(z, I-P_{j, l}, w, w_\Ga)$ such that
\begin{align}\label{hci}  &\qquad
\iintl_{\Om\setminus\Ga(z)} w  R(z)\xi_{j,l}\cdot R(z)\theta + \intl_{\Ga(z)} w_\Ga (S(z)\xi_{j,l} ) (S(z)\theta)\nonumber \\
&=\iintl_{\Om\setminus\Ga(z)} w R(z)\zeta'_{j,l}\cdot  R(z) \theta+\intl_{\Ga(z)} w_\Ga ( S(z)(\zeta'_{j,l}    -\phi_{\Ga, j,l} ))
(S(z)\theta)\end{align}
for all $\theta \in H(z, I_n-P_{j, l}, w, w_\Ga)$.

We determine $P_{j+1, l}$ from
\begin{align}\label{hcj} &\ker P_{j+1,l} = \ker P_{j,l} \oplus \text{ span} \{\xi_{j,l} \}\nonumber \\
&=\ker P_{j, l} \oplus \text{ span } \{ \xi_{j,l} - \zeta'_{j, l}\}.
\end{align}

From \eqref{hbf}, \eqref{hcj}

\be\label{hcm} H_{j+1, l} = H_{j, l}\oplus \text{ span\ } \{ \xi_{j, l} - \zeta'_{j, l}\}.\ee

From \eqref{hcg}, \eqref{hci}
\be\label{kab} \iintl_{\Om\setminus\Ga(z)} w R(z) (\zeta'_{j,l} - \xi_{j, l})\cdot R(z) \theta +\intl_{\Gamma(z)} w_\Ga (S(z)(\zeta'_{j, l} - \xi_{j,l} - \phi_{\Ga, j, l})) (S(z)\theta)= 0\ee
for all $\theta \in H_{j, l} \oplus H(z, I_n-P_{j, l}, w, w_\Ga)$.

However from \eqref{hbf}
\be\label{kac} H_{j, l} \oplus H(z,I_n-P_{j, l}, w, w_\Ga) = H(z, 0_N, w, w_\Ga).\ee

From \eqref{hcm}
\be\label{kad} \zeta'_{j, l} - \xi_{j, l} \in H_{j + 1, l},\ee
and thus from \eqref{cae}, \eqref{hbh}, \eqref{kab}, \eqref{kac}, \eqref{kad},
\begin{align}\label{hcn}
\tilde M_{j+1, l} (\phi_{\Ga, j, l}) &= M(\phi_{\Ga, j, l} ; z, 0_N, \Ga'_l, 0)\nonumber\\
&= 0.
\end{align}

Indeed
\begin{align}\label{heb} &\tilde M_{j+1} (\phi_{\Ga, i, l}) = 0, \; \, i = 0, \dots, j,\nonumber\\
&\tilde M_{j+1} (\phi_{\Ga, i, l')} = 0, \; \,  i = 0, \dots, \kappa_{l'}, \; l' < l, \end{align}
and for $j = 0, \dots, \k_l-1$,
\be\label{heba} Q(z, P_{j+1, \k_l}, \Ga'_l, 0) \le Q(z, P_{j, \k_l}, \Ga'_l, 0).\ee

With $l$ fixed, from \eqref{hcd}, $\Lambda_{\cdot, l}$ is nondecreasing in $j$.  Indeed,
\be\label{hcl} \Lambda_{j,l} \overset{j\to\infty}{\longrightarrow} \infty.\ee

Otherwise, using \eqref{hcc}, a subsequence within $\Sigma_{\cdot, l} $ converges in $L_2 (\Ga'_l)$ to   some $\underline{\phi}_\Gamma  \in \Sigma_{\cdot, l} $ with $\| \underline{\phi}_\Gamma \|_{L_2 (\Ga'_l)} > 0$, which will be removed from $\Sigma_{j,l}$ for some finite $j$ by \eqref{hcd}.

From \eqref{hbf}, \eqref{hc}, \eqref{ahf}, for each $l$,
\be\label{kal} S(z) H_{0, l} \subset S(z) \text{\ span} \, \Sigma_{0, l}.\ee

%%%%%%%%%%%%%%%%%%%%%%%%%
By assumption $(z, 0_N)$ is admissible, \eqref{al} applies to $H(z, 0_N, w, w_\Ga)$.  Thus
\eqref{kab}, \eqref{kac}, \eqref{hcn} implies
\be\label{kam} S(z) \phi_{\Ga, j, l} = S(z) (\zeta'_{j, l} - \xi_{j, l})\mathop{\midl}_{\Ga'_l}.\ee

From \eqref{hcj}, \eqref{kal}, then \eqref{kam}
\begin{align}\label{kan}
S(z) H_{j+1, l} \mathop{\midl}\limits_{\Ga'_l}  =& S(z) (H_{j, l} \oplus \text{ span\ } \{ \zeta'_{j, l} -\xi_{j, l}\}) \mathop{\midl}\limits_{\Ga'_l}\nonumber\\
=&S(z)(H_{j,l} \mathop{\midl}\limits_{\Ga'_l} \oplus \text{ span\ } \{ \phi_{\Ga, j,l}\}).
\end{align}

From \eqref{kan}, using \eqref{kal} then \eqref{hcd}
\begin{align}\label{hcu}
  S(z)H_{j+1, l} \mathop{\midl}\limits_{\Ga'_l} &= S(z) (H_{0, l}  \mathop{\midl}\limits_{\Ga'_l} \oplus \text{ span } \, \{ \phi_{\Ga, 0, l},\dots, \phi_{\Ga,j, l}\}) \nonumber \\
&= S(z) (\Sigma_{0, l} \oplus \text{ span} \, \{ \phi_{\Ga, 0, l}, \dots, \phi_{\Ga, j, l}\}) \nonumber \\
&= S(z) (\Sigma_{j+ 1, l} \oplus \text{ span} \, \{ \phi_{\Ga, 0, l}, \dots, \phi_{\Ga, j, l}\}).
\end{align}

%%%%%%%%%%%%%%%%%%%%%%%%%

From \eqref{hck}, \eqref{hcl}, \eqref{hcf}
\be\label{hco} \underset{\phi_\Ga\in\Sigma_{j,l}}{\lub} \| \phi_\Ga\|_{L_2(\Ga'_l)}\overset{j \to\infty}{\longrightarrow} 0,\ee
whence from \eqref{hcc}
\be\label{hcp} \underset{\phi_\Ga\in\Sigma_{j,l}}{\lub} \| \phi_\Ga\|_{L_p(\Ga'_l)}\overset{j \to\infty}{\longrightarrow} 0.\ee

From lemma 3.4, for each $\phi_\Ga \in \Sigma_{j, l} $ we have $\theta_\phi \in H_{j, l}$ satisfying
\be\label{hcq} \theta_\phi = \phi_\Ga\ee
on $\Ga(z)$, and such that, using \eqref{hcp},
\be\label{hcr} \underset{\phi\in\Sigma_{j,l}}{\lub} \| \theta_\phi\|_{W^{1, p}(\Om)}\overset{j\to\infty}{\longrightarrow} 0.\ee

From \eqref{hac}, \eqref{hcr}
\be\label{hcs} \underset{\phi\in\Sigma_{j, l}}{\lub} \| w^{1/2} R(z) \theta_\phi \|_{L_2 (\Om\setminus\Ga(z))} \overset {j\to\infty}{\longrightarrow} 0.
\ee

Using \eqref{hbh}, \eqref{hcq}, \eqref{hcs}, \eqref{hcc}, for each fixed $l$,
\be\label{hct} \underset{\phi_\Ga\in\Sigma_{j, l}}{\lub} \tilde M_{j,l} (\phi_\Ga) \overset {j\to\infty}{\longrightarrow} 0,
\ee
so from  \eqref{heb}, \eqref{hct}, using \eqref{hbf}, \eqref{hc}, \eqref{ahf}, \eqref{baa},
\be\label{hcu} \underset{\phi_\Ga\in H_{j, l} \mathop{\midl}\limits_{\Ga'_l}}{\lub} \tilde M_{j + 1, l} (\phi_\Ga ) \overset {j\to\infty}{\longrightarrow} 0.\ee

For each $l$, we fix $\kappa_l$ such that
\be\label{hcw} \underset{\phi_\Ga \in H_{\kappa_l, l}}{\lub} \tilde M_{\kappa_{l, l}} (\phi_\Ga ) \le \de_l.\ee

Then from \eqref{hcw}, \eqref{hbh}, \eqref{caf}, using \eqref{hcc}, \eqref{aga}, \eqref{agb}
\be\label{hcx} Q(z, P_{\kappa_l, l}, \Ga'_l, 0) \le \de_l.\ee

To \eqref{hcx}, we apply an analog of theorem 3.3 with $K = 2, P = P_{\k_l, l}$. Using \eqref{cde}, \eqref{hbg}, \eqref{hbe}, \eqref{heba} we obtain
\be\label{hcy} Q(z, P_{\kappa_l, l}, \Ga(z), 0) \le c\de_l.\ee

Setting
\be\label{hd} \ker P(z) = \underset{l}{\cup}  \ker P_{\k_l, l},\ee
for any bounded $\Ga'\subseteq \Ga(z)$, using \eqref{cbe}, \eqref{hd}, \eqref{hcy}, \eqref{hbd}
\be\label{hda} Q(z,P(z), \Ga', 0) = 0.\ee
As $\Ga'$ is arbitrary, \eqref{hda} implies
\be\label{hdb} Q(z, P(z), \Ga(z), 0) = 0,
\ee
and $(z, P(z))$ is admissible from corollary 5.4,  \eqref{cdf}.

It thus suffices to determine a suitable $\| \cdot \|_{\partial\Om, P(z)}$ such that \eqref{agd} holds with $P = P(z)$. From \eqref{hci}, \eqref{hcg},
\be\label{hdc} \iintl_{\Om\setminus \Ga(z)} wR(z) \xi_{j,l} \cdot R(z)\theta + \intl_{\Ga(z)} w_\Ga (S(z) \xi_{j, l}) (S(z)\theta) = 0\ee
for all $\theta \in H_{j, l}$. In particular, \eqref{hdc} holds for all $\theta \in H(z, P_\cals, w, w_\Ga)$ and implies the $\{ \xi_{j, l}\}$ orthogonal,
\be\label{hdd} \iintl_{\Om\setminus\Ga(z)} w R(z) \xi_{j, l} \cdot R(z)\xi_{j', l'} + \intl_{\Ga(z)} w_\Ga (S(z)\xi_{j, l})(S(z)\xi_{j', l'}) = 0\ee
for all $(j, l) \neq (j', l')$.

For any $\theta \in H(z, P(z), w, w_\Ga)$, from \eqref{hbb}, \eqref{hcj}, \eqref{hd}
on $\partial \Om$
\begin{align}\label{hde} \theta &= (I_n-P(z))\theta\nonumber\\
&=(I_n-P_\cals)\theta + \suml_{j, l} a_{j, l} \xi_{j, l}\nonumber\\
&=\theta_\cals + \suml_{j, l} a_{j, l }\xi_{j, l}, \; \theta_\cals \eqadef (I_n-P_\cals) \theta \in H(z, P_\cals, w, w_\Ga),
\end{align}
with constants $a_{j, l}$ and $\suml_{j, l} $ abbreviating $\suml^\infty_{l=1} \suml^{\kappa_l}_{j = 0}  $ such that $\xi_{j, l}$ does not vanish identically. Necessarily \eqref{hde} holds also within $\Om$, so using
\eqref{hdc}, \eqref{hdd},
\be\label{hdf} \|\theta\|_{z, w, w_\Ga}= ( \|\theta_\cals\|^2_{z, w, w_\Ga} +  \suml_{j, l} a^2_{j, l}\| \xi_{j,l}\|^2_{z, w, w_\Ga})^{1/2}.\ee

Using \eqref{agd} with $P = P_\cals$, the choice
\be\label{hdg} \| \theta\|_{\partial \Om, P(z)} = ( \| \theta_\cals\|^2_{\partial\Om, P_\cals} +
c_1 (z, P_\cals, w, w_\Ga)^2\suml_{j, l} a^2_{j, l} \| \xi_{j, l}\|^2_{z, w, w_\Ga})^{1/2}\ee
satisfies \eqref{agd} with $P = P(z)$, recovering \eqref{hap}, \eqref{haq}.

Uniqueness of $P(z)$ follows from theorem 2.8, using \eqref{kaa}.
\end{proof}

The corresponding expression for $\| \cdot \|_{B, P(z)}$ is obtained from \eqref{ahb}, \eqref{hdg}.  For $\theta$ of the form \eqref{hde},
\be\label{hdh} \intl_{\partial\Om} \dot b \cdot \theta = \intl_{\partial\Om} \dot b \cdot \theta_\cals + \suml_{j, l} a_{j, l } \int_{\partial\Om} \dot b \cdot \xi_{j, l},\ee
and
\begin{align} \label{hdi}
| \intl_{\partial\Om}\dot b \cdot \theta | \le & \left(
\frac{(\intl_{\partial\Om}\dot b \cdot \theta_\cals)^2}
{\| \theta_\cals \|^2_{\partial\Om, P_\cals}}
 + \frac{1}
 {c_1(z, P_\cals, w,w_\Ga)^2}
 \suml_{j, l}
 \frac{(\intl_{\partial\Om}\dot b \cdot \xi_{j, l})^2}
 {\| \xi_{j,l} \|^2_{z, w, w_\Ga}}\right)^{1/2} \nonumber \\
&\times \Big( \| \theta_\cals \|^2_{\partial \Om, P_\cals} + c_1 (z, P_\cals, w, w_\Ga) \suml_{j, l} a^2_{jl} \|\xi_{j, l} \|^2_{z, w, w_\Ga}\Big)^{1/2}\nonumber\\
\le& \| \dot b \|_{B, P(z)} \| \theta\|_{\partial\Om, P(z)},
\end{align}
thus identifying
\be\label{hdj}
\| \dot b \|_{B, P(z)} =
\left( \underset{\theta_\cals \in H(z, P_\cals, w, w_\Ga)}{\lub} \frac{(\intl_{\partial\Om}\dot b \cdot \theta_\cals)^2}
{\| \theta_\cals \|^2_{\partial\Om, P_\cals}}
 +
 \frac{1}
 {c_1(z, P_\cals, w, w_\Ga)^2}
 \suml_{j, l}
 \frac{(\intl_{\partial\Om}\dot b \cdot \xi_{j, l})^2}{\| \xi_{j, l}\|^2_{z, w, w_\Ga}}\right)^{1/2}.
 \ee

Finally, while $P(z)$ is not readily accessible using \eqref{hd}, moments of $\tilde b (z, P_\cals - P(z))$ may suffice to determine if the expressions \eqref{hee} are acceptably small.

Such may be determined by choosing  $\phi_\Ga \in \Phi_0(z, P_\cals) \cap \Phi(z, P_\cals, w_\Ga)$, thus satisfying \eqref{hcf}, a linear combination of the $\phi_{\Ga, j, l}$,  and such that
\be\label{hdm} M_0 (\phi_\Ga; z, P_\cals) > 0.\ee

Analogously with \eqref{hcg}, for any given $\phi_\Ga$ there exists a unique $\zeta'_\phi \in H(z, P_\cals, w, w_\Ga)$ satisfying
\be\label{hdt} \iintl_{\Om\setminus\Ga(z)} w R(z)\zeta'_\phi \cdot R(z)\theta + \intl_{\Ga(z)} w_\Ga (S(z)(\zeta'_\phi - \phi_\Ga)) (S(z)\theta)=0 \ee
for all $\theta \in H(z, P_\cals, w, w_\Ga)$.

As $(z, P(z))$ is admissible, analogously with \eqref{hci} there exists a unique $\xi_\phi$,
\be\label{hdu} \xi_\phi \in H(z, P(z), w, w_\Ga),\ee
satisfying
\begin{align}\label{hdp}
&\iintl_{\Om\setminus\Ga (z)} w R(z) \xi_\phi \cdot R(z) \theta + \intl_{\Ga(z)} w_\Ga(S(z)\xi_\phi) (S(z)\theta)\nonumber \\
= &\iintl_{\Om\setminus\Ga (z)} wR(z) \zeta'_\phi \cdot R(z)\theta + \intl_{\Ga(z)} w_\Ga
(S(z)(\zeta'_\phi - \phi_\Ga)) (S(z)\theta)\end{align}
for all $\theta \in H(z, P(z), w, w_\Ga)$.

From \eqref{hdm}, \eqref{hdt}, the right side of \eqref{hdp} does not vanish (for every $\theta$) and thus
\be\label{hdr} \xi_\phi \notin H(z, P_\cals, w, w_\Ga).\ee

Thus from \eqref{hal}, \eqref{hdr}, we obtain  nontrivial moments of $\tilde  b(z, P_\cals - P(z))$ with $\xi_\phi$ so obtained using \eqref{hdt}, \eqref{hdp},
\begin{align}\label{hds} \intl_{\partial\Om} \tilde b (z, P_\cals - P(z))\cdot \xi_\phi &= \intl_{\partial\Om} ((P_\cals - P(z)) \psi^\dag_{\nu,z} (z) \cdot \xi_\phi\nonumber\\
&= \intl_{\partial\Om} \psi^\dag_{\nu, z} (z) \cdot (P_\cals - P(z)) \xi_\phi\nonumber \\
&= \intl_{\partial\Om} \psi^\dag_{\nu, z} (z) \cdot P_\cals \xi_\phi.\end{align}

\section{Weakly well-posed problems}%8

 We now weaken the condition of well-posedness.  For a given $P_\cals$, the condition \eqref{pb} is relaxed to
 \be\label{paa} \tilde \cals \subseteq \hat \cals\ee
 \be\label{pa} \hcals\,  \eqadef \,  \{ z \, |\,  (z, P(z)) \hbox{\ unambiguous}\}\ee
 for some $P(z)$ satisfying \eqref{hae}.

 Correspondingly, \eqref{fbda} is relaxed to a conjecture,
 \be\label{pab} \hcals_0 \subseteq \hcals.\ee

A toy  example shows that $\hat \cals$ is too large to be identified as $\tilde \cals$, and suggests how suitable elements of $\tilde \cals$ are obtained.

We consider Burgers equation
\be\label{pe}  z_{x_1} + \left(\tfrac{z^2}{2}\right)_{x_2}= 0 \ee
in the strip $0 < x_1 < 1$, with $P_\cals $ as for Cauchy problems
\be\label{pf} P_\cals (x_1, x_2) =
\begin{cases} 0,\,  &x_1 = 0\\ I_1, \,  &x_1 = 1,\end{cases}
\ee
and data $b$ in \eqref{ab}
\be\label{pg} b (0, x_2) =
\begin{cases} -1,\,  &x_2 < 0\\ \;\; 1, \,  &x_2  > 0,\end{cases}\ee
\be\label{pga} b(1, x_2) = 0.\ee

The subset of $\hat \cals$ corresponding to $b$ given in \eqref{pg}, \eqref{pga} includes a one parameter family $\{ z_\la, 0 \le \la\le 1\}$, given by
\be\label{ph} z_\la (x_1, x_2) = \begin{cases} &-1, x_2 \le  (x_1 - \la), x_1 > \la\\
&x_2 / (x_1 - \la),\,  x_1 > \la, \; |x_2| < x_1 - \la\\
&1, \, x_2 \ge x_1 - \la, \; x_1 > \la. \end{cases},\ee
corresponding to Example 2, \eqref{bgc}, \eqref{bgd} above.

Clearly only $z_\la  $ with $\la = 0$ is to be included in $\tilde \cals$.

%%\begin{figure}[h]
%%\begin{center}
%%\includegraphics[width=0.8\textwidth]{  ./NS_figures/figure1.pdf }
%%\end{center}
%%\caption{%Figure 2.1:
%%Inadmissible $(z, 0_N)$}
%%\label{ }
%%\end{figure}
%
\begin{figure}[h]
\begin{center}
\includegraphics[width=0.8\textwidth]{  ./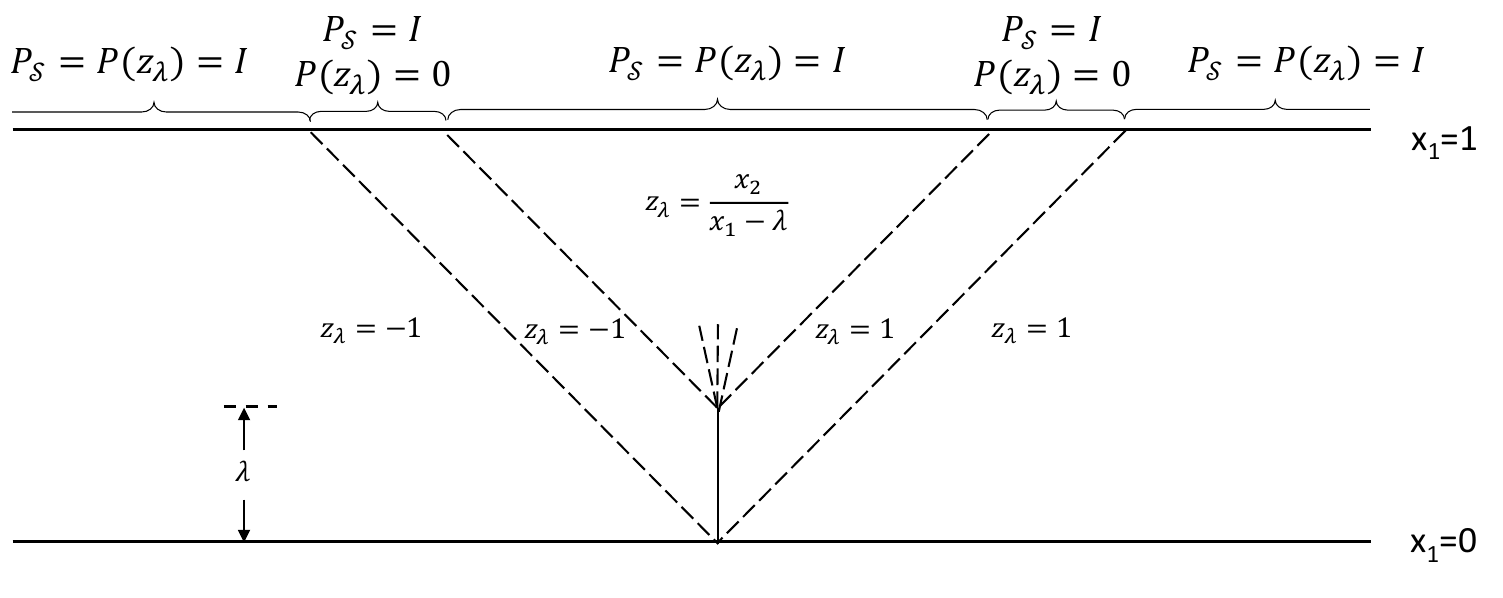 }
\end{center}
\caption{ Solution $z_\lambda \in \hat \cals, \; z_\lambda \notin \tilde \cals$}
\label{ }
\end{figure}

%\centerline{Figure  8.1:  Solution $z_\lambda \in \hat \cals, \; z_\lambda \notin \tilde \cals$})
 For  $\hat\cals$ given in \eqref{pa}, we denote
an extension  of $V(b, P_\cals)$ determined from \eqref{han}, \eqref{pab}, \eqref{pa}

\be\label{pk} \hat V(b) \eqadef \{ z \in \hat\cals | b(z, P_\cals) = b\}, \; \, b \in \tilde B_{P_\cals},\ee
and $P_\cap  (b)$ determined from
\be\label{pka} \ker P_\cap (b) \eqadef \underset{z \in \hat V(b)}{\cap} \ker P(z).\ee

From \eqref{hae}, \eqref{pka}, for any $b\in \tilde B_{P_\cals}$,
\be\label{pkb} \ker P_\cals \subseteq \ker P_\cap (b).\ee

The set $\hat V(b)$ admits useful partitioning.
\begin{lem} Assume theorem 7.2 holding. Then for  any $z \in \hat V(b), b \in \tilde B_{P_\cals}$,

%%%%%%%%%%%%%%%%%%
\begin{align}\label{pl}
\hat V(b) &= V_u(z) \cup V_= (z) \cup V_\subset (z);\\
\label{pla}
V_u(z) &\eqadef \{ z'\in \hat V(b) \, \big| \, (z, P(z')) \; \hbox{is\ not\ stable}\};\\
\label{plb}
V_=(z) &\eqadef\{ z'\in \hat V(b) \,\big| \, P(z') = P(z)\};\\
\label{plc}
V_\subset (z) &\eqadef \{ z'\in \hat V(b)\,\big|\, \ker P(z') \subset \ker P(z)\}.
\end{align}

The sets $V_u(z), V_= (z), V_\subset(z)$ are nonintersecting.
\end{lem}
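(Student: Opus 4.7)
The plan is to check disjointness directly from the definitions, then argue exhaustiveness by splitting on whether $(z, P(z'))$ is admissible when $z'$ lies outside $V_u(z)$.

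For disjointness, $V_=(z) \cap V_\subset(z) = \emptyset$ because $\subset$ denotes proper inclusion (following the convention after \eqref{aab}), making $\ker P(z') = \ker P(z)$ and $\ker P(z') \subset \ker P(z)$ incompatible. If $z' \in V_=(z)$ then $(z, P(z'))$ equals the unambiguous pair $(z, P(z))$ and is in particular stable, so $z' \notin V_u(z)$. If $z' \in V_\subset(z)$ then the inclusion of test spaces \eqref{anb}--\eqref{anc} transfers stability of $(z, P(z))$ to $(z, P(z'))$, so again $z' \notin V_u(z)$.

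For exhaustiveness, fix $z' \in \hat V(b) \setminus V_u(z)$, so $(z, P(z'))$ is stable, and split into two subcases. If $(z, P(z'))$ is also admissible, it is unambiguous; since $(z, P_\cals)$ is not admissible by the standing hypothesis of theorem 7.2, both $P(z)$ and $P(z')$ strictly extend $P_\cals$, and the uniqueness assertion of theorem 2.8, available under the range condition \eqref{kaa} attached to theorem 7.2, forces $P(z') = P(z)$, so $z' \in V_=(z)$. If $(z, P(z'))$ is stable but not admissible, it inherits potential admissibility from $(z, P_\cals)$ via \eqref{cbe} applied to $\ker P_\cals \subseteq \ker P(z')$, and the construction of theorem 7.2 run with $P(z')$ in place of $P_\cals$ produces $P^* \in \calp$ with $\ker P(z') \subset \ker P^*$ and $(z, P^*)$ unambiguous (the inclusion being proper since admissibility is only acquired by genuinely enlarging the kernel). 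Uniqueness identifies $P^* = P(z)$, giving $\ker P(z') \subset \ker P(z)$ and placing $z' \in V_\subset(z)$.

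The main obstacle is the reliance on uniqueness of $P(z)$, which holds under the range condition \eqref{kaa} but is not unconditional. Without it, $\hat V(b)$ could contain elements $z'$ with $P(z')$ incomparable to $P(z)$ and $(z, P(z'))$ stable, lying outside all three sets; the trichotomy therefore genuinely depends on the stronger hypotheses already invoked in theorem 7.2, and the proof must make that dependence explicit rather than merely quoting the existence half.
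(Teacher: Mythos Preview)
Your proof is correct and follows essentially the same route as the paper's own argument. The paper's proof is terser: it assumes $(z,P(z'))$ stable and $z'\notin V_=(z)$, asserts directly that $(z,P(z'))$ is then not admissible, applies theorem 7.2 with $P(z')$ in the role of $P_\cals$ to obtain an unambiguous $(z,P'')$ with $\ker P(z')\subset\ker P''$, and invokes uniqueness to identify $P''=P(z)$. Your version simply unpacks the implicit step behind that ``so'' --- the admissible subcase handled via uniqueness from theorem 2.8 / \eqref{kaa} --- and also spells out the disjointness checks and the inheritance of potential admissibility via \eqref{cbe}, none of which the paper writes down. Your closing remark on the dependence on \eqref{kaa} is apt; the paper's proof equally relies on it through the phrase ``By uniqueness.''
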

\begin{rem*} In \eqref{pla}, \eqref{plb}, \eqref{plc}, $P(z), P(z')$ are as determined from theorem 7.2.

The generic case for $z, z' \in \hat V(b)$ is neither   $\left(z, P(z')\right)$ nor $\left(z', P(z)\right)$ stable or admissible.
For $z'\in V_\subset(z), \, \left(z', P(z)
\right)$ is admissible but not stable and $\left(z, P(z')\right)$ stable but not admissible.  Both $\left(z', P(z)\right)$ and $\left(z, P(z')\right)$ are stable only if $z'\in V_= (z)$.  The set $V_= (z)$ may or may not be the singleton $\{ z\}$.
\end{rem*}
\begin{proof}  Assume $\left(z, P(z')\right)$ stable and $z'\notin V_= (z)$, so $\left(z, P(z')\right)$ is not admissible.  With $z \in \hat \cals$, we may apply theorem 7.2 to $z$ with $P_\cals$ replaced throughout by $P(z')$ obtaining $(z, P'')$ unambiguous with
\be\label{pld} \ker P_\cals \subseteq \ker P(z') \subset \ker P''.\ee

By uniqueness, $P''$ must coincide with $P(z)$, establishing \eqref{plc}.
\end{proof}

For the example \eqref{ph},
\be\label{pm} P(z_\la) (x_1,x_2) = \begin{cases} 0, \, x_1 = 0 \\
I_1, \,  x_1 = 1, \,  |x_2|< 1-\la\\
0, \,  x_1 = 1, \,   1-\la < |x_2| < 1\\
I_1, \,  x_1 = 1, \,  |x_2| > 1\end{cases};\ee
and
\be\label{pma}
V_u (z_\la) = \{ z_{\la'} \, |\, \la' > \la\},\ee
\be\label{pmb} V_= (z_\la) = \{ z_\la\},\ee
\be\label{pmc} V_\subset (z_\la) = \{ z_{\la'} \, |\,\la' < \la\}.\ee

From \eqref{ph}, \eqref{pmc}, it follows  that $V_\subset (z_\la)$ is empty for $\la = 0$, corresponding to the candidate solution for $\tilde\cals$.  Such is not accidental.

In general, nonempty $V_\subset(z)$ is interpreted that $z\notin \tilde \cals$, that $z'\in V_\subset (z)$ is in some sense preferable to $z$.

Specifically, from \eqref{pmc}, in \eqref{bb}
\be\label{pmd} Q_0 (z', P) \le Q_0(z, P)\ee
for any $P$ such that $\ker P_\cals \subseteq  \ker P$, and with strict inequality for $P = P(z')$.  Also from \eqref{pmd}
\be\label{pme} \range\;  (P_\cals - P(z'))\subset \range\; (P_\cals - P(z))\ee
implying less ``under-specified" data for $z'$ than for $z$ using \eqref{hal}.

On this basis we postulate $\tilde \cals$, obtaining \eqref{fag} with
\be\label{pn} \tilde \cals \eqadef \{ z \in \hat V(b), b \in \tilde B_{P_\cals}\, | \, V_\subset (z) = \emptyset\}.\ee

Immediately from \eqref{pn}, \eqref{pka}, \eqref{pkb} we have $z \in\tilde S$ if
\be\label{pnc} P(z) = P_\cap (b),\; \, z \in \hat V(b),\ee
recovering \eqref{pb} in the special case that \eqref{pnc} holds, and \eqref{pkb} holds with equality, for all $b \in \tilde B_\cals$.
Otherwise, $\tilde b (z, P_\cals - P(z))$ in \eqref{hal}  is nonempty and uniqueness of $z \in  \hcals $ for given $b \in \tilde B_{P_\cals}$ is lost, even if $V_= (z) = \{ z\}$.\\

Weak well-posedness is crudely identified with existence of a Frechet differentiable map of (boundary) data into weak solutions.  The precise if tentative definition given here applies to a system of the form \eqref{aa} in a given domain $\Om$, admitting weak solutions $z$ of the form \eqref{ad}, necessarily satisfying \eqref{afb}.

The required expressions for $R(z), S(z)$ in \eqref{ae} are then obtained from \eqref{af}, \eqref{ag} respectively.

Extension of this definition, as discussed in section 13 below, is anticipated.

\begin{defn}%8.2
A weakly well-posed boundary-value problem is determined by existence of a nontrivial set $\hat\cals$ of weak solutions and a projection map $P_\cals  \in \calp$ determining boundary conditions on $\partial\Om$, with the following properties.

Elements $z \in \hat \cals$ satisfy the entropy inequality \eqref{faf}.

The set $ B_{P_\cals}$ obtained from \eqref{ahd},  \eqref{ahb} with $P = P_\cals$ is nontrivial.

For all $z \in \hat \cals$ there exists $P(z)$ satisfying \eqref{hae} such that $(z, P(z))$ is unambiguous.

For all $z\in \hcals$, there are no nontrivial solutions of \eqref{agc}.
\end{defn}

Not included in definition 8.2 is tacit assumption of existence of $\cala_\cals$ satisfying \eqref{fag} with $\tilde \cals$ satisfying \eqref{paa}, \eqref{pa}.

It remains to show that \eqref{pn} is compatible with \eqref{fab}, that $\tilde B_{P_\cals}$ is not unduly   restricted by adoption of \eqref{pn}. It suffices that each $\hat V(b)$ contains suitable limit points.
\begin{thm} For arbitrary fixed $b\in \tilde B_{P_\cals}$, assume a sequence $\{ z_\de\}$ as $\de\downarrow 0$ within $\hat V(b)$, satisfying
\be\label{pp} \underset{\de}{\cap} \ker P(z_\de) = \ker {P}_\cap (b)\ee
given in \eqref{pka}, \eqref{pkb}.

Assume the existence of $w, w_\Ga$ satisfying \eqref{aga}, \eqref{agb}
 and $z_0 \in \hat V(b)$ such that using \eqref{ga},
 \be\label{pq} \dist_\cals (z_0, z_\de, w, w_\Ga)\overset{\de\downarrow 0}{\longrightarrow} 0,\ee
and $\| \cdot \|_{\partial\Om, P(z_\de)}$ such that \eqref{agd} holds for each $z_\de$ with $c_1 (z_\de, P(z_\de), w \circ T^{-1}_\de, w_\Ga \circ T^{-1}_\de)$ bounded uniformly with respect to $\de$.

Then $z_0 \in \tilde \cals$ as determined by \eqref{pn}.
\end{thm}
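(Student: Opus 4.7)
The conclusion $z_0\in\tilde\cals$ unwinds, via \eqref{pn}, to $V_\subset(z_0)=\emptyset$. I argue by contradiction: suppose $z'\in V_\subset(z_0)$, so $z'\in\hat V(b)$ and $\ker P(z')\subsetneq\ker P(z_0)$. The plan is to show that $(z_0,P(z'))$ is in fact unambiguous; the uniqueness machinery underlying Lemma 8.1 (explicitly, Theorem 2.8 or Theorem 7.2 under \eqref{kaa}) then forces $P(z')=P(z_0)$, contradicting the strict inclusion.

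Admissibility of $(z_0,P(z'))$ will be obtained from Theorem 5.2 applied to the sequence $\{(z_\de,P(z_\de))\}$. Each $(z_\de,P(z_\de))$ is unambiguous, hence admissible, so by Theorem 3.1 and \eqref{cde} we have $Q(z_\de,P(z_\de),\Ga(z_\de),0)=0$. Hypothesis (ii) supplies transition maps $T_\de$ verifying \eqref{daa}--\eqref{daf}; combined with the uniform bound (iii) on $c_1(z_\de,P(z_\de),w\circ T_\de^{-1},w_\Ga\circ T_\de^{-1})$, the continuity results Theorems 4.1 and 4.4 let me pass the double limit in \eqref{db}, establishing limit admissibility of the sequence. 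For the kernel condition \eqref{dba} of Theorem 5.2 with target $P=P(z')$, I invoke hypothesis (i): $\bigcap_\de\ker P(z_\de)=\ker P_\cap(b)\subseteq\ker P(z')$ by \eqref{pka} and $z'\in\hat V(b)$. Passing to a subsequence, I extract a pointwise $\calp$-limit $P^*=\lim P(z_\de)\circ T_\de^{-1}$ with $\ker P^*=\ker P_\cap(b)\subseteq\ker P(z')$; hypothesis (i) is exactly what prevents the limit kernel from expanding beyond $\ker P_\cap(b)$. Theorem 5.2 then delivers $(z_0,P(z'))$ admissible.

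Stability of $(z_0,P(z'))$ is automatic from \eqref{anb} and \eqref{anc}: the strict inclusion $\ker P(z')\subsetneq\ker P(z_0)$ with $(z_0,P(z_0))$ stable gives $H(z_0,P(z'),w,w_\Ga)\subset H(z_0,P(z_0),w,w_\Ga)$, whence \eqref{agd} for $(z_0,P(z_0))$ restricts to \eqref{agd} for $(z_0,P(z'))$ with the same $c_1$. Hence $(z_0,P(z'))$ is unambiguous, and the uniqueness noted above yields $P(z')=P(z_0)$ against the strict inclusion, completing the contradiction.

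The principal technical obstacle is the identification of $\ker P^*$ in the application of Theorem 5.2. Hypothesis (i) is tailored for exactly this, but extracting a genuine pointwise $\calp$-limit of the projection-valued functions $P(z_\de)\circ T_\de^{-1}$ along a suitable subsequence, and verifying that $\ker P^*$ does not exceed $\ker P_\cap(b)$, requires care --- particularly at boundary points where $P(z_\de)$ varies sharply with $\de$ or where $T_\de$ distorts $\pOm$ nontrivially. Once the limit projection is pinned down, the admissibility conclusion from Theorem 5.2 and the ensuing contradiction with the partition in Lemma 8.1 are routine.
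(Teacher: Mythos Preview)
Your proposal is correct and follows essentially the same route as the paper. The paper argues directly rather than by contradiction: it shows $(z_0, P_\cap(b))$ is unambiguous --- stability via Theorem 6.4 using the uniform $c_1$ bound and the convergence \eqref{pq}, admissibility via Lemma 8.4, which packages exactly the limit-admissibility argument you sketched with Theorems 4.1, 4.4, 5.2 --- and then invokes uniqueness to get $P(z_0)=P_\cap(b)$, whence $V_\subset(z_0)=\emptyset$ immediately from \eqref{plc}, \eqref{pka}. Your contradiction framing with a hypothetical $z'\in V_\subset(z_0)$ and target projection $P(z')$ reaches the same conclusion through the same machinery; the only substantive differences are that the paper isolates the admissibility limit as a standalone Lemma 8.4, and that you obtain stability by monotonicity from $(z_0,P(z_0))$ (legitimate, since $z_0\in\hat V(b)\subset\hat\cals$) rather than from the $z_\de$ via Theorem 6.4. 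One small correction: the uniform $c_1$ bound plays no role in the admissibility step --- Theorems 4.1, 4.4 and Lemma 8.4 do not use it --- so your invocation of it there is misplaced; in the paper it is used only for stability, and your monotonicity shortcut makes it redundant in your version.
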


\begin{rem*} The assumption \eqref{pp} is slightly stronger than necessary for simplicity.  Throughout $T_\de$ is determined from \eqref{pq}, \eqref{ga}.

If theorem 8.3 applies to all $b\in \tilde B_{P_\cals}$, then
\be\label{pr} \{ b(z, P_\cals) \, | \, z \in\tilde \cals\} = \{ b(z, P_\cals) \, | \, z \in \hat\cals\};\ee
adoption of \eqref{pn} does not reduce $\tilde B_{P_\cals}$. No examples where such fails are known.
\end{rem*}
\begin{proof} It will suffice to show that $(z_0, P_\cap (b))$ is unambiguous, establishing
\be\label{pra}
 P(z_0) = P_\cap(b),\ee
from which $V_\subset (z_0)$ empty will follow from \eqref{plc}, \eqref{pka}.

Stability of $(z_0, P_\cap (b))$ follows by assumption \eqref{pq}, using theorem 6.4.

The proof is completed with the following lemma, expressing admissibility of a limit of admissible sequences.
\end{proof}
\begin{lem} Assume a sequence $\{(z_\de, P_\de)\}$, each admissible, satisfying \eqref{pq} and such that the limit
\be\label{psa} \ker P_0 = \mathop{\cap}\limits_\de \ker P_\de\ee
exists.

Then $(z_0, P_0)$ is admissible.
\end{lem}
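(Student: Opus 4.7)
The plan is to verify the hypotheses of Theorem 5.2 with $(z_0, P_0)$ playing the role of $(z, P)$ therein. Unpacking \eqref{pq} through the definition \eqref{ga} of $\dist_\cals$, I extract transformations $T_\de$ realizing (or nearly realizing) the infimum and satisfying $\|T_\de - I_m\|_{C^1(\Om)} + \|T_\de - I_m\|_{L_2(\Om)} \to 0$ together with $T_\de \Ga(z_0) \subseteq \Ga_\de$ from \eqref{ffa}; this yields \eqref{daa}, \eqref{daf}, \eqref{dab}, \eqref{dac} directly. Uniform $L_\infty$-control on $z_\de$ (implicit in finiteness of $\dist_\cals$ over the bounded solution range) supplies \eqref{dad}, and comparison of \eqref{ga} with \eqref{cca} gives \eqref{dae}. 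For the projection hypothesis, the assumption $\ker P_0 = \cap_\de \ker P_\de$ combined with $T_\de \to I_m$ provides existence of $\lim_\de P_\de \circ T^{-1}_\de \in \calp$ with kernel $\ker P_0$, verifying \eqref{dag} and \eqref{dba} (with equality) for the target $P = P_0$.

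The substantive task is verification of limit admissibility per Definition 5.1, namely
\begin{equation*}
\lim_{\vare \downarrow 0}\lim_{\de \downarrow 0} Q(z_\de, P_\de, \Ga_\de \cap \Om'', \vare) = 0
\end{equation*}
for each bounded $\Om'' \subseteq \Om$. Individual admissibility of $(z_\de, P_\de)$, via Theorem 3.1 and \eqref{cde}, gives $Q(z_\de, P_\de, \Ga(z_\de), 0) = 0$, whence by \eqref{cbb}, $Q(z_\de, P_\de, \Ga_\de \cap \Om'', 0) = 0$ for every $\de$. Lemma 4.2 then produces the bound $Q(z_\de, P_\de, \Ga_\de \cap \Om'', \vare) \le \vare \sup_\phi \|\xi_\phi\|_Z^2 / \|\phi\|^2_{W^{1,p}(\Ga_\de \cap \Om'')}$, where $\xi_\phi$ may be chosen as any element achieving the vanishing minimum $L = 0$ of \eqref{cad} at $\vare = 0$. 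To exhibit such $\xi_\phi$, I would combine the Lemma 3.4 extension (matching $\phi$ on $\Ga_\de \cap \Om''$, with controlled $Z$-norm by \eqref{bdc}) with an $H_\Ga(z_\de, P_\de, w_\Ga)$-correction annihilating $R(z_\de)\theta_{(k)}$ in $\Om \setminus \Ga_\de$, available from admissibility-statement (ii). Provided the resulting $\|\xi_\phi\|_Z/\|\phi\|_{W^{1,p}}$ is bounded uniformly in $\de$, the estimate $Q(z_\de, P_\de, \Ga_\de \cap \Om'', \vare) \le c^2 \vare$ holds uniformly, so the iterated limit is zero, limit admissibility follows, and Theorem 5.2 delivers admissibility of $(z_0, P_0)$.

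The main obstacle is precisely securing that uniform bound on $\|\xi_\phi\|_Z$, since admissibility of $(z_\de, P_\de)$ is a qualitative existence statement that a priori admits no quantitative control. The plan is to exploit the $C^1$-convergence $T_\de \to I_m$ encoded in $\dist_\cals$: the Jacobian bound \eqref{adc} and the $L_\infty$-bound on $z_\de$ then provide uniform constants in the Lemma 3.4 construction, while the $H_\Ga$-correction inherits a bound controlled by $\|R(z_\de)\theta_{(k)}\|_{L_2}$, uniformly bounded by $\|\phi\|_{W^{1,p}}$ under the same convergence. If instead the correction norm cannot be so controlled uniformly in $\de$, a fallback is to bypass the interchange of limits by arguing directly via equivalent statement (iii) of \S 2.1: given $\theta \in H(z_0, P_0, w, w_\Ga)$, transfer to $\theta_\de = \theta \circ T^{-1}_\de \in H(z_\de, P_\de, w, w_\Ga)$ (using $\ker P_0 \subseteq \ker P_\de$), use admissibility of $(z_\de, P_\de)$ to produce $\theta'_\de \in H_\Ga(z_\de, P_\de, w_\Ga)$ with $S(z_\de)\theta'_\de = S(z_\de)\theta_\de$, and extract a limit $\theta'_0 \in H_\Ga(z_0, P_0, w_\Ga)$ with $S(z_0)\theta'_0 = S(z_0)\theta$, establishing \eqref{alc} for $(z_0, P_0)$.
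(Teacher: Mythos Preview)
Your overall plan --- verify limit admissibility \eqref{db} and then invoke Theorem 5.2 --- is exactly the paper's strategy, and your reduction of the structural hypotheses \eqref{daa}--\eqref{dag}, \eqref{dba} from \eqref{pq} and \eqref{psa} is fine. The gap is in your execution of the equicontinuity step.

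Your route through Lemma 4.2 requires a uniform bound on $\|\xi_\phi\|_Z/\|\phi\|_{W^{1,p}}$ for some $\xi_\phi$ achieving $L(\xi_\phi,\phi;0)=0$. The construction you propose does not deliver this. An $H_\Ga$-correction has $R(z_\de)\theta=0$ by definition \eqref{alb}; it cannot annihilate $R(z_\de)\theta_{(k)}$ of the Lemma 3.4 extension. What you would need is to subtract the $H_{\Om\setminus\Ga}$-component of $\theta_{(k)}$ in the decomposition \eqref{al}, leaving the $H_\Ga$-component --- but that decomposition is only in the $H$-norm, and neither component need lie in $Z(P_\de)$, let alone with $Z$-norm controlled by $\|\phi\|_{W^{1,p}}$. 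Your fallback (pull back $\theta'_\de\in H_\Ga(z_\de,P_\de,\cdot)$ and extract a limit) has the same defect: admissibility of $(z_\de,P_\de)$ is purely qualitative and supplies no uniform bound on $\theta'_\de$ from which to extract anything.

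The paper sidesteps this entirely. It does not attempt to bound $\|\xi_\phi\|_Z$ for the exact minimizer. Instead it observes that the constants $c_z$ in Lemma 3.4 (specifically in \eqref{bdc}, \eqref{bdd}) depend on $z_\de$ only through data controlled by the convergence \eqref{pq}, so the conclusions of Lemma 3.4 hold uniformly in $\de$. Since the proof of Theorem 4.4 uses Lemma 3.4 only on the \emph{difference} $\phi_\vare-\phi_0$ (which is small in $L_p$ by compactness on the bounded $\Ga_\de\cap\Om''$), that proof goes through with $\de$-uniform constants, yielding equicontinuity of $\vare\mapsto Q(z_\de,P_\de,\Ga_\de\cap\Om'',\vare)$ at $\vare=0$. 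Combined with $Q(\cdot,0)=0$ from admissibility, this gives \eqref{db} directly. The point is that Theorem 4.4's compactness-plus-difference argument is what converts the pointwise continuity of Lemma 4.2 into continuity of the supremum $Q$; trying to get there from Lemma 4.2 alone, as you do, forces you into the uniform-$\|\xi_\phi\|_Z$ trap.
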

\begin{proof} We apply lemma 3.4 to $z_0, P_0$.  Using \eqref{pq}, \eqref{psa} it follows that the conclusions \eqref{bda}, \eqref{bdb}, \eqref{bdc}, \eqref{bdd} hold for any sufficiently small $\de$, without loss of generality uniformly with respect to $\de$.

Thus theorem 4.4 holds uniformly with respect to $\de$.  For any bounded $\Om''$, the sequence $\{ Q(z_\de, P_\de, \Ga(z_\de) \cap \Om'', \cdot)\}$ is equicontinuous in $\vare$ at $\vare = 0$.  With each $(z_\de, P_\de)$ admissible, it follows that the sequence $\{ (z_\de, P_\de)\} $ is limit admissible, satisfying \eqref{db}.

Then application of theorem 5.2 gives $(z_0, P_0)$ admissible, as \eqref{pq} implies $T_\de \to I_m$ on $\partial\Om$ as $\de \downarrow 0$ and   \eqref{psa} implies \eqref{dba} (with equality).
\end{proof}

Using definitions 6.6, 8.2, a corollary of theorem 7.2 is the following.

\begin{thm} Assume a successful computational investigation with $\Om$ bounded, such that for each $z_0 \in \hcals_0, \; \, (z_0, 0_N)$ is admissible and \eqref{hac}, \eqref{had}, \eqref{kaa} hold.

Then $(\hcals_0, P_\cals)$ determine as weakly well-posed problem.
\end{thm}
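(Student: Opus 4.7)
The plan is to verify the four constitutive conditions of Definition 8.2 in turn, taking $\hcals = \hcals_0$. Three of them come essentially for free from Definition 6.6: each $z_0 \in \hcals_0$ satisfies the entropy inequality \eqref{faf}; $(z_0, P_\cals)$ is stable, which, together with the assumed nonemptiness of $\tilde B_{P_\cals}$ and the Fr\'echet differentiability of $\cala_0$ developed in Section 6.1, ensures that $B_{P_\cals}$ is nontrivial; and there are no nontrivial solutions of \eqref{agc} with $z = z_0$. This handles the first, second, and fourth requirements of Definition 8.2.

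The substantive work is the third requirement: for each $z \in \hcals_0$ one must produce $P(z) \in \calp$ with $\ker P_\cals \subseteq \ker P(z)$ such that $(z, P(z))$ is unambiguous. The strategy is to invoke Theorem 7.2 applied to $z = z_0$ and $P_\cals$. The key preparatory observation is that, with $\Om$ bounded, Definition 7.1 collapses to ordinary admissibility of $(z_0, 0_N)$: one may simply take $\Om_\eta = \Om$ for every $\eta > 0$, in which case $\Ga(z_0) \cap (\Om \setminus \Om_\eta)$ is empty and \eqref{haa} holds trivially. Thus the assumed admissibility of $(z_0, 0_N)$ upgrades to potential admissibility of $(z_0, P_\cals)$. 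Combined with the stability of $(z_0, P_\cals)$ supplied by Definition 6.6 and the weight integrability conditions \eqref{hac}, \eqref{had} assumed in the theorem's hypothesis, all hypotheses of Theorem 7.2 are met, delivering the desired $P(z_0)$.

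The uniqueness clause of Theorem 7.2 (which in turn rests on Theorem 2.8) applies under the surjectivity condition \eqref{kaa}, so $z_0 \mapsto P(z_0)$ is in fact a well-defined single-valued map, a bonus that is not strictly required by Definition 8.2 but is natural given the hypotheses.

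The hard part, to the extent there is one, is simply making the potential-admissibility reduction cleanly; after that, the argument is a direct citation of Definition 6.6 for the stability, entropy, and null-solution properties, followed by Theorem 7.2 for the admissible-projection property. No new estimates or constructions are required, and the theorem reads as a clean corollary that packages the preceding machinery.
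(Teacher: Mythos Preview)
Your proposal is correct and follows essentially the same approach as the paper's own proof, which is a terse two-sentence argument invoking Definition 7.1 (reducing potential admissibility to admissibility of $(z_0,0_N)$ when $\Om$ is bounded) and then Theorem 7.2, leaving the remaining Definition 8.2 conditions implicit from Definition 6.6. You have simply spelled out the verification of each clause of Definition 8.2 more explicitly than the paper does.
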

\begin{proof} With $\Om$ bounded, presumably required for computational investigation, each $(z_0, P_\cals)$ is potentially admissible  as per definition 7.1.  The conclusion
now follows from definition 8.2, using theorem 7.2.
\end{proof}

A posteriori of a successful computational investigation, admissible $(z_0, 0_N)$ is anticipated.
\begin{lem} For $z_0 \in\hat\cals_0$, assume there is no nontrivial $f\in L_2(\Om)^n$ simultaneously satisfying
\be\label{xba} R^\dag (z_0) f = 0\ee
almost everywhere in $\Om\setminus\Ga(z_0)$;
\be\label{xbb} \psi_{\nu,zz} (z_0) f = 0 \ee
almost everywhere in $\pOm$; and
\be\label{xbc} \intl_{\Ga(z_0)}[f\cdot \psi_{\mu,zz} (z_0) \theta ] = 0, \; \; \theta \in H_{\Om\setminus\Ga} (z_0, 0_N, 1)\ee
using \eqref{afb}, \eqref{adc}, \eqref{ala}.

Then $(z_0, 0_N)$ is admissible.
\end{lem}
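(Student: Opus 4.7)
The plan is to argue by contraposition using statement (i) of Section 2.2. Suppose $(z_0, 0_N)$ fails to be admissible. Then by theorem 2.6 applied with $w = w_\Ga = 1$, there exist $\dot z \in R(z_0) H(z_0, 0_N, 1, 1)$ and $\sigma \in S(z_0) H(z_0, 0_N, 1, 1)$, not both trivial, satisfying the homogeneous identity \eqref{ak} for every $\theta \in H(z_0, 0_N, 1, 1)$. I would set $f \eqadef \dot z \in L_2(\Om)^n$ and show $f$ meets \eqref{xba}--\eqref{xbc}, contradicting the hypothesis.

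First, I would show $\dot z \not\equiv 0$: otherwise \eqref{ak} reduces to $\intl_{\Ga(z_0)} \sigma S(z_0)\theta = 0$ for all $\theta \in H(z_0, 0_N, 1, 1)$, and writing $\sigma = S(z_0)\theta_\sigma$ from the class \eqref{ahe} and choosing $\theta = \theta_\sigma$ forces $\sigma \equiv 0$ as well, contradicting nontriviality. Hence $f = \dot z$ is a nontrivial $L_2$ function. To derive the three conditions, I would test \eqref{ak} against progressively larger classes of $\theta$ and integrate by parts. Taking $\theta \in C_c^\infty(\Om\setminus\Ga(z_0))^n$ drops both the $\pOm$-boundary term and the $\Ga(z_0)$-jump term, leaving $\iintl_{\Om\setminus\Ga(z_0)} f \cdot R(z_0) \theta = 0$, which is the distributional statement $R^\dag(z_0) f = 0$ in $\Om\setminus\Ga(z_0)$, i.e.\ \eqref{xba}. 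Allowing $\theta$ supported near a point of $\pOm$ but still away from $\Ga(z_0)$, the bulk contribution vanishes by \eqref{xba} and integration by parts extracts $\intl_{\pOm} \theta \cdot \psi_{\nu,zz}(z_0) f = 0$; since $P = 0_N$ leaves $\theta|_{\pOm}$ arbitrary, \eqref{xbb} follows. Finally, for $\theta \in H_{\Om\setminus\Ga}(z_0, 0_N, 1)$ the relation $S(z_0)\theta = 0$ from \eqref{ala} kills the $\sigma$-term in \eqref{ak}; integrating $\iintl_{\Om\setminus\Ga(z_0)} f \cdot R(z_0)\theta$ by parts produces a bulk piece (zero by \eqref{xba}), a $\pOm$ piece (zero by \eqref{xbb}), and the jump contribution $\intl_{\Ga(z_0)} [f \cdot \psi_{\mu,zz}(z_0)\theta]$, whose vanishing is precisely \eqref{xbc}.

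The main obstacle will be justifying these integrations by parts when $f$ is only $L_2$ and $\theta$ lives in the Hilbert space $H(z_0, 0_N, 1, 1)$. I would handle this by a density argument within $H(z_0, 0_N, 1, 1)$ combined with the Rankine-Hugoniot relation \eqref{afb} (which already converted the primitive jump form \eqref{afa} into the purely tangential \eqref{ag} and hence ensures the trace integrals on $\Ga(z_0)$ are well defined) and the bounded Jacobian \eqref{adc} (which legitimises the $\a$-parametrisation of each $\Ga_k$). Continuity of $\theta$ across $\Ga(z_0)$ then reduces the jump of the scalar $f \cdot \psi_{\mu, zz}(z_0)\theta$ to $\theta \cdot [\psi_{\mu, zz}(z_0) f]$, matching the shape required in \eqref{xbc}. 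Once these trace and jump identifications are in place, the nontrivial $f = \dot z$ satisfies \eqref{xba}--\eqref{xbc}, contradicting the standing hypothesis and thereby establishing admissibility of $(z_0, 0_N)$.
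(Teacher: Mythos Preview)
Your proposal is correct and follows essentially the same route as the paper: argue by contraposition, extract a nontrivial $f=\dot z$ from the failure of statement (i) in Section 2.2, and recover \eqref{xba}--\eqref{xbc} by partial integration. The paper compresses all of this into two sentences (obtaining \eqref{xbd} by restricting \eqref{ak} to $H_{\Om\setminus\Ga}(z_0,0_N,1)$, then integrating by parts); your version spells out the nontriviality of $\dot z$ and the three separate test-function choices, which is a welcome elaboration. One minor correction: the result you invoke to pass to $w=w_\Ga=1$ is Theorem 2.4, not Theorem 2.6.
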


\begin{rema*} The example in figure 2.1 is decidedly pathological, depending on a scalar conservation law and incompatible with \eqref{faf}.
\end{rema*}
\begin{proof} Failure of $(z_0, 0_N)$ admissible implies nontrivial $f$ satisfying
\be\label{xbd} \iintl_{\Om\setminus\Ga(z_0)} f \cdot R(z_0) \theta = 0, \; \;  \, \theta \in H_{\Om\setminus\Ga} (z_0, 0_N, 1).\ee

Partial integration of \eqref{xbd} using \eqref{af} determining $f$ satisfying \eqref{xba}, \eqref{xbb}, \eqref{xbc}.
\end{proof}

Given $z_0$, from \eqref{xbc} by inspection
\be\label{xbe} [ \psi_{\mu,zz} (z_0) f] = 0\ee
almost everywhere in $\Ga(z)$ is anticipated, and that the boundary of the support of any such $f$ cannot lie within the interior of $\Om$, contradicting \eqref{xbb}.

\section{Sufficient conditions for weak well-posedness}%9

%%%%%%%%%%%%%%%%%%%%
The next four sections are devoted to determination of sufficient conditions for weak well-posedness as per definition 8.2.

Of necessity, an abuse of notation is made, using $a, h, \beta, \gamma, \delta, \vare, \zeta, \eta, \chi, \kappa, \xi$, \linebreak $  \tau, \omega, \Sigma, E, Y$  differently below than previously, typically with different subscripts. Use of $z,\theta, \psi, B, \cals, P, \Om, \nu, \mu$ below is consistent with the above.

In this context, we regard successful computational investigations as having established \eqref{fbc}, \eqref{fbd}  beyond reasonable doubt, with a (nontrivial) empirically determined solution set $\hat\cals_0$ and corresponding  boundary data
\be\label{raa} \tilde B_{P_\cals} = \big\{ (I_n-P_\cals) \psi^\dag_{\nu,z} (z), \;  z \in \hat\cals_0 \big\}\ee
replacing \eqref{fab} using \eqref{fbc}, \eqref{fbd}, with some $P_\cals$ independent of $z \in \hcals_0$.

We also assume each $z\in \hcals_0$ such that each $(z, P_\cals)$ is stable,
\be\label{rast} \hcals_0 \subset \{ z\, |\,  (z, P_\cals) \; \hbox{stable}\}.\ee

The decisive condition for weak well-posedness is then \eqref{pab}, \eqref{pa}.  We thus seek to identify  problem classes with nontrivial  projected solution sets $\hat\cals$ such that $(z,P(z))$ is unambiguous for all $z \in \hat\cals$, for some $P(z)$ satisfying \eqref{hae}.

For  this purpose, a problem class is determined by a specified system of equations in a domain $\Om$, required local structure of weak solutions $z \in \hat \cals$, the required boundary conditions determined by $P_\cals$, and conditions on the boundary data $b \in \tilde B_{P_\cals}$ from \eqref{raa}, tacitly anticipating \eqref{pab}.

Below  identification of sufficient conditions for weak well-posedness is extended over four sections, successively determining various restrictions on the problem class details.

Theorems 9.1, 9.2 express sufficient conditions on $z \in \hcals$ to relate the required boundary estimate $\| \theta \|_{\pOm, P(z)}$  to interior estimates, $\| \theta\|_{L_2(\Ga(z))}$ in particular.
 Theorem 9.3 below expresses abstract sufficient conditions for $(z, P)$ unambiguous.  Anticipating application of theorem 7.2, with suitable restrictions on the regularity of the boundary data, sufficient condition for $(z, P_\cals)$ stable and  $(z, 0_N)$ admissible are expressed in theorems 9.4, 9.5 respectively.

Then theorem 10.4 expresses sufficient structural conditions on elements $z \in\hat\cals$, in particular a local entropy condition on whatever discontinuities $\Ga(z)$, such that the assumptions of theorems 9.4, 9.5 hold.  Here it is required that each $z \in \hat\cals$ be equipped with a distinguished direction determined from $e(\cdot; z) \in \bbs^m$ almost everywhere within $\Om$, reminiscent of time-like directions for hyperbolic systems.

Section 11 then relates the assumptions of theorem 10.4 to conditions on the given system \eqref{aa}.  Sufficient conditions are obtained, including hyperbolicity of the system \eqref{aa}, that  the required entropy condition is implied by  the familiar inequality \eqref{faf}.
 The assumption of hyperbolicity of the underlying system is subsequently  escaped by association of $\hat\cals$ with a reduced system obtained from a given hyperbolic ``primitive" system \eqref{aa} using the associated symmetry group.

 Construction of the required $e(\cdot ; z)$ is then  materially simplified by assumption of further increased regularity of the boundary data.

Assembling these results in section 12, we obtain problem class details for which weak well-posedness appears at least realistic.  The attractiveness of fluid flow models in this context is made explicit.

In the absence of suitable existence theorems (for example for systems \eqref{aa} with $m > 2$ or not everywhere hyperbolic), existence of nontrivial $\hat\cals $ must depend at present  on ``verification" a posteriori of computational investigations.  In this context, in section 13  we introduce and discuss a class of approximation schemes, the success of which is directly related to sufficient conditions for weak well-posedness.   Application thereof is beyond the scope of the present work.

\subsection{Abstract conditions for stability and admissibility}%9.1

Judicious choice of $\| \cdot \|_{\partial\Om, P}$ reduces stability of $(z,P)$, for $z$ of the form \eqref{ad}, \eqref{ada}, to a priori estimates for weak solutions $\theta $ of
\be\label{rad} w^{1/2} R(z)\theta = f\ee
within $\Om\setminus \Ga(z), \; \; f \in L_2(\Om\setminus \Ga (z))^n$ otherwise arbitrary;
\be\label{rae} w^{1/2}_\Ga S(z) \theta = g\ee
within $\Ga(z), g \in L_2 (\Ga (z))$ otherwise arbitrary, satisfying
\be\label{raf} P \theta = 0\ee
almost everywhere within $\partial\Om$.

\begin{thm}%{9.1}
 Assume $P_0 (z) $ piecewise continuous, uniformly inverse bounded on its range, and satisfying \eqref{aged} with $P = P_0(z)$.   Assume
\be\label{pya} z \in W^{1, \infty}(\Omega_\kappa)\ee
for some $\kappa > 0$, with
\be\label{pyb} \Omega_\kappa \eqadef (\Omega\setminus\Ga (z)) \cap \{ x | dist (x, \partial\Om) < \kappa \}.\ee

Assume $w, w_\Ga, c_{z, w, w_\Ga}$ such that for all $\theta \in H(z, P_0(z), w, w_\Ga)$
\be\label{pyc} \| \theta\|_{L_2(\Omega)} \le c_{z, w, w_\Ga} \| \theta\|_{z, w, w_\Ga}.\ee

Then $(z, P_0(z) )$ is stable.
\end{thm}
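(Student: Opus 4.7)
The plan is to produce a concrete boundary seminorm $\|\cdot\|_{\partial\Om, P_0(z)}$ adapted to the decomposition of $\psi_{\nu, zz}(z)$ on $\ker P_0(z)$, then verify \eqref{agd} by a partial-integration argument localized in the boundary strip $\Om_\kappa$ of \eqref{pyb}. I would define
\begin{equation*}
\|\theta\|_{\partial\Om, P_0(z)}^2 \eqadef \intl_{\partial\Om} \theta \cdot \bigl(\psi^+_{\nu, zz}(z) + \psi^-_{\nu, zz}(z)\bigr)\theta,
\end{equation*}
with $\psi^\pm_{\nu, zz}$ the commuting nonnegative semidefinite parts, analogous to \eqref{ata}. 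The hypotheses \eqref{aged}, \eqref{agea}, together with uniform inverse boundedness of $P_0(z)$ on its range, ensure this is a bona fide weighted $L_2$-type seminorm on admissible boundary traces, modulo the $e_0$ directions on which $\theta$ is already required to vanish.

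Next, I would select a cutoff $\chi\in W^{1,\infty}(\bar\Om\to[0,1])$ supported in $\overline{\Om_\kappa}$, satisfying $\chi\midl_{\partial\Om}=1$ and vanishing in a neighborhood of the limit points of $\bar\Ga(z)$ on $\partial\Om$. Since $\chi\theta \in H(z, P_0(z), w, w_\Ga)$ whenever $\theta$ is, and since $\chi$ can be arranged to vanish near $\Ga(z)$, partial integration using the symmetry of each $\psi_{i,zz}$ and the Rankine--Hugoniot conditions \eqref{afb} gives
\begin{equation*}
\iintl_{\Om\setminus\Ga(z)} \chi\theta\cdot R(z)(\chi\theta) = \tfrac12 \intl_{\partial\Om}\theta\cdot\psi_{\nu,zz}(z)\theta - \tfrac12 \iintl_{\Om\setminus\Ga(z)} \chi^2 \theta\cdot \Bigl(\suml^m_{i=1}(\psi_{i,zz}(z))_{x_i}\Bigr)\theta.
\end{equation*}
By $z\in W^{1,\infty}(\Om_\kappa)$ the coefficients $(\psi_{i,zz}(z))_{x_i}$ are bounded, so using \eqref{pyc} the bulk correction is controlled by $c\|\theta\|_{z, w, w_\Ga}^2$. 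Bounding the left-hand side by Cauchy--Schwarz together with $w\ge 1$ and \eqref{pyc} yields $\bigl|\intl_{\partial\Om}\theta\cdot\psi_{\nu,zz}(z)\theta\bigr| \le c\|\theta\|_{z, w, w_\Ga}^2$, which controls the signed quadratic form $\intl_{\partial\Om}\theta\cdot(\psi^+_{\nu,zz} - \psi^-_{\nu,zz})\theta$.

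The main obstacle will be upgrading this signed bound to the absolute bound on $\intl_{\partial\Om}\theta\cdot(\psi^+_{\nu,zz} + \psi^-_{\nu,zz})\theta = \|\theta\|_{\partial\Om, P_0(z)}^2$. Here I would exploit the assumption that $P_0(z)$ is uniformly inverse bounded on its range, combined with \eqref{aged}: these force a pointwise algebraic splitting of $\ker P_0(z)$ compatible with the spectral projections of $\psi_{\nu,zz}(z)$, so that $\psi^+_{\nu,zz}$ and $\psi^-_{\nu,zz}$ effectively act on disjoint subspaces of $\ker P_0(z)$ (up to the $e_0$ direction, removed by \eqref{ageb}). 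On each such subspace the signed and absolute forms coincide up to a uniform constant, yielding $\|\theta\|_{\partial\Om, P_0(z)}^2 \le c\|\theta\|_{z, w, w_\Ga}^2$ with $c$ serving as $c_1(z, P_0(z), w, w_\Ga)^2$, which is exactly \eqref{agd}. A subsidiary technicality will be treating the common limit points of $\bar\Ga(z)$ on $\partial\Om$, where the cutoff $\chi$ must vanish in order that no stray jump terms appear in the partial integration; this is handled by choosing $\chi$ to localize away from any such accumulation set, at the cost of a slightly enlarged but still uniform constant.
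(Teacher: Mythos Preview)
Your partial-integration identity is fine, but the step you flag as the ``main obstacle'' is a genuine gap, not a technicality. The hypotheses do \emph{not} force $\ker P_0(z)$ to split compatibly with the spectral projections of $\psi_{\nu,zz}(z)$. Condition \eqref{aged} says only that $\ker P_0(z)$ lies in $\range\,\psi_{\nu,zz}(z)\oplus\operatorname{span}\{e_0\}$; it places no constraint on how $\ker P_0(z)$ sits relative to the positive and negative eigenspaces. A two-dimensional counterexample: take $\psi_{\nu,zz}=\operatorname{diag}(1,-1)$ and $\ker P_0(z)=\operatorname{span}\{(1,1)^\dag\}$. Then \eqref{aged} holds trivially, yet for $\theta=(a,a)^\dag$ the signed form $\theta\cdot\psi_{\nu,zz}\theta$ vanishes while your proposed norm $\theta\cdot(\psi^+_{\nu,zz}+\psi^-_{\nu,zz})\theta=2a^2$ does not. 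So a bound on the signed form cannot yield a bound on the absolute form. The splitting you invoke is the \emph{conclusion} of Theorem~2.9 (equation \eqref{atz}), which already assumes $(z,P)$ unambiguous---you cannot import it here. The phrase ``uniformly inverse bounded on its range'' gives nothing extra: a projection restricted to its range is the identity.

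The paper avoids the signed-to-absolute upgrade altogether by choosing a \emph{weak} boundary norm rather than your strong weighted-$L_2$ one. It sets
\[
\|\theta\|_{\partial\Om,P_0(z)}\;\eqadef\;\sup_{\theta'}\,\frac{\intl_{\partial\Om}\chi\,(I_n-P_0(z))\theta\cdot\psi_{\nu,zz}(z)\theta'}{\|\theta'\|_{H^1(\Om)}},
\]
with $\theta'$ supported in $\Om_\kappa$ and the cutoff $\chi$ built into the definition (including $\chi(x)\le\dist(x,\Ga(z))$ on $\partial\Om$, which also absorbs your ``subsidiary technicality'' about limit points of $\Ga$). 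After partial integration the numerator becomes $\iintl_{\Om_\kappa}R(z)\theta\cdot\theta'-\iintl_{\Om_\kappa}\theta\cdot R^\dag(z)\theta'-\iintl_{\Om_\kappa}\theta\cdot\bigl(\suml_i(\psi_{i,zz}(z))_{x_i}\bigr)\theta'$, and each term is bounded by $c\|\theta\|_{z,w,w_\Ga}\|\theta'\|_{H^1(\Om_\kappa)}$ using \eqref{pya} and \eqref{pyc}. The dual-norm structure means no absolute-value upgrade is ever needed: the bound on the bilinear pairing \emph{is} the bound on the norm.
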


\begin{proof}  We choose a scalar function $\chi \in H^1 (\Om)\cap L_\infty (\Om)$ satisfying
\be\label{pyd} \supp \; \chi \subset \Om_\kappa,\ee
and
\be\label{pyf} \chi (x) \le \, \hbox{minimum\ }  (1, \dist (x, \Ga(z)), \; \; \, x \in \partial\Omega.\ee

We designate
\be\label{pye} \| \theta\|_{\partial\Om, P_0 (z)} \eqadef\,  \underset{\theta'}{\lub}\,  \frac{\intl_{\partial\Om} \chi (I_n - P_0(z)) \theta \cdot \psi_{\nu, zz} (z)\theta'}{\| \theta'\|_{H^1(\Om)}}
\ee
with $\theta'$ restricted by
\be\label{pyg} \supp\;  \theta' \subset \Om_\kappa\ee

Then from \eqref{pye}, \eqref{pyf}, after partial integration and subsequent use of
\eqref{af}, \eqref{ah}, \eqref{pyc}, \eqref{pyg}, \eqref{pya}
\begin{align}\label{pyj}
\| \theta\|_{\partial\Om, P_0(z)} &\le\underset{\theta'}{\lub}  \intl_{\partial\Om} \theta \cdot  \psi_{\nu, zz} (z) \theta'/\| \theta'\|_{H^1(\Om)} \nonumber \\
&=\underset{\theta'}{\lub} \Big(\iintl_{\Om_\kappa} R(z) \theta \cdot \theta' - \iintl_{\Om_\kappa} \theta \cdot R^\dag(z) \theta' - \iintl_{\Om_k} \theta \cdot
\Big(\suml^{m}_{i = 1} (\psi_{i,zz} (z))_{x_i}\Big) \theta' \Big)/\| \theta'\|_{H^1(\Om_\kappa)}\nonumber \\
&\le c \; \underset{\theta'}{\lub} \Big(\| \theta\|_{z, 1, 1} \| \theta' \|_{L_2 (\Om_\kappa)} + \| \theta \|_{L_2(\Om_\kappa)} \| \theta'\|_{H^1(\Om_\kappa)} + \| \theta\|_{L_2(\Om_\k)} \| \theta'\|_{L_2 (\Om_\k)}\Big)/\| \theta'\|_{H^1(\Om_\kappa)} \nonumber  \\
%%&\qquad \qquad \qquad \qquad\quad  + \| \theta\|_{L_2(\Om_\k)} \| \theta'\|_{L_2 (\Om_\k)}\Big), \nonumber \\
&\le c \| \theta \|_{z, w, w_\Ga}.
\end{align}
which implies \eqref{agd}, with $c$ depending on $\| z\|_{W^{1, \infty} (\Om_\k)}, \| \chi\|_{H^1(\Om)}$.
\end{proof}

Alternatively, the assumption \eqref{pya}, \eqref{pyb} may be weakened with a stronger form of \eqref{pyc}.

\begin{thm} Assume $z \in H^1(\Omega\setminus \Ga(z))$, and $w, w_\Ga, c_{z, w, w_\Ga}$
such that for all $\theta \in H(z, P(z), w, w_\Ga)$,
\be\label{pxm} \| \theta/ w_0\|_{L_2 (\Om)} + \| \theta\|_{L_2(\Ga(z))} \le c_{z, w, w_\Ga}\| \theta\|_{z, w,w_\Ga}, \ee
for some $w_0: \Om \to [1, \infty)$.

Assume \eqref{aged} with $P = P(z)$.

Then $(z, P(z))$ is stable.
\end{thm}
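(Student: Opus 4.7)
The plan is to adapt the argument of the preceding theorem, defining $\|\cdot\|_{\pOm,P(z)}$ through a supremum against auxiliary test functions $\theta'$ supported in a collar $\Om_\kappa$ of $\pOm$ and then converting the boundary integral into interior integrals by partial integration. The weakening from $z\in W^{1,\infty}(\Om_\kappa)$ to $z\in H^1(\Om\setminus\Ga(z))$ is compensated by the stronger interior control \eqref{pxm}, which supplies both $\|\theta\|_{L_2(\Ga(z))}$ and a weighted $L_2(\Om)$-bound on $\theta$. Concretely, I would fix $\kappa>0$ and a cutoff $\chi\in W^{1,\infty}(\Om)$ with $\supp\chi\subseteq\bar\Om_\kappa$, $\chi\equiv 1$ near $\pOm$, $\chi\equiv 0$ on $\partial\Om_\kappa\cap\Om$, arranged so that $w_0\chi\in L_\infty(\Om)$ (by further localization of the support of $\chi$ if necessary), together with an exponent $p>m$. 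I would then define
\begin{equation*}
\|\theta\|_{\pOm,P(z)}\,\eqadef\,\underset{\theta'}{\lub}\,\frac{\intl_{\pOm}\chi\,(I_n-P(z))\theta\cdot\psi_{\nu,zz}(z)\theta'}{\|\theta'\|_{W^{1,p}(\Om_\kappa)}},
\end{equation*}
the supremum being over $\theta'\in W^{1,p}(\Om_\kappa)^n$ vanishing on $\partial\Om_\kappa\cap\Om$. Hypothesis \eqref{aged} with $P=P(z)$ ensures this is non-degenerate as a norm (modulo the span of $\{e_0\}$).

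Applying the divergence theorem on $\Om_\kappa\setminus\Ga(z)$ to $\suml_i\partial_{x_i}(\chi\theta_j\psi_{i,z_jz_k}(z)\theta'_k)$, the boundary integral on $\pOm$ equals the sum of four volume integrals over $\Om_\kappa\setminus\Ga(z)$ plus a jump contribution $\intl_{\Ga(z)\cap\Om_\kappa}\chi[\psi_{\mu,z_jz_k}(z)]\theta_j\theta'_k$, which in general does not vanish since the Rankine-Hugoniot relations \eqref{afb} constrain only first derivatives of $\psi$ across $\Ga(z)$. I would then estimate each term by H\"older's inequality: the $R(z)\theta\cdot\theta'$ volume term by $\|\theta\|_{z,w,w_\Ga}\|\theta'\|_{L_2(\Om_\kappa)}$ using $w\ge 1$; the adjoint-type term $\chi\theta_j\psi_{i,z_jz_k}(z)\theta'_{k,x_i}$ by $\|w_0\chi\|_{L_\infty}\|\theta/w_0\|_{L_2(\Om)}\|\nabla\theta'\|_{L_2(\Om_\kappa)}$; the cutoff-gradient term similarly using $\chi_x\in L_\infty$ together with \eqref{pxm}; the volume term containing $(\partial_{x_i}\psi_{i,z_jz_k}(z))_{\Om\setminus\Ga}$ by Cauchy-Schwarz as
\begin{equation*}
c\,\|\nabla z\|_{L_2(\Om\setminus\Ga(z))}\|w_0\chi\|_{L_\infty}\|\theta/w_0\|_{L_2(\Om)}\|\theta'\|_{L_\infty(\Om_\kappa)},
\end{equation*}
invoking $z\in H^1(\Om\setminus\Ga(z))$; and the jump term by $\|[\psi_{\mu,zz}(z)]\|_{L_\infty}\|\theta\|_{L_2(\Ga(z))}\|\theta'\|_{L_2(\Ga(z)\cap\Om_\kappa)}$, the last factor bounded via a trace inequality $\|\theta'\|_{L_2(\Ga(z)\cap\Om_\kappa)}\le c\|\theta'\|_{W^{1,p}(\Om_\kappa)}$ using the parametrization \eqref{ada}--\eqref{adc}. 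The Sobolev embedding $W^{1,p}(\Om_\kappa)\hookrightarrow L_\infty(\Om_\kappa)$ valid for $p>m$ then absorbs every $\theta'$-factor into $\|\theta'\|_{W^{1,p}(\Om_\kappa)}$, while \eqref{pxm} absorbs the $\theta$-factors $\|\theta/w_0\|_{L_2(\Om)}$ and $\|\theta\|_{L_2(\Ga(z))}$ into $c_{z,w,w_\Ga}\|\theta\|_{z,w,w_\Ga}$. Division by the denominator in the defining supremum delivers \eqref{agd} with $P=P(z)$.

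The principal obstacle is the volume term carrying $(\partial_{x_i}\psi_{i,zz}(z))_{\Om\setminus\Ga}$: since only $\nabla z\in L_2$ is available (rather than $L_\infty$ as in the preceding theorem), Cauchy-Schwarz forces the $\theta'$-factor into $L_\infty$, which both fixes the test-norm to involve $\|\theta'\|_{W^{1,p}}$ with $p>m$ and requires $w_0\chi\in L_\infty$ (and hence limits how large the support of $\chi$ may be, relative to where $w_0$ grows unboundedly). A secondary technical issue is the trace estimate for $\theta'$ on $\Ga(z)\cap\Om_\kappa$, which depends on the local regularity of each $\Ga_k$ provided by \eqref{ada}--\eqref{adc}.
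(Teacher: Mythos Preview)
Your outline is sound and would go through under the extra side condition $w_0\chi\in L_\infty$, but the paper's argument is both simpler and strictly more general. The paper dispenses with the cutoff $\chi$, the collar $\Om_\kappa$, and the $W^{1,p}$/Sobolev machinery entirely. It defines
\[
\|\theta\|_{\partial\Om,P(z)}\;\eqadef\;\underset{\theta'\in H(z,0_N,w,w_\Ga)}{\lub}\;\frac{\displaystyle\intl_{\partial\Om}\theta'\cdot\psi_{\nu,zz}(z)\theta}{\|\theta'\|_{L_2(\Om)}+\|w_0\,R^\dag(z)\theta'\|_{L_2(\Om\setminus\Ga(z))}+\|\theta'\|_{L_2(\Ga(z))}},
\]
and then a single integration by parts over $\Om\setminus\Ga(z)$ yields exactly three terms,
\[
\Big|\intl_{\partial\Om}\theta'\cdot\psi_{\nu,zz}(z)\theta\Big|\le\Big|\iintl_{\Om\setminus\Ga}\theta'\cdot R(z)\theta\Big|+\Big|\iintl_{\Om\setminus\Ga}\theta\cdot R^\dag(z)\theta'\Big|+c_z\intl_{\Ga}|\theta|\,|\theta'|,
\]
each of which pairs by Cauchy--Schwarz (with the weight $w_0$ in the middle term) against one summand of the denominator. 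The factor containing $\theta$ is then controlled directly by \eqref{pxm}.

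The crucial difference from your route is that the paper does \emph{not} split $R^\dag(z)\theta'$ into $\psi_{i,zz}(z)\theta'_{x_i}$ and $(\partial_{x_i}\psi_{i,zz}(z))\theta'$; it leaves $R^\dag(z)\theta'$ intact and simply inserts $\|w_0 R^\dag(z)\theta'\|_{L_2}$ into the denominator of the defining supremum. This is what eliminates both the Sobolev embedding (your term (4) forced $\theta'\in L_\infty$) and the side condition $w_0\chi\in L_\infty$. That side condition is a genuine limitation of your argument: the theorem allows $w_0:\Om\to[1,\infty)$ to be unbounded arbitrarily close to $\partial\Om$, and since \eqref{pxm} only becomes \emph{weaker} as $w_0$ grows, you cannot in general replace $w_0$ by a bounded surrogate. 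In the paper's main application $w_0=1$ (see \eqref{rafb}), so your version would suffice there, but not for the theorem as stated.
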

\begin{rema*} The condition \eqref{pxm} precludes nontrivial $\theta$ satisfying \eqref{agc}.
\end{rema*}
\begin{proof}  Using \eqref{aged}, \eqref{aga},  we denote
\be\label{pxn} \| \theta\|_{\partial \Om, P(z)} \eqadef \, \underset{\theta' \in H(z, 0_N, w, w_\Ga)}{\lub} \, \frac{\intl_{\partial\Om} \theta' \cdot \psi_{\nu, zz} (z)\theta}
{\| \theta'\|_{L_2 (\Om)}
+ \| w_0 R^\dag (z) \theta'
\|_{L_2 (\Om\setminus \Ga(z))} + \| \theta'\|_{L_2(\Ga(z))}}.\ee
%{\| \theta'\|_{H^1(\Om)} + \| \theta'\|_{L_\infty (\Om)}}.\ee

Then using \eqref{af} and partial integration, for some $c_z$ for arbitrary $\theta'\in H(z, 0_N, w, w_\Ga)$
\begin{align}\label{pxo} |\intl_{\partial\Om} \theta' &\cdot \psi_{\nu, zz} (z) \theta | \le | \iintl_{\Om\setminus\Ga(z)} \theta' \cdot R(z) \theta | + |\iintl_{\Om\setminus\Ga(z)} \theta \cdot R^\dag (z) \theta'| + c_z\intl_{\Ga(z)} |\theta| |\theta'|\nonumber \\
%1
\le & \|  \theta' \|_{L_2 (\Om\setminus \Ga(z))} \| \theta\|_{z, w, w_\Ga}
 + \| \theta/w_0\|_{L_2(\Om)} \| w_0 R^\dag (z) \theta'\|_{L_2 (\Om\setminus\Ga(z))}\nonumber \\
%2
\qquad \quad &+ c_z\|\theta\|_{L_2(\Ga(z))} \| \theta'\|_{L_2 (\Ga(z))}\nonumber \\
%3
\le& c_z(\| \theta\|_{z, w, w_\Ga} + \| \theta/w_0\|_{L_2 (\Om)} + \| \theta\|_{L_2 (\Ga(z))}) ( \|  \theta'\|_{L_2 (\Om)}\nonumber \\
%4
\qquad \quad &+ \| w_0 R^\dag (z) \theta' \|_{L_2(\Om\setminus \Ga(z))} + \| \theta'\|_{L_2 (\Ga (z))})\nonumber\\
%5
\le& c_z (1+c_{z, w,w_\Ga})\| \theta\|_{z, w, w_\Ga} (\|  \theta'\|_{L_2 (\Om)} + \| w_0 R^\dag (z) \theta'\|_{L_2(\Om\setminus\Ga(z))} + \| \theta'\|_{L_2(\Ga(z))})\end{align}
using \eqref{pxm}.
\end{proof}

Sufficient conditions for $(z, P(z))$ unambiguous are now obtained,  employing  theorem 9.2 with
\be\label{rafb} w_0 = 1.\ee

A partitioning of $\Om$ is introduced, depending on a unit vector
\be\label{saa} e(\cdot ; z) \in C(\Om\setminus\Ga^I(z) \to \bbs^m),\ee
with
\be\label{saaz} \Ga^I (z) \eqadef  \mathop{\cup}\limits_{k \neq k'} \partial\Ga_k (z) \cap \partial\Ga_{k'} (z) \ee
composed of the $(m-2)$-manifolds corresponding to ``interaction points" where $\hat\mu$ is not continuous.

Analogously with \eqref{aae}, such $e(\cdot; z)$ determines
\be\label{saf} \psi_e (y)(x) \eqadef \suml^m_{i = 1} e_i(x; z) \psi_i (y)\ee
\be\label{sag} \psi_{e, zz} (y)(x)\eqadef \, \suml^m_{i = 1} e_i(x; z) \psi_{i, zz} (y), \; \; x \in \Om\setminus \Ga(z), \; \, y \in D,\ee
and
\be\label{rak} \triangle (z; e) \eqadef\, \{ x \in \Om\setminus \Ga (z) | \psi_{e, zz} (z(x)) (x) \; \hbox{ is\ singular}\}.\ee

We assume $z, e $ such that $\triangle (z; e)$ is a finite union of disjoint $(m-1)$-manifolds (familiar as ``sonic manifolds")
\be\label{rap} \triangle (z; e) = \mathop{\cup}\limits^{K'}_{k' = 1} \triangle_{k'} (z; e),\ee
within each of which there exists a continuous unit normal $\hat \mu_{k'}$.

From \eqref{rak}, $\Ga(z) \cap \triangle (z; e)$ is empty; by convention, $\partial\Ga(z) \cap \triangle (z; e)$ and $\Ga(z) \cap \partial\triangle (z; e)$ are also empty.  But nonempty $\partial\Ga (z) \cap \partial\triangle (z; e)$ is anticipated.

Then there exist open, connected $\Om_l(z), \; l = 1, \dots, L_\cals$,  such that
\be\label{rag} \hbox{measure\ } \left( \Om\setminus ( \mathop{\cup}\limits^{L_\cals}_{l = 1} \Om_l (z)) \right) = 0 \ee
%\triangle (z,e) \} = \mathop{\cup}\limits^{L_\cals}_{l = 1} \Om_l(z)\ee
in $\bbr^m$, with boundaries $\partial\Om_l(z)$ satisfying
\be\label{rah} \partial\Om_l(z) = (\partial\Om_l (z) \cap \partial\Om) \mathop{\cup}\limits_{l' \neq l} (\partial \Om_l (z) \cap \partial\Om_{l'} (z)), \ee
\be\label{rai} \Gamma(z) \cup \triangle (z; e) \subseteq \mathop{\cup}\limits_{l \neq l'} (\partial\Om_l(z) \cap \partial\Om_{l'} (z)).\ee

An exterior unit normal $\nu_l$ is defined almost everywhere on each $\partial\Om_l$, necessarily satisfying
\be\label{raha} \nu_l (x) = \nu (x), \; \; x \in \partial\Om\ee
and
\be\label{rahb}
\nu_l(x) = - \nu_{l'} (x), \; \; x \in \partial\Om_l \cap \partial\Om_{l'}.\ee

In particular, for each $k = 1, \dots, K$ in \eqref{ada} there exist $l_k, l'_k$ such that
\be\label{raj} \Ga_k(z) = \partial\Om_{l_k} (z) \cap \partial\Om_{l'_k} (z).\ee

Similarly, for $k' = K+1, \dots, K'$, $K'$ independent of $z \in \hat \cals$, each segment $\triangle_{k'}$ satisfies
\be\label{raja} \triangle_{k'} (z;e) = \partial\Om_{l_{k'}} (z) \cap \partial\Om_{l'_{k'}} (z).\ee

\begin{thm}%9.3
Assume $z\in H^1(\Om\setminus\Ga(z))$, $w$ from \eqref{aga}, $w_\Ga$ from \eqref{agb}, $w_0$ from \eqref{pxm}, $P(z)$, existence of mappings
\be\label{rba} \land(z): H(z, P(z), w, w_\Ga) \mathop|\limits_{\Om_l} \to \bbr^{n\times n}, \; \; \, l = 1, \dots, L_\cals,\ee
\be\label{rbe} \Sigma(z): H(z, P(z), w, w_\Ga) \mathop{|}\limits_{\Ga_k} \to \bbr, \; \; \, k= 1, \dots, K,\ee
and a symmetric $n$-matrix valued function
\be\label{rbh} B_l (z) \in L_\infty (\partial\Om_l \to \bbr^{n\times n}), \; \; \, l = 1,\dots, L_\cals\ee
such that the following hold:
 \be\label{rbx}  w^{1/2}  \ge | \land (z) |;\ee
%%%%%%%%%%%%%%lost label numbers
\be\label{rbc}
\land (z) \tilde H \text{ is\ dense\ in\ }  L_2(\Om\setminus\Ga (z))^n\ee
with $\tilde H \subset H_{\Om\setminus\Ga} (z, P(z), w) $ using \eqref{ala},
\be\label{rbd}
\tilde H \eqadef \{ \theta\in  H (z, P(z), w, w_\Ga) | \theta = 0 \; \text{within \ }  \Ga (z)\};\ee
\be\label{rbf} \Sigma(z) H(z, P(z), w, w_\Ga) \; \, \text{is\ dense\ in \ } L_2 (\Ga(z));\ee
%\be\label{rbg}
\be\label{rby} w^{1/2}_\Ga \ge | \Sigma (z)|.\ee

Assume in addition that for all $\theta \in H(z, P(z), w, w_\Ga)$
\be\label{rbl} \intl_{\partial\Om_l \cap \partial\Om_{l'} } \theta\cdot (B_{l_{k'}} (z) + B_{l'_{k'}}(z)) \theta = 0, \; \, k' = 1,\dots, K',\ee
for all $l, l'$ such that $\partial\Om_{l} (z) \cap \partial\Om_{l'} (z) \not\subset   \Ga (z);$
\be\label{rbi} \theta\cdot B_l(z)\theta \ge 0, \; l = 1, \dots, L_\cals,\ee
almost everywhere within $\partial\Om_l\cap \partial\Om$, and such that the following hold:  for each $l = 1, \dots L_\cals$,
    \be\label{rbj} c\| w^{1/2} R(z) \theta\|_{L_2(\Om_l)} \|  \theta \|_{L_2 (\Om_l)}\ge \| \theta \|^2_{L_2(\Om_l)} + \intl_{\partial\Om_l} \theta\cdot B_l(z)\theta;\ee
and with some constant $c_\Ga$ (independent of $\theta$)
\be\label{rbk}\|\theta\|^2_{L_2 (\Ga(z))} \le c_\Ga\suml^K_{k = 1}\, \intl_{\Ga_k(z)} \theta\cdot (B_{l_k} (z) + B_{l'_{k}}(z))\theta + c\intl_{\Ga (z)}  (S(z) \theta)(\Sigma(z)\theta).\ee

Then $(z, P(z))$ is unambiguous, with $\| \cdot\|_{\partial\Om, P(z)}$ obtained from
\eqref{pxn}, \eqref{rafb}.
\end{thm}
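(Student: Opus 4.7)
\medskip
\noindent\emph{Plan of proof.} The plan is to deduce stability from theorem 9.2 and admissibility from the equivalent characterization (i) of Section 2.2, so that $(z,P(z))$ is unambiguous by definition 2.5 with $\|\cdot\|_{\partial\Omega,P(z)}$ obtained from \eqref{pxn} and \eqref{rafb}.

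\medskip
\noindent\textbf{Stability.}  First I would verify the hypothesis \eqref{pxm} of theorem 9.2 with $w_0=1$.  Summing \eqref{rbj} over $l=1,\dots,L_\cals$ yields
\begin{equation*}
c\,\|w^{1/2}R(z)\theta\|_{L_2(\Omega)}\,\|\theta\|_{L_2(\Omega)}\;\ge\;\|\theta\|^2_{L_2(\Omega)}\;+\;\sum_{l=1}^{L_\cals}\intl_{\partial\Omega_l}\theta\cdot B_l(z)\theta.
\end{equation*}
The collected boundary integrals split, using \eqref{rah}--\eqref{raja}, into (a) contributions on $\partial\Omega$, which are pointwise nonnegative by \eqref{rbi}; (b) contributions on interior $(m-1)$-manifolds $\triangle_{k'}(z;e)\setminus\Gamma(z)$, where the two adjacent normals are opposite and \eqref{rbl} forces exact cancellation between the pair $(l_{k'},l'_{k'})$; and (c) contributions on each shock $\Gamma_k(z)$, producing $\intl_{\Gamma_k}\theta\cdot(B_{l_k}(z)+B_{l'_k}(z))\theta$.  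By \eqref{rbk}, the latter control $\|\theta\|^2_{L_2(\Gamma(z))}$ modulo the cross-term $c\intl_\Gamma (S(z)\theta)(\Sigma(z)\theta)$, which via \eqref{rby} and Cauchy--Schwarz is bounded by $\|w_\Ga^{1/2}S(z)\theta\|_{L_2(\Ga)}\|\theta\|_{L_2(\Gamma(z))}\le\|\theta\|_{z,w,w_\Ga}\,\|\theta\|_{L_2(\Gamma(z))}$.  Absorbing the $\|\theta\|_{L_2(\Gamma(z))}$ factor on the right and using $\|w^{1/2}R(z)\theta\|_{L_2(\Omega\setminus\Gamma(z))}\le\|\theta\|_{z,w,w_\Ga}$ delivers \eqref{pxm} with $w_0=1$.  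Theorem 9.2, together with the hypothesis \eqref{aged} carried in via $P(z)$, then yields stability and the form \eqref{pxn} of $\|\cdot\|_{\partial\Omega,P(z)}$.

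\medskip
\noindent\textbf{Admissibility.}  I would verify statement (i) of Section 2.2: that any $(\dot{\underline z},\underline\sigma)$ in the class \eqref{ahe} satisfying \eqref{ak} for all $\theta\in H(z,P(z),w,w_\Ga)$ is trivial.  Testing with $\theta\in\tilde H$ defined in \eqref{rbd}, we have $\theta\equiv 0$ on $\Gamma(z)$, hence its tangential derivatives along each $\Gamma_k(z)$ vanish, so $S(z)\theta=0$ on $\Gamma(z)$ by \eqref{ag}.  Equation \eqref{ak} reduces to $\iintl_{\Omega\setminus\Gamma(z)}\dot{\underline z}\cdot R(z)\theta=0$ for all $\theta\in\tilde H$.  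Using \eqref{rbx} the pairing $\theta\mapsto\iintl\dot{\underline z}\cdot\bigl(\Lambda(z)\theta\bigr)$ is continuous in the $L_2(\Omega\setminus\Gamma(z))^n$-topology, so by the density \eqref{rbc} of $\Lambda(z)\tilde H$ in $L_2(\Omega\setminus\Gamma(z))^n$, $\dot{\underline z}\equiv 0$.  With $\dot{\underline z}=0$, \eqref{ak} becomes $\intl_{\Gamma(z)}\underline\sigma\,S(z)\theta=0$ for all $\theta\in H(z,P(z),w,w_\Ga)$; using \eqref{rby} to ensure the $L_2(\Gamma(z))$-continuity of the pairing through $\Sigma(z)$ and the density \eqref{rbf} of $\Sigma(z)H(z,P(z),w,w_\Ga)$ in $L_2(\Gamma(z))$, one concludes $\underline\sigma\equiv 0$.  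Admissibility of $(z,P(z))$ follows from theorem 2.6.

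\medskip
\noindent\textbf{Main obstacle.}  The delicate point is the passage from the abstract density properties \eqref{rbc}, \eqref{rbf} of $\Lambda(z)$ and $\Sigma(z)$ to the genuine density of admissible test fields for the pairings against $\dot{\underline z}$ and $\underline\sigma$ in the class \eqref{ahe}: one must ensure that the operators $\Lambda(z)$, $\Sigma(z)$ are compatible with the weighted $L_2$-dualities induced by $w,w_\Ga$ (this is precisely what \eqref{rbx}, \eqref{rby} encode).  On the stability side, the key structural fact is that the boundary operator $B_l(z)$ attached to each subdomain $\Omega_l(z)$ must reconcile with $\psi_{\nu_l,zz}(z)$ so that the transmission identity \eqref{rbl} holds across each sonic manifold $\triangle_{k'}(z;e)$; verifying this in any concrete problem class (the task of the subsequent sections) is where the detailed structure of the system \eqref{aa} enters.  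Once both properties are in hand, combining theorems 9.2 and 2.6 with definition 2.5 completes the proof.
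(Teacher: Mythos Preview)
Your stability argument matches the paper's: sum \eqref{rbj} over $l$, use \eqref{rbi} on $\partial\Omega$ and \eqref{rbl} on the interior sonic interfaces, feed the surviving $\Gamma$-terms into \eqref{rbk}, bound the cross-term via \eqref{rby} and Cauchy--Schwarz, and apply theorem~9.2 with $w_0=1$.

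The admissibility argument, however, has a genuine gap. After restricting to $\theta\in\tilde H$ you correctly obtain $\iint_{\Omega\setminus\Gamma}\dot{\underline z}\cdot R(z)\theta=0$, but then you invoke the density of $\Lambda(z)\tilde H$ in $L_2(\Omega\setminus\Gamma)^n$ to conclude $\dot{\underline z}=0$. These are two different pairings: what vanishes is $\iint\dot{\underline z}\cdot R(z)\theta$, whereas the density hypothesis \eqref{rbc} concerns the range of $\Lambda(z)$, not of $R(z)$. The bound \eqref{rbx} only says $|\Lambda(z)\theta|\le w^{1/2}|\theta|$ pointwise; it does not relate $\Lambda(z)\theta$ to $R(z)\theta$, so it cannot transfer density from one range to the other. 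The same non sequitur occurs in the $\underline\sigma$ step: you have $\int_\Gamma\underline\sigma\,S(z)\theta=0$, but \eqref{rbf} gives density of $\Sigma(z)H$, not of $S(z)H$, and \eqref{rby} does not bridge the two.

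The paper takes a different route: rather than testing \eqref{ak}, it runs a Lax--Milgram argument on the asymmetric bilinear form $Y(\theta,\phi)=\iint w^{1/2}R(z)\theta\cdot\phi+\tfrac{1}{c_\Gamma}\int w_\Gamma^{1/2}S(z)\theta\,\phi$, showing that for every $(f,g)\in L_2(\Omega\setminus\Gamma)^n\times L_2(\Gamma)$ there exists $\theta'$ with $w^{1/2}R(z)\theta'=f$ and $w_\Gamma^{1/2}S(z)\theta'=g$. The point where $\Lambda$ and $\Sigma$ actually meet $R$ and $S$ is the coercivity estimate $Y(\theta,M_Y\theta)\ge\|\theta\|_{L_2(\Omega)}^2+\tfrac{1}{c_\Gamma}\|\theta\|_{L_2(\Gamma)}^2$ with $M_Y\theta=(\Lambda(z)\theta,\Sigma(z)\theta)$; this is what \eqref{rbj}, \eqref{rbk} (together with \eqref{rbi}, \eqref{rbl}) are really encoding---a sign on the \emph{pairing} $\iint R(z)\theta\cdot\Lambda(z)\theta$, not merely a bound on $\|R(z)\theta\|\,\|\theta\|$ (compare the derivation \eqref{sfg}--\eqref{sfh} in theorem~10.4). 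Surjectivity of $M_Y$ onto the auxiliary Hilbert space (from \eqref{rbc}, \eqref{rbf}) then lets the Lax--Milgram mechanism produce $\theta'$. Surjectivity of $\theta\mapsto(w^{1/2}R(z)\theta,w_\Gamma^{1/2}S(z)\theta)$ immediately kills any $(\dot{\underline z},\underline\sigma)$ satisfying \eqref{ak}, giving admissibility. Your outline is missing this coercivity link; without it the density hypotheses on $\Lambda$ and $\Sigma$ are disconnected from the operators $R$ and $S$ that actually appear in \eqref{ak}.
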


\begin{rem*}

The matrix $B_l(z)$ does not have to be positive semidefinite within $\partial\Om_l(z)\cap \partial\Om_{l'} (z)$.

An entropy condition on the discontinuities is implicit  in \eqref{rbk},  \eqref{rbf}.

The required $P(z)$ is determined  implicitly from \eqref{rbi}.

 Theorem 9.3 can be proved in this generality  for some everywhere hyperbolic systems and with $m = 2$.  There are two reasons for this.  One is the need of a suitable basis for $H(z, P(z), w, w_\Ga)$ to obtain suitable expressions for $R(z)\theta$.  (For hyperbolic systems with $m = 2$, the characteristic eigenvectors suffice.)  The second problem is that $S(z)\theta$ is scalar-valued, whereas $S(z)$ in \eqref{ag} includes $m-1$ independent $\alpha$-derivatives, generally precluding  \eqref{rbk} for $m > 2$.
\end{rem*}

 \begin{proof}
 Summing \eqref{rbj} with respect to $l,$ using \eqref{rag}, \eqref{rah}, \eqref{rai}, \eqref{raj}, \eqref{rbi}, \eqref{rbl}, \eqref{rbx}, we obtain
 \begin{align}\label{rca} \|\theta\|^2_{L_2(\Om)} &+ \suml^K_{k = 1} \, \intl_{\Ga_k} \theta \cdot (B_{l_k} + B_{l'_k} )\theta\nonumber\\
\le &c \suml^{L_\cals}_{l = 1}\| w^{1/2} R(z)\theta\|_{L_2(\Om_l)} \| w^{-1/2} \land (z) \theta\|_{L_2(\Om_l)}  \nonumber \\
\le & c\| w^{1/2} R(z) \theta\|_{L_2 (\Om)} \| \theta \|_{L_2(\Om)\setminus \Ga(z))}\nonumber \\
\le & c  \| w^{1/2} R(z) \theta\|^2_{L_2 \Om\setminus \Ga(z)} \| + \thalf \| \theta \|^2_{L_2(\Om\setminus\Ga(z))}\end{align}

 Combining \eqref{rca}, \eqref{rbk}, \eqref{rby},
 \begin{align}\label{rcb} \tfrac 12\| \theta\|^2_{L_2(\Om\setminus\Ga(z))} &+ \frac{1}{c_\Ga} \|  \theta\|^2_{L_2(\Ga)}
 \le c \|w^{1/2}R(z) \theta \|^2_{L_2(\Om\setminus\Ga(z))} + c \intl_\Ga (S (z)  \theta)(\Sigma(z)\theta)\nonumber \\
\le & c\| w^{1/2} R(z)\theta\|^2_{L_2(\Om)\setminus\Ga(z))} + c \| w^{1/2}_\Ga S(z)\theta\|_{L_2(\Ga)} \|\theta\|_{L_2(\Ga)}\nonumber \\
 \le & c\| w^{1/2} R(z)\theta\|^2_{L_2(\Om)\setminus\Ga(z))} + c \| w^{1/2}_\Ga S(z)\theta\|^2_{L_2(\Ga)}+ \frac {1}{2c_\Ga}  \|\theta\|^2_{L_2(\Ga)}\nonumber \\
 \le & c\| \theta\|^2_{z,w,w_\Ga} + \frac{1}{2c_\Ga} \| \theta \|^2_{L_2(\Ga)}.\end{align}

 Stability of $(z, P(z))$ follows from \eqref{rcb}, \eqref{pxm}.

 Admissibility and thus $(z, P(z))$ unambiguous now follows from existence of a solution of \eqref{rad}, \eqref{rae}, \eqref{raf}.   Here we employ a variant of the familiar Lax-Milgram theorem.

 For $\theta \in H (z, P(z), w, w_\Ga)$, we denote a mapping on $H(z, P(z), w, w_\Ga)$
 \be\label{rdc} M_Y (\theta) \eqadef\begin{cases} \land(z)\theta  &\hbox{within\ } \Om\setminus\Ga (z)\\
 \Sigma (z) \theta &\hbox{within\ } \Ga(z),\end{cases}\ee
 which is $n$-vector valued within $\Om\setminus \Ga(z)$ and scalar valued within $\Ga(z)$.  Denote a Hilbert space
 \be\label{rcc} H_\phi \eqadef \{ M_Y\theta, \; \theta \in H(z, P(z), w, w_\Ga)\}\ee
 equipped with inner product
 \be\label{rcca} ((\phi, \phi'))\, \eqadef\,\iintl_{\Om\setminus\Ga(z)} \phi\cdot \phi' + \frac{1}{c_\Ga} \intl_{\Ga(z)} \phi\phi',\ee
 the constant $c_\Ga$ from \eqref{rbk}.

 We denote a bilinear form on $H(z, P(z), w, w_\Ga) \times H_\phi$
 \be\label{rcd} Y(\theta, \phi)  \eqadef\, \iintl_{\Om\setminus\Ga (z)} w^{1/2}  R(z)\theta\cdot \phi + \frac{1}{c_\Ga} \intl_{\Ga(z)} ( w^{1/2}_\Ga  S(z)\theta) \phi.\ee

 By inspection, using \eqref{ah}, \eqref{rcca}
 \be\label{rce} |Y(\theta,\phi)| \le c \|\theta\|_{z, w, w_\Ga} \| \phi\|_{H_\phi},\ee
 so for any $\theta \in H(z, P(z), w, w_\Ga)$ there exists a unique $\gamma_\theta \in H_\phi$ such that
 \be\label{rcf} Y(\theta, \phi) = ((\gamma_\theta, \phi))\ee
 for all $\phi \in H_\phi$.

 For $f, g$ as in \eqref{rad}, \eqref{rae}, we denote a bounded linear functional on $H_\phi$
 \be\label{rcg} \tilde Y_{f, g} (\phi) \, \eqadef \iintl_{\Om\setminus \Ga(z)} f \cdot \phi + \frac {1}{c_\Ga} \intl_{\Ga(z)} g \phi,\ee
 and there exists a unique $\xi_{f, g} \in H_\phi$  such that for all $\phi \in H_\phi$,
 \be\label{rch} \tilde Y_{f, g} (\phi) = ((\xi_{f,g}, \phi)).\ee

  We claim that for any such $f, g$, there exists $\theta' \in H(z, P(z), w, w_\Ga)$ such that $\gamma_{\theta'}$ coincides with $\xi_{f,g}$,
  \be\label{rda} (( \gamma_{\theta'} - \xi_{f, g}, \phi)) = 0\ee
  for all $\phi \in H_\phi$.

  Should \eqref{rda} fail, there exists nonzero $f, g$ such that for all
  $\theta \in H(z, P(z), w,w_\Ga)$,
  \begin{align}\label{rdb} 0 &= ((\gamma_\theta, \xi_{f, g} ))\nonumber \\
  &= Y(\theta, \xi_{f, g)})\end{align}
  using \eqref{rcf}.

  From \eqref{rcd}, using \eqref{rdc}, \eqref{rbj}, \eqref{rbk}, \eqref{rbi}, \eqref{rbl},
  \be\label{rdd} Y(\theta, M_Y \theta) \ge \| \theta \|^2_{L_2(\Om)} + \frac{1}{c_\Ga} \| \theta\|^2_{L_2(\Ga(z))},\ee
  so as $M_Y$ is onto $H_\phi$ by definition, $\theta'' \in H (z, P(z),w,w_\Ga)$ satisfying
  \be\label{rde} M_Y \theta'' = \xi_{f,g},\ee
  which contradicts \eqref{rdb} and establishes \eqref{rda}.

  From \eqref{rda}, \eqref{rcca}, using \eqref{rcd}, \eqref{rcf} with $\theta = \theta'$ and \eqref{rcg}, \eqref{rch}, for all $\phi \in H_\phi$
  \be\label{rdf} \iintl_{\Om\setminus \Ga(z)} (w^{1\setminus 2} R(z) \theta' - f)\cdot \phi + \frac{1}{c_\Ga} \int (w^{1\setminus 2}_\Ga S(z) \theta' - g)\phi = 0.\ee

  Choosing
  \be\label{rdg} \phi \in M_Y \tilde H,\ee
  using \eqref{rbc}, \eqref{rbd}, \eqref{rdc}, \ the statement \eqref{rad} (with $\theta = \theta'$) follows from \eqref{rdf}. Then \eqref{rae} follows from \eqref{rdf} using \eqref{rbf}.
 \end{proof}

\subsection{Boundary data regularity} %9.2
Use of theorem  9.3 is made  at the expense of required increased regularity in the prescribed data $\dot b$.

For $z\in \hcals, \, z \in H^1 (\Om \setminus\Ga(z)), \, P(z)$ as in theorems 9.2, 9.3 with \eqref{rafb}, we shall construct an alternative test space $H' (z, P(z))$ satisfying
\be\label{rha} H'(z, P(z)) \hbox{ \ dense \ in the space \ } \{ \theta \in (C^1(\Om)\cap C (\ol{\Om}))^n | P(z)\theta\mathop{\mid}\limits_{\pOm} = 0 \}\ee
with respect to the norm $W^{1,\infty} (\Om)$.

The space $H'(z, P(z))$ has structure
\be\label{rfa} H'(z, P(z)) = \mathop{\oplus}\limits^\infty_{\eta=1} \; \hat H_\eta,\ee
with each
\be\label{rhb} \hat H_\eta = \mathop{\oplus}\limits^\eta_{\eta'=1} \, \hat H'_{\eta'}.\ee

The spaces $\hat H'_\eta$ satisfy an orthogonality condition
\be\label{rhc} (\theta'_{\eta'}, \theta''_{\eta''})_H = 0, \, \, \theta'_{\eta'} \in \hat H'_{\eta'}, \theta ''_{\eta''} \in \hat H'_{\eta''}, \; \; \eta' \neq \eta''\ee
using \eqref{aha} with
\be \label{rhd} w = w_\Ga = 1.\ee

In sections 10, 11, we shall construct the spaces $\hat H'_\eta$, depending on $z, P(z)$, such that for all $\theta_\eta \in \hat H'_\eta$ and each $\eta$,
\be\label{pxmr} \| \theta_\eta \|_{L_2(\Om\setminus \Ga(z))} + \| \theta_\eta\|_{L_2(\Ga(z))} \le c_{z, w_\eta, w_{\Ga\eta}} \| \theta_\eta\|_{z, w_\eta, w_{\Ga \eta}}\ee
using the norm \eqref{ah} (with $w_\eta, w_{\Ga \eta}$ replacing $w, w_\Ga$) with some constants $c_{z, w_\eta, w_{\Ga\eta}}$ and suitable functions
\begin{align}\label{rhe} & w_\eta: \Om\setminus\Ga(z) \to [1, \| w_\eta\|_{L_\infty}] \nonumber \\
& w_{\Ga\eta}: \Ga (z) \to   [1, \| w_{\Ga \eta}\|_{L_\infty}].\end{align}

In \eqref{pxmr}, \eqref{rhe}, we anticipate
\be\label{rhf} c_{z, w_\eta, w_{\Ga\eta}}, \| w_\eta \|_{L_\infty}, \, \, \| w_{\Ga \eta}\|_{L_\infty} \overset{\eta \to\infty}{\longrightarrow} \infty,\ee
indeed with
\be\label{rfg} w_\eta (\cdot) \, \overset{\eta \to\infty}{\longrightarrow} \infty\ee
almost everywhere in $\Om\setminus\Ga(z)$, and
\be\label{rfh} w_{\Ga \eta}(\cdot) \overset{\eta \to\infty}{\longrightarrow} \infty\ee
almost everywhere in $\Ga(z)$.

For $w = 1$ or $w_{\eta}, w_\Ga = 1$ or $w_{\Ga\eta}$, denote
\be\label{rja} \hat H'_\eta (z, P(z), w, w_\Ga) \, \eqadef\; \hbox{ completion of \ } H'_\eta\ee
with respect to the norm $\| \cdot\|_{z, w, w_\Ga}$ in \eqref{ah}.

From \eqref{rhe}, notwithstanding \eqref{rhf},
\be\label{rjb} \hat H'_\eta (z, P(z), w_\eta, w_{\Ga\eta}) \, =   \hat H'_\eta (z, P(z), 1, 1).\ee

In \eqref{ahd}, \eqref{ahb}, \eqref{ae}, we assume $\dot b$ of the form
\be\label{rfna} \dot b = \suml^\infty_{\eta= 1} \dot b_\eta,\ee
\be\label{rjc} \intl_{\pOm} (\dot b - \dot b_\eta) \cdot \theta_\eta = 0, \; \; \theta_\eta \in \hat H'_\eta,\ee
\be\label{rjca} \intl_{\pOm} \dot b_\eta  \cdot \theta_{\eta'} = 0, \; \; \theta_{\eta'} \in \hat H'_{\eta'}, \;\eta' \neq \eta.\ee

We denote constants
\be\label{rjd} c'_\eta \, \eqadef\, \hbox{maximum \ } (\| w_\eta\|^{1/2}_{L_\infty}, \; \| w_{\Ga\eta} \|^{1/2}_{L_\infty}).\ee

Then for any $\dot b_\eta \in B_{P(z)}$, using \eqref{ahb}, then \eqref{pxn}, \eqref{pxo}, \eqref{rafb}, \eqref{rja}, then \eqref{rjb}, \eqref{rjd},
\begin{align}\label{rjda} | \intl_{\pOm} \dot b_\eta \cdot \theta_\eta | &\le \| \dot b_\eta \|_{\calb, P(z)} \| \theta_\eta\|_{\pOm, P(z)}\nonumber \\
&\le c_z (1+c_{z, w_\eta, w_{\Ga\eta}}) \| \dot b_\eta\|_{\calb, P(z)} \| \theta_\eta\|_{z, w_\eta, w_{\Ga\eta}}\nonumber \\
&\le c_z c'_\eta (1+c_{z, w_\eta, w_{\Ga \eta}}) \| \dot b_\eta \|_{\calb, P(z)} \| \theta_\eta\|_{z, 1, 1}\end{align}
for all $\theta_\eta \in \hat H'_\eta (z, P(z), 1, 1)$.

Using \eqref{rjda}, \eqref{rhd}, by application of the Riesz theorem, for given $\dot b_\eta$ there exists a unique
\be\label{rjdb}\zeta'_\eta \in \hat H'_\eta (z, P, 1, 1), \; \; \|  \zeta'_\eta\|_{z, 1, 1} \le c_zc'_\eta (1+c_{z, w_\eta, w_{\Ga eta}}) \| \dot b _\eta\|_{\calb, P(z)}\ee
satisfying
\begin{align}\label{rjdc} \intl_{\pOm} \dot b_\eta \cdot \theta &= (\zeta'_\eta, \theta)_H\nonumber \\
&= \iintl_{\Om\setminus\Ga(z)} \dot z'_\eta \cdot R(z)\theta + \intl_{\Ga(z)} \sigma'_\eta S(z)\theta\end{align}
for all $\theta \in \hat H'_\eta (z, P(z), 1,1)$, with
\be\label{rjdd} \dot z'_\eta \, \eqadef \, R(z)\zeta'_\eta, \; \; \sigma'_\eta = S(z)\zeta'_\eta.\ee

Using \eqref{rhc}, \eqref{rhd}, \eqref{rfa}, \eqref{rhb}, it follows that \eqref{rjdc}, \eqref{rjdd} holds for all $\theta \in H'(z, P(z))$, thus for all $\theta \in H(z, P(z), 1, 1)$ using \eqref{rha}.

Thus using \eqref{rfna} and identifying
\be\label{rka} \dot z = \suml^\infty_{\eta = 1} \dot z'_\eta, \; \; \, \sigma = \suml^\infty_{\eta = 1} \sigma'_\eta,\ee
we satisfy \eqref{ae} formally for all $\theta \in H(z, P(z), 1, 1)$.

We thus seek conditions on $\dot b $ such that the sums in \eqref{rfna}, \eqref{rka} are defined.

From \eqref{rfa}, \eqref{rhb}
\be\label{rje}  \theta = \suml^\infty_{\eta = 1} \theta_\eta, \; \; \theta \in H'(z,  P(z)), \; \; \theta_\eta \in \hat H'_\eta, \; \; \eta = 1,2, \dots ,\ee
and from \eqref{rhc}, \eqref{rhd}, $\| \theta\|^2_{z, 1, 1} = \suml^\infty_{\eta = 1} \| \theta_\eta \|^2_{z, 1, 1}$, so
\be\label{rjf} \| \theta\|_{z, 1, 1} \le \suml^\infty_{\eta = 1} 2^{(1+\eta)/2} \| \theta_\eta\|_{z, 1,1}.\ee

From \eqref{rjda}, for $\theta_\eta \in \hat H'_\eta$ and $\| \theta_\eta\|_{\pOm, P(z)}$ from \eqref{pxn}, \eqref{rafb}
\be\label{rjg} \frac{\| \theta_\eta\|_{\pOm, P(z)}}{c'_\eta (1+c_{z, w_\eta, w_{\Ga\eta}})} \le c_z \| \theta_\eta \|_{z,1,1},\ee
and for all $\theta \in H' (z, P(z))$, using \eqref{rjf},
\begin{align}\label{rjh} \| \theta\|_{\pOm, P(z), H} \, &\eqadef \, \suml^\infty_{\eta = 1}
\frac{
\| \theta_\eta\|_{\pOm, P(z)}
}
{
2^{(1+\eta)/2} c'_\eta (1+c_{z, w_\eta, w_{\Ga \eta}})
}\nonumber \\
&\le c_z \|\theta\|_{z, 1,1}.\end{align}

Using \eqref{rha}, the estimate \eqref{rjh} extends immediately to $\theta \in H(z, P(z),1,1)$ and establishes $(z, P(z))$ stable, identifying
\be\label{rji} c_1 (z, P(z), 1,1) = c_z\ee
in \eqref{agd}.

In \eqref{ahb}, we use \eqref{rjh}, \eqref{rje}, \eqref{rfna}, \eqref{rjc}, \eqref{rjca} to obtain
\begin{align}\label{rjk}
\| \dot b\|_{B, P(z) } & = \underset{\theta \in H'(z, P(z))}{\lub}\; \, \frac{\intl_{\pOm} \dot b \cdot \theta}{\| \theta \|_{\pOm, P(z), H}} \nonumber\\
&  = \underset{\theta \in H'(z, P(z))} {\lub} \;
\frac{
\suml^\infty_{\eta=1}  \; \intl_{\pOm}\dot b_\eta \cdot \theta_\eta}
{\suml^\infty_{\eta=1}\, \| \theta_\eta\|_{\pOm,P(z)} / (2^{(1+\eta)/2}
c'_\eta (1+c_{z, w_\eta, w_{\Ga\eta}}))}.\end{align}

The need of small $\dot b_\eta$ for large $\eta$ is obvious in \eqref{rjk}, using \eqref{rhf}, \eqref{rjd}.

Thus we will determine sufficient  conditions below such that \eqref{rha}, \eqref{rhe}, \eqref{pxmr} hold.  Using theorem 9.3, the results are summarized as follows.

%%%%%%%%%%%%%%%%%%%%%%%%%%%%%%%%%%%%%%%%%%%%

\begin{thm}%9.4
Assume spaces $\hat H_\eta$ satisfying \eqref{rha}, \eqref{rhb}, \eqref{rhc}, \eqref{rhd}, $\land_\eta, \Sigma_\eta, B_{l\eta}, w_\eta, w_{\Ga\eta}$ of forms \eqref{rba}, \eqref{rbe}, \eqref{rbh},\eqref{rhe}, \eqref{aga}, \eqref{agb}, respectively, satisfying
\be\label{rfv} c \| w^{1/2}_\eta R(z)\theta_\eta\|_{L_2(\Om_l)}\|\theta_\eta\|_{L_2(\Om_l)} \ge \| \theta_\eta\|^2_{L_2(\Om_l)} + \intl_{\partial\Om_l} \theta_\eta, B_{l \eta} \theta_\eta,\ee
for each $\Om_l$;
\begin{align}\label{rfva} \| \theta_\eta\|_{L_2(\Ga(z))} \le c_\Ga &\suml^K_{k=1} \, \intl_{\Ga_k(z)} \theta_\eta \cdot (B_{l_k\eta} + B_{l'_k\eta}) \theta_{\eta }\nonumber \\
+& c \intl_{\Ga (z)} (S(z) \theta_\eta) (\Sigma_\eta(z)\theta_\eta);\end{align}
\be\label{rfvb} \intl_{\partial \Om_l (z) \cap \partial\Om_{l'} (z)} \theta_\eta \cdot (B_{l\eta} + B_{l'\eta} ) \theta_\eta = 0,\ee
for all $l, l'$ such that $\partial\Om_l(z)\cap \partial\Om_{l'} (z) \not\subset \Ga (z)$;

\be\label{rfw} \theta_\eta \cdot B_{l\eta} (z) \theta_\eta \ge 0,\ee
almost everywhere in $\partial\Om$;
\be\label{rfx} w^{1/2}_\eta \ge |\land_\eta(z)| ,\ee
\be\label{rfxa} w^{1/2}_{\Ga\eta} \ge | \Sigma_\eta(z)|.\ee

Then with the qualification that $B_{P(z)}$ is constrained to satisfy \eqref{rjk}, $(z, P(z))$  is stable.
\end{thm}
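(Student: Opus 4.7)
The plan is to obtain stability of $(z,P(z))$ by applying Theorem 9.3 separately on each orthogonal subspace $\hat H'_\eta$, then summing the per-$\eta$ estimates through the direct-sum structure \eqref{rfa}, \eqref{rhb} and orthogonality \eqref{rhc}. Concretely, I would first observe that for each fixed $\eta$, the hypotheses \eqref{rfv}, \eqref{rfva}, \eqref{rfvb}, \eqref{rfw}, \eqref{rfx}, \eqref{rfxa} are exactly the per-$\eta$ versions of the hypotheses \eqref{rbj}, \eqref{rbk}, \eqref{rbl}, \eqref{rbi}, \eqref{rbx}, \eqref{rby} of Theorem 9.3 with $(\land,\Sigma,B_l,w,w_\Ga)$ replaced by $(\land_\eta,\Sigma_\eta,B_{l\eta},w_\eta,w_{\Ga\eta})$. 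Provided the density conditions \eqref{rbc}, \eqref{rbf} hold within $\hat H'_\eta$ (which may be arranged by construction of the $\hat H'_\eta$ in sections 10--11), the argument \eqref{rca}, \eqref{rcb} from the proof of Theorem 9.3 applies within each $\hat H'_\eta$ and delivers \eqref{pxmr}, i.e.\ the per-$\eta$ estimate
\begin{equation*}
\|\theta_\eta\|_{L_2(\Om\setminus\Ga(z))} + \|\theta_\eta\|_{L_2(\Ga(z))} \le c_{z,w_\eta,w_{\Ga\eta}}\|\theta_\eta\|_{z,w_\eta,w_{\Ga\eta}}.
\end{equation*}

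Next I would apply Theorem 9.2 per $\eta$ with $w_0=1$: the estimate \eqref{pxo} combined with \eqref{pxmr} and the pointwise bound $\|\theta_\eta\|_{z,w_\eta,w_{\Ga\eta}}\le c'_\eta\|\theta_\eta\|_{z,1,1}$ coming from \eqref{rjd}, \eqref{rhe} yields, for each $\eta$,
\begin{equation*}
\|\theta_\eta\|_{\partial\Om,P(z)} \le c_z\,c'_\eta(1+c_{z,w_\eta,w_{\Ga\eta}})\|\theta_\eta\|_{z,1,1},
\end{equation*}
which is \eqref{rjg}. This is where the weights $c'_\eta$ and $c_{z,w_\eta,w_{\Ga\eta}}$ enter — they are generally unbounded in $\eta$ by \eqref{rhf}, so the per-$\eta$ estimates alone cannot be summed uniformly.

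The decisive step is to decompose an arbitrary $\theta\in H'(z,P(z))$ via \eqref{rje}, use the orthogonality \eqref{rhc} with $w=w_\Ga=1$ to obtain the Pythagorean identity $\|\theta\|_{z,1,1}^2=\sum_\eta\|\theta_\eta\|_{z,1,1}^2$ and hence \eqref{rjf}, then \emph{define} the boundary seminorm $\|\theta\|_{\partial\Om,P(z),H}$ by the weighted sum \eqref{rjh}. The per-$\eta$ estimates combine with \eqref{rjf} to give
\begin{equation*}
\|\theta\|_{\partial\Om,P(z),H} \le c_z\|\theta\|_{z,1,1}
\end{equation*}
for $\theta\in H'(z,P(z))$, and this extends by density \eqref{rha} to all $\theta\in H(z,P(z),1,1)$, verifying \eqref{agd} with constant $c_1(z,P(z),1,1)=c_z$. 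The dual norm \eqref{ahb} then takes the form \eqref{rjk}, where $\dot b$ is split as in \eqref{rfna}, \eqref{rjc}, \eqref{rjca} and $B_{P(z)}$ is identified with the class of $\dot b$ for which the weighted sum on the right of \eqref{rjk} is finite.

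The main obstacle is bookkeeping: the constants $c'_\eta$ and $c_{z,w_\eta,w_{\Ga\eta}}$ blow up with $\eta$, so stability can only be asserted over the restricted class $B_{P(z)}$ defined through \eqref{rjk}, and one must verify that the orthogonal decomposition survives the extension from $H'(z,P(z))$ to its completion so that the dual characterisation of $\|\dot b\|_{B,P(z)}$ via \eqref{ahb} actually coincides with the weighted sum in \eqref{rjk}. Once this identification is secured, stability of $(z,P(z))$ is immediate from the estimate for $\|\theta\|_{\partial\Om,P(z),H}$ derived above.
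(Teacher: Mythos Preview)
Your proposal is correct and follows essentially the same route as the paper: the paper's ``proof'' of Theorem 9.4 is precisely the argument laid out in subsection 9.2 preceding the theorem statement, namely applying the estimates \eqref{rca}, \eqref{rcb} from Theorem 9.3 per $\eta$ to obtain \eqref{pxmr}, then invoking Theorem 9.2 per $\eta$ (with $w_0=1$) to get \eqref{rjg}, and finally summing via the orthogonality \eqref{rhc} and the weighted boundary norm \eqref{rjh} to recover \eqref{agd} with $c_1=c_z$ and dual norm \eqref{rjk}. One small over-statement: the density conditions \eqref{rbc}, \eqref{rbf} are not needed for the stability claim of Theorem 9.4 itself --- they enter only in the admissibility argument, which the paper separates out as Theorem 9.5.
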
%12.1

Additionally, from the proof of theorem 9.4, we have the following.
\begin{thm}\label{9.5} Assume the conditions of theorem 9.4.  Then $(z, 0_N)$ is admissible.
\end{thm}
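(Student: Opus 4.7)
The plan is to apply, within each orthogonal subspace $\hat H_\eta$ of the decomposition \eqref{rfa}--\eqref{rhd}, the Riesz--Lax--Milgram construction carried out in the proof of Theorem 9.3, exploiting the observation that the hypotheses \eqref{rfv}--\eqref{rfxa} of Theorem 9.4 are precisely the hypotheses \eqref{rbj}--\eqref{rby} of Theorem 9.3 with $(w,w_\Ga,\land,\Sigma,B_l)$ replaced by $(w_\eta,w_{\Ga\eta},\land_\eta,\Sigma_\eta,B_{l\eta})$. Admissibility of $(z,0_N)$ will then be established via statement (iii) of Theorem 2.5: exhibiting, for arbitrary $\theta\in H(z,0_N,1,1)$, an element of $H_\Ga(z,0_N,1)$ with the same image under $S(z)$.

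First I would reproduce, within each $\hat H_\eta$, the computation \eqref{rcc}--\eqref{rdf}. The coercivity estimate
\begin{equation*} Y_\eta(\theta_\eta,M_{Y\eta}\theta_\eta)\ge\|\theta_\eta\|_{L_2(\Om)}^2+\tfrac{1}{c_\Ga}\|\theta_\eta\|_{L_2(\Ga(z))}^2, \end{equation*}
combined with two applications of the Riesz theorem, yields for any $f_\eta\in L_2(\Om\setminus\Ga(z))^n$ and $g_\eta\in L_2(\Ga(z))$ a $\theta'_\eta\in\hat H_\eta$ satisfying $w_\eta^{1/2}R(z)\theta'_\eta=f_\eta$ and $w_{\Ga\eta}^{1/2}S(z)\theta'_\eta=g_\eta$. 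Specializing to $f_\eta=0$ forces $\theta'_\eta\in H_\Ga(z,P(z),w_{\Ga\eta})$; since the constraint $P(z)\theta'_\eta|_{\pOm}=0$ is strictly stronger than the vacuous constraint imposed by $0_N$, it follows that $\theta'_\eta\in H_\Ga(z,0_N,w_{\Ga\eta})=H_\Ga(z,0_N,1)$ via \eqref{rjb}, and $S(z)\theta'_\eta$ can realize any prescribed value in $w_{\Ga\eta}^{-1/2}L_2(\Ga(z))$.

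Next, given $\theta\in H(z,0_N,1,1)$, I would decompose $g\eqadef S(z)\theta\in L_2(\Ga(z))$ along the images $S(z)\hat H_\eta$ as $g=\sum_\eta w_{\Ga\eta}^{-1/2}g_\eta$, form $\theta'_\eta$ from the previous step with $f_\eta=0$ and these $g_\eta$, and set $\theta^\Ga=\sum_\eta\theta'_\eta$. The orthogonality \eqref{rhc} in the unweighted $H$-inner product and the per-subspace equivalence \eqref{rjb} yield $\|\theta^\Ga\|_{z,1,1}^2=\sum_\eta\|\theta'_\eta\|_{z,1,1}^2$, controlled in turn by $\sum_\eta\|g_\eta\|_{L_2}^2\sim\|g\|_{L_2(\Ga(z))}^2$. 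Hence $\theta^\Ga\in H_\Ga(z,0_N,1)$ with $S(z)\theta^\Ga=g=S(z)\theta$, which is statement (iii) of Theorem 2.5 for $(z,0_N)$ and thus gives admissibility.

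The main obstacle will be in the $\eta\to\infty$ summation, since by \eqref{rhf} the per-subspace constants $c_{z,w_\eta,w_{\Ga\eta}}$ and the weight bounds $\|w_\eta\|_{L_\infty},\|w_{\Ga\eta}\|_{L_\infty}$ diverge. Control comes from the fact that \eqref{rhc} is taken in the unweighted inner product \eqref{aha} and that the natural $L_2(\Ga(z))$ norm of $g$ decomposes as a square-summable series over the $S(z)\hat H_\eta$; no additional summability beyond what already underwrites \eqref{rjk} in the proof of Theorem 9.4 is required. A small side check, that the absence of nontrivial $\theta$ satisfying \eqref{agc} (inherited from \eqref{rast}) rules out the pathological kernel that would otherwise obstruct the Riesz representations, closes the argument.
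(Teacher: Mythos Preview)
Your approach diverges from the paper's and contains a genuine gap at the per-subspace step. The paper's proof is two lines: it defines the aggregate operators $\land(z)\theta \eqadef \sum_\eta \land_\eta(z)\theta_\eta$ and $\Sigma(z)\theta \eqadef \sum_\eta \Sigma_\eta(z)\theta_\eta$ on all of $H'(z,P(z))$, and then observes that the density conditions \eqref{rbc}, \eqref{rbf} can fail only through the boundary constraint \eqref{raf}, which is vacuous when $P=0_N$. The point is that density is claimed for the \emph{aggregate} space, not per subspace, and the admissibility portion of the Theorem~9.3 argument is then invoked once, at the aggregate level.

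Your per-subspace assertion---that the Lax--Milgram construction inside a single $\hat H'_\eta$ produces $\theta'_\eta$ with $w_\eta^{1/2}R(z)\theta'_\eta=f_\eta$ and $w_{\Ga\eta}^{1/2}S(z)\theta'_\eta=g_\eta$ \emph{pointwise} for arbitrary $f_\eta\in L_2(\Om\setminus\Ga)^n$, $g_\eta\in L_2(\Ga)$---is not justified. The Lax--Milgram step delivers only the weak identity \eqref{rdf} tested against $\phi\in M_{Y\eta}\hat H'_\eta$; passing to pointwise equations requires the per-subspace analogs of \eqref{rbc}, \eqref{rbf}. But in the concrete construction of Theorem~10.4 the elements of $\hat H'_\eta$ have coefficients $a_j(x'_1,\dots)\in E_\eta$, a \emph{fixed finite-dimensional} space in the transverse variables $x'_2,\dots,x'_m$ (see \eqref{seb}). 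Consequently $\land_\eta\tilde H_\eta$ and $\Sigma_\eta\hat H'_\eta$ are not dense in $L_2$ for any single $\eta$, so you cannot conclude $R(z)\theta'_\eta=0$ from $f_\eta=0$, and the subsequent claim $\theta'_\eta\in H_\Ga$ collapses. With that gone, the orthogonality of the $S(z)\theta'_\eta$ in $L_2(\Ga)$ (which relied on $R(z)\theta'_\eta=0$ in \eqref{rhc}) is lost, and your summation argument unravels. Density only emerges after aggregating over all $\eta$ (because the $E_\eta$ become dense as $\eta\to\infty$), which is precisely why the paper forms $\land,\Sigma$ as sums before invoking \eqref{rbc}, \eqref{rbf}.
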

\begin{proof} Denote
\be\label{rfy} \land (z) \theta \, \eqadef \suml^\infty_{\eta = 1} \land_\eta(z) \theta_\eta,\ee
\be\label{rfya}  \Sigma (z) \theta \, \eqadef  \suml^\infty_{\eta = 1} \Sigma_\eta(z) \theta_\eta.\ee

Then \eqref{rbc}, \eqref{rbf} could fail only because of boundary conditions \eqref{raf}  on $\theta, \theta_\eta$ within $\partial\Om$.
\end{proof}

\section{Distinguished directions and an entropy condition}%10

For each $z \in \hat\cals$, introducing a distinguished direction $e(\cdot; z)$ in \eqref{saa} and spaces $ H'(z_1 P(z)), \hat H_\eta, \hat H'_\eta$
in \eqref{rha}, \eqref{rfa}, \eqref{rhb}, \eqref{rhc}, the problematic  assumptions \eqref{rbl}, \eqref{rbi}, \eqref{rbj}, \eqref{rbk} of theorem 9.3 are replaced by problematic assumptions \eqref{rfv},  \eqref{rfva}, \eqref{rfvb},  \eqref{rfw} in theorems 9.4, 9.5.  We next characterize some circumstances, not necessarily uniquely, for which the latter  conditions hold.

Such incorporates additional conditions on $e(\cdot; z)$, with implied restrictions on the location of the inhomogeneous boundary data in \eqref{raa}.

Using $e(\cdot; z)$ so determined, a sufficient example of the anticipated entropy condition on $\Ga(z)$ is formulated.

With $e(\cdot; z)$ of regularity \eqref{saa}, \eqref{saaz}, sonic manifolds $\triangle (z; e)$ are generally anticipated, satisfying \eqref{rak}, \eqref{rap}.  A compatibility condition  on such $\triangle (z; e)$ is introduced, depending on $e(\cdot; z)$ and on the adopted entropy condition for $\Ga(z)$.

The results are summarized in theorem 10.4 below.

In \eqref{saa} and throughout, $e(\cdot;z)$ depends on $z \in \hat \cals$, and does not need to be unique or uniformly continuous.  However, we do assume a continuous extension to $\partial\Om$ almost everywhere.

Any such $e(\cdot;z)$ necessarily also  partitions the boundary $\partial\Om$ into upstream, lateral, and downstream segments,
\be\label{sab} \partial\Om = \partial\Om^U \cup \partial\Om^L\cup \partial\Om^D,\ee
possibly excluding a set of measure zero within $\partial\Om$, such that
\be\label{sac} e(x;z) \cdot \nu(x) < 0 (\hbox{resp.} = 0, > 0),\ee
respectively, for
\be\label{saca} x \in \partial\Om^U (\hbox{resp.}\;  \partial\Om^L, \partial\Om^D).\ee

Analogously for each $\Om_l, \; l = 1, \dots, L_\cals $ satisfying \eqref{rag},
\be\label{sad} \partial \Om_l = \partial\Om^U_l \cup \partial \Om^L_l \cup \partial\Om^D_l,\ee
\be\label{sae} e (x;z) \cdot \nu_l (x) < 0 ( \hbox{resp.}\! = 0, > 0)\ee
respectively for
\be\label{saea}  x \in \partial\Om^U_l (\hbox{resp.\ } \partial\Om^L_l, \partial\Om^D_l).\ee

Within each $\Ga_k(z), \; k = 1, \cdot, K$, without loss of generality the coordinates $\a_l, l = 1, \dots, m - 1$, from \eqref{adc}, are determined  with unit vectors $\hat
\a_l$ satisfying
\be\label{sbb} \hat\a_l(x) \cdot e (x; z) = 0, \; l = 2, \dots, m - 1,\ee
implicitly distinguishing $\hat\a_1$.

The coordinate $\a_1$ is assumed such that  a seemingly mild condition holds almost everywhere on $\Ga (z)$,
\be\label{sbba} \Big | \left( \frac{[\psi_{\a_1, z}]}{|[z]|} \right)_{\a_1} \Big | \le c\ee
with a constant depending on $z\in \hcals$.

Several additional conditions on $e(\cdot; z)$ are also required.

First and foremost, as implied by \eqref{rak},
\be\label{sbk} \psi_{e,zz} (x)\;  \hbox{nonsingular,\ } x \in \Om\setminus (\Ga(z)\cup \triangle(z;e)).\ee

Additionally, the trajectories $x(t)$ determined from
\be\label{sba} x_t(t) = e (x(t); z), \; t > 0, \; x(0) \in\partial\Om^U\ee
reach $\partial\Om^D$ in uniformly bounded time and cover $\Om$, possibly excepting a set of measure zero.

Analogously, trajectories $\a(t)$ within $\Ga(z)$, determined from
\be\label{sbc} \a_t (x(\a(t))= \hat\a_1 (x(\a(t)), \; t > 0\ee
\be\label{sbd} x(\a(0)) \in \Pi(z) \subset \partial\Om\ee
with $x(\a(\cdot))$ continuous at the common endpoints
 $ \Ga^I(z) $ in \eqref{saaz}  cover $\Ga(z)$, for some
\be\label{sbe} \Pi(z) \subseteq \partial\Ga (z) \cap \partial\Om.\ee

Additionally, using \eqref{sad}, \eqref{sae}, it is assumed that the conditions \eqref{rai}, \eqref{raj}, \eqref{rak} hold with each
\be\label{sbf} \partial\Om_l \cap \partial\Om_{l'} =  \partial\Om^D_{l} \cap \partial\Om^U_{l'}\ee
or else
\be\label{sbfa} \partial\Om_l \cap \partial\Om_{l'} = \partial\Om^L_l \cap \partial\Om^L_{l'},\ee
where $z$ is continuous and $\psi_{e, zz} (z) $ nonsingular.

In cases where \eqref{sbf} holds, necessarily either for some $k$
\be\label{sbg}\partial\Om^D_{l_k} \cap \partial\Om^U_{l'_k} = \Ga_k, \; \, l' < l\ee
\be\label{sbh} e(\cdot; z) \cdot \hat \mu_k \ge c > 0, \; \, \hat \mu_k = \nu_{l} = -\nu_{l'},\ee
or else for some $k'$,
\be\label{sbi} \partial\Om^D_{l_{k'}} \cap \partial\Om^U_{{l'}_{k'}} = \triangle_{k'}, \; \; l' < l,
\ee
\be\label{sbj} e(\cdot ; z) = \hat\mu_{k'}, \; \, \hat\mu_{k'} = \nu_l = - \nu_{l'}.\ee

\begin{defn}%10.1
An element $z\in\hat\cals$ is equipped with a distinguished direction $e(\cdot; z)$ satisfying\eqref{saa} if the conditions \eqref{sbk}, \eqref{sba}, \eqref{sbf}, \eqref{sbfa}, \eqref{sbg}, \eqref{sbh}, \eqref{sbi}, \eqref{sbj} hold.
\end{defn}
Restricting $\hat \cals$ as necessary, without loss of generality we assume that the topological structure of $\Ga(z), \triangle(z;e)$, whether or not \eqref{sbe} holds with equality, and the linear ordering of $\{ \Om_l\}$ such that\eqref{sbg} holds will be independent of $z \in \hat \cals$.

For $z$ so equipped, the required entropy condition on $\Ga(z)$ is formulated assuming existence of a suitable uniformly bounded positive definite symmetric $n$-matrix function $M(\cdot; z)$ within $\Om\setminus\Ga (z)$.  It is anticipated, as in the examples following, that
\be\label{sca}M(x;z) = \psi_{0,zz} (z(x)), \; \, x \in \Om\setminus \Ga (z)\ee
with some $\psi_0$ uniformly convex within $D$ associated with the given system \eqref{aa}.

Then from \eqref{sag}, pointwise within $\Om\setminus \Ga (z)$, we have $\lambda_j (x) \in \bbr, v_j(x) \in \bbr^n$ satisfying
\be\label{scb} \psi_{e,zz} (z(x)) v_j (x) = \lambda_j (x)M(x; z) v_j(x)\ee
\be\label{scc} v_{j'(x)} \cdot M(x; z) v_j(x) = \delta_{jj'}, \; \, j, j' = 1, \dots, n\ee
employing the Kroneker delta.  On each $\partial\Om_l$, one-sided  limiting values from within $\Om_l$ are denoted $\lambda_{jl}, v_{jl}, M_l(\cdot ; z)$ below.

\begin{defn}%10.2
A discontinuity segment $\Ga_k$  satisfies the entropy condition if \eqref{sbg} holds and  if there exists $M(\cdot ; z)$ and $  j_k \in \{ 1, \dots, n\}$ independent of $x \in\Ga_k$ such that either
\be\label{sce}\lambda_{j_{k}l'} < 0 < \lambda_{j_kl},\ee
and \eqref{sbh} holds, or else there exists $J_k\subset\{ 1, \dots, n\}$,  such that
\be\label{scf} \lambda_{jl} = \lambda_{j l'} = 0, \; \, j \in J_k,\ee
and
\be\label{scfa} e(\cdot ; z) = \hat\mu_k, \; \; j_k \in J_k.\ee

In each case, it is required that
\be\label{scg} | \lambda_{jl}|, \, | \lambda_{jl'} | \ge c > 0, \; \sgn \lambda_{jl} = \sgn \lambda_{jl'}, \; j \neq j_k \; \hbox{or\ } j \notin J_k,\ee
respectively, and
\be\label{sch} \Big| \frac{[\psi_{e,z}]}{|[z]|} M_{l'} (\cdot;z) v_{j_{k}l'} \Big| \ge c > 0,\ee
with $j_k$ such that \eqref{sce} holds or for some $j_k \in J_k$ where \eqref{scf}, \eqref{scfa} hold.

\end{defn}
We note that the conditions \eqref{scg}, \eqref{sch} are generally true for discontinuities of modest strength.  The conditions \eqref{sce}, \eqref{scf} are familiar for hyperbolic systems, corresponding to nondegenerate or linearly degenerate characteristic fields, and are compatible with the entropy inequality \eqref{faf}.

A complementary condition is expressed for segments $\triangle_{k'}$.  From \eqref{rak}, \eqref{rap}, \eqref{sbk}, \eqref{scb}, for each $\triangle_{k'}$ necessarily there exists $j_{k'} \in \{ 1, \dots, n\}$ independent of $x \in \triangle_{k'}$ and of $z \in \hcals$ such that
\be\label{sci}\lambda_{j_{k'}} (x) = 0, \; \; x \in \triangle_{k'} (z; e),\ee
tacitly assuming $e(\cdot; z)$ such that $j_{k'}$ is single-valued.

\begin{defn} %10.3
A ``sonic manifold" $\triangle_{k'}$ satisfying \eqref{sbi}, \eqref{sci} satisfies the compatibility condition if
\be\label{scj} e(\cdot; z) \cdot \triangledown \lambda_{j_{k'}} \ge c > 0\ee
almost everywhere on $\triangle_{k'}$; if either
\be\label{sck} \partial\Om^U_{l_{k'}} \subset \partial\Om\ee
or else
\be\label{scl} \partial\Om^U_{l_{k'}} = \Ga_k,\ee
where \eqref{sce} holds (with $l'$ replaced by $l_{k'}$ and $l$ by some $l''$) and with

\be\label{scz} j_k = j_{k'}\ee
in \eqref{sce}; and if either
\be\label{scm} \partial\Om^D_{l'_{k'}} \subset \partial\Om\ee
or else
\be\label{scn} \partial\Om^D_{l'_{k'}} = \Ga_k\ee
where \eqref{sce}, \eqref{scz} (with $l$ replaced by $l_{k'}$ and $l'$ by some $l''$) hold.
\end{defn}

Using these definitions, we establish the following.
\begin{thm}%10.4
Assume $z \in \hat\cals$ equipped with a distinguished direction $e(\cdot; z)$ satisfying \eqref{saa}, each $\Ga_k(z)$ satisfying the entropy condition and each $\triangle_{k'} (z;e)$ satisfying the compatibility condition.

Assume $P(z)$ such that
\be\label{sdi} \frac{[\psi_{\a_1,z}]^\dag}{|[z]|} \in \range\,  P(z)  \ee
 almost everywhere within $\Pi(z)$ as appearing in \eqref{sbd}, \eqref{sbe}, and that \eqref{sbba} holds.

 Within each $\Om_l(z;e)$, assume $z, M(\cdot ; z)$ sufficiently smooth that \eqref{scb}, \eqref{scc} hold with
 \be\label{sda}  \lambda_j \in W^{1,\infty} (\Om_l), \; \, v_j \in W^{1,\infty} (\Om_l)^n, \; \; \, j = 1,\dots, n.\ee
 Then there exist $\hat H_\eta, \land_\eta(z), \Sigma_\eta(z), B_{l\eta} (z), w_\eta, w_{\Ga\eta}$
 satisfying \eqref{rha}, \eqref{rfa}, \eqref{rhb}, \eqref{rhc}, \eqref{rhe}, \eqref{rfx}, \eqref{rfxa} such that \eqref{rfv}, \eqref{rfva}, \eqref{rfvb}, \eqref{rfw}  hold.
 \end{thm}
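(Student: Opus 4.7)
The plan is to build $\hat H'_\eta$, $\land_\eta$, $\Sigma_\eta$, $B_{l\eta}$, $w_\eta$, $w_{\Ga\eta}$ from a single ingredient: a weighted energy estimate in the distinguished direction $e(\cdot;z)$, localized at resolution $\sim 1/\eta$ away from the singular locus $\Sigma_\star = \Ga^I(z) \cup \partial\triangle(z;e) \cup \partial(\pOm^U \cup \pOm^L \cup \pOm^D)$. Within each $\Om_l(z;e)$ I use the $M$-orthonormal basis $\{v_j\}$ from \eqref{scb}, \eqref{scc} to decompose $\theta = \sum_j \beta_j v_j$ with $\beta_j = v_j^\dag M \theta$; the multiplier $M\theta$ applied to $R(z)\theta$, symmetrized and combined with an exponential weight $e^{\gamma \tau}$ where $\tau$ is a flow coordinate for $e$ (existing by \eqref{sba}), yields after integration by parts a volume term $\tfrac{\gamma}{2}\int_{\Om_l} e^{\gamma\tau}\theta\cdot\psi_{e,zz}\theta$ plus a boundary term $\tfrac12 \int_{\partial\Om_l} e^{\gamma\tau}\theta\cdot\psi_{\nu_l,zz}\theta$. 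Reading off eigenmode-by-eigenmode and choosing $\gamma$ large (absorbed into $c$ using \eqref{sda} and \eqref{scg}), the volume term coerces $\|\theta\|^2_{L_2(\Om_l)}$, and the boundary form defines $\theta\cdot B_{l\eta}\theta$ of the schematic form $\tfrac12 \sum_j \mathrm{sgn}(\lambda_{jl})(e\cdot\nu_l)\beta_j^2$ (times a slowly varying weight), producing \eqref{rfv} with $\land_\eta$ of order $w_\eta^{1/2}$ acting eigenmode-wise.

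For \eqref{rfvb}, \eqref{rfw}, \eqref{rfva}: on $\pOm^U_l \cap \pOm$ one has $e\cdot\nu_l<0$, so the signs in $B_{l\eta}$ conspire to give $\theta\cdot B_{l\eta}\theta \ge 0$, which is \eqref{rfw}. At an interior interface $\partial\Om_l \cap \partial\Om_{l'}$ not on $\Ga(z)$, either the interface is lateral (\eqref{sbfa}) with $e\cdot\nu_l=0$, where both forms vanish, or it is a sonic manifold $\triangle_{k'}$ (\eqref{sbi}--\eqref{sbj}) with $z$ continuous across, so $v_{jl}=v_{jl'}$, $\lambda_{jl}=\lambda_{jl'}$ for $j\neq j_{k'}$, and the signs $\mathrm{sgn}(\lambda_{jl})(e\cdot\nu_l)$ from opposite sides cancel, while the $j_{k'}$ mode is absent since $\lambda_{j_{k'}}=0$ on $\triangle_{k'}$, yielding \eqref{rfvb}. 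On each $\Ga_k$, the entropy condition \eqref{sce}--\eqref{sch} and \eqref{sbh} together with the Rankine--Hugoniot relations \eqref{afb} make $\theta\cdot(B_{l_k\eta}+B_{l'_k\eta})\theta$ positive definite in the incoming mode $v_{j_kl'}$; the remaining $n-1$ modes are related to the tangential jump $[\psi_{\alpha_1,z}]$ via \eqref{sbba}, so defining $\Sigma_\eta\theta = [\psi_{\alpha_1,z}]^\dag M\theta/|[z]|$ produces $\int_\Ga (S(z)\theta)(\Sigma_\eta\theta)$ and gives \eqref{rfva}; the boundary condition \eqref{sdi} on $P(z)$ ensures compatibility at the endpoints in $\Pi(z)$ from \eqref{sbd}.

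For the decomposition and density \eqref{rha}--\eqref{rhc} with weight bounds \eqref{rhe}, \eqref{rfx}, \eqref{rfxa}, take a nested exhaustion $\{O_\eta\}$ of $\Om\setminus\Sigma_\star$ with $\dist(\bar O_\eta,\Sigma_\star)\ge 1/\eta$. Let $\hat H_\eta$ be the $(\cdot,\cdot)_H$-closure (with $w=w_\Ga=1$) of $\{\theta \in (C^1(\Om)\cap C(\bar\Om))^n : P(z)\theta\midl_{\pOm}=0,\ \supp\theta\subset\bar O_\eta\}$, and set $\hat H'_\eta$ to be the $(\cdot,\cdot)_H$-orthogonal complement of $\hat H_{\eta-1}$ in $\hat H_\eta$ (with $\hat H_0 = \{0\}$). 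Then \eqref{rfa}, \eqref{rhb}, \eqref{rhc} hold tautologically, and \eqref{rha} follows from density of $\bigcup O_\eta$ in $\Om$. Choose $w_\eta, w_{\Ga\eta}$ equal to $1$ on $O_\eta$ and growing as $\dist(\cdot,\Sigma_\star)^{-2\kappa_\eta}$ outside, with $\kappa_\eta$ tuned so the energy estimate on $O_\eta$ passes through with bounded (but $\eta$-dependent) constants; set $\land_\eta = w_\eta^{1/2}$ on each eigenmode and $\Sigma_\eta$ as above (appropriately cut off), yielding \eqref{rfx}, \eqref{rfxa} automatically.

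The principal technical obstacle is verifying \eqref{rfv} uniformly on modes near the sonic manifolds $\triangle_{k'}$, where $\psi_{e,zz}$ degenerates and the slow mode $\beta_{j_{k'}}$ is only weakly controlled. The compatibility condition \eqref{scj} makes $\lambda_{j_{k'}}$ cross zero transversally at a definite rate, so locally near $\triangle_{k'}$ the slow mode is governed by a one-sided ODE in the $e$-direction, well-posed provided the incoming boundary data is prescribed consistently---which is precisely the content of \eqref{sck}--\eqref{scn} together with \eqref{scz}. Making this rigorous requires $w_\eta$ to blow up near $\partial\triangle(z;e)$ with a power marginally dominating this degeneracy, at the price $c_{z,w_\eta,w_{\Ga\eta}}\to\infty$ as in \eqref{rhf}. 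A secondary difficulty is that \eqref{rfvb} demands $B_{l\eta}+B_{l'\eta}$ \emph{vanish} (not merely be bounded) across $\triangle_{k'}$, the most delicate algebraic check: the sign and weight conventions in $B_{l\eta}$ must be simultaneously compatible with the shock sides of $\Ga_k$ via \eqref{sbh}, the sonic sides via \eqref{sbj}, and the projection $P(z)$ via \eqref{sdi}.
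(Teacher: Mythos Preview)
Your proposal has a genuine gap in the construction of $\hat H_\eta$ and the closure of the energy estimate.

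The paper's $\hat H_\eta$ are built from \emph{finite-dimensional Galerkin spaces $E_\eta$ in the transverse variables} $x'_2,\dots,x'_m$ (after rotating so that $x'_1$ is aligned with $e$). The point of this choice is the inverse inequality \eqref{sec}, which lets one replace $R(z)\theta_\eta$ by its purely $e$-directional part $R_e(z)\theta_\eta = \psi_{e,zz}\theta_{\eta,x'_1}/(\nabla_x x'_1\cdot e)$ at the price of a transverse error bounded by $c_\eta\|\theta_\eta\|_{L_2(\Om_{l\beta})}$ (see \eqref{sef}). Only after this reduction does the problem become a coupled ODE system in $x'_1$ for the coefficients $a_j$, and the mode-by-mode multiplier $\land_\eta\theta_\eta = -\sum_j \rho_{j\eta} a_j v_j$ with carefully designed weights $\rho_{j\eta}\hat\lambda_j \ge 0$ satisfying $(\rho_{j\eta}\hat\lambda_j)_{x'_1}$ large (cf.\ \eqref{sff}, \eqref{sga}) closes the estimate. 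Your $\hat H_\eta$ are instead defined by support cutoffs near the singular set; this gives no inverse control of transverse derivatives, so the cross terms in $R(z)\theta$ coming from $\psi_{i,zz}\theta_{x_i}$ for $i$ transverse to $e$ (where the $\{v_j\}$ are \emph{not} eigenvectors) cannot be absorbed, and the mode-by-mode argument does not close.

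A second, related issue: a single exponential multiplier $e^{\gamma\tau}M\theta$ produces a volume term proportional to $\gamma\int e^{\gamma\tau}\theta\cdot\psi_{e,zz}\theta = \gamma\int e^{\gamma\tau}\sum_j \lambda_j\beta_j^2$. Since $\psi_{e,zz}$ is nonsingular but generally \emph{indefinite} (nothing in the hypotheses forces all $\lambda_j$ to have one sign), this is not coercive for any $\gamma$; the paper avoids this by making each $\rho_{j\eta}\hat\lambda_j$ nonnegative regardless of $\mathrm{sgn}\,\lambda_j$. Your boundary form $\sum_j\mathrm{sgn}(\lambda_{jl})(e\cdot\nu_l)\beta_j^2$ already reflects that you need sign-dependent weights, but then the transverse coupling problem above becomes essential. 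Finally, near $\triangle_{k'}$ the paper does not use weight blow-up; it simply \emph{removes} the degenerate mode by setting $a_{j_{k'}}=0$ on a strip $|x'_1-\beta_{k'}|<\vare_\eta$ (cf.\ \eqref{seu}) and recovers density only as $\eta\to\infty$ with $\vare_\eta\downarrow 0$. Your weight-blow-up alternative is plausible in spirit but would need the inverse inequality just as badly.
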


\begin{proof}  Using \eqref{saa}, \eqref{sba}, pointwise throughout $\Om $ we rotate the coordinates $x_i, \, i = 1,\dots, m$,  obtaining $x'_i, \, i = 1,\dots, m $ satisfying the following conditions:
\be\label{sdb}(\triangledown_x  x'_1)^\dag = (\triangledown_x x'_1 e (\cdot ; x)) e(\cdot, z) \ee
with
\be\label{sdba} 0 < c_e \le \triangledown_x x'_1 e(\cdot ; z) \le c'_e\ee
with some constants $c_e, c'_e$ depending on $z$, throughout $\Om\setminus \Ga (z)$;
on each segment $\Ga_k$ where \eqref{sce}, \eqref{sbh} hold, there exists a continuous function $\beta_{\Ga k}$ such that
\be\label{sdc} x'_1 (x) = \beta_{\Ga k} (x'_2(x),\dots, x'_{m} (x));\ee
on each segment $\Ga_k$ where \eqref{scf}, \eqref{scfa} hold, there exists a constant $\beta_k$ such that
\be\label{sdca} x'_1 (x) = \beta_k;\ee
on each $\triangle_{k'}$, where \eqref{sbj} holds, there exists a constant $\beta_{k'}$ such that
\be\label{sdd} x'_1(x) = \beta_{k'};\ee
and the obtained Jacobian determinant is uniformly positive and bounded below,
\be\label{sde} 0 < c \le \frac{\partial(x_1,\dots, x_{m})}{\partial(x'_1,\dots, x'_{m})}.\ee

The coordinates $x'_1, \dots, x'_{m}$ will be employed throughout within each $\Om_l$.

Within each $\partial\Om_l$, excluding those segments $\partial\Om_l\cap\partial\Om^L$, we further restrict the coordinates $\a_1,\dots, \a_{m -1}$, satisfying \eqref{adc}, \eqref{sbb} within each $\Ga_k$, with unit vectors $\hat\a_i$ satisfying
\be\label{sdea} \hat\a_i \cdot \nu_l = 0, \; \; i = 1, \dots, m -1.\ee

By convention, using \eqref{sdb}, \eqref{sac}, \eqref{sbh}, \eqref{scfa}, \eqref{sbj}, the Jacobian determinant
\be\label{sdeb} \omega \eqadef \frac{\partial(x'_2, \dots, x'_{m})}
{ \partial(\a_1,\dots, \a_{m-1}) }\ee
is positive and bounded.  Indeed, within $\Ga_k$ where \eqref{sce}, \eqref{sbh} hold, we shall have
\be\label{sdec} 0 < c \le \om (x) \le c \ee
and within $\Ga_k$ where \eqref{scf}, \eqref{scfa} hold and within $\triangle_{k'}$ where \eqref{sbj} holds, without loss of generality we set
\be\label{sded} \om(x) = 1.\ee

Using \eqref{sdb}, within each $\Om_l$ we denote slices
\be\label{sdg} \Om_{l\beta} \eqadef \{ x' \in \Om_l | x'_1 = \beta \}.\ee

Using \eqref{sbb}, within each $\Ga_k$ we denote slices
\be\label{sdh} \Ga_{k\beta} \eqadef \{ x'\in \Ga_k | \a_1 (x') = \beta \}. \ee

Next we determine the spaces $\hat H_\eta$.  We choose a nested sequence $\{ E_\eta\}, \eta = 1, \dots$,  of finite-dimensional spaces in the variables $x'_2, \dots, x'_{m}$, becoming dense in smooth functions as $\eta \to \infty$.

Elements $\theta_\eta \in \hat H_\eta$ introduced in \eqref{rfa}  are expanded using \eqref{scb}, \eqref{scc} throughout $\Om\setminus \Ga (z)$
\be\label{sea} \theta_\eta (x') = \suml^n_{j=1} a_j(x') v_j(x')\ee
with the scalar coefficients $a_j(x')$ satisfying
\be\label{seb} a_j(x'_1,\dots) \in E_\eta.\ee

In \eqref{sea}, \eqref{seb} and throughout, subscript $\eta$ is dropped from $a_j$ as no ambiguity arises.

The spaces $E_\eta$ satisfy an inverse assumption, are required such that \eqref{rhb}, \eqref{rhc} hold, and such that using \eqref{seb}, \eqref{sdg},
\be\label{sec} \| \theta_{\eta, x'_i} \|_{L_2(\Om_{l\beta})} \le c_\eta \| \theta_\eta\|_{L_2 (\Om_{l\beta})}, \; \;  i = 2, \dots, m,\ee
with generic constants $c_\eta$ satisfying
\be\label{sed} c_\eta \xrightarrow[]{\eta\to\infty} \infty.\ee

Then using \eqref{sdb}, denoting
\begin{align}\label{see} R_e(z)\theta_\eta &\eqadef \psi_{e, zz}( e(\cdot; z) \cdot \triangledown_x \theta_\eta)\nonumber \\
&= \psi_{e, zz} \theta_{\eta, x'_1}/ ( \td_x x'_1  e(\cdot; z)),
\end{align}
from \eqref{af}, \eqref{sec},
\be\label{sef} \| (R(z) - R_e(z)) \theta_\eta\|_{L_2(\Om_{l\beta})} \le c_\eta \| \theta_\eta\|_{L_2(\Om_{l\beta})}.\ee

Within each segment $\Ga_k $ satisfying \eqref{sbg}, \eqref{sbh}, we use limiting values from \eqref{scb}, \eqref{scc}, and analogously with \eqref{sea}, for $x' \in\Ga_k$, $\a = \a(x')$,
\begin{align}\label{seg} \theta_\eta (x') &= \suml^n_{j=1} a_{jl_k} (\a) v_{jl_{k}} (\a)\nonumber \\
&= \suml^n_{j=1} a_{jl'_k} (\a) v_{jl'_{k}} (\a).\end{align}

Now from \eqref{sbb}, \eqref{sdb}, \eqref{seb}
\be\label{seh} a_{jl_k} (\a_1,\dots), a_{jl'_k} (\a_1, \dots) \in E_\eta, \; \; \, j = 1,\dots, n,\ee
and using \eqref{sdh},
\be\label{sei} \| \theta_{\eta,\a_i} \|_{L_2(\Ga_{k\beta})} \le c_\eta \|\theta_\eta\|_{L_2(\Ga_{k\beta})}, \; \, i = 2, \dots, m -1.\ee

Then denoting
\be\label{sej} S_e(z)\theta_\eta \eqadef [\psi_{\a_1, z}]  \theta_{\eta, \a_1},\ee
from \eqref{ag}, \eqref{sei}, \eqref{sej}
\be\label{sek} \| \frac{1}{|[z]|} (S(z) - S_e(z)) \theta_\eta\|_{L_2 (\Ga_{k\beta})} \le c_\eta \| \theta_\eta\|_{L_2(\Ga_{k\beta})}.\ee

An additional restriction is placed on the elements $\theta_\eta $ within neighborhoods of $\Ga_k$ where \eqref{scf} holds and neighborhoods of $\triangle_{k'}$, where \eqref{sci} holds.

We choose a sequence
\be\label{sel} \vare_\eta \downarrow 0 \; \hbox{\ as \ } \,  \eta \to \infty\ee
and construct $\hat\lambda_j (x'), \; \; j = 1, \dots, n$, depending on $\eta$, such that
\be\label{sem} | \hat\lambda_j (x') | \ge \delta_\eta,\ee
for a sequence
\be\label{sen} \delta_\eta \downarrow 0\; \hbox{\ as \ }\,  \eta\to \infty.\ee

Then the coefficients $a_j$ in \eqref{sea}  are further restricted so that either
\be\label{seo} a_j (x')= 0 \; \hbox{ or\ }  \hat\lambda_j (x')=\lambda_j (x'),\ee
with \eqref{sem}, \eqref{seo} holding for all $j = 1, \dots, n$ and throughout $\Om$, including limiting values on $\Ga_k$ where \eqref{sce} holds.

Within a neighborhood of each $\Ga_k$ where \eqref{sbg}, \eqref{scf}, \eqref{sdca} hold, we satisfy \eqref{seo} with values
\be\label{sep} \hat\lambda_j (x') = \lambda_j(x'), \; \; \, j \notin J_k,\ee
and for $j \in J_k$,
\be\label{seq} \hat\lambda_j (x') = \lambda_{j}(x'), \; \; |x'_1 - \beta_k| \ge \vare_\eta,\ee

\be\label{ser} a_{j} (x') = 0, \;\,   |x'_1 - \beta_k| < \vare_\eta,\ee
choosing  positive $\delta_\eta$ sufficiently small that \eqref{sem} holds, using \eqref{sbk}.

Analogously, within a neighborhood of each $\triangle_{k'}$ where \eqref{sbi}, \eqref{sdd}, \eqref{sci}  hold, we assign values
\be\label{ses}\hat\lambda_j (x') = \lambda_j (x'), \; \; \, j \neq j_{k'}, \ee
\be\label{set} \hat\lambda_{j_{k'}} (x') = \lambda_{j_{k'}} (x'), \; \; |x'_1 - \beta_{k'}| \ge \vare_\eta,\ee
\be\label{seu} a_{j_{k'}} (x') = 0, \; |x'_1 - \beta_{k'}| < \vare_\eta,\ee
reducing $\delta_\eta$ as necessary  to maintain \eqref{sem}.

We now address the conditions \eqref{rfv}, \eqref{rfx}.  From \eqref{see}, $R_e(z)$ is an ordinary differential operator on $\hat H_\eta$, for which familiar ``energy estimates" apply.  Within arbitrary $\Om_l$, using \eqref{sea} we denote
\be\label{sfa} (\land_\eta \theta_\eta) (x') \, \eqadef - \suml^n_{j = 1} \rho_{j \eta} (x') a_j(x') v_j(x'),\ee
introducing bounded scalar functions $\rho_{j\eta}$ to be determined.

Then within each slice $\Om_{l\beta}$, from \eqref{sdg}, \eqref{see}, \eqref{sea}, \eqref{sfa}, using \eqref{scb}, \eqref{scc}, then \eqref{seo},
\begin{align}\label{sfb}
(\td_x x'_1 e(\cdot; z))\land_\eta\theta_\eta&\cdot R_e (z) \theta_\eta = - \Big( \suml^n_{j=1} \rho_{j\eta} a_jv_j\Big) \cdot \psi_{e, zz} \Big( \suml^n_{j'=1}  a_{j'}v_{j'}\Big)_{x'_1}\nonumber \\
&= -  \suml^n_{j=1} \rho_{j\eta} \lambda_ja_ja_{j, x'_1} -  \suml^n_{j,j'=1} \rho_{j\eta} \lambda_ja_ja_{j'} (M(\cdot ; z) v_j \cdot v_{j',x'_1})\nonumber \\
&=-\suml^n_{j=1} \rho_{j\eta} \hat\lambda_ja_ja_{j, x'_1} - \suml^n_{j,j'=1} \rho_{j\eta} \hat \lambda_ja_ja_{j'}  (M(\cdot ; z) v_j \cdot v_{j',x'_1})\nonumber \\
&=\Big( - \thalf \suml^n_{j=1} \rho_{j\eta} \hat \lambda_j a^2_j\Big)_{x'_1} + \thalf \suml^n_{j=1} (\rho_{j\eta} \hat\lambda_j)_{x'_1} a^2_j\nonumber \\
&\qquad \qquad \qquad -\suml^n_{j, j'=1} \rho_{j\eta} \hat \lambda_j a_j a_{j'} (M(\cdot; z) v_j \cdot v_{j', x'_1}). \end{align}

Denote, for any $\beta$ such that $\Om_{l\beta}$ is nonempty,
\be\label{sfc} \gamma_\eta (\beta) = \underset{j = 1, \dots, n}{\hbox{maximum }} \| \rho_{j\eta} \hat\lambda_j\|_{L_\infty (\Om_{l\beta})}\ee
whence from \eqref{sem}, \eqref{sfa}, \eqref{sea}, \eqref{scc}
\be\label{sfca} \| \land_\eta\|_{L_{\infty}(\Om_{l\beta})} \le c \frac{\gamma_\eta(\beta)}{\delta_\eta}.\ee

For the final right-hand term in \eqref{sfb}, using \eqref{sda}, \eqref{sfc}, \eqref{sea}, there is a constant $c_v$ such that
\be\label{sfd} \mid \intl_{\Om_{l\beta}} \suml^n_{j,j' = 1} \rho_{j\eta} \hat\lambda_j a_ja_{j'} (M(\cdot ; z) v_j\cdot v_{j', x'_1}) \mid \le c_v\gamma_\eta(\beta) \| \theta_\eta\|^2_{L_2(\Om_{ l\beta})}.\ee

Next we use \eqref{sef}, \eqref{sfca}, \eqref{sdb}  to obtain
\be\label{sfe} \mid\intl_{\Om_{l\beta}} ((\td_x x'_1 e(\cdot; z))  \land_\eta\theta_\eta\cdot (R(z) - R_e(z)) \theta_\eta \, \mid \, \le c_\eta\frac{\gamma_\eta (\beta)}{\delta_\eta} \| \theta_\eta\|^2_{L_2(\Om_{l\beta})}.\ee

The $\rho_{j\eta}$ are chosen such that each $\rho_{j\eta} \hat \lambda_j$ is nonnegative and  to satisfy, within each $\Om_{l}$,
\be\label{sff} (\rho_{j\eta} \hat\lambda_j)_{x'_1} \ge (2 (c_v + \frac{c_\eta}{\delta_\eta} ) \gamma_\eta (x'_1) + 2)/c_e,\ee
using \eqref{sdb}, \eqref{sdba}  which is compatible with \eqref{sfc}, using \eqref{sem}.  Then from
\eqref{sfb}, \eqref{sfd}, \eqref{sfe}, \eqref{sff},
\be\label{sfg} \intl_{\Om_{l\beta}}(\td_x x'_1 e(\cdot; z)) \land_\eta\theta_\eta \cdot R(z) \theta_\eta \ge - \thalf \intl_{\Om_{l\beta}} \Big(\suml^n_{j=1} \rho_{j\eta} \hat\lambda_j a^2_j\Big)_{x'_1} + \| \theta_\eta \|^2_{L_2(\Om_{l\beta})}.\ee

We  integrate \eqref{sfg} with respect to $\beta$, over the values of $\beta $ such that $\Om_{l \beta}$ is nonempty  using \eqref{sba}.  Using \eqref{sdba}, \eqref{sde}, \eqref{rfx}, we have
\begin{align}\label{sfh} \int d \beta \intl_{\Om_{l\beta}} (\td_x x'_1 e(\cdot; z)) \land_\eta \theta_\eta \cdot R(z) \theta_\eta &\le \int d\beta \| \theta_\eta\|_{L_2 (\Om_{l\beta})} \| \land_\eta\|_{L_\infty (\Om_{l\beta})} \| R(z)\theta_\eta\|_{L_2(\Om_{l\beta})}\nonumber\\
&\le \int d\beta \| \theta_\eta\|_{L_2(\Om_{l\beta})} \| w^{1/2}_\eta R(z) \theta_\eta\|_{L_2 (\Om_{l\beta})}\nonumber \\
&\le c \| \theta_\eta\|_{L_2(\Om_l)} \| w_{\eta}^{1/2}  R(z) \theta_\eta \|_{L_2(\Om_l)},\end{align}
the left-hand term in \eqref{rfv}.

Again using \eqref{sde}
\be\label{sfj} \int d \beta \|\theta_\eta\|^2_{L_2(\Om_{l\beta})} \ge c \| \theta_\eta\|^2_{L_2(\Om_l)}.\ee

Using \eqref{sdb}, \eqref{sdeb}
\begin{align}\label{sfk} \int d\beta\intl_{\Om_{l\beta}} \Big(\suml^n_{j = 1} \rho_{j\eta} \hat\lambda_j a^2_j\Big)_{x'_1} &= \int dx'_1 \intl_{\Om_{lx'_1}} \Big(\suml^n_{j=1} \rho_{j\eta} \hat\lambda_j a^2_j\Big)_{x'_1} \; \; dx'_2 \dots dx'_{m}\nonumber \\
&= \intl_{\partial\Om_l} \omega (\td_x x'_1 \nu_l) \suml^n_{j=1} \rho_{j\eta l} \hat\lambda_{jl} a^2_{jl}\end{align}
using limiting values $\rho_{j\eta l},  \hat\lambda_{jl}$, and $a_{jl}$ from \eqref{seg}, \eqref{seo}, \eqref{sfa}.

Now \eqref{rfv}  follows from \eqref{sfh}, \eqref{sfj}, \eqref{sfk}, identifying
\be\label{sfl} \theta_\eta \cdot B_{l\eta} \theta_\eta \, \eqadef - \thalf \omega (\td_x x'_1 \nu_l) \suml^n_{j = 1} \rho_{j\eta l} \hat \lambda_{jl} a^2_{jl}\ee
almost everywhere on $\partial\Om_l$.

To address the conditions \eqref{rfva}, \eqref{rfxa}, we make \eqref{sfc}, \eqref{sfd} precise  specifying within each $\Om_l$
\be\label{sga} (\rho_{j\eta} \hat\lambda_j) (x') \, \eqadef \begin{cases} \k_{l\eta} \tau_\eta(x'_1) (1-\chi_l (x')), \; \; \, j = j_k, \partial \Om^U_l = \Ga_k;\\ \k_{l\eta} \tau_\eta (x'_1), \; \; \hbox{otherwise}.\end{cases}\ee

In \eqref{sga}, $j_k, \Ga_k$ are from \eqref{sbg}, \eqref{sce}, \eqref{scf}, \eqref{sch}; we introduce constants
\be\label{sgb} \k_{l\eta} \in \bbr_+, \; \; l = 1, \dots, L_\cals\ee
to be determined; the scalar functions $\tau_\eta $ will be chosen satisfying
\be\label{sgc} \tau_{\eta, x'_1} = 2(c_v+\frac{c_\eta}{\delta_\eta}) \tau_\eta + 2\ee
throughout $\Om$, with $\tau_\eta = 1$ at a suitable (unimportant) value of $x'_1$.

The scalar functions $\chi_l$, independent  of $\eta$, are determined satisfying
\be\label{sgd} \chi_{l, x'_1} \le 0 \ee
within $\Om_l$ and boundary values
\be\label{sge} \chi_l(x') = \begin{cases} 1, \; \; x' \in\partial\Om^U_l\\ 0, \; \; x' \in \partial\Om^D_l\end{cases}.\ee

The constants $\k_{l\eta}$ are chosen to satisfy
\be\label{sgf} \k_{1\eta} = 1\ee
\be\label{sgg} \k_{l\eta} = \k_{l'\eta}\ee
for $l, l'$ satisfying \eqref{sbi}, recovering \eqref{rbl}.

Then in \eqref{rfva}, for any $l,l'$ in \eqref{sbf}, using  \eqref{sfl}, \eqref{sga}, \eqref{sae}, \eqref{saea},  within $\partial\Om^D_l\cap \partial \Om^U_{l'}$
\be\label{sgh} \theta_\eta \cdot (B_{l\eta} + B_{l'\eta}) \theta_\eta = \thalf\om (\td_x x'_1\nu_l) \tau_\eta \suml^n_{j = 1} (\k_{l'\eta} a^2_{jl'} - \k_{l\eta} a^2_{jl}).\ee

 Using \eqref{sgg}, \eqref{seg}, \eqref{scc}, the right side of \eqref{sgh} vanishes for $l,l'$ satisfying  \eqref{sbi}  as required in \eqref{rfvb}.  For $l, l'$ satisfying
 \eqref{sbg}, using \eqref{sga}, \eqref{sge}
 \be\label{sgi} \theta_\eta\cdot (B_{l\eta} + B_{l'\eta}) \theta_\eta = \thalf \om (\td_x x'_1 \nu_l ) \tau_\eta \Big( \k_{l'\eta}
 \suml_{j\neq j_k} a^2_{jl'} - \k_{l\eta} \suml^n_{j=1} a^2_{jl}\Big).\ee

 From \eqref{seg}, \eqref{sch}, there is a constant $c_2$ such that pointwise within $\partial\Om^D_l \cap \partial\Om^U_{l'}$
 \be\label{sgn} |\theta_\eta|^2 \le c_2\Big( \Big( \frac{[\psi_{\a_1, z}]\theta_\eta}{|[z]|}\Big)^2  + \suml_{j \neq j_k} a^2_{jl'}\Big).\ee

 Analogous to \eqref{sfa}, in \eqref{rfva} we choose
 \be\label{sgj} \Sigma_\eta (z) \theta_\eta \, \eqadef\, - \frac{\rho^\Ga_\eta}{|[z]|^2}\; [\psi_{\a_1, z}] \theta_\eta\ee
 with scalar functions $\rho^\Ga_\eta (\a_1)$, constant within each $\Ga_{k\beta}$ to be determined.

 Then from \eqref{sej}, \eqref{sgj}, using \eqref{sbc}, \eqref{sbd}, \eqref{sbe},  after a partial integration using \eqref{sdi}, then using \eqref{sbba},
\begin{align}\label{sgk} \intl_{\Ga(z)} (\Sigma_\eta(z)\theta_\eta ) &(S_e(z)\theta_\eta) \ge\thalf \intl_{\Ga(z)} \rho^\Ga_{\eta, \a_1} \Big( \frac{[\psi_{\a_1,z}]\theta_\eta}{|[z]|}\Big)^2 \nonumber \\
&+   \intl_{\Ga(z)} \rho^\Ga_\eta \Big( \frac{[\psi_{\a_1,z}]\theta_\eta}{|[z]|}\Big)\;
\Big(\Big( \frac{[\psi_{\a_1,z}]}{|[z]|}\Big)_{\a_1} \theta_\eta\Big) \nonumber \\
\ge &\thalf \intl_{\Ga(z)} \rho^\Ga_{\eta, \a_1} \Big( \frac{[\psi_{\a_1,z}]\theta_\eta}{|[z]|}\Big)^2 - c \intl_{\Ga(z)} \rho^\Ga_\eta \Big| \frac{[\psi_{\a_1,z}]\theta_\eta}{|[z]|}\Big| |\theta_\eta|\nonumber \\
\ge &\thalf \intl_{\Ga(z)} \rho^\Ga_{\eta, \a_1} \Big( \frac{[\psi_{\a_1,z}]\theta_\eta}{|[z]|}\Big)^2 - c_3 \Big( \|(\rho^\Ga_\eta)^{1/2} \Big(\frac{[\psi_{\a_1,z}]\theta_\eta}{|[z]|}\Big)\|^2_{L_2(\Ga(z))} \nonumber \\
&  \qquad -\| (\rho^\Ga_\eta)^{1/2} \theta_\eta \|^2_{L_2(\Ga(z))}\Big),\end{align}
with a suitable constant $c_3$, independent of $\eta$.

%%%%%%%%%%%%%%%%%%%%%%%%%%
From \eqref{sek}, \eqref{sgj}, \eqref{sdh}
\begin{align}\label{sgl}
| \intl_{\Ga(z)} (\Sigma_\eta (z)\theta_\eta)&((S(z) - S_e(z)) \theta_\eta)|\nonumber \\
&\le c_4 c_\eta \| (\rho^\Ga_\eta)^{1/2} \Big( \frac{[\psi_{\a_1,z}]\theta_\eta}{|[z]|}\Big) \|_{L_2(\Ga(z))} \| (\rho^\Ga_\eta)^{1/2} \theta_\eta\|_{L_2(\Ga(z))}\nonumber \\
&\le \frac{c_4c_\eta}{2} \; \Big( \| (\rho^\Ga_\eta)^{1/2}  \frac{[\psi_{\a_1,z}]\theta_\eta}{|[z]|}\Big) \|^2_{L_2(\Ga(z))} + \|
(\rho^\Ga_\eta)^{1/2} \theta_\eta\|^2_{L_2(\Ga(z))}\Big)\end{align}
 with suitable constant $c_4$.

From \eqref{sgk}, \eqref{sgl}
\begin{align}\label{sgm} \intl_{\Ga(z)} (\Sigma_\eta (z) \theta_\eta )& (S(z)\theta_\eta) \ge \thalf \intl_{\Ga(z)} \rho^\Ga_{\eta, \a_1}
 \Big( \frac{[\psi_{\a_1,z}]\theta_\eta}{|[z]|}\Big)^2\nonumber \\
 &-(c_3 + \frac{c_4c_\eta}{2}) \Big( \| (\rho^\Ga_\eta)^{1/2} \Big(  \frac{[\psi_{\a_1,z}]\theta_\eta}{|[z]|}\Big) \|^2_{L_2(\Ga(z))} + \| (\rho^\Ga_\eta)^{1/2} \theta_\eta\|^2_{L_2(\Ga(z))}\Big).\end{align}

 In \eqref{sgm}, using
 \eqref{sgn},
 \begin{align}\label{sgo} \| (\rho^\Ga_\eta)^{1/2}\theta_\eta &\|^2_{L_2(\Ga(z))} \le  c_2 \| (\rho^\Ga_\eta)^{1/2}
  \Big(\frac{[\psi_{\a_1,z}] \theta_\eta}{|[z]|}\Big) \|^2_{L_2(\Ga(z))}\nonumber \\
&+ c_2\suml^K_{k = 1}\intl_{\Ga_k} \rho^\Ga_\eta \suml_{j\neq j_k} a^2_{jl'}.\end{align}

%%%%%%%%%%%%%%%%%%%%%%%%%%

From \eqref{seg}, there is a constant $c_5$ such that from \eqref{sgh}, using \eqref{sgn}
\begin{align}\label{sgp}
\intl_{\Ga_k} \theta_\eta&\cdot (B_{l\eta} + B_{l'\eta}) \theta_\eta \ge \thalf\intl_{\Ga_k} \om  (\td_x x'_1\nu_l)\tau_\eta (\k_{l'\eta} \suml_{j = j_k} a^2_{jl'} - \k_{l\eta} c_5 |\theta_\eta |^2)\nonumber \\
&\ge \thalf \intl_{\Ga_k} \om (\triangledown_x x'_1 \nu_l) \tau_\eta \Big( ( \k_{l'\eta} - c_2c_5 \k_{l\eta}) \suml_{j \neq j_k} a^2_{1l'} - c_2c_5 \k _{l\eta} \big( \frac{[\psi_{\a_1, z}]\theta_\eta}{|[z]|}\big)^2\Big).\end{align}

Combining \eqref{sgm}, \eqref{sgp}, using \eqref{sgo}

\begin{align}\label{sgq} &\intl_{\Ga(z)} (\Sigma_\eta (z) \theta_\eta) (S(z) \theta_\eta)
+ \suml^K_{k = 1} \intl_{\Ga_k} \theta_\eta \cdot (B_{l\eta} + B_{l'\eta}) \theta_\eta \nonumber \\
\ge & \suml^K_{k = 1} \intl_{\Ga_k} \Big( \frac{[\psi_{\a_1, z}]\theta_\eta}{|[z]|}\Big)^2 \Big( \frac{\rho^\Ga_{\eta, \alpha_1}}{2} - (c_3 + \frac{c_4 c_\eta}{2})
c_2 \rho^\Ga_\eta - \frac{c_2 c_5 \k_{l\eta} \om (\td_x x'_1 \nu_l)\tau_\eta}{2}\Big)\nonumber \\
&+ \suml^K_{k = 1} \intl_{\Ga_k} (\suml_{j\neq j_k} a^2_{jl'}) \Big( \frac{(\k_{l'\eta} - c_2 c_5 \k_{l\eta}) \om (\triangledown_x x'_1 \nu_l)\tau_\eta}{2} - (c_3 + \frac{c_4 c_\eta}{2} )c_2 \rho^\Ga_\eta\Big).
\end{align}

The conclusion \eqref{rfva} follows from \eqref{sgq}, successively choosing $\rho^\Ga_\eta$ to satisfy
\be\label{sgr} \frac{\rho^\Ga_{\eta,\a_1}}{2} = (c_3 + \frac{c_4 c_\eta}{2}) c_2 \rho
^\Ga_\eta +
\frac{c_2c_5 \k_{l\eta} \om (\td_x x'_1 \nu_l) \tau_\eta}{2} + 1\ee
and then, recursively,
\be\label{sgs}  \k_{l'\eta} \ge c_2 c_5 \k_{l\eta} + 2 \, \frac{(c_3 + \frac{c_4 c_\eta}{2}) c_2\rho^\Ga_\eta}{\om (\td_x x'_1 \nu_l) \tau_\eta} +1
\ee
using the ordering \eqref{sbg}, the starting value \eqref{sgf}, and positivity of the denominator in \eqref{sgs} from \eqref{sbh}, \eqref{sdec},  \eqref{sded}, \eqref{sgc}, and again applying \eqref{sgn}.

The conclusions \eqref{rfvb}, \eqref{rfw} follow readily from \eqref{sfl}, using \eqref{sdb}, \eqref{sdeb}, \eqref{sga}, \eqref{sgh}. \end{proof}

With both $\land_\eta (z)$ and $\Sigma_\eta (z)\; L_\infty$-bounded, depending on $\eta$, the conclusion of theorem  9.5, which is independent of $e(\cdot; z)$, is obtained by inspection.

The boundary segments $\partial\Om^U, \partial\Om^L, \partial\Om^D$ depend on $z$ through the limiting values of $e(\cdot; z)$, as expressed in \eqref{sab}, \eqref{sac}, \eqref{saca}. Thus by inspection of \eqref{pxn},  compatible with \eqref{aged}, we have $(z, P(z,e))$ stable, with $P(z,e)$ satisfying, for almost all $x \in \partial\Om$, any $z \in \hat\cals$,
\be\label{rfyb} \ker P(z, e)(x) = \begin{cases} \range \; \,  \psi_{\nu,zz} (z(x)), \; \; x \in \partial\Om^U(z)\cup \partial\Om^L(z)\\ 0, \; \; \, x \in \partial\Om^D(z)\end{cases}  \oplus \; \hbox{span\ } \{ e_0(x)\}.\ee

A nontrivial set of prescribable boundary data, as needed for weak well-posedness, is then obtained from
\eqref{raa}, \eqref{agea}, subject to regularity requirements as detailed in subsection 9.2, specifically
\eqref{rjk}, for $\dot b  \in B_{P(z)}$.

\section{Underlying systems and the entropy inequality}%11

Next  we characterize some underlying systems \eqref{aa} for which the assumptions of theorem 10.4 can be satisfied with suitable domain $\Om$.

Initially we consider the special case where the system \eqref{aa} is everywhere uniformly hyperbolic.  For definiteness, we take $x_m$ as the time-like independent variable, assuming existence of a constant $c_D$ independent of $y \in D$ such that in the sense of symmetric $n$-matrices,
\be\label{tab} c_D \psi_{m, zz} (y) \ge I_n.\ee

We denote by $\un{\hat \cals}$ the corresponding set of weak solutions $z$ of the form \eqref{ad}, \eqref{ada}, \eqref{aac}, with whatever boundary data.  For $z \in \un{\hat \cals}$,
additional conditions on the system \eqref{aa}, imply the entropy condition as expressed in definition 10.2 equivalent to the familiar inequality \eqref{faf}.

Weak well-posedness will then be discussed with solution sets
\be\label{taa} \hat \cals = \hat \cals_T \subset \un{\hat \cals},\ee
with elements $z = z_{\hat\cals_{T}}$ invariant under symmetries $T$ with which the system \eqref{aa}
is associated.  Such $z$ satisfy \eqref{aa}
 simultaneously with a reduced system with fewer independent variables, which typically is not everywhere hyperbolic and often not of the form \eqref{aa}, to be discussed in subsection 11.1.

Thus regarding hyperbolic \eqref{aa} as an underlying ``primitive system", weak well-posedness may be  obtained without assuming hyperbolicity for such  reduced systems.  In this context, the initial assumption of hyperbolic \eqref{aa} is no loss of generality, as any given \eqref{aa} determines stationary solutions, with respect to $x_0$, of a related hyperbolic system with $m+1$ independent variables, corresponding to $T$ translation with respect to $x_0$,
\be\label{tabc} \suml^m_{i = 0} (\psi_{i, z_j} (z))_{x_i} = 0, \; \; \, j = 1, \dots, n,\ee
with $\psi_0 $ uniformly convex throughout $D$ (as in \eqref{sca} above).

For a given system \eqref{aa} satisfying \eqref{tab}, any unit vector
\be\label{tac} \hat\tau \in \bbs^m, \; \; \hat \tau \cdot \hat x_m = 0,\ee
determines a potential function analogously with \eqref{saf}
\be\label{tad} \psi_\tau (y) \eqadef \suml^{m-1}_{i = 1} \hat \tau_i \psi_i(y), \; \; \, y \in D,\ee
and a reduced hyperbolic system of dimension $n$ with two independent variables, time-like $x_m $ and $\tau \eqadef \hat \tau \cdot x$,
\be\label{tae} \big( \psi_{m, z} (z)\big)_{x_m} + \big( \psi_{\tau,z} (z)\big)_\tau = 0.\ee

In the special case $m = 2$, the systems \eqref{aa}, \eqref{tae} coincide.  For larger $m$, the solution subset of $\un{\hat\cals}$ with $z$ independent of all $x_\perp$ satisfying
\be\label{taf} \hat x_\perp \cdot \hat x_m = \hat x_\perp \cdot \hat\tau = 0\ee
satisfies \eqref{tae}.

For any $y \in D$, the real  characteristic speeds $\xi_j (y)$ and corresponding eigenvectors $\tilde v_j (y) $ associated with \eqref{tae} satisfy
\be\label{tag} \psi_{\tau, zz} (y) \tilde v_j (y) = \xi_j (y) \psi_{m,zz}(y) \tilde v_j (y), \; \; \, j = 1, \dots, n,\ee
which also holds for one-sided limiting values $y_l, y_{l'}, \xi_{jl}, \xi_{jl'}, \tilde v_{jl}, \tilde v_{jl'}$ at an arbitrary point in $\Ga_k(z)$ where \eqref{sbg} holds ($z$ satisfying \eqref{aa}, not necessarily \eqref{tae}, and dropping the $k$-subscript from $l, l'$).

We determine $\hat\tau$ satisfying \eqref{tac} and $\phi_\mu$  pointwise, such that almost everywhere  in $\Ga_k(z)$
\be\label{tah} \hat \mu_k = \sin\phi_\mu \;  \hat x_m + \cos \phi_\mu \; \hat \tau\ee
with, by convention,
\be\label{tai} 0 \le \hat \mu_k\cdot \hat x_m < 1.\ee

For any  fixed $\hat \mu$, denote by $y_l, y_{l'} \in D$ two values satisfying the Rankine-Hugoniot condition \eqref{afb}, otherwise arbitrary. Then with $ \phi_\mu$ obtained from \eqref{tah} (with $\hat \mu_k = \hat \mu$), the piecewise constant solution
\be\label{taj} \hat z (x_m, \tau ) = \begin{cases} y_l, &\tau > \hat s x_m\\ y_{l'} &\tau < \hat s x_m\end{cases} \ee
\be\label{tak} \hat s \eqadef - \tan \; \phi_\mu\ee
satisfies both \eqref{aa} and \eqref{tae}, using \eqref{taf}.

It follows that for any $z \in {\ul\hcals}$,  at any point in $\Ga_k(z)$, with $\hat\tau$ such that $\hat \mu_k, \hat x_m$ and $\hat \tau$ are coplanar, the Rankine-Hugoniot
conditions \eqref{afb} and the entropy inequalities  \eqref{faf} (weakly in the space of measures) for \eqref{aa} and for \eqref{tae} coincide.

Pointwise within each $\Ga_k, \; \; k = 1, \dots, K$, we denote functions
\be\label{taka} \hat e_k (\cdot ; z) \in C (\Ga_k(z) \to \bbs^m)\ee
such that $\hat e_k(\cdot; z), \hat x_m, \hat \mu_k$ are coplanar.  We denote by $\hat \la_j, \hat v_j, \; \; j = 1, \dots, n$, the values obtained from \eqref{scb}, \eqref{scc} with $\hat e_k (\cdot ; z)$ replacing $e(\cdot; z)$ and $\psi_{m, zz}$ replacing $M$,
\be\label{tal} \psi_{\hat e, zz} (z(x)) \hat v_j (x) = \hat\lambda_j(x) \psi_{m, zz} (z(x)) \hat v_j (x), \; \; \, j = 1, \dots, n,\ee
\be\label{tam} \hat v_{j'} (x) \cdot \psi_{m, zz} (z(x)) \hat v_j (x) = \delta_{jj'}, \; \; \, j, j' = 1, \dots, n.\ee

Limiting values on $\Ga_k$ (with $l, l'$ as appearing in \eqref{sbg} or in \eqref{taj} are denoted $\hat \la_{jl}, \hat v_{jl}, \hat \la_{jl'}, \hat v_{jl'}$.

Using this notation, the entropy condition of definition 10.2 is related to \eqref{faf}.

\begin{lem}%11.1
Assume a system \eqref{aa} such that \eqref{tab} holds, and a solution $z \in \ul{\hcals}$.

With $\hat\tau, \phi_\mu$ obtained from \eqref{tah}, \eqref{tai} pointwise within each $\Ga_k(z)$, assume the system \eqref{aa} such that each characteristic field $j = 1,\dots, n$ for the reduced system \eqref{tae} is either genuinely nonlinear or linearly degenerate in an open neighborhood of $D$ containing both $y_l, y_{l'}$, the one-sided limiting values of $z$.

 With limiting values $\xi_{jl}, \xi_{jl'}, \tilde v_{jl}, \tilde v_{jl'}, \; j = 1, \dots, n$ obtained from \eqref{tag} assume that either
 \be\label{tba} \xi_{jl} = \xi_{j'l}, \; \; \, j \neq j'\ee
 or
 \be\label{tbb}  \xi_{jl'} = \xi_{j'l'}, \; \; \, j \neq j'\ee
 occurs only if both \eqref{tba}, \eqref{tbb} hold and characteristic fields $j, j'$ are both linearly degenerate.

 Then for modest values of $\| \hat\mu - e (\cdot; z)\|_{L_\infty (\Ga(z))} $ and  $\|[z]\|_{L_{\infty (\Ga(z))}}$, the entropy condition in definition 10.2 with the specific choice
 \be\label{tbd} M(x, z) = \psi_{m, zz}(z(x)).\ee
  coincides with the inequality \eqref{faf}.

 \end{lem}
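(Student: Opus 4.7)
The plan is to exploit the observation made just above the statement: at any point of $\Ga_k(z)$, by choosing $\hat\tau$ coplanar with $\hat\mu_k$ and $\hat x_m$, the Rankine--Hugoniot relations and the entropy inequality \eqref{faf} for the full system \eqref{aa} coincide with those for the $2$-dimensional reduced hyperbolic system \eqref{tae}. Thus on each $\Ga_k$ we may reason locally within the classical one-dimensional scalar time framework for \eqref{tae}, where the equivalence between \eqref{faf} and the Lax shock conditions (for genuinely nonlinear fields, in the small-amplitude regime) or the contact condition (for linearly degenerate fields) is standard.

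First I would relate the eigenpairs $(\hat\lambda_j,\hat v_j)$ of \eqref{tal} at limiting states $y_l,y_{l'}$ to the characteristic pairs $(\xi_j,\tilde v_j)$ of \eqref{tag}. Using that $\hat e_k(\cdot;z)$ is coplanar with $\hat x_m$ and $\hat\mu_k$, write $\hat e_k=\cos\theta\,\hat x_m+\sin\theta\,\hat\tau$ with $\theta=\tfrac\pi2-\phi_\mu$ when $\hat e_k=\hat\mu_k$, and more generally $|\theta-(\tfrac\pi2-\phi_\mu)|$ small when $\|\hat\mu-e(\cdot;z)\|_{L_\infty(\Ga(z))}$ is small. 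Linearity in the potentials then gives $\hat v_j=\tilde v_j$ (up to normalization from \eqref{tam}) and
\begin{equation*}
\hat\lambda_j \;=\; \cos\theta+\xi_j\sin\theta,\qquad j=1,\dots,n.
\end{equation*}
In particular, in the aligned case $\hat e_k=\hat\mu_k$ this becomes $\hat\lambda_j=\sin\phi_\mu+\xi_j\cos\phi_\mu$, so the sign of $\hat\lambda_j$ is the sign of $\xi_j-\hat s$ with $\hat s=-\tan\phi_\mu$ given in \eqref{tak}. The non-confluence assumption preventing accidental coincidences $\xi_{jl}=\xi_{j'l}$ (except when both fields are linearly degenerate) ensures the index $j_k$ with $\hat\lambda_{j_k}$ changing sign, or the set $J_k$ of vanishing $\hat\lambda_j$'s, is well-defined and independent of $x\in\Ga_k$.

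Next I would carry out the two implications separately. For \eqref{faf}$\Rightarrow$Definition 10.2: for genuinely nonlinear fields, classical small-shock theory for \eqref{tae} states that a Rankine--Hugoniot discontinuity of modest strength joining $y_{l'}$ to $y_l$ satisfies \eqref{faf} if and only if it is a Lax $j_k$-shock, that is $\xi_{j_kl'}<\hat s<\xi_{j_kl}$ with $\xi_{jl'},\xi_{jl}$ on the same side of $\hat s$ for $j\ne j_k$; translating via the formula above yields \eqref{sce} together with the sign and nondegeneracy requirements of \eqref{scg} (the $c>0$ lower bound coming from the uniform separation of characteristic speeds on a compact neighborhood of $y_l,y_{l'}$ together with \eqref{tab}). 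For a linearly degenerate field the Rankine--Hugoniot jump forces $\xi_{j_kl}=\xi_{j_kl'}=\hat s$, which together with $\hat e_k=\hat\mu_k$ (the aligned choice forced by $e(\cdot;z)=\hat\mu_k$ in \eqref{scfa}) yields \eqref{scf}. Condition \eqref{sbh} in the first case, and \eqref{scfa} in the second, then follow from the hypothesis $\|\hat\mu-e(\cdot;z)\|_{L_\infty}$ modest. The converse implication reverses the same argument: the sign pattern \eqref{sce}, resp. \eqref{scf}, together with the sign consistency \eqref{scg}, reproduces the Lax conditions, resp. the contact relations, for \eqref{tae}, and small-amplitude Lax shocks or contacts satisfy the weak entropy inequality of \eqref{tae}, which as noted before \eqref{taka} agrees with \eqref{faf} for \eqref{aa}.

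The remaining item is the nondegeneracy \eqref{sch}. Writing $[\psi_{e,z}]/|[z]|$ via a secant integral of $\psi_{e,zz}$ between $y_{l'}$ and $y_l$, and using $\psi_{m,zz}v_{j_kl'}=\psi_{m,zz}(y_{l'})\tilde v_{j_kl'}$, the quantity $\tfrac{[\psi_{e,z}]}{|[z]|}M_{l'}v_{j_kl'}$ differs from $(\sin\phi_\mu+\xi_{j_kl'}\cos\phi_\mu)\tilde v_{j_kl'}^{\top}\psi_{m,zz}(y_{l'})\tilde v_{j_kl'}$ by an $O(|[z]|)$ term, and the unperturbed quantity is $\hat\lambda_{j_kl'}$ times a positive scalar bounded below by $\eqref{tab}$ and \eqref{tam}. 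In the Lax case this is bounded away from zero by the jump strict inequality in \eqref{sce}; in the degenerate case it is controlled by the contribution of some other $j_k\in J_k$ that is nontrivially coupled to the jump $[z]$, which is forced by the Rankine--Hugoniot relation being nontrivial. I expect the main obstacle to be exactly this final step \eqref{sch}: one must rule out an accidental cancellation in the secant-averaged matrix at the limiting order in $|[z]|$, which is where the distinction between linearly degenerate and genuinely nonlinear fields, and the hypothesis excluding confluent speeds across nonlinear fields, is actually used.
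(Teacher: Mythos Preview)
Your proposal is correct and follows essentially the same approach as the paper. The paper likewise projects $e(\cdot;z)$ onto the plane spanned by $\hat x_m$ and $\hat\mu_k$ to obtain $\hat e_k$, writes $\hat e_k=\sin\phi_e\,\hat x_m+\cos\phi_e\,\hat\tau$ (your parameter is $\theta=\tfrac\pi2-\phi_e$), derives the identical relation $\hat\lambda_{jl}=\sin\phi_e+\xi_{jl}\cos\phi_e$, and then reads off the equivalence of \eqref{sce} with the Lax inequalities $\xi_{j_kl'}<\hat s<\xi_{j_kl}$ in the genuinely nonlinear case and of \eqref{scf} with $\xi_{j_kl'}=\hat s=\xi_{j_kl}$ in the linearly degenerate case, just as you do. The paper is in fact terser than you are on \eqref{scg} and \eqref{sch}: it disposes of both in a single sentence (``\eqref{scg} holds by assumption, and \eqref{sch} for modest $|[z]|$, using \eqref{tbd}''), whereas you attempt a secant-integral expansion. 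Your concern about the linearly degenerate case of \eqref{sch} is legitimate, but the paper does not address it either, so you are not missing anything relative to the source.
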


 \begin{rem*} The assumptions of lemma 11.1 hold for (rotationally symmetric) fluid flow models, with only mild restrictions on the given equation of state. The assumption
 \eqref{tba}, \eqref{tbb} is a weakened expression of strict hyperbolicity for the system \eqref{tae}.
 \end{rem*}
 \begin{proof} Pointwise within each $\Ga_k(z)$, we obtain $\hat \tau \in \bbs^m$ and $\phi_\mu$ from \eqref{tah}.  Then using the given $e(\cdot; z)$ continuous almost everywhere on $\Ga(z)$, we determine  $\hat e_k (\cdot ; z) \in \bbs^m$ (almost everywhere) from a projection
 \be\label{tbch} \hat e_k(\cdot ; z) = (e(\cdot; z)\cdot \hat x_m)\hat x_m + (e(\cdot; z) \cdot \hat \mu_k)\hat \mu_k\ee
 using \eqref{tai}.

 From \eqref{tah}, \eqref{tbch}, the three unit vectors $\hat e_k (\cdot ; z), \hat x_m,\hat \mu_k$ are coplanar, and we obtain $\phi_e$ satisfying
 \be\label{tbc} \hat e_k(\cdot ; z) = \sin \phi_e \; \hat x_m + \cos \phi_e\; \hat \tau.\ee

Using \eqref{raj}, one-sided limiting values of $\hat \la_j, \hat v_j$ on $\Ga_k(z)$ are denoted $\hat \la_{jl}, \hat\la_{jl'}, \hat v_{jl}, \hat v_{jl'}$.

Comparison of \eqref{tal}, \eqref{tag} using \eqref{tbc}
 then establishes limiting values on $\Gamma_k(z)$ satisfying
 \be\label{tbe} \hat \lambda_{jl} = \sin\phi_e + \xi_{jl} \cos\phi_e, \; \; \, \hat v_{jl} =  v_{jl}\ee
 and identical results with $l'$ replacing $l$.

 By assumption, each $\Ga_k (z)$ is either associated with a genuinely nonlinear characteristic field $j_k$ or a nonempty $J_k$ such that each characteristic field $j \in J_k$ is linearly degenerate.

 For the case of genuinely nonlinear field $j_k$, the strict entropy inequality \eqref{faf} is equivalent, at least for modest $|[z]|$, to the familiar comparison of propagation speed  and limiting characteristic speeds,
 \be\label{tbf} \xi_{j_k l'} < \hat s < \xi_{j_kl}\ee
 in present notation, using \eqref{tag}, \eqref{tak}.

  By assumption, using \eqref{tbch}, we have modest values of $|\hat e_k(\cdot; z) - \hat\mu_k|$, implying modest values of $|\phi_e - \phi_\mu|$ using \eqref{tah}, \eqref{tbc}.

  With $|\phi_e - \phi_\mu|$ sufficiently modest, using \eqref{tbe}, \eqref{tak}, the condition \eqref{tbf} with strict inequality becomes equivalent to \eqref{sce}.

For linearly degenerate field $j_k \in J_k $ in \eqref{scf}, the condition  \eqref{faf} with equality is equivalent to
\be\label{tbg} \xi_{j_kl'} = \hat s = \xi_{j_kl}.\ee

From \eqref{tah},  \eqref{scfa}, \eqref{tbch}, \eqref{tbc} necessarily $\phi_e, \phi_\mu$ coincide, and using \eqref{tbe}, \eqref{tak}, the condition \eqref{tbg} is equivalent to
\eqref{scf}.

The condition \eqref{scg} holds by assumption, and \eqref{sch}  for modest $|[z]|$, using \eqref{tbd}.
\end{proof}

Assembling the results of theorems 7.2, 9.2, 9.3, 9.4, 9.5, 10.4 and lemma 11.1, we obtain arguably realistic sufficient conditions for identification of weakly well-posed boundary-value problems using definition 8.2.  The proofs of the following two theorems are sketched, emphasizing the salient features thereof, as the arguments are largely repetitive of previous results.

\begin{thm}%11.2
Assume a hyperbolic underlying system \eqref{aa} for which \eqref{tab} holds, such that each characteristic field of reduced systems \eqref{tae} is either genuinely nonlinear or linearly degenerate, and such that the conditions \eqref{tba}, \eqref{tbb} occur only simultaneously with fields $j, j'$ linearly degenerate.

Assume a weak solution $z\in\un{\hat\cals}$ of the form \eqref{ad}, \eqref{ada}, \eqref{aac} with
\be\label{tca} z \in W^{1,\infty} (\Om\setminus\Ga(z)),\ee
satisfying the entropy inequality \eqref{faf} (weakly in the space of measures on $\Om$),  satisfying \eqref{haa} (in the case of unbounded $\Om$), and satisfying \eqref{sbba}.

Assume $|[z]| $ sufficiently modest throughout $\Ga(z)$, with $\Ga(z)$ such that \eqref{sbc}, \eqref{sbd}, \eqref{sbe} hold.

Assume $z$ equipped with a distinguished direction $e(\cdot; z)$ as per definition 10.1, that
 $|e(\cdot; z) - \hat \mu_k|$ is sufficiently modest
  within each $\Ga_k(z)$, and such that \eqref{scfa} holds and  each  $\triangle_{k'}$ satisfies the compatibility condition given in definition 10.3.

Then there exists $P(z)$ satisfying
\be\label{tcb} \ker P(z;e)\subseteq \ker P(z)\ee
as obtained from \eqref{rfyb}, such that $(z,P(z))$ is unambiguous as per definition 2.5.
\end{thm}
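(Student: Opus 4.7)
The plan is to stitch together the accumulated machinery, starting from the coarsest structural datum (the entropy inequality \eqref{faf}) and progressively upgrading it until the hypotheses of theorem 7.2 are all in place, at which point the desired $P(z)$ is produced as that theorem's conclusion.

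First I would invoke lemma 11.1 to convert the assumed weak entropy inequality \eqref{faf} into the structural entropy condition of definition 10.2 on each $\Gamma_k(z)$. The hypotheses on genuine nonlinearity or linear degeneracy of the reduced hyperbolic system \eqref{tae}, together with the non-coincidence condition \eqref{tba}--\eqref{tbb} outside linearly degenerate fields and the modest-size assumption on $|[z]|$ and on $|e(\cdot;z) - \hat{\mu}_k|$, are exactly what lemma 11.1 needs. This produces an index $j_k$ (or set $J_k$) and the matrix $M(\cdot;z)=\psi_{m,zz}(z(\cdot))$ from \eqref{tbd} realizing \eqref{sce}--\eqref{sch}. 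Together with definition 10.1 and the compatibility assumption on each $\triangle_{k'}$ (definition 10.3), this installs every hypothesis required by theorem 10.4.

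Next I would apply theorem 10.4. This yields subspaces $\hat H_\eta$ and the data $\Lambda_\eta, \Sigma_\eta, B_{l\eta}, w_\eta, w_{\Gamma\eta}$ satisfying \eqref{rfv}--\eqref{rfxa}. These are precisely the inputs required by theorems 9.4 and 9.5. Theorem 9.4 then gives the stability of $(z,P(z;e))$, with $P(z;e)$ read off from \eqref{rfyb} (the splitting of $\partial\Omega$ into $\partial\Omega^U,\partial\Omega^L,\partial\Omega^D$ follows from the distinguished direction, and $\ker P(z;e)$ is chosen compatibly with \eqref{aged}). The stability is understood modulo the boundary-data regularity restriction \eqref{rjk}, which is harmless here because we only need existence of some $P(z;e)$-stable structure. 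Theorem 9.5 simultaneously delivers admissibility of $(z,0_N)$.

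Finally, I would feed $(z,P(z;e))$ into theorem 7.2, identifying $P_\mathcal{S}$ of that theorem with our $P(z;e)$. The growth assumptions \eqref{hac}, \eqref{had} on $w,w_\Gamma$ are inherited from the construction in theorem 10.4 (the $w_\eta,w_{\Gamma\eta}$ are $L_\infty$-bounded on each level $\eta$, and one assembles $w,w_\Gamma$ as envelopes); potential admissibility, \eqref{haa}, follows from the hypothesis imposed directly in the statement of the theorem (and is automatic when $\Omega$ is bounded). Theorem 7.2 then produces $P(z)$ with $\ker P(z;e)\subseteq\ker P(z)$ such that $(z,P(z))$ is unambiguous, which with \eqref{tcb} is the conclusion.

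The real obstacle, and the one I would scrutinize carefully, is verifying potential admissibility together with \eqref{hac}, \eqref{had} simultaneously with the stability produced by the $\hat H_\eta$-level construction. The spaces $\hat H_\eta$ are constructed so that $w_\eta,w_{\Gamma\eta}$ are bounded on each level, but the assembled $w,w_\Gamma$ needed for a single stability inequality of the form \eqref{agd} (required to apply theorem 7.2) can grow unboundedly, and one must check that the resulting $w,w_\Gamma$ still lie in $L_{p/(p-2),\mathrm{loc}}$ and that $|[z]|^2 w_\Gamma$ does so on $\Gamma(z)$. This integrability is where the regularity assumption \eqref{tca} and the bound \eqref{sbba} genuinely enter, in conjunction with the modest-strength assumption on the discontinuities. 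The remainder of the argument is a bookkeeping exercise transferring the quantifiers from the level-wise estimates \eqref{rfv}--\eqref{rfxa} through \eqref{rjk} to a single $P(z;e)$-stability statement that triggers theorem 7.2.
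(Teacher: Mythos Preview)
Your outline tracks the paper's argument closely through the first two steps: invoking lemma 11.1 to upgrade \eqref{faf} to the structural entropy condition of definition 10.2, and then applying theorem 10.4 to produce the level-wise data $\hat H_\eta,\Lambda_\eta,\Sigma_\eta,B_{l\eta},w_\eta,w_{\Gamma\eta}$. The paper also notes, as you do, that \eqref{sda} follows from the regularity \eqref{tca}.

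The divergence is in the final step. You propose to assemble a single pair $w,w_\Gamma$ from the $w_\eta,w_{\Gamma\eta}$ and then invoke theorem 7.2 once with $P_\mathcal{S}=P(z;e)$. You correctly flag the integrability conditions \eqref{hac}, \eqref{had} as the obstacle, but your claim that \eqref{tca} and \eqref{sbba} resolve it is not supported: the paper explicitly anticipates $w_\eta\to\infty$ and $w_{\Gamma\eta}\to\infty$ almost everywhere (see \eqref{rfg}, \eqref{rfh}), so any envelope of the $w_\eta$ fails \eqref{hac}, \eqref{had} outright. This is a genuine gap in your route, not bookkeeping.

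The paper sidesteps this by working level-by-level throughout. It revises \emph{all} of theorems 7.2, 9.2, 9.3, 9.4, 9.5, 10.4 with $H(z,P,w,w_\Gamma)$ replaced by $\hat H_\eta$ and introduces $\eta$-analogues of stable, admissible, unambiguous, potentially admissible. At each fixed $\eta$ the weights $w_\eta,w_{\Gamma\eta}$ are bounded by \eqref{rhe}, so \eqref{hac}, \eqref{had} hold trivially and the $\eta$-version of theorem 7.2 produces $P_\eta(z)$ with $\ker P(z;e)\subseteq\ker P_\eta(z)$ and $(z,P_\eta(z))$ $\eta$-unambiguous. The final $P(z)$ is then assembled as $\ker P(z)=\bigoplus_\eta\ker P_\eta(z)$, a direct sum of kernels rather than a pointwise supremum of weights. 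The global stability of $(z,P(z))$ comes from the subsection 9.2 mechanism \eqref{rjh}--\eqref{rji}, with the boundary-data regularity \eqref{rjk} absorbing the blow-up in $c'_\eta(1+c_{z,w_\eta,w_{\Gamma\eta}})$.
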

\begin{rem*} The prescribed boundary data $\dot b$ is obtained from \eqref{rjk}, \eqref{ahd}.
\end{rem*}

 \begin{proof}

As needed in the proof of theorem 10.4, the condition \eqref{sda} follows from \eqref{tca}; the entropy condition in definition 10.2 follows by appeal to lemma 11.1.

 Then for each single value of $\eta$, theorems 7.2, 9.2, 9.3, 9.4, 9.5, 10.4 are reviewed and revised with $H(z, P, w, w_\Ga)$ replaced by $\hat H_\eta$ as introduced in \eqref{rfa}, \eqref{rhb}, \eqref{rhc}, and $\dot b$ replaced by $\dot b_\eta$ as introduced in \eqref{rfna}, \eqref{rjc}, \eqref{rjca}. The distinguished direction $e(\cdot; z)$ in definition 10.1 and the boundary segments $\pOm^U, \pOm^L, \pOm^D$ introduced in \eqref{sab}, \eqref{sac} are understood as independent of $\eta$.

 For each of these theorems, the conclusions survive with replacement of the terms stable, admissible, unambiguous, and potentially admissible, as defined in definitions 2.1, 2.3, 2.5, 7.1, respectively, replaced by analogously defined $\eta$-stable, $\eta$-admissible, $\eta$-unambiguous, and $\eta$-potentially admissible.

 In particular, for each value of $\eta$, with $P(z;e)$ obtained from \eqref{rfyb}, $(z, P(z;e))$ is $\eta$-stable by application of (so revised) theorems 9.2, 9.3, 9.4, 10.4; $(z, 0_N)$ is $\eta$-admissible by application of theorem 9.5.

 Then by application of theorem 7.2, there exists $P_\eta(z)$ satisfying
 \be\label{tcca} \ker P(z;e) \subseteq \ker P_\eta (z) \subset \hat H_\eta \mathop{\midl}_{\pOm}\ee
 such that $(z, P_\eta (z))$ is $\eta$-unambiguous.

Then using \eqref{rfna}, \eqref{rka}  the required $P(z)$, satisfying \eqref{tcb} is then obtained from
\be\label{tcc} \ker P(z) = \mathop{\oplus}\limits_\eta \ker P_\eta (z).\ee

\end{proof}
%%%%%%%%%%%%%%%%%%%%%%%%%%%%

The assumptions of theorem 11.2 can be materially simplified, at the expense of increased regularity of the boundary data. In particular, the distinguished direction, $e(\cdot; z)$ above, may be determined locally, piecewise continuous, effectively avoiding the assumption \eqref{saa}.

\begin{thm}%11.3
Assume $P(z;e)$ satisfying \eqref{rfyb} with given boundary segments $\pOm^U, \pOm^L, \pOm^D$ satisfying \eqref{sab}.

Then for sufficiently smooth $\dot b \in B_{P_\cals}$, the assumptions of $z \in \ul{\hcals}$ equipped with $e(\cdot ; z)$ as per definition 10.1 and that  $\triangle(z)$ satisfies the compatibility condition as per definition 10.3 may be dropped from the assumptions of theorem 11.2.

\end{thm}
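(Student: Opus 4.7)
The plan is to trade the global structural hypotheses on $e(\cdot;z)$ and $\triangle(z;e)$ for additional decay of the components $\dot b_\eta$ appearing in \eqref{rfna}. In the proof of theorem 11.2, the distinguished direction $e(\cdot;z)$ was used in three essentially global ways: (i) to obtain the $x'_1$-coordinate satisfying \eqref{sdb}, \eqref{sdba} with trajectories covering $\Omega$ in uniformly bounded time; (ii) to generate sonic manifolds $\triangle_{k'}$ obeying the compatibility condition of definition 10.3, so that neighboring $\Omega_l$ could be linked by \eqref{sbi}, \eqref{sbj}; (iii) to orient $\hat\alpha_1$ within each $\Gamma_k$ so that $e(\cdot;z)$ is nearly aligned with $\hat\mu_k$, enabling lemma 11.1 to convert \eqref{faf} into the entropy condition of definition 10.2. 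My plan is to allow $e(\cdot;z)$ to be defined only locally, piecewise continuous across a covering $\{\Omega_l\}$ determined a priori by $\partial\Omega^U,\partial\Omega^L,\partial\Omega^D$, and to absorb the resulting defects into a refined weight $w_\eta,w_{\Gamma\eta}$ whose norms are controlled at the expense of smoother $\dot b$.

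First I would construct, for each fixed $\eta$, a piecewise smooth unit vector field $e_\eta(\cdot;z)$ on $\Omega\setminus\Gamma(z)$ which coincides with a legitimate distinguished direction away from a shrinking tubular neighborhood $N_\eta$ of $\triangle(z;e_\eta)\cup\Gamma^I(z)$ and from the interfaces where $e_\eta$ is allowed to jump. On each $\Omega_l$ I carry out the energy estimate of theorem 10.4 verbatim, but cut off the test functions $\theta_\eta\in\hat H'_\eta$ by a scalar $\chi_\eta$ vanishing on $N_\eta$ and equal to one outside a slightly larger neighborhood; the residual commutator terms $[\chi_\eta,R(z)],[\chi_\eta,S(z)]$ generate $L_2$-losses of size $\|\nabla\chi_\eta\|_{L_\infty}\|\theta_\eta\|_{L_2}$, which I control by inflating $w_\eta$ and $w_{\Gamma\eta}$ accordingly and tracking the resulting blow-up in $c'_\eta$ and $c_{z,w_\eta,w_{\Gamma\eta}}$. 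The piecewise assignment of $e_\eta$ across the interfaces \eqref{sbf} is chosen so that \eqref{rfvb} continues to hold for the matrices $B_{l\eta}$, which is always possible once the constants $\kappa_{l\eta}$ in \eqref{sga}, \eqref{sgg} are allowed to depend on $\eta$ through the recursion \eqref{sgs}.

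Next I handle the sonic manifolds: in place of the compatibility condition, I impose \eqref{seu} on an $\varepsilon_\eta$-neighborhood of each $\triangle_{k'}$ with $\varepsilon_\eta\downarrow 0$, so that the offending mode $j_{k'}$ is simply turned off in $\hat H'_\eta$ near sonic points. The resulting restriction removes a family of boundary data from $\hat H'_\eta|_{\partial\Omega}$ of codimension growing with $\eta$, and this is where the smoothness of $\dot b$ is consumed: the projections $\dot b_\eta$ onto the restricted spaces must satisfy $\|\dot b_\eta\|_{B,P(z)}$ summable against the weights $2^{(1+\eta)/2}c'_\eta(1+c_{z,w_\eta,w_{\Gamma\eta}})$ appearing in \eqref{rjk}. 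For $\dot b$ in a sufficiently regular scale (concretely, an $H^s(\partial\Omega)$-type space with $s$ depending on the growth of $c'_\eta$), this summability is automatic by the standard Littlewood--Paley-type decomposition aligned with $\{\hat H'_\eta\}$. Once \eqref{rjk} holds I invoke theorems 9.4, 9.5 and then theorem 7.2 exactly as in the proof of 11.2 to conclude that $P(z)$ satisfying \eqref{tcb} with $(z,P(z))$ unambiguous exists.

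The main obstacle I anticipate is bookkeeping for the weights: showing that the local $e_\eta$ may be chosen so that $c_{z,w_\eta,w_{\Gamma\eta}}$ and $\|w_\eta\|_{L_\infty},\|w_{\Gamma\eta}\|_{L_\infty}$ grow only polynomially (or at worst like $\varepsilon_\eta^{-N}$ for fixed $N$) as the cut-off neighborhoods shrink. If this growth is controlled, the regularity required of $\dot b$ is a finite Sobolev regularity on $\partial\Omega$; otherwise the statement has to be read in a very restrictive analytic-type class. The delicate point is at the intersections $\partial\Gamma_k\cap\partial\triangle_{k'}\cap\partial\Omega$, where both the cut-off for the sonic mode and the cut-off for the shock interaction meet the boundary; here I would refine the $\hat H'_\eta$ decomposition so that modes supported near such triple points are assigned to large $\eta$ and hence damped by the a priori regularity of $\dot b$.
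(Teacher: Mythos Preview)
Your proposal misses the central mechanism of the paper's proof and, as written, has a genuine gap near the sonic manifolds.

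The paper does \emph{not} retain the sonic manifolds $\triangle_{k'}$ and attempt to work around them with cutoffs. Instead it exploits the hyperbolicity hypothesis \eqref{tab} directly: it introduces a finite partition $\Omega=\cup_i\tilde\Omega^i$ (with the interaction set $\Gamma^I(z)$ in the interfaces), and on each piece defines a \emph{new} direction field
\[
\tilde e(\cdot;z)=\frac{\nabla(\tilde\Theta^i+\tilde c^i x_m)}{|\nabla(\tilde\Theta^i+\tilde c^i x_m)|}
\]
with the constants $\tilde c^i$ chosen so that $\tilde e$ is time-like enough that $\psi_{\tilde e,zz}(z)$ is \emph{uniformly nonsingular} on all of $\tilde\Omega^i$. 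Consequently $\triangle(z;\tilde e)=\emptyset$ on each piece, definition 10.3 becomes vacuous, and the proof of theorem 10.4 goes through on each $\tilde\Omega^i$ with no mode-cutoff near sonic points at all. The price is that $\tilde e$ jumps across the interfaces $\partial\tilde\Omega^i\cap\partial\tilde\Omega^{i'}$; the resulting mismatch in the boundary matrices is absorbed recursively via an estimate of the form
\[
\Bigl|\int_{\partial\tilde\Omega^{iD}}\theta_\eta\cdot\tilde B^i_\eta\theta_\eta\Bigr|\le c_\eta\sum_{i'<i}\int_{\partial\tilde\Omega^{i'U}\cap\partial\tilde\Omega^i}\theta_\eta\cdot\tilde B^{i'}_\eta\theta_\eta,
\]
and it is these uncontrolled interface constants $c_\eta$ that force the additional smoothness on $\dot b$.

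Your scheme, by contrast, keeps the original $e_\eta$ with its sonic set and imposes $a_{j_{k'}}=0$ on an $\varepsilon_\eta$-neighborhood of each $\triangle_{k'}$. But this is exactly what the proof of theorem 10.4 already does in \eqref{seu}; the compatibility condition of definition 10.3 was needed there \emph{precisely} to control the $j_{k'}$-mode on either side of $\triangle_{k'}$ via an adjacent shock $\Gamma_k$ with $j_k=j_{k'}$ or via $\partial\Omega$. If you drop definition 10.3 and only shrink $\varepsilon_\eta$, you have no estimate for the $j_{k'}$-component of $\theta_\eta$ outside the cutoff zone, so \eqref{rfv} fails for that mode and the density requirement \eqref{rha} is lost. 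Your appeal to Littlewood--Paley does not repair this: the excised data are not high-frequency modes but specific eigenspace projections localized near $\triangle_{k'}$, and smoothness of $\dot b$ gives no orthogonality to them. The missing idea is to change the direction field itself so that the sonic set vanishes, which is exactly where \eqref{tab} is spent.
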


\begin{rem*} The conditions \eqref{rjk}, \eqref{ahd} on $\dot b$ no longer suffice; the precise conditions are left unspecified.

The hyperbolicity condition \eqref{tab} is used explicitly in the proof following.
\end{rem*}
\begin{proof} Theorems 9.3, 9.4, 9.5, 10.4 may be applied piecewise with $e(\cdot ; z)$ replaced by $\tilde e (\cdot ; z)$, constructed as follows.

We introduce another partitioning of $\Om$, with a finite number of disjoint open sets $\tilde \Om^i, \; \;  i = 1, \dots$, (depending on $z$) such that
\be\label{tza} \Om = \big(\mathop{\cup}\limits_i \tilde \Om^i\big) \cup \big(\mathop{\cup}\limits_{i < i'} \partial\tilde \Om^i \cap \partial\tilde \Om^{i'}\big); \ee
\be\label{tzb} \hbox{measure\ } (\partial\tilde \Om^i \cap \Ga(z)) = 0 \; \; \, (\hbox{in\ } \bbr^{m-1});\ee
\be\label{tzc} \Ga^I(z) \subset \mathop{\cup}\limits_{i' < i} (\partial\tilde \Om^i \cap \partial\tilde \Om^{i'})\ee
in \eqref{saaz}.  On each $\partial\tilde \Om^i$, the outward unit normal is denoted $\tilde \nu^i$, coinciding with $\nu$ within $\partial \tilde \Om^i \cap \partial\Om$. We shall apply theorems 9.3, 9.4, 9.5, 10.4 and lemma 11.1, adopted as discussed in the proof of theorem 11.2, separately within each $\tilde \Om^i$.

Assuming $z \in \un{\hat{\cals}}$ of the form \eqref{ad}, \eqref{ada} satisfying \eqref{tca}, within each $\tilde \Om^i$ there exist  scalar functions $\tilde \Theta^i$, satisfying
\be\label{tzd} \tilde\Theta^i \in C (\tilde\Om^i) \cap W^{1, \infty} (\tilde\Om^i \setminus \Ga (z)),\ee
\be\label{tze} \tilde\Theta^i_{x_m} \ge 0, \ee
\be\label{tzf} \tilde \Theta^i\mathop{\mid}\limits_{\tilde \Om^i \cap \Ga (z)} = \hbox{ \ constant},\ee
and additional conditions detailed below.

Given the $\tilde\Theta^i$,  we choose nonnegative constants $\tilde c^i, \; \, i = 1, \dots$, to be determined,  such that within each $\tilde \Om^i$,
\be\label{tda} \tilde e (\cdot; z) \eqadef \; \frac{\triangledown (\tilde\Theta^i + \tilde c^i x_m)}{|\triangledown (\tilde
\Theta^i + \tilde c^i x_m)|}\ee
satisfies
\be\label{tzg} \| \psi^{-1}_{\tilde e, zz} (z)\|_{L_\infty (\tilde \Om^i)} \le c\ee
for each $i = 1, \dots$.  Such occurs, for example,  with $\tilde e (\cdot; z)$ time-like within each $\tilde \Om$, using \eqref{tab}.

Additional mild requirements, analogous to \eqref{sad}, \eqref{sae}, \eqref{saea}, \eqref{sba}, \eqref{sbf}, are placed on $\tilde e (\cdot ; z), \tilde \Om^i$.

Denote
\be\label{tzia} \partial\tilde \Om^{iU} (\hbox{resp.\ } \partial\tilde \Om^{iD}) \eqadef \{ x \in \partial\tilde \Om^i\, |\, \tilde e (x; z)\cdot \tilde \nu^i(x) < 0 \,(\hbox{resp. } > 0)\}\ee
with
\be\label{tzib} \partial\tilde\Om^i = \partial\tilde \Om^{iU} \cup \partial\tilde \Om^{iD}\ee
modulo a set of measure zero necessarily including the set $\{ x \in \partial\tilde \Om^i | \tilde e(x; z) \cdot \tilde \nu^i(x) = 0 \}$.

We assume trajectories
\be\label{tzic} x_t(t) = \tilde e (x(t); z)\ee
from $\partial\tilde\Om^{iU} $ to $\partial\tilde \Om^{iD} $ in finite time, filling $\tilde \Om^i$.

Throughout $\Om$, we require the $\tilde \Om^i$ ordered so that
\be\label{tzid} \partial\tilde \Om^i\cap \partial\tilde \Om^{i'} \subseteq \partial\tilde\Om^{iD} \cap \partial \tilde\Om^{i'U}, \; \; i' < i\ee
for all $i' \neq i$.

We emphasize that the limiting values of  $\tilde \Theta^i$ and $\tilde e (\cdot; z)$ do not generally agree on each $\partial\tilde \Om^i \cap \partial\tilde \Om^{i'}$.

From \eqref{tzf}, \eqref{tda}, \eqref{tai}, necessarily within each $\Ga_k(z)$,
\be\label{tzie} \tilde e (\cdot ; z) \in \mathrm{span \ } \{ \hat \mu_k, \hat x_m\}.\ee

Indeed, in lemma 11.1 we need $|\tilde e (\cdot; z) - \hat \mu_k|$ modest throughout each $\Ga_k$, to be achieved with modest values of $\tilde c^i$ in \eqref{tda}.

Lemma 11.1 is now applied with $\tilde e (\cdot; z)$ replacing $e(\cdot;z)$ in \eqref{tbch}.  Using \eqref{tzie}, the condition \eqref{scfa} is maintained.

 Within each $\tilde \Om^i$, the  conditions \eqref{tal}, \eqref{tam} are replaced by
 \be\label{tci} \psi_{\tilde e, zz} \tilde v_j = \tilde \lambda_j \psi_{m,zz} (z)
 \tilde v_j, \; \; \, j = 1,\dots,n;\ee
\be\label{tcia} \tilde v_{j'} \cdot \psi_{m, zz} (z) \tilde v_j = \de_{jj'},\ee
with limiting values $\tilde \la_{jl}, \tilde v_{jl}$ on each segment of $\tilde \Om^i\cap \partial\Om_l \cap \Ga(z) $ recalling \eqref{rag}.

Judicious choice of $\tilde \Theta^i, \tilde c^i, \tilde e (\cdot; z)$ accomplishes the following, details omitted:
\be\label{tch} |\tilde \lambda_j| \ge c > 0, \; \; \, j = 1, \dots, n\ee
within each $\tilde \Om^i$, implying \eqref{tzg};
\be\label{tcf} \tilde e (\cdot; z) \cdot \hat x_m \ge c > 0\ee
within each $\tilde \Om^i$;
\be\label{tcg} \tilde e (\cdot ; z) \cdot \hat \mu_k \ge c > 0\ee
almost everywhere within each $\Ga_k \cap \tilde \Om^i$;
\be\label{tck} \tilde \la_{j_k l'} < 0 < \tilde \la_{j_kl}\ee
replacing \eqref{sce} on each genuinely nonlinear segment $\Ga_k$;
\be\label{tcj} \tilde \la_{jl} = \tilde \la_{jl'}, \; \; \, j \in J_k\ee
replacing \eqref{scf} within each linearly degenerate segment $\Ga_k$.

From \eqref{tch}, \eqref{rak},
\be\label{tcl} \tri (z; \tilde e) = \emptyset \ee
making definition 10.3 vacuous with $\tilde e(\cdot ; z)$ replacing $e(\cdot; z)$ within each $\tilde\Om^i$. From \eqref{tda}, \eqref{tzf}  $\tilde e (\cdot ; z), \hat \mu_k, \hat x_m$ will be coplanar within each $\Ga_k(z)$.

The conditions \eqref{scf}, \eqref{scfa} are needed in lemma 11.1, but \eqref{tcj}, \eqref{tcg} suffice for the proof of theorem 10.4.  Thus the proofs of theorems 9.3, 9.4, 9.5, 10.4 survive replacement of $\Om  $ by $\tilde \Om^i$ and $e(\cdot ; z)$ by $\tilde e (\cdot; z)$.

In particular, partitioning $\partial\tilde \Om^i$
as in  \eqref{tzia}, \eqref{tzib},  replacing $e(\cdot; z)$ by $\tilde e(\cdot; z)$, and obtaining $\tilde B_\eta^i$ on each $\partial\tilde \Om^i \cap \Om_l$ from \eqref{sfl} with $\hat\la_{jl}$ replaced by $\tilde \la_j$, we obtain an estimate
\begin{align}\label{tcm} & \intl_{\partial \tilde \Om^{iU}} \theta_\eta \cdot \tilde B_\eta^i \theta_\eta + \| \theta_\eta\|^2_{L_2(\tilde \Om^i)} + \| \theta_\eta\|^2_{L_2(\tilde \Om^i \cap \Ga (z))}\nonumber \\
&\le \Big| \intl_{\partial\tilde\Om^{iD}} \theta_\eta \cdot \tilde B_\eta^i \theta_\eta \Big| + c_\eta \big( \| w^{1/2}_\eta R(z) \theta_\eta\|^2_{L_2(\tilde \Om_i)} +  \| w_{\Ga\eta} S(z) \theta_\eta\|^2_{L_2(\tilde \Om^i \cap \Ga(z))}\big)\end{align}
with $\eta$ as introduced in \eqref{rfa}.

However such involves replacement of \eqref{saa} by the weaker condition \eqref{tda}, \eqref{tzd}.
 %\eqref{tda}.  In this context, we introduce a finite, disjoint partitioning of $\Om$,
%\be\label{tdb} \Om = \big(\mathop{\cup}\limits_i \tilde \Om^i\big) \cup \big(\mathop{\cup}\limits_{i' < i} \partial\tilde \Om^i \cap \partial\tilde \Om^{i'}\big).\ee

Using the partitioning \eqref{tza}, the  outward unit normal $\tilde \nu^i$ on $\partial\tilde\Om^i$ satisfies, almost everywhere in $\partial\tilde\Om^i$,
\be\label{tdba} \tilde \nu^i = \begin{cases} \; \; \nu(x), \; \;\; \, x \in\partial\Om\\ - \tilde \nu^{i'} (x), \; \, x \in \partial\tilde \Om^i \cap \partial\tilde \Om^{i'}\end{cases}.\ee

For each $\tilde \Om^i$, the boundary segments $\partial\tilde \Om^{iU},  \partial\tilde\Om^{iD}$ are determined analogously with \eqref{sad}, \eqref{sae}, \eqref{saea}, using \eqref{tzia}, \eqref{tzib}.

In particular,  from \eqref{sab}, \eqref{sac}, \eqref{saca},  necessarily
\be\label{tdc} \partial\Om^U \subseteq \mathop{\cup}\limits_i \partial \tilde \Om^{iU}.\ee

We choose the $\tilde\Om^i$ and $\tilde e (\cdot; z)$ such that for each $i$,

\be\label{tdd} \{ x \in  \partial\tilde \Om^{i} | \tilde e (x; z)\cdot \tilde \nu^i (x) = 0 \}  \subset \partial\Om^L\cup \partial\Om^D,\ee
and
\be\label{tde} \partial\Om^{iD} \subseteq \partial\Om^D \cup \big(\mathop{\cup}\limits_{i' < i} \partial\tilde\Om^{i'U}\big).\ee

From \eqref{tda}, the limiting values of $\tilde e(\cdot; z)$ on $\partial\tilde \Om^i$ are such that in general
\be\label{tdf} \tilde e^i (x;z) \neq \tilde e^{i'} (x; z), \; \; x \in \partial\tilde\Om^i \cap \partial\tilde\Om^{i'}.\ee

Nonetheless, from \eqref{tch}, \eqref{tde}, \eqref{tza}, in \eqref{tcm}, with suitable constants $c_\eta$,
\be\label{tdg} \big| \intl_{\partial\tilde\Om^{iD} } \theta_\eta \cdot \tilde B^i_\eta \theta_\eta \big| \le c_\eta\suml_{i' < i} \intl_{\partial\tilde \Om^{i'U}\cap \partial\tilde \Om^i} \theta_\eta \cdot \tilde B^{i'}_\eta \theta_\eta.\ee

Thus applying \eqref{tcm} recursively, using \eqref{tdg}, \eqref{tdc}, we have for each~$\eta$,
\be\label{tdh} \intl_{\partial\Om^U} \theta_\eta \cdot \tilde  B^\cdot_\eta \theta_\eta \le c_\eta \suml_i \left(\| w^{1/2}_\eta R(z) \theta_\eta\|^2_{L_2(\tilde \Om_i)} + \|w^{1/2}_{\Ga\eta} S(z) \theta_\eta
\|^2_{L_2(\tilde \Om^i \cap \Ga(z))}\right)\ee
which implies stability of $(z, P(z))$ satisfying \eqref{tcb}.

\end{proof}

However, because of the need  of the constants $c_\eta$ in \eqref{tdg}, we have lost control of the required regularity  of $\dot b\in B_{P(z;\tilde e)}$.  In general  \eqref{rjk}, \eqref{ahd} no longer suffice.

Using theorem 11.2 or 11.3, weak well posedness will be discussed below  for solution sets $\hat{\ul \cals}$ such that $P_{\hat{\ul \cals}} $ as obtained from \eqref{rfyb} is nontrivial, $P_{\hat{\ul \cals}}$ satisfying
\be\label{tdi} \ker P_{\hat{\ul \cals}} (x) \subseteq
\begin{cases}
\mathop{\cap}\limits_{z \in \hat{\ul \cals}} \range\,  \psi_{\nu,zz} (z(x)), \; \; x \in \mathop{\cap}\limits_{z \in \hat{\ul \cals}}\big( \partial\Om^U(z;\tilde e(z))\cup \partial \Om^L (z; \tilde e (z))\big)\\ 0, \; \; \hbox{otherwise}
\end{cases} \oplus \; \mathrm{span \ } \{ e_0 (x)\} \ee
and with the understanding that elements of $B_{P_{\hat{\ul \cals}}} $ are of regularity as required by theorem 11.2 or 11.3.

\subsection{Reduced systems}%11.1

The underlying system for a weakly well-posed problem need not coincide with a ``primitive" hyperbolic system of the form \eqref{aa}, satisfying the assumptions of lemma 11.1, provided that the given solution set $\hcals$ is common to both systems.  Such permits underlying systems which are not everywhere hyperbolic, and need not be of the form \eqref{aa}, obtained as reduced systems for elements $T$ of the symmetry group $\calt$ with which the primitive system is equipped.

Such $T$ are one-parameter Lie groups determined by parameters
\be\label{tea} T_x \in M^{m\times m}, \; \; x_T \in \bbr^m, \; \; T_z \in M^{n\times n}, \; \; z_D \in \bbr^n,\ee
\be\label{teaa} T_x x_T = 0, \; \, T_z z_D = 0,\ee
determining infinitesimals $x^*, y^*$
\be\label{teb}x^* = T_x x+x_T, \; \, x \in \Om; \; \; y^* = T_z y + z_D, \; y \in D.\ee

Familiar examples are translation, with
\be\label{tee} T_x = T_z = z_D = 0, \; \, x_T \neq 0,\ee
for which
\be\label{tga} \Big(\suml^m_{i = 1} (\psi^\dag_{i, z} (z))_{x_i}\Big)^* = 0,\ee
and scaling in $x$, with
\be\label{tef} T_x = I_m, \; \, x_T = T_z = z_D = 0,\ee
for which
\be\label{tgb} \Big( \suml^m_{i = 1} (\psi_{i, z} (z))_{x_i} \Big)^* = -  \suml^m_{i = 1} (\psi^\dag_{i, z} (z))_{x_i}.\ee

Examples such as rotation symmetry correspond \cite{S4} to
\be\label{tgc} x_T = x_D = 0, \; \, T_x, T_z \hbox{\ antisymmetric,}\ee
such that
\be\label{tec} (\psi(y))^* = T_x \psi(y), \; \; \, y \in D\ee
resulting in
\be\label{tgd} \Big(\suml^m_{i = 1} (\psi^\dag_{i, z} (z))_{x_i} \Big)^* = - T_z \Big(  \suml^m_{i = 1} (\psi^\dag_{i, z} (z))_{x_i}\Big).\ee

In each case \eqref{tga}, \eqref{tgb}, \eqref{tgd} the solution set is invariant under~$T$.

Each $x \in \Om$ is determined by coordinates $r(x) \in \bbr^{m-1}$, scalar $t(x)$, with $r(x)$ invariant under $T$,
\begin{align}\label{tei}0 &= r(x)^*\nonumber \\
&=r_x(x) x^*\nonumber \\
&=r_x(x) (T_x x + x_T).
\end{align}

Correspondingly, for each $x \in \Om$ there exists an invariant subset $\{ y_T (x)\} \subset~D$.
\be\label{tek} y_T (x)^* = 0.\ee

For translation \eqref{tee}, by inspection
\be\label{ter} r(x)\cdot x_T = 0,\ee
\be\label{terb} t(x) = x\cdot x_T, \; \; x \in \Om\ee
\be\label{tera} y_T(x) =y, \; \; y \in D.\ee

For scaling \eqref{tef}, with
\be\label{tesa} t(x) = x_m\ee
by convention,
\be\label{tes} r_i (x) = x_i / x_m, \; \; i = 1, \dots, m-1\ee
\eqref{tei} holds with
\be\label{tesb} r_x(x)= \frac{1}{x_m} \big( I_{m-1} {\vdots}  - r(x)\big)\ee
and \eqref{tera} again holds.

For $T$ such that $T_z$ is nonzero and antisymmetric, $x_D = 0$, we satisfy \eqref{tek} at each $x \in \Om$ with
\be\label{tej} \{ y_T(x)\} = \{ T_D (x) D\}, \; \; \, T_D(x) \in M^{n\times n}.\ee

From \eqref{tek},  \eqref{tej}, \eqref{teb}, necessarily for all $x \in \Om, y \in D$,
\begin{align}\label{tejz} 0 &= (T_D (x)y)^*\nonumber \\
&= (T_D (x))^* y + T_D (x) T_zy;\end{align}
here setting
\be\label{tejy} t(x)^* = 1\ee
by convention and using $T_z$ antisymmetric, explicitly
\be\label{tejx} T_D (x) = exp (-t(x) T_z).\ee

Compatibility of \eqref{tejx}, \eqref{tejy} with \eqref{teb},
\be\label{tejw} (T_D (x))^* = T_{D,x} (x) (T_x x+x_T)\ee
requires
\be\label{tejv} t_x(x) T_x x = 1,\; \; t_x(x) x_T=0.\ee

An example is rotation symmetry in the $x_1, x_2$ plane with $n = m = 3$,
\be\label{tha} T_x = T_z = \begin{pmatrix} 0 &-1 &0\\1 &0 &0\\ 0 &0 &0\end{pmatrix}, \; \; \, x_T = x_D = 0;\ee
\be\label{thb} r (x) = {|r|(x)\choose x_3}, \; \; |r|(x) \, \eqadef\, (x^2_1 + x^2_2)^{1/2};\ee
\be\label{thc} t(x) = \tan^{-1} (x_2 / x_1);\ee
\be\label{thd} T_D (x) =  \begin{pmatrix} \cos t  &\sin t &0\\ - \sin t  &\cos t &0\\ 0 &0 &1\end{pmatrix}.\ee

For $T$ such that \eqref{tera} holds, in \eqref{tej} we have
\be\label{the} T_D(x) = I_n, \; \; x \in \Om.\ee

From \eqref{tek}, invariant solution sets  $z_T \in \hcals_T$ satisfying
\be\label{tfa} (z_T (x))^* = 0 \ee
almost everywhere in $\Om$, are obtained with
\be\label{tfb} z_T (x) \in \{ y_T (x)\}.\ee

With $T_D $ obtained from \eqref{tejx} or \eqref{the}, such $z_T$ satisfy a reduced system

\begin{align}\label{tfc}
0 &= \suml^m_{i = 1} (\psi_{i, z_k} (z_T))_{x_i}\nonumber \\
& =  \suml^m_{i = 1}  \suml^n_{j = 1} (\psi_{i, z_{T,j}} (z_T) T_{D, jk})_{x_i}\nonumber \\
&= \suml^m_{i = 1} \Big(\suml^{m-1}_{l = 1} \suml^n_{j = 1}
 (\psi_{i, z_{T, j}}(z_T))_{r_l} T_{D, jk} \; r_{l, x_i}\Big.,\; \; \,  k = 1, \dots, n. \end{align}

For translations \eqref{tee}, with \eqref{tesa} by convention, \eqref{tfc} is just the stationary form of \eqref{aa},
\be\label{tfd} \suml^{m-1}_{i = 1} (\psi_{i, z_{T,k}})_{x_i} = 0, \;\; k = 1, \dots, n.\ee

For scaling \eqref{tef}, using \eqref{tesb}, \eqref{tfc} is the familiar self-similar form of \eqref{aa},
\be\label{tfg} \frac{1}{x_m} \Big( \suml^{m-1}_{l = 1}\big((\psi_{l, z_{T, k}} (z_T)\big)_{r_l} - r_l (\psi_{m, z_{T, k}}(z_T))_{r_l})\Big) = 0, \; \, k = 1, \dots, n.\ee

For rotations $\big($\eqref{tha}, \eqref{thb}, \eqref{thc}, \eqref{thd}$\big)$, \eqref{tfc} becomes the expected
\be\label{tff} (\psi_{3, z_{T, k}} (z_T))_{x_3} + \frac{1}{|r|(x)} (x_1 \psi_{1, z_{T,k}} (z_T) + x_2 \psi_{2, z_{T, k}} (z_T))_{|r|(x)} = 0,\;  k = 1, \dots, n.\ee

\subsection{Compatible domains}%11.2

Solutions $z_T$ satisfying \eqref{tfc}, extended as per \eqref{tfa}, \eqref{tfb}, satisfy \eqref{aa} locally.  Compatible domains $\Om^r,\;  \Om $ such that solutions of \eqref{tfc} for $r \in \Om^r$ are solutions of \eqref{aa} for $x (r, t) \in \Om$, are obtained satisfying
\be\label{tfe} \Om = \{ x(r, t)\, | \, r(x(r, t)) \in \Om^r, \; \; \, t \in \Om^t \, \eqadef \, (\ul{t}, \ol{t}(r))\},\ee
with a convention
\be\label{trb} r(x(r, \ul{t}) = r, \; \; \, r \in \Om^r.\ee

The boundaries $\pOm^r, \pOm$ are then related by
\be\label{tfea} \pOm = (\pOm^r \times \Om^t) \cup (\Om^r\times \pOm^t)\ee
with obvious abuse of notation
\be\label{trc} \pOm^r \times \Om^t\, \eqadef \, \{ x(r, t) \, | \, r (x(r, t)) \in \pOm^r, \; \; \, t \in \Om^t\},\ee
\be\label{trd} \Om^r \times \pOm^t\, \eqadef \, \{ x(r, \ul{t}) \, | \, r \in \Om^r\} \cup \{ x(r, \ol{t}(r)) \, | \, x(r, \ol{t}(r) - 0) \in \Om\}.\ee

This choice of $\Om, \pOm$ permits a precise statement that boundary conditions for \eqref{tfc} on $\pOm^r$ are satisfied on $\pOm^r\times \Om^t$.  It is required that no points $x(t)$ determined from
\be\label{tra} x_t(x(t)) = x^* (x(t)), \; \; t > \ul{t}\ee
using \eqref{teb}, \eqref{tejy} traverse the boundary segment $\pOm^r\times \Om^t$ as $t$ varies.  Using \eqref{trc}, \eqref{teb}, this condition is
\begin{align} \label{uth} \nu (x) \cdot x^*(x) &=\nu(x)\cdot (T_x x + x_T)\nonumber\\
&=0, \; \; x \in \pOm^r \times \Om^t.\end{align}

Within $\pOm^r \times \Om^t$,
\be\label{trf} \nu (x, t) = {\nu_\perp (r)\choose 0}\ee
\be\label{trg} \nu_\perp (r) \cdot r = 0, \; \; r \in \pOm^r.\ee

If \eqref{uth}, \eqref{trf}, \eqref{trg} fails, then solutions of \eqref{tfc} within $\Om^r$ determine solutions of \eqref{aa} only within a subset $\ul{\Om}$
\be\label{trh} \ul{\Om} = \{ x(t) \in \Om\}\ee
with $x(t)$ determined from \eqref{tra} with
\be\label{tri} r(x(\ul{t})) \in \Om^r,\ee
limiting ${\ol t}(r)  -\ul{t}$ in particular.

\section{Problem class details}%12

Based on the results of sections 9-11, we characterize problem classes for which weak well-posedness is realistic if by no means guaranteed.

There are three essential sources of restrictions on such problem classes.

A reduced system \eqref{tfc} is regarded as the underlying system of physical interest, related to a ``primitive" system \eqref{aa} by a symmetry $T$ as discussed in subsection 11.1.  Weak well-posedness, as has been discussed for weak solutions of \eqref{aa}, in a domain $\Om$, may be applied to weak solutions of \eqref{tfc} in a domain $\Om^r$ by making the solution set $\hcals$, as appearing in definition 8.2, isomorphic to a subset of the weak solution set for \eqref{aa} in $\Om$.

An entropy condition is required on the elements of $\hcals$. At least for systems \eqref{aa} satisfying the assumptions of lemma 11.1, the familiar inequality \eqref{faf} will suffice.

There are competing requirements on $P_\cals$.  The underlying assumption of solutions in the range of a Frechet differentiable map $\cala $ on the boundary data implies solutions $z = \cala(b)$ majorized by $b \in \tilde B_{P_\cals}$.  Such requires $\ker P_\cals$ sufficiently large, for example such that there are no nontrivial solutions of \eqref{agc} with $\theta \in H(z, P_\cals, \cdot, \cdot)$, and may additionally imply restrictions in $D$ and the form of $q(\psi_{\cdot, z}(z))$, $z \in \hat\cals$.  As against this, stability  of $(z, P_\cals)$ for all $z \in \hat \cals$, as discussed in section 10, requires $\ker P_\cals$ sufficiently small.

Elements $z_{\hcals} \in \hcals$ satisfying \eqref{tfc}, \eqref{aa} simultaneously are determined with domains $\Om^r, \Om$ satisfying \eqref{tfe}, boundaries $\pOm^r, \pOm$ satisfying \eqref{tfea}.

 No a priori boundary conditions are applied on the boundary segments $\Om^r \times \pOm^t$, on solutions $z_{\hat\cals} $ or on the space $X_{P_\cals}$.  By convention,
 \be\label{uha} P_\cals (x) = 0, \;\, e_0(x) = 0, \;\, b(x) = \psi^\dag_{\nu(x), z} (z_{\hat{\cals}} (x)), \; \; x \in \Om^r \times \pOm^t.\ee

At any point
\be\label{uca} x \in \pOm^r \times \Om^t, \; \; r(x) \in \pOm^r\ee
the unit normals $\nu(x)$ to $\pOm$  $\nu_T (x)$ to $\pOm^r$ are related by
\be\label{ucaa} \nu_T(x) = r_x(x) \nu(x), \ee
and the ``normal potential" function in (2.6), from which boundary data is expressed assumes the form
\be\label{ucba} \psi_\nu(z)(x) = \nu_T (x) \cdot (r_x(x) \psi(z)).\ee

For simplicity of notation, below we abbreviate
\be\label{uhb} E(z) \, \eqadef\, \suml^m_{i = 1} (\psi_{i, z}^\dag (z))_{x_i}\ee
equivalently obtained from \eqref{tfc}.  Solutions $z$ correspond to $E(z)$ (weakly) vanishing.

Using \eqref{tga}, \eqref{tgb}, \eqref{tgd} we identify
\begin{equation*}T_E \in M^{n\times n}\end{equation*}
such that for any $z$ of the form \eqref{tfa}
\be\label{uhc} E(z)^* = T_E E(z).\ee

For any fixed $t\in \Om^t$, the weak form of \eqref{tfc} follows from
\be\label{uhka} \iintl_{\Om^r} dr \, \omega_T E(z) \cdot \theta = 0, \; \;  \theta \in X_{P_\cals, T};\ee
\begin{align}\label{uhd} X_{P_\cals, T}\, \eqadef\,  &\{ \theta = \theta(r), \; r \in \Om^r, \; \theta \in (C(\Om^r)\cap W^{1, 1} (\Om^r))^n,\nonumber \\
&P_\cals \theta\mathop{\mid}\limits_{\pOm^r} = 0 \};
\end{align}
\be\label{uhj} \om_T \, \eqadef \; \frac{\partial(x_1,\dots, x_m)}{\partial (r(x), t(x))}.\ee

Such is expressed with boundary conditions
\be\label{uhkb} b = (I_n-P_\cals) \psi^\dag_{\nu_T, z} (z) \ee
\be\label{uhkc} \psi_{\nu_T, z} (z) e_0 = 0\ee
almost everywhere on $\pOm^r$.

We extend $\theta, P_\cals, b, e_0, \om_T$ so that \eqref{uhka} is independent of $t \in \Om^t$, thus equivalent to the weak form of \eqref{aa}, at least where \eqref{uth} holds;
\be\label{uhk} \iintl_\Om dx\,  E(z) \cdot \theta = \intl_{\Om^t} dt \iintl_{\Om^r} dr \, \om_T E(z) \cdot \theta = 0, \; \, \theta \in X_{P_\cals}.\ee

For translation or scaling symmetries, where $T_z = 0, z_D = 0, T_x $ is either $0$ or $I_m$, and $T_E = 0 $ or $-I_m$, such is easily accomplished with
\be\label{uhkd} z, \theta, \psi_{\nu_T, z}, P_\cals, b, e_0, \om_T \; \; \hbox{ independent \ of \ } t,\ee
and using \eqref{ucaa}
\be\label{uhke} \nu_T = \nu\ee
within $\pOm^r\times \Om^t$.

 Symmetries satisfying \eqref{tgc}, \eqref{tec} correspond to
\be\label{uhkz} x^* = T_x x, z^* = T_z z, T_E = T_z, \, \nu* = T_x \nu, \; r^*_x = - r_x T^\dag_x = r_x T_x, \; \om^*_T = 0,\ee
and \eqref{uhka} is achieved with
\begin{align}\label{uhky} &\theta^* = T_z \theta, \, b^* = T_z b, \; e^*_0 = - T_z e_0\nonumber \\
&(I_n-P_\cals)^* = T_z (I_n-P_\cals) - (I_n - P_\cals) T_z;\end{align}
details omitted.

As outlined in section 6, we seek solutions $z_{\hcals} \in \hcals$ as familiar vanishing dissipation limits,
\be\label{uma} \tilde z_h\overset{h \downarrow 0}{\longrightarrow} z_{\hcals}\ee
with $\tilde z_h$, also of the form \eqref{tfa}, \eqref{tfb} satisfying suitably discretized \eqref{fad}, \eqref{fac}, \eqref{fae}.

A dissipation term $\cald$ is required compatible with \eqref{uhc}, satisfying
\be\label{uja} (\cald(\tilde z_h))^* = T_E \cald (\tilde z_h),\ee
satisfying \eqref{fbb} and a one-sided condition
\be\label{ujc} \underset{h \downarrow 0 }{\lim\sup}\; \; h \tilde z_h \cdot \cald (\tilde z_h) \le 0 \ee
in the sense of distributions, pursuant to obtaining \eqref{faf} for limits \eqref{uma}.

An a priori bound on solutions $z_{\hat\cals}$, depending on the corresponding boundary data, may be obtained by application of \eqref{fhd}.

For $\tilde z_h, z_{\hcals}$ satisfying \eqref{tfb}, from \eqref{aaea}, \eqref{tec},
\begin{align}\label{ujd} q(\psi_{\cdot, z} (\tilde z_h))^* & = T_x q(\psi_{\cdot, z} (\tilde z_h)),\nonumber \\
q(\psi_{\cdot, z} ( z_{\hcals}))^* & = T_x q(\psi_{\cdot, z} ( z_{\hcals})).\end{align}

For $z$ satisfying \eqref{tfa}, \eqref{tfb}, denoting
\be\label{uje} q_r (z, x) \,  \eqadef \, r_x (x) q (\psi_{\cdot, z} (z)),\ee
for $E(z)$ given in \eqref{uhb}, in the sense of distributions,
\begin{align}\label{ujea} E(z) \cdot z &=\triangledown_x\cdot q (\psi_{\cdot, z} (z))\nonumber \\
&=\triangledown_r \cdot q_r(z, \cdot) - \suml^m_{i = 1} \Big( \suml^{m-1}_{j = 1} \, (r_{x, ji})_{x_j} q_i (\psi_{i, z} (z)\Big).\end{align}

Restricting $D$ as necessary, we assume
\be\label{ujf} |y| \le c + c \,| \, q_r(y, x)\, |, \; \; y \in D, \, x \in \Om.\ee

Mirroring \eqref{fhc}, we assume $\ker P_\cals$ sufficiently large that almost everywhere on $\pOm^r$,
\be\label{xca} \nu_T \cdot q_r (z_{\hcals}, \cdot) \ge - c_b.\ee

Finally we assume that the entropy flow is approximately potential in the sense that
\be\label{ujh} \| q_r (z_{\hcals}, \cdot)\|_{L_\infty (\Om^r)} \le - c \; \,\underset{\Theta}{\lub} \iintl_{\Om^r} \triangledown_r \Theta q_r(z_{\hcals}, \cdot)\ee
nonnegative
\be\label{ujg}  \Theta \in W^{1,\infty} (\Om^r), \; \; \, \| \Theta\|_{L_\infty (\Om^r)} \le 1.\ee

Then from \eqref{ujh},  \eqref{ujg},  \eqref{xca},  \eqref{ujf},  \eqref{uma},  \eqref{fae},
\be\label{ujj} \| z_{\hcals} \|_{L_\infty (\Om^r)} \le c \| b \|_{L_1 (\Om^r)}.\ee

As against this, stable $(z_{\hcals}, P_\cals)$ for all $z_{\hcals} \in \hcals$ is required for weak well posedness in definition 8.2.  Here we assume \eqref{sab} applied to $\pOm^r$, obtaining $\pOm^{rU},\,  \pOm^{rL}, \,  \pOm^{rD}$ independent of $z_{\hcals} \in \hcals$.
Comparing \eqref{sac} or \eqref{tdd}  with \eqref{fhc}, such is anticipated with $e(\cdot; z_{\hcals})$ or $\tilde e (\cdot; z_{\hcals})$ roughly parallel to $q_r(z_{\hcals}, \cdot)$.

Restricting $\hcals$ as necessary, we take $\tilde e (\cdot; z_{\hcals})$ independent of $t, z_{\hcals} \in \hcals$  (satisfying \eqref{tfb}) within $\Om^r$.  Then from \eqref{tzia}, \eqref{tzib}, modulo a set of measure zero
\be\label{uada} \pOm = \partial\tilde \Om^U \cup \partial\tilde \Om^D;\ee
\be\label{uadb} \partial\tilde\Om^U (resp.\, \partial\tilde \Om^D) \, \eqadef\, \{ x\in \partial\Om\, | \, \nu(x)\cdot \tilde e(x, \cdot) < 0 (resp.\,  > 0)\}.\ee

Theorems 10.4, 11.2, 11.3 are now applied with $\tilde e (\cdot; z)$ replacing $e(\cdot; z)$ within $ \pOm$.  Stability of $(z_{\hcals}, P)$ follows for any $P$ such that almost everywhere in $\pOm$,
\be\label{uad} \ker P \subseteq \ker P(z_{\hcals}, \tilde e (\cdot ; z_{\hcals}))\ee
as obtained from \eqref{rfyb} (with $\tilde e $ replacing $e$).

From \eqref{rfyb}, \eqref{uad}, we obtain an upper bound on $\ker P_\cals$, competing with \eqref{xca},
\be\label{ujk} \ker P_\cals (x) \subseteq
\begin{cases}
\underset{z_{\hcals} \in \hcals}{\cap} \range \; \psi_{\nu, zz} (z_{\hcals} (x)) \oplus  \hbox{span\ } \{ e_0 (x)\}, \; \; x \in \partial\Om^{rU} \cup \pOm^{rL}; \\
\hbox {\ span\ }  \{ e_0 (x)\},  \; \; \,  x \in \pOm^{rD}.
\end{cases}\ee

    \subsection{Fluid flow}%12.1

The familiar examples of \eqref{aa} for which the above conditions can be satisfied are Euler systems, with $x_m$ the time-like variable and $D$ mildly restricted so that \eqref{tab} holds.  Here for simplicity we restrict attention to a single isentropic fluid, for which
\be\label{uba}    n = m,\ee
typically 3 or 4.  We assume $T$ either translation \eqref{tee} or scaling \eqref{tef}; in either case \eqref{tesa}, \eqref{the} apply.  The reduced system \eqref{tfc} is thus either \eqref{tfd} or else \eqref{tfg}.

For either case, using \eqref{uba},  setting
\be\label{ubb} z \, \eqadef \, \begin{pmatrix} u_1\\ \vdots\\ u_{n-1} \\ z_n\end{pmatrix};\ee
the dependent variable $u$, of dimension $n-1$,  is identified as the fluid velocity, and \eqref{aa} is obtained with
\be\label{ubc} \psi_l(z) = u_l \psi_m (z), \; \; \, l = 1, \dots, m-1,\ee
identifying $\psi_m (z)$ as the fluid pressure.

A given equation of state determines
\be\label{ubd} \psi_m(z) = \psi_m(z_n + \tfrac 12 |u|^2),\ee
with $z_n + \tfrac 12 |u|^2$ identified as the fluid
enthalpy,
positive by convention.

Phase space $D$ is restricted so that for all $H = H ( y) = y_n + \frac 12 \suml^{n-1}_{i = 1} y^2_i, \; \; y \in D$,
\be\label{ube} \psi'_m (H), \psi''_m (H) > 0\ee
\be\label{ubf} \psi'''_m (H) \neq 3 (\psi''_m(H))^2 / \psi'_m(H)\ee
with primes denoting derivatives on $\psi_m$ with respect to the enthalpy $H$  throughout.   From \eqref{ube}, $\psi'_m(z)$ is identified as the fluid density and
\be\label{ubg} c_s (z) = (\psi'_m (z) /\psi''_m (z))^{1/2}\ee
as the sound speed.

From \eqref{aaea}, \eqref{ubb}, \eqref{ubc}, the entropy flux components are
\be\label{ucc} q_i (\psi^\dagger_{i, z} (z)) = u_i (z_n + |u|^2) \psi'_m(z), \; \; \, i = 1, \dots, m-1;\ee
\be\label{ucd} q_m(\psi^\dag_{m, z}(z)) = (z_n + |u|^2) \psi'_m(z) - \psi_m(z).\ee

The rotation symmetry apparent in \eqref{ubb}, \eqref{ubc}, facilitates satisfying the assumptions of lemma 11.1.  With $z_{\hcals}$ independent of $x_m$, for $k=1, \dots, K$, any $x \in \Ga_k(z_{\hcals})$, in \eqref{tac}, \eqref{tad}, \eqref{tae}, we have
\be\label{ubh} \hat\tau(x) = \hat\mu_k(x) \; \eqadef\, {\mu_T(x)\choose 0}, \; \; \, \mu_T(x) \in \bbs^{m-1}.\ee

The condition \eqref{ubf}, typically satisfied with $\psi'''_m$ negative, suffices to determine that independently of $z_{\hcals}, \hat\mu_k$, characteristic fields $1, n$ are genuinely nonlinear and $2, \dots, n-1$ are linearly degenerate . Details omitted.

Thus lemma 11.1 applies to both stationary and self-similar problems.  The different expressions  \eqref{ter}, \eqref{tes} nonetheless result in material difference in subsequent treatment of the two problem classes.

The forms of \ $\Ga(x_{\hcals}), \Ga_{\hcals}$ are determined by pointwise solution of the Rankine-Hugoniot relation \eqref{afb}.  On $\Ga_k(z_{\hcals})$, using \eqref{ubh}, \eqref{tei}, \eqref{ter}, \eqref{tes}, \eqref{ubc},
\begin{align}\label{ubi}
0 = [\psi_{\hat{\mu}_{k},z} (z)]^\dag
&= [\mu_T \cdot r_x(\cdot) \psi^\dag_z(z)]\nonumber \\
&=
\begin{cases} [\psi^\dag_{\mu_{T},z} (z)], \; \; \; \hbox{\ for\ stationary\ problems\ } \\
 [\psi^\dag_{\mu_T ,z }] - (r \cdot \mu_T) [\psi^\dag_{m,z} (z)], \; \; \; \hbox{\ for\ self-similar problems},
 \end{cases} \end{align}
 with
 \be\label{ubib} \psi_{\mu_T} (z) \; \eqadef\; (\mu_T \cdot u) \psi_m(z),\ee
 with $\mu_T \cdot u$ identified as the normal velocity component pointwise on $\Ga_k (z_{\hcals})$ using \eqref{ubh}.

 The solutions of \eqref{ubi}  are familiar.  Shocks correspond to $\Ga_k(z_{\hcals})$ associated with either genuinely nonlinear characteristic field 1 or $n$, and satisfy
 \be\label{ubia} [\mu_T \cdot u], [\psi_m(z)] \neq 0, \; \; \, [u-\mu_T (\mu_T \cdot u)] = 0.\ee

 In stationary problems, such necessarily occur with ``supersonic" limiting velocities satisfying
 \be\label{ubj} | \mu_T \cdot u | > c_s(z)\ee
 for at least one side of the discontinuity segment $\Ga_k(z_{\hcals})$.  However in self-similar problems, \eqref{ubj} is replaced by
 \be\label{ubja} |\mu_T \cdot (u - r)| > c_s (z),\ee
 again details omitted.

For the isentropic system \eqref{ubb}, \eqref{ubc}, there are no ``contact discontinuities."
 Tangential discontinuities, with
\be\label{ubk} [\mu_T \cdot u], \;  [\psi_m(z)] = 0, \; \; [u-\mu_T (\mu_T \cdot u)] \neq 0\ee
correspond to $\Ga_k(z_{\hcals})$ associated with linearly degenerate characteristic fields $2, \dots, n-1$.  In stationary problems, such occur with limiting values
\be\label{ubl} \mu_T\cdot u = 0\ee
on both sides of $\Ga_k (z_{\hcals})$.  In self-similar problems, such occur with
\be\label{ubm} \mu_T\cdot (u-r) = 0\ee
on both sides of $\Ga_k(z_{\hcals})$.

Tangential discontinuities are ``unphysical" in the sense that they may be incompatible with  \eqref{fad}, depending on the adopted dissipation term.  For example, a commonly adopted dissipation in \eqref{fad}, anticipated in \eqref{fbp},
\be\label{ubn} \cald_k(z) = \suml^{m-1}_{l = 1} z_{k,r_l r_l}, \; \; \, k =1, \cdots, n,\ee
satisfies \eqref{ujc} and assures \eqref{fhe}, but is such that if \eqref{fad} holds, for positive $\Th$ in an open neighborhood intersecting a tangential discontinuity
\be\label{ubo} \iint \Th \om_T \tilde z_h \cdot \cald(\tilde z_h) \overset{ h \downarrow 0}{\longrightarrow}- \infty.\ee

A ``milder" form of dissipation
\be\label{ubp} \cald_k(z) = \begin{cases} \Big( \suml^{m-1}_{l = 1} u_{l, r_l}\Big)_{r_k}, \; \; \, k = 1, \cdots,  n-1\\
\; \; \; 0,  \quad k = n \end{cases}\ee
satisfies \eqref{ujc} and avoids \eqref{ubo} at the expense of potential failure of \eqref{fhe}.

For either problem class, exclusion of tangential discontinuities complicates determination of any $\Ga_{\hcals}$  for problem classes in which the set of interior interaction points
\be\label{ubr} x \in \Ga^I (z_{\hcals}) \cap \Om\ee
is nonempty, recalling \eqref{saaz}.  Such $x$ cannot be rarefaction centers if \eqref{tca} holds.  ``Triple points" $x$ are precluded by requirement of satisfying the Rankine-Hugoniot conditions, in a neighborhood of such $x$, for either problem class.  Thus in the absence of rarefaction centers and of  tangential discontinuities, any such points $x$ are necessarily limit points of four segments $\Ga_k(z_{\hcals})$, two 1-shocks and two $n$-shocks.

Remaining details are different for the two problem classes considered.

For stationary problems, adopting \eqref{tesa} for definiteness we have
\be\label{tesc} r_i = x_i, \; \; i = 1,\dots, m-1\ee
\be\label{tesd} r_x = (I_{m-1} \vdots \, 0) \in M^{(m-1)\times m}\ee
and in \eqref{uhj}
\be\label{uta} \om_T = 1.\ee

The condition \eqref{uhc} follows from \eqref{tga}.

From \eqref{ucba}, \eqref{tesd}, \eqref{ubc}, for any $r \in \pOm^r$,
\be\label{ucbb} \psi_\nu (z) (r) = (\nu_T (r) \cdot u(r)) \psi_m(z(r)).\ee

Differentiating \eqref{ucbb}, in \eqref{fae}, \eqref{raa},
\be\label{uda} \psi^\dag_{\nu,z} (z) = {\nu_T\choose 0} \psi_m(z) + (\nu_T\cdot u) \psi'_m (z) {u\choose 1},\ee
and in \eqref{rfyb}, \eqref{ujk}
\begin{align}\label{udc}
&\psi_{\nu, zz} (z) = \psi'_m(z) \Big( {\nu_T \choose 0} \Big. (u^\dag \; \; 1) + {u\choose 1} (\nu^\dag_T \;  \; 0)\nonumber \\
+& (\nu_T \cdot u)  \Big.\; \diag \;  (1, \dots, 1, 0)\Big) + (\nu_T\cdot u) \psi''_m(z) {u\choose 1} (u^\dag \; \, 1).\end{align}

In view of \eqref{uda},  boundary conditions on $\partial\Om^r\times \Om^t$ are expressed with the assumption that almost everywhere on $\partial\Om^r, \sgn (\nu_T \cdot u)$ is independent of $z_{\hcals} \in \hcals$, restricting $\hcals$ as necessary.  Then modulo a set of measure zero,
\be\label{udb} \partial\Om^r = \partial\Om^r_- \cup \partial\Om^r_0 \cup \partial\Om^r_+,\ee
with
\be\label{udba} \nu_T(r)\cdot u(r) < 0, \; \; \, r \in \partial\Om^r_-,\ee
\be\label{udbb} \nu_T(r)\cdot u(r) = 0, \; \; \, r \in \partial\Om^r_0,\ee
\be\label{udbc} \nu_T(r)\cdot u(r) > 0, \; \; \, r \in \partial\Om^r_+.\ee

Within $\partial\Om^r_\pm$, using \eqref{udc}, \eqref{ubg}, $\psi_{\nu,zz} (z)$ is nonsingular, implying no restriction from \eqref{ujk} on $\ker P_{\cals}$, except at ``sonic" points, where
\be\label{udd} (\nu_T \cdot u) = c_s(z),\ee
as obtained from \eqref{ubg}.

We tacitly assume $\hat\cals$ such that \eqref{udd} holds at most on a set of measure zero for any $z \in \hat\cals$.

From \eqref{uda}, \eqref{fae}, at any point in $\partial\Om^r$ where \eqref{udd} does not hold, prescribable inhomogeneous boundary data $b$ in \eqref{uhkb} includes
\be\label{udda} \psi_m(z) + (\nu_T \cdot u)^2 \psi'_m(z), \; \; \hbox{for\ } {\nu_T\choose 0} \in \ker P_\cals;\ee
\be\label{uddb} (\nu_T \cdot u) (\nu_{T\perp} \cdot u) \psi'_m (z), \; \; \hbox{ for\ } {\nu_{T\perp}\choose 0} \in \ker P_\cals, \; \; \nu_{T\perp} \cdot  \nu_T=0;
\ee
\be\label{uddc}(\nu_T\cdot u) \psi'_m(z), \; \, \hbox{ \ for\ } \begin{pmatrix} 0\\ \vdots\\ 0 \\ 1\end{pmatrix} \in \ker P_\cals.\ee
In particular, using \eqref{ube}, wherever  $P_\cals = 0$ all of $z$ is prescribable.

Within $\partial\Om^r_0$, by inspection of \eqref{udc}, using \eqref{uddb},
\be\label{ude} \range\; \,  \psi_{\nu,zz} = \hbox{ span\ }  \Big\{ {\nu_T\choose 0}, \; {u\choose 1} \Big\}.\ee

Using \eqref{ujk}, \eqref{ude}, \eqref{uda}, \eqref{fae},  here prescribable inhomogeneous boundary data  is limited to
\be\label{udea} \psi_m(z), \; \hbox{wherever \ } {\nu_T\choose 0} \in \ker P_\cals.\ee

However, \eqref{udbb} is itself a condition of the form \eqref{agea}, with
\be\label{udeb} e_0 = \begin{pmatrix} 0\\ \vdots\\ 0 \\ 1\end{pmatrix}\ee
obtained using \eqref{uda}, \eqref{ube}.  Here \eqref{agec} applies, so  necessarily within $\pOm^r_0$
\be\label{udec} e_0 \in \ker P_\cals.\ee

Additional  conditions on $z_{\hcals}, P_{\cals}$ suffice to satisfy \eqref{fhc}.

For stationary problems, from \eqref{uje}, \eqref{ucc}, \eqref{ucd}, \eqref{ter},
\begin{align}\label{uce} q_{r,l} (z) &= q_l(\psi_{\cdot, z}(z))\nonumber \\
&= u_l(z_n+ |u|^2) \psi'_m(z), \; \; \, l = 1, \dots, m-1.\end{align}

Thus in \eqref{fhc}, almost everywhere within $\partial\Om^r$,
\be\label{ucea} \nu_T \cdot  q_r (z) = (\nu_T \cdot u) (z_n + |u|^2) \psi'_m (z).\ee

Using positive fluid density and enthalpy, use of \eqref{udba}, \eqref{udbb}, \eqref{udbc} in \eqref{ucea}, it follows that \eqref{fhc} is nontrivial only in $\partial\Om^r_-$, and can be satisfied only by prescribing all of $z$, necessarily with
\be\label{udf} P_\cals\mathop{\mid}\limits_{\partial\Om^r_-} = 0\ee
almost everywhere.

From \eqref{udf}, \eqref{udec}, using \eqref{uada}, \eqref{uadb}, \eqref{uad}, such requires $\tilde e (\cdot; z_{\hcals}), z_{\hcals} \in \hcals$ constructed such that (modulo a set of measure zero) for all $z_{\hcals} \in \hcals$
\be\label{udfa} (\partial\Om^r_- \cup \partial\Om^r_0) \times \Om^t\subseteq \{ x \, | \, \tilde e (x; z_{\hcals}) \cdot \nu (x) < 0\}.\ee

As no prescribed data on $\partial\Om^r_+$ is required to satisfy \eqref{fhc}, we anticipate \eqref{udfa} with equality and
\be\label{udfb} \partial \Om^r_+ \times \Om^t = \{ x \, | \, \tilde e (x; z_{\hcals} ) \cdot \nu (x) > 0\}.\ee

Comparing \eqref{udfa}, \eqref{udfb} with \eqref{tzia}, \eqref{tzib}, using \eqref{udba}, \eqref{udbb}, \eqref{udbc}, a seemingly mild requirement on $\tilde e (\cdot; z_{\hcals})$ suffices in this context,
\be\label{udga} \sgn (\tilde e (x; z_{\hcals}) \cdot \nu (x) ) = \sgn (u(r(x)) \cdot \nu_T (r(x)), \; \, x \in \partial\Om^r_{\pm} \times \Om^t;\ee
\be\label{udgb} \tilde e (x; z_{\hcals} ) \cdot \nu(x) < 0, \; \; \, x \in \partial \Om^r_0 \times \Om^t.\ee

The condition \eqref{ujh} is recovered using \eqref{uje}, \eqref{ucc}, obtaining
\be\label{udh} -\iintl_{\Om^r} \, \suml^{m-1}_{l = 1}\;  \Theta_{r_l} q_{r,l} (z_{\hcals}) = - \iintl_{\Om^r} \, \suml^{m-1}_{l = 1} \big(\Theta_{r_l} u_l\big) (z_n + |u|^2) \psi'_m (z_{\hcals}).\ee

A seemingly mild assumption is made, that for any $z_{\hcals} \in \hcals$, there exist trajectories
\be\label{udi} r_\tau (r(\tau)) = u(r(\tau)), \; \; \, r(\tau) \in \Om^r/\Ga(z_{\hcals}), \; \; \, \tau > 0, \ee
\be\label{udj} r(0) \in \partial\Om^r_-, \ee
continuous on $\Ga(z_{\hcals})$, filling $\Om^r$.

Then in \eqref{ujh}, it suffices to choose a set $\{ \Theta\},\;  \Theta$ nonnegative and satisfying \eqref{ujg} with
\be\label{udk} \suml^{m-1}_{l = 1} \, \Theta_{r_l} (r) u_l(r) = - \xi_q (r) \, \Theta (r), \; \, \xi_q (r) > 0, \; \; \, r \in \Om^r/\Ga(z_{\hcals}),\ee
continuous on $\Ga(z_{\hcals})$ with ``initial data"
\be\label{udl} \Theta\mathop{\mid}\limits_{\partial \Om^r_-} \ge 0,\ee
for a judiciously chosen set $\{ \xi_q\}$.  Further details omitted.

Self-similar problem classes correspond to $T$ obtained from \eqref{tef}, $r$ from \eqref{tes}, retaining \eqref{tesa}, \eqref{the}.  With \eqref{tes} replacing \eqref{tesc} and \eqref{tesb} replacing \eqref{tesd},
\be\label{utc} \om_T = x^{m-1}_m\ee
replaces \eqref{uta} in \eqref{uhj}.

From \eqref{tgb}, by inspection
\be\label{utb} T_E = - I_n\ee
in \eqref{uhc}, and \eqref{uhkd} applies.

Euler systems admit Galilean symmetry, of the form \eqref{teb} with
\be\label{ute} u^* = - s_T, x^*_i = - s_{T,i} x_m, \; \; \, i = 1,\dots, m-1, s_T \in \bbs^{m-1}\ee
retaining \eqref{tesa}.  As a result, results from self-similar problem classes based on primitive Euler systems are obtained from those for stationary problem classes by replacement
\be\label{utd} u(r) \to u(r) - r \ee
in the leading terms, with appropriate ``lower order" adjustments.  Such is already apparent by juxtaposition of \eqref{tfd} with \eqref{tfg} or \eqref{ubja} with \eqref{ubj}.

Using \eqref{tesb}, \eqref{uca}, for self-similar problems \eqref{ucaa} holds,
and  the expression \eqref{ucba} is
\be\label{uec} \psi_{\nu} (z) (x) = \frac{1}{x_m} (\nu_T(r) \cdot (u(r) - r)) \psi_m(z).\ee

Differentiating \eqref{uec}, for self-similar problems \eqref{uda}, \eqref{udc} become, respectively,
\be\label{ued} \psi^\dag_{\nu, z}(z) = \frac{1}{x_m}\Bigg({\nu_T \choose 0} \psi_m(z) + (\nu_T \cdot (u-r)) \psi'_m(z) {u \choose 1}\Bigg) ;\ee
\begin{align}\label{uee}& \psi_{\nu,zz}(z)  = \frac{1}{x_m}( \psi'_m (z) \Bigg( {\nu_T\choose 0}  (u^\dag \; \; \, 1) + {u\choose 1}  (\nu^\dag_T\; \; \; 0)\nonumber \\
+  (\nu_T &\cdot (u-r)) \,  \diag (1, \dots, 1, 0)
 + (\nu_T \cdot (u-r)) \psi''_m(z) {u\choose 1} (u^\dag \; \; \; 1)\Bigg).\end{align}

For self-similar problems, using \eqref{tesb}, \eqref{ucc}, \eqref{ucd} in \eqref{uje}, the entropy flux is
\begin{align}\label{uef} q_r(z,x)_l &= \frac{1}{x_m} (q_l(\psi^\dag_{\cdot, z}(z)) - r_lq_m(\psi^\dag_{\cdot, z}(z))\nonumber \\
&= \frac{1}{x_m} \big((u_l-r_l) (z_n + |u|^2) \psi'_m(z) + r_l\psi_m(z)\big), \; \; l = 1, \dots, m-1.\end{align}

The form of $\Om^r$ and of the applied boundary conditions are significantly restricted for self-similar problems in comparison with stationary problems by the condition \eqref{uth}.

For stationary problems, using \eqref{teb}, \eqref{tee} it suffices that
\be\label{utj} x_T \cdot  \nu(x) = 0, \; \; x \in \pOm^r\times \Om^t,\ee
but for self-similar problems, using \eqref{teb}, \eqref{tef},
\be\label{utl} \nu(x) \cdot x = 0, \; \; \, x \in \pOm^r\times \Om^t\ee
is required.  Then using \eqref{ucaa}, \eqref{tesb}, in \eqref{utl}, necessarily
\be\label{ufz} \nu_T (r) \cdot r = 0, \; \; r \in \pOm^r\ee
independently of $x_m$, implying $\Om^r$ a cone  in $\bbr^{m-1}$, unbounded.

Self-similar problems are of particular interest as they admit interpretation as generalized  Riemann problems.  Unbounded $\Om^r$ is appropriate in this context, but unsuitable for computations Thus we shall forego \eqref{ufz} on an artificial boundary  $\pOm^r_A \subset \pOm^r$,  anticipating a limit
\be\label{ufza} ``\partial\Om^r_A \to \infty".\ee

Thus we take disjoint segments
\be\label{ufa} \partial\Om^r = \partial\Om^r_A \cup \partial\Om^r_L \cup \partial\Om^r_O\ee
with \eqref{ufz} holding within $\partial\Om^r_L$ and
\be\label{ufaa} \partial\Om^r_O\,  \eqadef\,  \{ r = 0\}.\ee

For definiteness and simplicity, in view of \eqref{ufza}
 here we take
 \be\label{ufab} \nu_T (r) \cdot r = |r|\, \eqadef\,
  \ol{r}, \; \; r \in \partial\Om^r_A,\ee
  and \eqref{ufza} becomes simply the limit $\ol{r} \to \infty$.

  For $\ol{r}$ sufficiently large, depending on $\hcals$, using \eqref{ube} it follows that within $\partial\Om^r_A, \, \psi_{\nu,zz} (z_{\hcals})$ is strictly negative definite.  (For large $|r|$, the system \eqref{tfg} is hyperbolic, with $-r$ time-like.) From the assumed convexity \eqref{tab} and  \eqref{aaea}, there exists a constant $c_{\hcals}$, again depending on $\hcals$, such that within $\Om^r/\Ga(z_{\hcals})$,
  \be\label{ufb} c_{\hcals} q_m(z_{\hcals}) > |\psi_{\cdot, z}(z_{\hcals} )|.\ee

  It follows from \eqref{uef}, \eqref{ufab}, \eqref{ufb} that for $\ol{r}$ sufficiently large, depending on $\hcals$, within $\partial\Om^r_A$, independently of $z_{\hcals} \in \hcals$,
  \be\label{ufc} \nu_T (r) \cdot q_r(z_{\hcals}, \cdot ) < 0.\ee

  Thus \eqref{fhc} is nontrivial throughout $\partial\Om^r_A$,
  which requires
  \be\label{ufd} P_{\cals}  \mathop{\mid}\limits_{\partial\Om^r_A} = 0,\ee
  almost everywhere, analogously with \eqref{udf}.

  Then from \eqref{ufd}, \eqref{uada}, \eqref{uadb} necessarily
  \be\label{ufe} \partial\Om^r_A \subseteq \partial\tilde \Om^U,\ee
  and from \eqref{ucaa}, \eqref{ufab}, within $\partial\Om^r_A$,
  \be\label{uff} \tilde e (r; z_{\hcals} )\cdot  \nu_T(r) < 0, \; \; |r| = \ol{r}.\ee

  From \eqref{ufz}, using \eqref{ued}, \eqref{uee}, within $\partial\Om^r_L $ the boundary conditions for self-similar problems coincide with those obtained above for stationary problems.

  However the prescribable boundary data on $\partial\Om^r_L$ is severely limited.

From \eqref{uada}
\be\label{ufg} \p\Om^r_L  = (\p\Om^r_L \cap \p\tilde \Om^U) \cup  (\p\Om^r_L \cap \p\tilde \Om^D).\ee

From \eqref{uadb}, \eqref{uff}, \eqref{tzic}, $\p\Om^r_L \cap \p\tilde \Om^D$ cannot be empty; typically
\be\label{ufh} \p\Om^r_L\subseteq \p\tilde \Om^D.\ee

Within $\p\Om^r_L\cap \p\tilde \Om^D$, from \eqref{uad}, \eqref{rfyb}
\be\label{ufi} \ker P_\cals = \hbox{span\ } \{ e_0(\cdot)\}.\ee

Typically
\be\label{ufj} \p\Om^r_L\subseteq \p\Om^r_0\ee
satisfying \eqref{udbb}, with $e_0$ obtained from \eqref{udeb}.

Recovery of \eqref{fhd} from \eqref{fhc} for self-similar problems is entirely analogous to that above for stationary problems.
The bound \eqref{ujh} is obtained with $q_r (z_{\hcals}, \cdot)$ obtained from \eqref{uef}.

Analogously with \eqref{udi}, \eqref{udj}, we assume $\hcals$ such that for any $z_{\hcals} \in \hcals$, there exist trajectories
\be\label{ugb} r_\tau(\tau) =
q_r(z_{\hcals} (r(\tau)),  \; \,   r(\tau) \in \Om^r/\Ga(z_{\hcals}),\; \; \, \tau > 0\ee
continuous on $\Ga(z_{\hcals}) $ and with
\be\label{ugc} r(0) \in \p\Om^r_A,\ee
filling $\Om^r$.

The condition \eqref{udi} tacitly assumes addition of a constant $m$-vector to $q$ so that $\mu \cdot q$ does not vanish or change sign on $\Ga(z)$.

Then analogous to \eqref{udk}, \eqref{udl}, we choose nonnegative $\Th$ satisfying
\be\label{ugd} \suml^{m-1}_{l = 1} \, \Th_{r_l}
q_{r, l} (z_{\hcals}, \cdot)
 = - \xi_q\, \Th,\; \; \, r \in \Om^r\setminus \Ga(z_{\hcals})\ee
continuous on $\Ga(z_{\hcals})$, with
\be\label{uge} \Th \mathop{\mid}\limits_{\p\Om^r_A} > 0\ee
and judiciously chosen $\xi_q = \xi_q(r) > 0$.  Further details omitted.

In contrast to stationary problems, with the form of $\Om^r$ restricted by \eqref{ufa}, distinct self-similar problem classes are determined largely by details in the prescribed boundary data.  Such is reflected in distinct $\Ga_{\hcals}$.

A familiar example is reflection of an incident shock by a wedge.  Weak regular reflection is obtained with $\hcals$ such that $\Ga^I(z_{\hcals}) \cap \Om$ is empty; the resulting $\Ga_{\hcals}$ is well-known.  With the same $\Om^r$, a stronger incident shock makes regular reflection impossible.  For irregular (Mach) reflection, $\Ga^I(z_{\hcals}) \cap~\Om$ cannot be empty; the resulting $\Ga_{\hcals}$ remains uncertain.  Indeed, the existence of a weakly well-posed problem class corresponding to irregular reflection remains uncertain.

\section{Loose ends}%13

Conjectured association of successful computational investigations with weakly well-posed underlying problems is obviously dependent on choice of  problem class.  At a minimum, specified details of a problem class must be sufficient to permit computational investigations to be undertaken, with success or failure thereof unambiguous a posteriori.  Simultaneously, whatever definition of weak well-posedness must be applicable.

Theorem 8.5 above determines that successful computational investigation, as per definition 6.6, implies weak well-posedness, as per definition 8.2, with perhaps surprisingly minimal additional requirements.  It suffices essentially that obtained solutions $z$ satisfy the entropy inequality \eqref{faf} and that the boundary data $\dot b$ be of sufficient regularity to permit a posteriori determination of stability of $(z, P_\cals)$,  using theorem 6.3 and \eqref{xaa}, \eqref{xab}, \eqref{xac}, \eqref{xad}, \eqref{xae}.

Within this framework, subsequent determination of sufficient conditions for weak well-posedness in sections 9-12 decisively fails to corroborate theorem 8.5 with available empirical evidence.

In this context, the problem class as detailed in definition 6.6 is significantly if superficially changed.  A reduced system \eqref{tfc} in a domain $\Om^r$, related to a primitive system \eqref{aa} as discussed in subsection 11.1, is identified as the underlying problem of interest, with conditions that the set of weak solutions of \eqref{tfc} is isomorphic to a subset of the weak solutions of \eqref{aa}.

In addition to the anticipated requirements of bounded $\Om^r$, the entropy inequality \eqref{faf} and sufficient regularity of the boundary data $\dot b$, various conditions have been required in the argument:  existence of distinguished directions $e(\cdot; z)$ or $\tilde e (\cdot; z); \, P_\cals $ restricted by \eqref{ujk}; solutions $z$ of increased regularity \eqref{tca}; and the primitive system \eqref{aa} hyperbolic and satisfying the assumptions of lemma 11.1.

A conundrum is evident.  Perhaps the adopted problem class is too large.  Perhaps the empirical evidence is misleading, based on special cases such as fluid flow.  In any event, the necessity  of the determined sufficient conditions for weak well-posedness is an open question.

A brief review of the discussion in the following subsection suggests that two of the additional required conditions, the existence of $\tilde  e(\cdot; z)$ and $P_\cals$ satisfying \eqref{ujk}, are likely satisfied in practice.  As against this, alternatives in the above analysis appear to be limited and unattractive.

Resolution of the conundrum is achieved, in the absence of predictability of successful computational investigations, by a combination of restriction of the adopted problem class and extension of definitions 6.6, 8.2.  An amended problem class is constructed within which theorem 8.5 holds simultaneously with a converse theorem 13.1.

The proof of theorem 13.1 is constructive.  A class of  approximation schemes is constructed, of the basic form adopted in section 6 but tailored to the amended problem class.  Failure of such schemes to determine solutions is directly related to failure of weak well-posedness.  Precise conditions for weak well posedness can thus be determined a posteriori from subsequent computational experiments.

The conditions of bounded $\Om^r$ and of solutions of regularity \eqref{tca} are subsequently relaxed by extension of the definitions of successful computations and of weak well-posedness to include convergent sequences of solutions in a sequence of domains.

\subsection{Review}%13.1

The obtained conditions for weak well-posedness follow from a series of strategic decisions, made largely for expedience  throughout this treatise and thus subject to review and potential revision.

The initial such decision is to restrict attention to systems of conservation laws equipped with an entropy extension, and to solutions in the range of a Frechet differentiable map \eqref{fag} of the boundary data and satisfying the entropy inequality \eqref{faf}.  Such is motivated  by previous work and by simple examples, including that illustrated in figure 2.1, applies in any number of dimensions and is inherited by reduced systems \eqref{tfc}.

Existence of an entropy extension implies the primitive system expressible in the form \eqref{aa}.  Frechet differentiability requires linearization of the weak form thereof \eqref{ae}.  Here such is obtained with solutions $z$ of the form \eqref{ad}, \eqref{ada}, satisfying \eqref{ac}, \eqref{aca}, the Rankine-Hugoniot conditions \eqref{afb} in particular.

Satisfying \eqref{ac}, \eqref{aca}, such solutions are presumed bounded in the space $\ul{\cals} $ \eqref{fhb}, with norm
\be\label{xdb} \| z \|_{\ul{\cals}} \, \eqadef \, \| z \|_{(\Om)} + \underset{\theta, i}{\lub} \iintl_\Om z\cdot \theta_{x_i} \ee
\be\label{xdd} \theta \in (C(\Om) \cap W^{1,1} (\Om))^n, \;  \, \| \theta\|_\cals \le 1, \; \;  i = 1, \dots, m,\ee
using the norm \eqref{xdc}.

The entropy inequality implies a weaker bound on solutions $z$, depending on the directionality of the entropy flux  \eqref{ujh}, \eqref{ujf} \eqref{fhc}, \eqref{fhd}, a repeatedly arising  issue.

Attention is now restricted to approximation schemes \eqref{fai} seeking solutions as vanishing dissipation limits, by discretization of systems such as \eqref{fad}.  Choosing a subsequence as necessary, a bounded sequence of $\tilde z_h \in \cals$,
\be\label{xdc} \| \tilde z_h\|_\cals \, \eqadef \, \| \tilde z_h\|_{L_\infty(\Om)} + \| \tilde z_h\|_{W^{1,1}(\Om)}\ee
determines
\be\label{xf} \tilde z_h \overset {h\downarrow 0}{\longrightarrow} z\ee
almost everywhere in $\Om$.

For definiteness, we restrict attention to a dissipation term $\cald$ of a form suggested by \eqref{ubn}, \eqref{ubp},
\be\label{xeg} \iintl_{\Om^r} \cald (\tilde z_h) \cdot \theta = - \iintl_{\Om^r} \suml^{m-1}_{l = 1} (M_{\cald, \cals} \tilde z_{h, r_l})\cdot (M_{\cald, \cals} \theta_{r_l}),\ee
$\theta \in W^{1, \infty} (\Om^r)^n$, each $M_{\cald, \cals}  \in M^{n\times n}$ which may depend, boundedly, on $\tilde z_h, x$.  Supplemental boundary conditions of the form \eqref{fal} are not required.

The condition \eqref{fbb} is immediate from \eqref{xeg}, and choosing $\theta = \Th \tilde z_h$ with nonnegative $\Th$, we recover \eqref{ujc} in the form
\be\label{xef} \iintl_{\Om^r } \Th \tilde z_h\cdot \cald (\tilde z_h) \le c \| \Th \|_{W^{1,\infty}} \| \tilde z_h\|^2_\cals\ee
using \eqref{xdc}.  Thus $z$  obtained from \eqref{fad} in the limit \eqref{xf} necessarily  satisfies \eqref{faf}.

Definitions 6.6 of a successful computational investigation and 8.2 of a weakly well-posed problem permit determination of conditions under which they are equivalent.  Bounded $\Om$ is required for computations, and simplifies definition 7.1.

A posteriori determination  of stable $(z_0, P_\cals)$ depends on regularity of the boundary data, introduced in \eqref{xaa}.  Such is not unexpected, comparing \cite{Ma1,Ma2} with theorem 6.1 of \cite{S1}.

The discussion of sections 9-12 is based on three strategic decisions.  Theorem 9.2 is used to relate stability of $(z, P(z))$ to a priori estimates for solutions of \eqref{rad}, \eqref{rae}, precluding nontrivial solutions of \eqref{agc}.

Such estimates are obtained from \eqref{rbj}, \eqref{rbk}, subsequently \eqref{rfv}, \eqref{rfva}, \eqref{rfvb}, \eqref{rfw}, at the expense of material restrictions.  Regularity of the boundary data is introduced in \eqref{rfa} and discussed in subsection 9.2.  Distinguished directions, introduced in \eqref{saa}, satisfying \eqref{sbk}, \eqref{tzg} in particular, are discussed in section 10.  Increased regularity of $z$ is required in \eqref{tca}.

Additionally, a requisite entropy condition on the discontinuity locus is expressed in definition 10.2, subsequently reduced to the inequality \eqref{faf} using lemma 11.1.  A corresponding compatibility condition on $z$,  expressed in definition 10.3,  is avoided by appeal to theorem 11.3, at the expense of increased regularity of the boundary data.

The restriction to hyperbolic systems is relaxed by introduction of the reduced systems in subsection 11.1.

The need for assumptions on the regularity of the boundary data is regarded as unavoidable, as such is anticipated and does not appear in sections 2-5, 7, 8.

Requisite conditions on distinguished directions $\tilde e(\cdot; z)$ introduced in \eqref{tda} follow from an expression of directionality of the entropy flux.  Within each $\tilde \Om^i$ in \eqref{tza} we assume $\tau \in C(\tilde \Om^i)$, constant within $\tilde \Om^i\cap \Ga (z)$, determined from trajectories satisfying
\be\label{xd} r_\tau (\tau) = q_r(z,x(r(\tau), x_m)), \; \; \, r(\tau) \in \tilde \Om^i\setminus\Ga (z)\ee
with $q_r$ from \eqref{uje}.  For self-similar problems, from \eqref{tfg}, notwithstanding \eqref{faf} with equality, within $\tilde \Om^i\setminus \Ga (z), \; q_r$ satisfies
\be\label{xded} \suml^{m-1}_{l = 1} q_{r, l} (z, \cdot )_{r_l} = - (m-1) (z \cdot \psi^\dag_{m, z} (z) - \psi_m(z))\ee
generally nonvanishing.

Then within each $\tilde \Om^i$, in \eqref{tzd}, \eqref{tze}, \eqref{tzf} we choose $\tilde \Th^i$
 a function of $\tau$.  The decisive condition \eqref{tzg} now follows from \eqref{aaea} \cite{M}
\be\label{xde} (\psi_{\tilde e, zz} (z))^{-1} = \tilde e\cdot q_{\psi_{\tilde e, z},\psi_{\tilde e,z}} (\psi_{\tilde e, z}).\ee

Choosing $\{ \tilde \Om^i\}$ judiciously and exploiting an arbitrary additive constant in $q$, we anticipate \eqref{tzia}, \eqref{tzib} compatible with
\be\label{xdea} \tilde \nu^i \cdot q_r (z, \cdot) \mathop{\midl}_{\partial\tilde\Om^{iU}} < 0 < \tilde \nu^i \cdot q_r (z, \cdot ) \mathop{\midl}_{\partial\tilde\Om^{iD}}.\ee
Then from \eqref{xdea}, \eqref{tdc}, \eqref{tde}, the condition \eqref{ujk} and possibly \eqref{fhca} follow from
\be\label{xdec} \supp (I_n-P_\cals) \subset \{ \nu_T\cdot q_r (z, \cdot) < 0 \} \cup \supp \{ e_0\}.\ee

Within each $\tilde \Om^i$, in \eqref{fhz} we take $\ul{\Th} = \tilde \Th^i$ obtaining
\be\label{xdeb} - q (\psi_{\cdot, z} (z)) \cdot( \triangledown \tilde \Th^i)^\dag \ge \frac 1c |q_r(z, \cdot)|\ee
recovering \eqref{fhd} recursively using \eqref{tzid}.

The competing conditions \eqref{xdec}, \eqref{xca} severely restrict the form of the prescribable boundary data.

The requirements of bounded $\Om, \Om^r$ and of $z$ satisfying \eqref{tca} will be weakened.  Left unanswered is whether primitive systems satisfying the assumptions of lemma 11.1 are required.

\subsection{Amended problem classes}%13.2

Details of an amended problem class include necessary information for a computational investigation to proceed, an underlying system \eqref{tfc}, bounded domain $\Om^r$, and boundary conditions determined from $P_\cals, \tilde B_{P_\cals}, \{ e_0\}$  satisfying \eqref{ab}, \eqref{agea}, likely selected on ``physical grounds".

Restrictions are needed so that the discussion of weak well-posedness applies. The given system \eqref{tfc} is the reduced form of a primitive system of the form \eqref{aa} using a given symmetry $T$, with domain $\Om$ obtained from \eqref{tfe}, extending the boundary conditions as per \eqref{uha}.

Here we shall assume for simplicity that $T$ is either translation \eqref{tee} or scaling \eqref{tef}, so \eqref{tfc} is either \eqref{tfd} or \eqref{tfg}.  In either case \eqref{uhkd} holds.

Solutions $z$ are required of the form \eqref{ad}, \eqref{ada}, satisfying \eqref{ac}, \eqref{aca}.  Tacitly, in view of \eqref{ac}, \eqref{aca},  we assume $z\in \hcals$ satisfying
\be\label{xea}\| z\|_{\ul\cals} \le c_{\hcals}\ee
using the norm \eqref{xdb}.

Anticipating solutions $z$ as limits of approximations \eqref{xf}, an amended problem class includes specification of a set $\cals$ of approximate solutions, an extension of the set introduced in \eqref{fah}, \eqref{fai}.  Indeed, a successful computational investigation will be conditioned on
\be\label{xda} \hcals \subset \cals.\ee

Such requires elements $z \in \cals$ also of the form \eqref{ad}, \eqref{ada}.  We require elements of $\cals$ satisfying \eqref{xea}, the Rankine-Hugoniot conditions \eqref{afb}, the boundary conditions
\be\label{xeb} \{ (I_n - P_\cals) \psi^\dag_{\nu_T, z} (z), \; \; z \in \cals \} = \tilde B_{P_\cals}\ee
simultaneously with \eqref{agea}, and with $\Ga(z), z \in \cals$, of a prescribed topological form $\Ga_{\hcals}$.

The amended problem class specifically anticipates \eqref{xf} as a vanishing dissipation limit.  A specified dissipation term $\cald$ of the form \eqref{xeg},  \eqref{xef} is required such that the elements of $\cals $ admit viscous structure, an extension of the familiar ``viscous profiles".

Specifically, for all $z \in \cals$, $\vare > 0$, there exists
\be\label{xfa} \hat z_\vare \in W^{1, \infty} (\Om^r)^n\ee
satisfying \eqref{agea},
\be\label{xfb} \| \hat z_\vare \|_\cals \le c_z,\ee
independent of $\vare$, using the norm \eqref{xdc};
\be\label{xfc} \hat z_\vare \overset{\vare\downarrow 0}{\longrightarrow} z,\ee
pointwise in $\Om^r\setminus \Ga (z)$; and
\begin{align}\label{xfd}&\underset{\theta}{\lub} \, \frac{1}{\| \theta \|_\cals}\, \iintl_{\Om^r}(E(\hat z_\vare)-\vare \cald (\hat z_\vare) - E(z)) \cdot \theta \overset{\vare\downarrow 0 }{\longrightarrow}
 0, \nonumber \\
 &\theta \in W^{1,\infty} (\Om^r)^n, \; \; P_\cals \theta \mathop{\midl}_{\pOm^r} = 0 .\end{align}

 Using \eqref{tfd}, \eqref{tfg}, \eqref{xeg}, \eqref{uhb}, we write out \eqref{xfd} for the two cases:
 \begin{align}\label{xfda} \underset{\theta}{\lub} \, \frac{1}{\| \theta \|_\cals}&\iintl_{\Om^r} \Big( \suml^{m-1}_{i = 1} (\psi_{i,z} (z) - \psi_{i, z} (\hat z_\vare)) \theta_{x_i}\nonumber \\
+ & \vare\suml^{m-1}_{i = 1} (M_{\cald, i} \hat z_{\vare, x_i})\cdot (M_{\cald, i} \theta_{x_i})\Big) \overset{\varepsilon\downarrow 0}{\longrightarrow} 0\end{align}
for stationary problems;
\begin{align}\label{xfdb}  \underset{\theta}{\lub} \, \frac{1}{\| \theta \|_\cals}&\iintl_{\Om^r} \Big( \suml^{m-1}_{l = 1} \big((\psi_{l,z}
(z) - \psi_{l, z} (\hat z_\vare)) \theta_{r_l}- (\psi_{m, z} (z) - \psi_{m, z} (\hat z_\vare)\big) r_l \theta)_{r_l})\nonumber \\
+ & \vare\suml^{m-1}_{l = 1} (M_{\cald, l} \hat z_{\vare, r_l})\cdot (M_{\cald,l} \theta_{r_{l}})\Big) \overset{\varepsilon\downarrow 0}{\longrightarrow} 0\end{align}
for self-similar problems.

The condition \eqref{xfd}  potentially restricts $\cald $ given $\Ga_{\hcals}$, as illustrated in \eqref{ubo} above.  In any neighborhood where $z\in W^{1,\infty}$, \eqref{xfd} is satisfied with $\hat z_\vare = z$, using \eqref{xeg}.  Indeed, the strength of this condition results from the appearance of $\| \theta\|_\cals$ as opposed to $\| \theta\|_{W^{1,\infty}}$.   Approximations $z \in \cals$ satisfying the Rankine-Hugoniot condition \eqref{afb} is essential in constructing $\hat z_\vare$.

Specifically not required is the primitive system \eqref{aa} hyperbolic or satisfying the assumptions of lema 11.1.

Conditions \eqref{tca}, \eqref{xd}, \eqref{xdea} for elements of $\cals$  in principle  are optional.  Restrictions on $\cals$ may jeopardize \eqref{xda}, but expedite construction of $\hat z_\vare$ satisfying \eqref{xfd} and may be helpful in computations.

\subsection{A converse theorem-failed computations}%13.3

\begin{thm} Assume an amended problem class as discussed in subsection 13.2 and a solution set $\hat \cals$, satisfying \eqref{xea}, \eqref{xda}, such that $(\hcals, P_\cals)$ determines a weakly well-posed problem as per definition 8.2.

Then a successful computational investigation as per definition 6.6 can be achieved.

\end{thm}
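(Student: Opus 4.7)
The plan is to construct an explicit approximation scheme $\cala_h$ tailored to the amended problem class, using the viscous structure assumed in \eqref{xfa}--\eqref{xfd}, and to verify directly that it produces a $\hat\cals_0$ satisfying all four conditions of definition 6.6. The construction is essentially forced by the hypotheses: given $z\in\hcals$ with boundary data $b=b(z,P_\cals)\in\tilde B_{P_\cals}$, I would set $\tilde\cala_h(b)\eqadef\hat z_h$, the viscous profile with $\vare=h$ guaranteed by \eqref{xfa}--\eqref{xfd} applied to $z$. The shock fitting $\calf_{h,b}$ is then built using the prescribed topological form $\Ga_{\hcals}$: since $\cals$ contains elements of the specified topological type, one locates the transition regions of $\hat z_h$ near $\Ga(z)$, collapses them to jump surfaces satisfying \eqref{afb}, and pastes the smooth pieces of $\hat z_h$ across, producing $z_h=\calf_{h,b}(\hat z_h)\in\cals$ with \eqref{fae} holding by construction.

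Next I would verify the hypotheses of lemma 6.1. Convergence \eqref{fb} in $\mathrm{dist}_\cals$ follows from \eqref{xfc} on each bounded region compactly contained in $\Om^r\setminus\Ga(z)$, together with the fact that shock fitting modifies $\hat z_h$ only on a set shrinking to $\Ga(z)$, on which $\hat z_h$ is bounded uniformly in $h$ by \eqref{xfb}; the associated coordinate maps $T_h$ satisfy $\|T_h-I_m\|_{C^1}\to 0$ because $\Ga(z_h)$ is selected to track $\Ga(z)$. The condition \eqref{fbb} is \eqref{xfd} rewritten after passing to the limit in $E(\hat z_\vare)-E(z)$ and using the weak form \eqref{ac} satisfied by $z$. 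Condition \eqref{fba} reduces to smallness of $\psi_{i,z}(\hat z_h)-\psi_{i,z}(z_h)$ in $L_r$, which holds because the two functions differ only on the shrinking transition region, on which both are uniformly bounded. Lemma 6.1 then yields $\cala_0(b)=z$, so $\hcals_0=\hcals$ and \eqref{ac}, \eqref{aca} hold.

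The entropy inequality \eqref{faf} for each $z_0\in\hcals_0$ follows by taking the $h\downarrow 0$ limit of \eqref{xef} applied with nonnegative test functions against $\tilde z_h=\hat z_h$, using \eqref{xfc} and \eqref{xfb} to pass to the limit on the right. Stability of $(z_0,P_\cals)$ is inherited from weak well-posedness: by definition 8.2 there exists $P(z_0)$ with $\ker P_\cals\subseteq\ker P(z_0)$ such that $(z_0,P(z_0))$ is unambiguous, hence stable, and \eqref{ana}--\eqref{anc} then give $(z_0,P_\cals)$ stable with $c_1(z_0,P_\cals,w,w_\Ga)=c_1(z_0,P(z_0),w,w_\Ga)$. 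The absence of nontrivial solutions of \eqref{agc} for $z=z_0$ is the final clause of definition 8.2. With \eqref{agea} holding by construction of $\cals$ via \eqref{xeb}, all four requirements of definition 6.6 are verified.

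The hard part will be the verification of \eqref{fba}, since the shock fitting needed to pass from $\hat z_h$ to $z_h\in\cals$ must simultaneously preserve the Rankine–Hugoniot conditions, keep $z_h$ uniformly bounded in $\cals$, and produce transition regions whose measure tends to zero at a rate making the $L_r$-difference in \eqref{fba} vanish. This is precisely what forces the structural assumptions \eqref{xfa}--\eqref{xfd} on the amended class to include uniform $W^{1,\infty}$ bounds on $\hat z_\vare$ and the strong convergence \eqref{xfd} tested against $\|\theta\|_\cals$ rather than $\|\theta\|_{W^{1,\infty}}$; without these, the shock-fitting step could introduce uncontrolled jumps in $\psi_{i,z}(\tilde z_h)-\psi_{i,z}(z_h)$. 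A clean way to carry out the fitting is to define $z_h$ on tubular neighborhoods of $\Ga(z)$ of width $\o(1)$ by using the one-sided traces of $\hat z_h$ adjusted by a small $L_\infty$ correction enforcing \eqref{afb} exactly, and to use \eqref{xfb} together with the bounded-measure estimate $\|\hat z_h-z_h\|_{L_r}\le\|\hat z_h-z_h\|_{L_\infty}\,|\mathrm{tube}|^{1/r}$ to conclude.
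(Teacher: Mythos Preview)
Your approach is correct but differs substantially from the paper's. The paper constructs a three-parameter Galerkin-type scheme $\cala_{\de,\vare,h}$ with finite-dimensional test spaces $X_\de\subset X_{P_\cals}\cap(W^{1,\infty})^n$ and trial spaces $\tilde X_h\subset(W^{1,\infty})^n$, seeks $\tilde z_h\in\tilde X_h$ satisfying a discretized regularized system, and proves convergence via a telescoping decomposition $T_1+T_2+T_3$: test-function approximation error (small for small $\de$), viscous-structure error using \eqref{xfd} (small for small $\vare$), and trial-space error between $\hat z_\vare$ and $\tilde z_h$ (small for small $h$). Stability of $(z,P_\cals)$ is then dismissed in one line as a consequence of weak well-posedness.

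You bypass discretization entirely by setting $\tilde\cala_h(b)=\hat z_h$ directly and then invest effort in shock-fitting and verifying the hypotheses of lemma~6.1 and definition~6.6 one by one; your stability argument via \eqref{ana}--\eqref{anc} spells out what the paper leaves implicit. Your route is more elementary and makes the role of the viscous-structure axioms \eqref{xfa}--\eqref{xfd} transparent, while the paper's $T_1+T_2+T_3$ decomposition stays closer to an actual numerical scheme and cleanly separates the three sources of error. One minor point: much of your shock-fitting labor is not strictly forced by definition~6.6, which only requires the \emph{limit} $z_0$ to be of the form \eqref{ad}; the paper's proof never performs an explicit shock-fitting step. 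Also, your $\tilde\cala_h(b)$ presupposes a choice of $z\in\hat V(b)$ for each $b$, which is fine for existence but worth stating; the paper's scheme has the same implicit dependence through the choice of $\vare_\de,h_\de$.
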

\begin{proof} An approximation scheme $\cala_{\de, \vare, h}$  of the form discussed in section 6 is constructed, but depending on three quantifiers: the dissipation parameter $\vare$ is \eqref{xfd} and ``mesh parameters" $\delta, h$.    For $\de > 0$, we introduce finite-dimensional ``test spaces"
\be\label{xha} X_\de \subset X_{P_\cals} \cap (W^{1,\infty} (\Om^r))^n\ee
becoming dense in $X_{P_\cals} \cap (W^{1,1} (\Om^r))^n$ as $\de \downarrow 0 $ (with respect to the norm $\| \cdot \|_\cals)$.

For $h > 0$, we introduce ``trial spaces"
\be\label{xhb} \tilde X_h \subset W^{1, \infty} (\Om^r)^n\ee
with elements satisfying \eqref{agea} and becoming dense as $h \downarrow 0$ (again with respect to the norm $\| \cdot \|_\cals$).

Further approximation properties of $X_\delta, \tilde X_h$ are not required; piecewise linear elements on simplexes will suffice.

Using $X_\de, \tilde X_h$, a regularized form of \eqref{tfc}  is discretized.  A sequence $\{ \tilde z_h\}$ is sought satisfying
\be\label{xhd} \lim\limits_{\de\downarrow 0}\, (\lim\limits_{\vare\downarrow 0}\, (\lim\limits_{h \downarrow 0}\,
\iintl_{\Om^r} (E (\tilde z_h) - \vare \cald (\tilde z_h)) \cdot \theta_\de))) = 0,\ee
for all $\theta_\de \in X_\de$.

For arbitrary $b \in \tilde B_{P_\cals}$, using \eqref{tfd}, \eqref{tfg}, \eqref{xeg}, \eqref{uhb}, the system \eqref{xhd} is
\begin{align}\label{xhda}
\lim\limits_{\de\downarrow 0}\, &(\lim\limits_{\vare\downarrow 0}\,  (\lim\limits_{h \downarrow 0}\, (\intl_{\pOm^r} b \cdot \theta_\de - \iintl_{\Om^r} \suml^{m-1}_{i = 1}
\psi_{i,z} (\tilde z_h)\theta_\de\nonumber \\
&+ \vare \iintl_{\Om^r} \suml^{m-1}_{i = 1} (M_{\cald, i} \tilde z_{h, x_i}) \cdot (M_{\cald, i} \theta_{\de, x_i})))) = 0, \; \; \, \theta_\de \in X_\de
\end{align}
for stationary problems;

\begin{align}\label{xhdb} \lim\limits_{\de\downarrow 0}\, &(\lim\limits_{\vare\downarrow 0}\, (\lim\limits_{h \downarrow 0}\, \big( \intl_{\pOm^r} b \cdot \theta_\de
- \iintl_{\Om^r} \big(\suml^{m-1}_{l = 1}
\psi_{l,z} (\tilde z_h)\theta_{\de, r_l}- \psi_{m,z} (\tilde z_h)(r_l\theta_\de)_{r_l}\big)\nonumber \\
&+ \vare \iintl_{\Om^r} \suml^{m-1}_{l = 1} (M_{\cald, l} \tilde z_{h, x_l}) \cdot (M_{\cald, l} \theta_{\de, x_l})\big))) = 0, \; \; \, \theta_\de \in X_\de\end{align}
 for self similar problems.

It suffices to prove that if \eqref{xda} holds, for any
\be\label{xhea} z \in \hcals,\ee
there exists a sequence $\tilde z_h \in \tilde X_h$,
\be\label{xhdz} \tilde z_h = \tilde z_{\de, \vare_\de, h_\de}, \; \; \vare_{\de}, h_{\de}\,  \xlongrightarrow{\de\downarrow 0}\,  0\ee
such that \eqref{xf} holds.

From \eqref{xhea}, \eqref{xda}

\be\label{xhe} z \in \cals\ee
and there exists $\hat z_\vare$ satisfying \eqref{xfa}, \eqref{xfb}, \eqref{xfc}, \eqref{xfd}.

From \eqref{xeb}, \eqref{xhe}, there exists $b \in \tilde B_{P_\cals}$ such that
\be\label{xhc} b = (I_n - P_\cals)\, \psi^\dag_{\nu_T, z} (z)\ee
almost everywhere in $\pOm^r$.

We write out the proof for the case of stationary solutions \eqref{xhda}, that for self-similar solutions being entirely similar.

For any $\de, \vare, h > 0, \; \, \theta \in X_{P_\cals} \cap W^{1,1} (\Om^r)^n, \; \theta_\de \in X_\de, \; \tilde z_h \in \tilde X_h, \; \hat z_\vare$ satisfying \eqref{xfa}, \eqref{xfb}, \eqref{xfd}
\begin{align}\label{xhf} \intl_{\pOm^r} b \cdot \theta - \iintl_{\Om^r}\,& \suml^{m-1}_{i = 1} \psi_{i,z} (z) \theta_{x_i} - \Big( \intl_{\pOm^r} b \cdot \theta_\de - \iintl_{\Om^r} \big(\suml^{m-1}_{i = 1} \psi_{i, z} (\tilde z_h) \theta_{\de, x_i} \nonumber \\
&+\vare\cald (\tilde z_h)\cdot \theta_\de \big) \Big) = T_1 + T_2 + T_3\end{align}
\be\label{xhg}  T_1 \, \eqadef\, \intl_{\pOm^r} b \cdot (\theta-\theta_\de) - \iintl_{\Om^r} \, \suml^{m-1}_{i = 1} \, \psi_{i, z} (z) (\theta - \theta_\de)_{x_i} \ee
\begin{align}\label{xhh}
T_2 \, \eqadef \, \iintl_{\Om^r} &\big( \suml^{m-1}_{i = 1} \psi_{i, z} (z)  \theta_{\de,x_i}- \big( \suml^{m-1}_{i = 1} \psi_{i,z} (\hat z_\vare)\theta_{\de, x_i}\nonumber \\
&+ \vare \cald(\hat z_\vare) \cdot \theta_\de\big)\big) \end{align}
\begin{align}\label{xhj}T_3 \, \eqadef\,  &\iintl_\Om \big( \suml^{m-1}_{i = 1} (\psi_{i, z} (\hat z_\vare) - \psi_{i, z} (\tilde z_h)) \theta_{\de, x_i}\nonumber \\
&+\vare (\cald (\hat z_\vare) - \cald (\tilde z_h)) \cdot \theta_\de \big).
\end{align}

For arbitrary $\theta$, using \eqref{xha} in \eqref{xhg}, $|T_1|$ is made arbitrarily small for sufficiently small $\de$, determining $\theta_\de \in W^{1,\infty} (\Om^r)$.

Then using \eqref{xfda}, \eqref{xeg} in \eqref{xhh}, $|T_2|$ is made arbitrarily small by sufficiently small $\vare$, depending on $z$.

With $\hat z_\vare, \theta_\de$ thus determined, from \eqref{xhj}, using \eqref{xfc}, \eqref{xhe}, $|T_3|$ is made arbitrarily small by sufficiently small $h$.

Thus \eqref{xhf} implies \eqref{xf}.

The assumption of weak well-posedness implies $(z, P_\cals)$ stable as required in definition 6.6.
\end{proof}

Sufficient conditions for weak well-posedness may be inferred, a posteriori of a successful computation, by appeal to theorem 8.5.  Correspondingly, theorem 13.1 indirectly relates necessary conditions for weak well-posedness to failure of computational investigations.

From theorem 13.1, failure of computational investigations using approximation schemes $\cala_{\de, \vare, h}$ necessarily arises from one of four causes:  lack of weak well-posedness of the underlying problem; failure of \eqref{xea}; failure of \eqref{xda}; or imprudent choice of the requisite discretization parameters, including the spaces $X_\de$ \eqref{xha}, $\tilde X_h $ \eqref{xhb}, the quantifiers $\de, \vare, h$ \eqref{xhdz} or an algorithm for (approximate) solution of the discrete systems obtained from \eqref{xhda} or \eqref{xhdb}.

Symptoms of such failure include inability to solve the discrete systems (to sufficient accuracy), failure of the limit \eqref{xf}, and obtaining $(z, P_\cals)$ not stable, using definition 6.6.

Assuming that \eqref{xea}, \eqref{xda} hold, the failure symptoms are readily understood.  The requisite conditions on the discretization parameters for theorem 13.1 are quite mild, save for a condition on the adopted solution algorithm for the discrete systems, requiring \be\label{xhja} \tilde z_h \approx \hat z_\vare\ee
to make $|T_3|$ small un \eqref{xhj}.  From \eqref{xhja}, we infer in particular
\be\label{xhjb} \| \tilde z_h\|_\cals \le c \| \hat z_\vare \|_\cals \le c \| z\|_{\ul{\cals}}\ee
for some $z \in \hcals$.

Such suffices for solution of the discrete systems; \eqref{xhjb}, \eqref{xea} and assures existence of a limit \eqref{xf}, taking a subsequence as necessary.

Stability of  obtained $(z, P_\cals)$ is determined a posteriori, using theorem 6.3 and \eqref{xaa}, \eqref{xab}, \eqref{xac}, \eqref{xad}, \eqref{xae}, \eqref{xaf}. In view of \eqref{axh}, failure thereof is regarded as pathological, but is related to the regularity of the data $b$ by \eqref{addz}.  Sufficient regularity of  the prescribed data is regarded as a necessary condition for weak well-posedness.

If weak well-posedness and \eqref{xea} hold, from \eqref{xeb} failure of \eqref{xda} can result only from incompatibility of the adopted dissipation term $\cald$ with the given $\Ga_{\hcals}$, appearing as failure of \eqref{xfa}, \eqref{xfb}, \eqref{xfc}, \eqref{xfd}.  A possible source of such is discussed in \eqref{ubn}, \eqref{ubo}, \eqref{ubp} above.

Remaining is the possibility of weak well-posedness with failure of \eqref{xea}.  As the solution set $\hat\cals$ is not known a priori, for some $b \in \tilde B_{P_\cals}$ it is possible that \eqref{pk} holds with some
\be\label{xka} \hat V (b) \subset \hat \cals,\ee
\be\label{xkb} \underset{z \in \hat V(b)}{\lub}\; \| z \|_{\ul{\cals}} = \infty,\ee
elements of $\hat V(b)$ satisfying the entropy condition \eqref{faf}.  This is the phenomenon referred to throughout as solutions not majorized by the prescribable boundary data, arguably pathological.

In view of the vanishing dissipation framework employed throughout, we have tacitly associated \eqref{xkb} with a stronger condition
\be\label{xkc}  \underset{z \in \hat V(b)}{\lub}\;\| z \|_{L_\infty (\Om^r)} = \infty.\ee

The condition \eqref{xkc} may be precluded a priori by imposition of conditions \eqref{xca}, \eqref{ujh}, \eqref{ujg} on the set $\cals$, thus obtaining \eqref{ujj}.

\subsection{Two remaining issues}%13.4

Unbounded domains, with nontrivial asymptotic conditions imposed on the corresponding solution sets, are frequent if not universal features of problem classes of interest meriting computational investigation.  Approximation schemes  include boundary conditions on artificial boundaries, as in \eqref{ufza}, leaving open the issue of the accuracy thereof.

Additionally, some problem classes of interest anticipate solution sets within which \eqref{tca} fails locally, in any open neighborhood containing some
\be\label{xpa} \Ga_{LS} (z) \subset \Om^r, \; \; z \in \hcals.\ee

Typically such occurs at ``interaction points"
\be\label{xpaa} \big\{ x (\Ga_{LS} (z), \ul{t} )\big\} \; \subseteq \Ga^I(z)\ee
as introduced in \eqref{saaz}, using notation from \eqref{tfe}, or at limit points of $\Ga(z)$, where
\be\label{xpab} | [z]| (r)
\xrightarrow{r\to\Ga_{LS}(z)} 0.\ee

Approximation schemes typically ignore this issue, in the selection of a dissipation term in particular, with possibly ambiguous results as noted above \eqref{ubn}, \eqref{ubp}.

Modifications of the discussion are required in either case.  Definition 6.6 of a successful computational investigation obviously does not apply if $\Om, \Om^r$ are unbounded.  If \eqref{tca} fails, failure of \eqref{hac}, \eqref{had} in a neighborhood of $\Ga_{LS}(z)$ is possible, precluding use of theorem 7.2 in the proof of theorem 8.5.  In addition, the conditions \eqref{xfa}, \eqref{xfb}, \eqref{xfc}, \eqref{xfd} in the specification of $\cals$  are unrealistic, undermining theorem 13.1.

Theorems 8.5, 13.1 may be extended to accommodate either phenomenon.  Retaining the notation of \eqref{ufza}, the given domain $\Om^r$ is regarded as a domain limit as $\ol{r}\uparrow \infty$,
\be\label{xpb} \Om^r = \underset{\ol{r}}{\cup}  \Om^r(\ol{r})\ee
with $\{ \Om^r (\ol r)\} $ a nested sequence of bounded domains, within each of which theorems 8.5, 13.1 apply with solution set $\hcals (\ol r)$ and boundary conditions determined by $P_\cals (\ol r)$.

Mirroring \eqref{ufa}, for each finite $\ol r$ the boundary $\pOm^r(\ol r)$ is of the form
\be\label{xpc} \pOm^r(\ol r ) = ( \pOm^r(\ol r) \cap \pOm^r) \cup \pOm^r_A (\ol r),\ee
with artificial boundary
\be\label{xpd} \pOm^r_A (\ol r) = \pOm^r(\ol r) \cap \Om^r.\ee

Even if $\Om^r$ is unbounded, $\pOm^r$ may be bounded, for example fluid flow past a given obstacle.  If so, then \eqref{xpc} holds with
\be\label{xpe} \pOm^r\subset \pOm^r(\ol r)\ee
for each $\ol r$.

On physical grounds, we assume given $T, \Om^r$ such that \eqref{uth} holds within $\pOm^r$.  Such cannot be expected for $\pOm^r_A (\ol r)$.  Then solutions of \eqref{tfc} within $\Om^r(\ol r)$ satisfy \eqref{aa} within $\ul{\Om} (\ol r)$, obtained from \eqref{trh}, \eqref{tri} with an additional restriction in the event of local singularities \eqref{xpa},
\be\label{xpf} \dist (\ul{\Om} ((\ol r), \{ x (t), \ul{t} < t < \ol{t} (\ol r), \; \; x (\ul{t}) \in \{ x (\Ga_{LS} (z), \ul{t} \} \} ) > 0\ee
with $x(t)$ obtained from  \eqref{tra}.

The condition \eqref{tfe} is required compatible with
\be\label{xpg} \Om \subseteq \underset{\ol r}{\cup} \ul{\Om} (\ol r)\ee
a seemingly mild restriction.

Boundary conditions \eqref{uha} are retained.

Then for each $b \in \tilde B_{P_\cals} $ and for each finite $\ol r$, within $\Om^r(\ol r)$ solutions
\be\label{xph} z(\cdot; \ol r) \in \hcals (\ol r)\ee
are ostensibly determined from approximation schemes, for example by replacement of $\Om^r$ by $\Om^r(\ol r)$ in \eqref{xhd} and following.

A successful computational investigation depends on existence of a limit solution in all of $\Om^r$,
\be\label{xpi} z (\cdot; \infty) \in \hcals\ee
satisfying
\be\label{xpj} \dist_{\cals, \ol r'} (z(\cdot; \ol r), \; \, z (\cdot; \infty) \overset{\ol r \uparrow \infty}{\longrightarrow} 0 \ee
with $\dist_{\cals, \ol r'}$ obtained from \eqref{ga} as restricted to $\Om^r(\ol r')$.

Definition 8.2 of weak well-posedness extends immediately, using solution sets \eqref{xpi}.

Extension of theorem 8.5, using \eqref{xpj}, then depends on sufficient detail in amended problem classes and sufficient conditions for successful computational investigations that weak well-posedness for each finite $\ol r$ implies weak well-posedness in the limit $\ol r \to \infty$.

In this context we recall that definitions 2.1, 2.3 of stability and admissibility of pairs $(z, P)$ require neither a bounded domain nor regularity  \eqref{tca}.  Stability in the limit $\ol r \to \infty$ is addressed using theorem 6.3, which implicitly quantifies approach to stability failure as the $w, w_\Ga$ required in \eqref{xae} become unbounded.

Limit admissibility has been discussed in section 5, and is implicitly related to a domain limit in \eqref{haa}, \eqref{hbe}.

Extension of theorem 13.1 follows from extension of \eqref{xda} to finite $\ol r$, requiring approximation sets satisfying
\be\label{xpk} \hcals (\ol r) \subset \cals (\ol r)\ee
for each $\ol r$, with $\hcals(\ol r) $ in \eqref{xph}.

Remaining details are different for unbounded domains than for existence of local singularities.  While these phenomena may obviously occur simultaneously, notation is simplified below by separate discussions.  The discussions  will be somewhat abbreviated in both cases.

The distinguishing feature of problem classes with unbounded $\pOm^r$ is the asymptotic conditions imposed on solutions \eqref{xpi}.

Specified details of an amended problem class with unbounded $\Om^r$ include a $(m-2)$-manifold
\be\label{xkka} \Phi_\Om \subset \bbr^{m-1},\ee
with unit normal $\nu_{T, \infty}$ almost everywhere, satisfying
\be\label{xkaa} \nu_{T,\infty} (r') \cdot r' > 0, \; \; \, r' \in \Phi_\Om,\ee
such that for each $\ol r$, \eqref{xpc}, \eqref{xpd} hold with
\be\label{xkkb} \pOm^r_A (\ol r) = \{ r \, | \, \frac{r}{\ol r} \in \Phi_\Om\}.\ee

The specified $P_\cals, \tilde B_{P_\cals}$ are of the form
\be\label{xkkc} P_\cals = {P_{\cals, \pOm^r} \choose P_{\cals, \infty}}\ee
\be\label{xkd} \tilde B_{P_\cals} = \bigg\{ {b_{\pOm^r}\choose b_\infty}\bigg\} \ee
with $P_{\cals, \pOm^r}$ and each $b_{\pOm^r}$ specified on (all of) $\pOm^r$ and $P_{\cals, \infty}$, each $b_\infty$ specified (almost everywhere) in $\Phi_\Om$.

Solutions $z(\cdot; \ol r)$ in \eqref{xph} are sought, satisfying whichever form \eqref{tfd} or \eqref{tfg} of \eqref{tfc}, weakly in $\Om^r (\ol r)$, using
\be\label{xkea} P_{\cals, \ol r} (r) \, \eqadef \,
\begin{cases} P_{\cals, \pOm^r} (r), \; \; \, r \in \pOm^r(\ol r) \cap \pOm^r\\
P_{\cals, \infty} \big( \frac{r}{\ol r}\big), \; \;\;\, r \in \pOm^r_A (\ol r),\end{cases}\ee
and satisfying the entropy condition \eqref{faf}.

Using \eqref{xkd}, \eqref{xkea}, \eqref{xkaa}, almost everywhere in $\pOm^r (\ol r)$, boundary data is of the form
\begin{align}\label{xke}
b_{\ol r} (r) &= (I_n - P_{\cals, \ol r} (r)) \; \psi^\dag_{\nu_T (r), z} (z(r; \ol r))\nonumber \\
&=
\begin{cases} b_{\pOm^r} (r), \; \; r \in \pOm^r(\ol r) \cap \pOm^r\\
b_\infty (\frac{r}{\ol r}), \; \;\; \,  r \in \pOm^r_A (\ol r).\end{cases}
\end{align}

Elements $\theta $ of the test space $X_{P_{\cals}, \ol{r}}$ satisfy
\begin{align}\label{xkf}
  0 &= P_{\cals, {\ol r}} (r) \theta (r)\nonumber\\
  &= \begin{cases}
   &P_{\cals, \pOm^r} (r) \theta (r), \; \; \,  r \in \pOm^r (\ol r) \cap \pOm^r \\
  &P_{\cals, \infty} \left( \frac {r}{\ol r}\right) \theta(r), \quad r \in \pOm^r_A (\ol r)\, .
  \end{cases}\end{align}

  The limit solutions \eqref{xpi}, \eqref{xpj} satisfy \eqref{xke}, \eqref{xkf} within $\pOm^r$ and asymptotic conditions
  \be\label{xkg} \bigl( I_n - P_{\cals, \infty} \left( \frac {r}{\ol r}\right) \bigr) \psi^\dag_{\nu_T, \infty} (z(r; \infty))
  \xrightarrow{{\ol r}\to \infty} \, b_\infty \left( \frac{r}{\ol r}\right)\ee
  for almost all fixed $r/\ol r \in \Phi_\Om$.

  Correspondingly, elements of the test space $X_{P_\cals, \infty} $ satisfy
  \be\label{xkh} P_{\cals, \infty} \left(\frac {r}{\ol r}\right) \theta (r) \xrightarrow{\ol r \to \infty} 0 \ee
  almost everywhere in $\Phi_\Om$.

  In the extension of theorem 8.5 to problem classes with unbounded $\Om^r$, the essential step is inference of stability of $(z(\cdot; \infty), P_\cals)$ from that of $(z(\cdot; \ol r), P_{\cals, \ol r})$ for each $\ol r$.  Here the approximation of asymptotic conditions by applied boundary conditions on the artificial boundaries is decisive.

  For each $\ol r$, stability of $(z(\cdot ; \ol r), \, P_{\cals, \ol r})$ is determined a posteriori using theorem 6.3, with some judiciously chosen $\dot b \in \calb_{P_\cals}$ of the form
  \be\label{xla} \dot b = {\dot b_{\pOm^r}\choose \dot b_\infty}\ee
  satisfying
  \begin{align}\label{xlaa} &P_{\cals, \pOm^r} (r) \dot b_{\pOm^r} (r) = 0, \; \; \, r \in \pOm^r,\nonumber \\
  &P_{\cals,\infty} (r') \dot b_\infty (r') = 0,\,\, \;   \; \; r' \in \Phi_\Om.\end{align}

  Then abbreviating
  \be\label{xlab} H_{\ol r} \, \eqadef\, H(z(\cdot; \ol r), P_{\cals, \ol r}, \; w(\ol r), w_\Ga(\ol r)),\ee
  \be\label{xlac} \| \theta\|_{\ol r} \, \eqadef\, \| \theta\|_{z(\cdot; \ol r), w(\ol r), w_\Ga (\ol r)},\ee
  the norm \eqref{ah} within $\Om^r(\ol r)$ for $\theta \in H_{\ol r}$,
  stability is verified using \eqref{xae} (within $\Om^r(\ol r))$, determination of $w(\ol r), w_\Ga (\ol r), \| \dot b \|_{B, P_\cals, \ol r}$ such that
  \begin{align}\label{xlb} \underset{\theta \in H_{\ol r}}{\glb} \; &(\tfrac 12 \| \theta\|^2_{\ol r} - \intl_{\pOm^r(\ol r) \cap \pOm^r} \!\!\!\!\theta \cdot \dot b_{\pOm^r}\nonumber \\
  &- \intl_{\pOm^r_A (\ol r)} \theta \cdot \dot b_\infty = - \tfrac 12 \| \dot b \|^2_{B, P_\cals, \ol r}.\end{align}

In addition to existence of the limit \eqref{xpj}, a successful computation for an amended problem class with unbounded $\Om^r$ requires several additional conditions.

The conditions \eqref{xlb} are required such that
\begin{align}\label{xlc} w(\infty) &\, \eqadef \, \underset{\ol r \uparrow \infty }{\lub}\,  w (\ol r) \in L_{\infty, loc},\nonumber \\
w_\Ga (\infty) \, &\, \eqadef\, \underset{\ol r \uparrow \infty }{\lub}\,  w_\Ga (\ol r) \in L_{\infty, loc},\end{align}
and existence of a limit
\be\label{xld} \| \dot b \|_{B, P_\cals, \infty} = \underset{\ol r \uparrow \infty }{\lub}\, \| \dot b \|_{ B, P_\cals, \ol r}\ee
for $\dot b \in B_{P_\cals}$.

Each $(z(\cdot; \ol r), 0_N)$ is admissible, perhaps as inferred using lemma 8.6.

The limit solution $z(\cdot; \infty)$ satisfies \eqref{haa}, anticipated as determined by inspection.

The functions $w(\infty), w_\Ga(\infty)$ will necessarily be unbounded as $|r|\uparrow \infty$.

Using \eqref{xpj}, \eqref{gae}, \eqref{xlc}, \eqref{xkf}, \eqref{xkh},  in the limit $\ol r\uparrow \infty$ the space $H_{\ol r}$ coincides, within any bounded subset of $\Om^r$, with the space $H(z(\cdot; \infty), \\ P_\cals, w(\infty), w_\Ga(\infty))$.  Passing to the limit $\ol r \uparrow \infty$ in \eqref{xlb}, using \eqref{xld}, we obtain
\begin{align}\label{xle} \tfrac 12 \| \theta\|^2_\infty  &- \intl_{\pOm^r} b_{\pOm^r}\cdot \theta \nonumber \\
-& \lim\limits_{\ol r \uparrow \infty, r/\ol r \in \Phi_\Om} \intl_{\Phi_\Om} \dot b_\infty \left(\frac{r}{\ol r}\right) \cdot \theta (r) \ge - \tfrac 12 \| \dot b \|_{B, P_\cals, \infty}\end{align}
for all $\theta \in H(z(\cdot; \infty), P_\cals, w (\infty), w_\Ga(\infty))$, so $(z(\cdot; \infty), P_\cals)$ is stable.

Each $(z(\cdot; \ol r), 0_N)$ admissible implies $(z(\cdot; \infty), 0_N) $ admissible, using \eqref{xpj} and definition 2.3.

Thus $(z(\cdot; \infty), P_\cals)$ is potentially admissible as per definition 7.1. Using \eqref{xlc}, theorem 7.2 now determines existence of $P(z(\cdot; \infty))$ such that $(z(\cdot; \infty), P(z(\cdot; \infty)))$ is unambiguous.  Thus theorem 8.5 is extended to problem classes with unbounded $\Om^r$.

Theorem 7.2 may also be applied to each $z(\cdot; \ol r)$, to get unambiguous $(z(\cdot; \ol r), P(z(\cdot; \ol r)))$ (in $\Om^r(\ol r))$.  Then identification of
\be\label{xlf} \Om_{\de_l} = \Om^r(\ol r)\ee
in \eqref{hbe} and use of \eqref{xpj} determines that within any bounded region of $\pOm^r$ and within $\Phi_\Om$, in the limit $\ol r \uparrow \infty$ $P(z(\cdot; \ol r))$ coincides with $P(z(\cdot; \infty))$.

Extension of theorem 13.1 to problem classes with unbounded $\Om^r$ is straightforward using \eqref{xpk}.

Definitions 6.6 and 8.2, theorems 8.5 and 13.1 largely survive failure of regularity \eqref{tca} on a set $\Ga_{LS}$, of measure zero in $\bbr^{m-1}$, in \eqref{xpa}.  For theorem 8.5,  it suffices that \eqref{hac}, \eqref{had} survive, perhaps uncertain where \eqref{xpab} applies.

For theorem 13.1, the essential hypothesis \eqref{xda} depends on suitable details in the amended problem class.  Where \eqref{xpaa} holds, in particular, the conditions \eqref{xfc}, \eqref{xfd} are unrealistic.

A domain limit \eqref{xpb} is again employed, \eqref{xpc}, \eqref{xpd}, \eqref{xpe} holding with
\be\label{xma}
\Om^r_A (\ol r; z) \, \eqadef\, \{ r \in \Om^r\, | \, \dist (r, \Ga_{LS}(z)) < 1/\ol r\}
\ee
\be\label{xmb} \pOm^r_A (\ol r; z) \, \eqadef\,\{ r \in \Om^r \, | \, \dist (r, \Ga_{LS}(z)) = 1/\ol r \}
\ee
\be\label{xmba} \Om^r(\ol r; z) \, \eqadef\, \{ r \in \Om^r \, | \, \dist (r, \Ga_{LS} (z)) > 1/\ol r \}
\ee
depending on $z \in \cals$.

In the specification of $\cals$, the conditions \eqref{xfa}, \eqref{xfb}, \eqref{xfc}, \eqref{xfd} are revised as follows:  for all $z \in\cals$, finite $\ol r, \, \vare$ sufficiently small depending on $\ol r$, there exists $\hat z_{\vare, \ol r} \in W^{1,\infty} (\Om^r)^n$, satisfying \eqref{agea},
\be\label{xfar} \| \hat z_{\vare, \ol r} \|_{W^{1,\infty} (\Om^r)} \le \tfrac{c}{\vare};\ee
\be\label{xfbr} \| \hat z_{\vare, \ol r} \|_\cals \le c_z\ee
independent of $\vare, \ol r$;
\be\label{xfcr} \lim\limits_{\ol r \uparrow \infty} \Big( \lim\limits_{\vare\downarrow 0} \hat z_{\vare, \ol r}\Big) = z\ee
pointwise in $\Om^r\setminus \Ga(z)$;
\begin{align}\label{xfdr} &\mathop{\lub}\limits_\theta \, \frac{1}{\| \theta\|_\cals} \iintl_{\Om^r(\ol r; z)} (E(\hat z_{\vare, \ol r}) - \vare \cald (\hat z_{\vare, \ol r}) - E(z)) \cdot \theta \xrightarrow{\vare \downarrow 0} 0,\nonumber \\
&\theta \in W^{1,\infty} (\Om^r(\ol r; z))^n, \; \; P_\cals \theta\mathop{\mid}\limits_{\pOm^r} = 0
\end{align}
using \eqref{xmba}. In particular, the $\hat z_{\vare, \ol r}$ satisfy bounds \eqref{xfar}, \eqref{xfbr}, but need not approximate $z$ within $\Om^r_A (\ol r; z)$.

Modification of the proof of theorem 13.1 is straightforward.  Solutions $z\in \hat\cals$ are obtained as limits of $\tilde z_h$ satisfying \eqref{xhda} or \eqref{xhdb} in the limit $\ol r \uparrow \infty$.  Mirroring \eqref{xhdz}, in \eqref{xhf}, \eqref{xhg}, \eqref{xhh}, \eqref{xhj} we replace $\hat z_\vare$ by $\hat z_{\vare, \ol r}$,
\be\label{xhdy} \tilde z_h = \tilde z_{h_{\ol r}, \de_{\ol r}, \vare_{\ol r}, \ol r}\ee
successively taking $\de_{\ol r}$ sufficiently small depending on $\ol r$; $\vare_{\ol r}$ sufficiently small depending on $\ol r, \de_{\ol r}; h_{\ol r}$ sufficiently small depending on $\ol r, \de_{\ol r}, \vare_{\ol r}$ as $\ol r \uparrow \infty$. Further details omitted.

\end{document}